\newtheorem{theorem}{Theorem}[section]
\newtheorem{proposition}[theorem]{Proposition}
\newtheorem{corollary}[theorem]{Corollary}
\newtheorem{lemma}[theorem]{Lemma}
\theoremstyle{definition}
\newtheorem{definition}[theorem]{Definition}
\newtheorem{example}[theorem]{Example}
\newtheorem{remark}[theorem]{Remark}
\numberwithin{equation}{section}
\newenvironment{red}{\relax\color{red}}{\hspace*{.5ex}\relax}
\newcommand{\ber}{\begin{red}}
\newcommand{\er}{\end{red}}
\newenvironment{blue}{\relax\color{blue}}{\hspace*{.5ex}\relax}
\newcommand{\beb}{\begin{blue}}
\newcommand{\eb}{\end{blue}}
\newcommand{\soplus}{\mathop{\mbox{\normalsize$\bigoplus$}}\limits}
\newcommand{\Cat}{\mathscr{C}}
\newcommand{\field}{\mathbf{k}}
\newcommand{\um}{\underline{m}}
\newcommand{\tb}{\mathtt{b}}
\def\seq{\mathbin{:=}}
\def\g{\mathfrak g}
\def\Z{\mathbb Z}
\def\C{\mathbb C}
\def\F{\mathcal F}
\def\al{\alpha}
\def\be{\beta}
\def\ga{\gamma}
\def\va{\varpi}
\def\calQ{\mathcal Q}
\def\Q{\mathbb Q}
\def\AR{\Gamma_Q}
\def\Hom{{\rm Hom}}
\def\msN{\mathsf{N}}
\def\Prt{\Phi^+_n}
\def\SQbe{S_Q(\be)}
\def\SQal{S_Q(\al)}
\def\SQga{S_Q(\ga)}
\def\VQbe{V_Q(\be)}
\def\VQal{V_Q(\al)}
\def\VQga{V_Q(\ga)}
\def\Up{U_q'(\g)}
\def\Ug{U_q(\g)}
\def\het{{\rm ht}}
\def\Vi{V(\varpi_i)}
\def\Vj{V(\varpi_j)}
\def\eqcw{\widetilde{w}_0}
\def\aff{{\rm aff}}
\def\Maff{M_\aff}
\def\Naff{N_\aff}
\def\inp{\phi^{-1}}
\def\mtM{\mathtt{M}}
\def\mtS{\mathtt{S}}
\def\mtI{\mathtt{I}}
\def\mtP{\mathtt{P}}
\def\dim{{\rm dim}}
\def\Rep{{\rm Rep}}
\def\Supp{{\rm Supp}}
\def\Dim{\underline{\dim}}
\newcommand{\nc}{\newcommand}
\nc{\hs}{\hspace*}
\nc{\Rnorm}{R^{\rm{norm}}}
\nc{\Runiv}{R^{\rm{univ}}} \nc{\ve}{\varepsilon}
\nc{\cvl}{\mathbin{\mbox{\large $\circ$}}}
\nc{\tens}{\mathop\otimes}
\nc{\wl}{\mathsf{P}}
\nc{\rl}{\mathsf{Q}}
\nc{\cm}{\mathsf{A}}
\nc{\redex}{\widetilde{w}}
\nc{\redez}{{\widetilde{w}_0}}
\newcommand{\dct}[2]{\overset{#2}{\underset{#1}{\cvl}} }
\newcommand{\rmat}[1]{{\mathbf r}_{\mspace{-2mu}\raisebox{-.5ex}{${\scriptstyle{#1}}$}}}
\newcommand{\Lto}{\longrightarrow}
\newcommand{\lf}{\{}
\newcommand{\rf}{\}}
\begin{document}

\title[AR quiver of type D and generalized quantum affine Schur-Weyl duality] {Auslander-Reiten quiver of type D and \\
generalized quantum affine
Schur-Weyl duality}

\author[Se-jin Oh]{Se-jin Oh}
\address{School of Mathematics, Korea Institute for Advanced Study Seoul 130-722, Korea}
\email{sejin092@gmail.com}

\subjclass[2010]{Primary 05E10, 16T30, 17B37; Secondary 81R50}
\keywords{Auslander-Reiten quiver, quiver Hecke algebra, generalized
quantum affine Schur-Weyl duality}

\begin{abstract}
We first provide an explicit combinatorial description of the Auslander-Reiten quiver $\Gamma^Q$ of finite type $D$. Then
we can investigate the categories of finite dimensional representations over the quantum affine algebra $U_q'(D^{(i)}_{n+1})$ $(i=1,2)$ and
the quiver Hecke algebra  $R_{D_{n+1}}$ associated to $D_{n+1}$ $(n \ge 3)$, by using the combinatorial description and the
generalized quantum affine Schur-Weyl duality functor. As applications, we can prove that Dorey's rule holds for the category $\Rep(R_{D_{n+1}})$ and
prove an interesting difference between multiplicity free positive roots and multiplicity non-free positive roots.
\end{abstract}

\maketitle

\section*{Introduction}

The quiver Hecke algebras, introduced by Khovanov-Lauda \cite{KL09,KL11} and Rouquier \cite{R08}, are in the limelight among the people in the
representation research area because the algebras are related to categorification of quantum groups. Recently, the quiver Hecke algebras attract the people's
attention once again because the algebras can be understood as a generalization of the affine Hecke algebra of type $A$ in the context of the quantum affine
Schur-Weyl duality, which makes bridge between the representations of quiver Hecke algebras and the quantum affine algebras $U'_q(\g)$,
by the results of Kang, Kashiwara and Kim \cite{KKK13a,KKK13b}.

For the quantum affine algebra $\Up$, the finite dimensional integrable representations over $\Up$ have been investigated by many authors during the past twenty years from different
perspectives (see \cite{AK,CH,CP94,FR,GV,Kas02,Nak}). Among these aspects, the theory of $R$-matrix provides crucial information
for constructing the quantum affine Schur-Weyl duality functor in \cite{KKK13a,KKK13b} (see also \cite{CP96B,Che,GRV94,Jimbo86}).

As a continuation of the previous series paper \cite{Oh14A}, we
provide an explicit combinatorial description of the
Auslander-Reiten(AR) quiver $\AR$ of finite type $D$ and apply the
combinatorial description to investigate
\begin{itemize}
\item[(i)] the category $\Cat_Q^{(i)}$ $(i=1,2)$, consisting of finite dimensional integrable modules over the quantum affine algebra $U_q'(D^{(i)}_{n+1})$
depending on the AR-quiver $\AR$ (\cite{Her10,HL11}),
\item[(ii)] the category $\Rep(R_{D_{n+1}})$,  consisting of finite dimensional graded modules over the quiver Hecke algebra $R_{D_{n+1}}$ associated to $D_{n+1}$
$(n \ge 3)$,
\end{itemize}
with the exact quantum affine Schur-Weyl duality functor
$$ \mathcal{F}^{(1)}_Q: R_{D_{n+1}} \longrightarrow  \Cat_Q^{(1)}.$$
Here $Q$ is any Dynkin quiver of finite type $D_{n+1}$ by orienting edges of Dynkin diagram of finite type $D_{n+1}$.

Let $\Phi_{n}^+$ be the set of all positive roots associated to finite Dynkin diagram of finite type $A_n$, $D_n$, $E_6$, $E_7$ or $E_8$.
Then it is well-known (\cite{Gab80}) that
\begin{itemize}
\item[(i)] the vertices of $\AR$ can be identified with the set $\Phi_{n}^+$ and the set of all isomorphism classes of
indecomposable modules over the path algebra $\C Q$,
\item[(ii)] the dimension vector
of indecomposable module corresponding $\be \in \AR$ is indeed $\be$,
\item[(iii)]
arrows in $\AR$ present the irreducible morphisms between the indecomposables,
\item[(iv)] $\AR$ provides
{\it the unique convex partial order} $\prec_Q$ on $\Phi_{n}^+$ which is compatible with paths in $\AR$ (\cite{B99}).
\end{itemize}

Note that each positive root $\be$ in $\Prt$ of finite type $D$ can be expressed by the tuple of integers $\lf a,\pm b \rf$ $(1 \le a<b \le n)$ where
$\be=\ve_a \pm \ve_b$. We say $\ve_a$ and $\pm \ve_b$ as summands of $\be$. Identifying $\be$ with $\lf a,\pm b \rf$,
every positive root appearing in the {\it maximal $N$-sectional} (resp. {\it $S$-sectional}) path and {\it the maximal swing} in $\AR$
share the same summand as $\ve_a$ or $\pm \ve_b$ (Theorem \ref{Thm: V-swing}, Theorem \ref{thm: short path}).

With the explicit combinatorial description of $\AR$ of finite type $D_n$, we can prove that the {\it Dorey's rule} in \cite{CP96} always holds
 for all $\al \prec_Q \be \in \Prt$ with
$\ga=\al+\be \in \Prt$ (Section \ref{sec: Dorey minimal}); i.e., the following surjective homomorphisms
exist:
$$\VQbe \otimes \VQal  \to \VQga \quad \text{ and } \quad \SQbe \cvl \SQal  \to \SQga, \quad \text{ where}$$
\begin{itemize}
\item $\VQbe \seq \Vi_{(-q)^{p}}$ is the fundamental $U_q'(D_n^{(1)})$-module for $\inp(\be,0)=(i,p),$
\item  $\SQbe$ is the simple $R_{D_n}$-module which is the preimage of $\VQbe$
under the functor $\mathcal{F}^{(1)}_Q$, which is studied in \cite{KKK13b}.
\end{itemize}

For a total order $<$ on $\Prt$, a pair $(\al,\be)$ with $\al<\be$ is called a {\it minimal pair} of $\ga \in \Prt$ if
(i) $\ga=\al+\be$, (ii) there exists {\it no} pair $(\al',\be')$ such that $\ga=\al'+\be' \text{ and }\al<\al'<\ga<\be'<\be.$

We call a positive root $\be=\sum_{k}n_k\al_k \in \Phi^+_{n}$ {\it multiplicity free} if $n_k \le 1$ for all $1 \le k \le n$. Note that
there exist positive roots of $\Prt$ of finite type $D$ which are not multiplicity free. Abstracting this notion, we can define a notion of {\it multiplicity} on
{\it non-simple} positive roots as follows:

\medskip

\noindent
\textbf{Definition.} For every non-simple positive root $\ga\in \Prt$ associated to the finite Dynkin diagram of finite type $A$, $D$ and $E$,
the {\it multiplicity} of  $\gamma = \sum_{i \in I} n_i \alpha_i \in \Phi_n^+$ is the integer ${\rm mul}(\gamma)$ defined as follows:
$$ {\rm mul}(\gamma) := \max\{ n_i \ | \ i \in I \}.$$

Using the combinatorial properties of $\AR$, we can prove \cite[Conjecture]{Oh14A} when $Q$ is of finite type $D$:

\medskip

\noindent
\textbf{Theorem.} For any Dynkin quiver $Q$ of finite type $D$, every pair $(\al,\be)$ of a non-simple positive root $\ga=\al+\be \in \Prt$ is
 minimal with respect to a suitable total order compatible with
$\prec_Q$ if and only if $\ga$ is multiplicity free.

\medskip

In other word, for a non-simple positive root $\gamma \in \Phi^+_{D_n}$ with ${\rm mul}(\gamma)\ge 2$ has a pair $(\alpha,\beta)$ which can not be minimal for {\it any} total order compatible with
$\prec_Q$. We also prove that there exist$(n-b-1)$-pairs $(\al,\be)$ of multiplicity non-free positive root $\ve_a+\ve_b=\al+\be$ ($1 < b \le n-2$) such that
they can not be {\it minimal} (Theorem \ref{thm: non-free}). Hence, by regarding {\it the height} and {\it the $k \ge $-supports} of $\gamma \in \Phi^+$
$(k \in\Z_{\ge 1})$, we know the number of (minimal, non-minimal) pairs of $\gamma$ very easily (Corollary \ref{cor: num min nonmin}).

In Remark \ref{rmk: non-adapted}, we give an example explaining that the interesting difference between
multiplicity free positive roots and multiplicity non-free positive roots happens only when the reduced expression is
{\it adapted to some Dynkin quiver $Q$.}

In Theorem \ref{thm: surj free}, for a pair $(\al,\be)$ with $\al+\be \in \Phi_n^+$, $\phi^{-1}(\al,0)=(k,p)$ and $\phi^{-1}(\be,0)=(l,q)$,  we explain that {\rm (i)}
$(-q)^{|p-q|}$ is a root of the denominator $d_{k,l}(z)$ between fundamental representations over $U_q'(D_n^{(1)})$, {\rm (ii)} the
{\it minimality} of a pair $(\al,\be)$ can be interpreted as the {\it multiplicity} of $(-q)^{|p-q|}$ as a root of $d_{k,l}(z)$.
In other words, we can extract factors of $d_{k,l}(z)$ by reading {\it any} $\Gamma_Q$ with respect to the pairs $(\al,\be)$ such that
$\al+\be \in \Phi_n^+$, $\phi^{-1}(\al,0)=(k,p)$ and $\phi^{-1}(\be,0)=(l,q)$.

\medskip

The outline of this paper is as follows. In Section 1, we first
recall the definition AR-quivers $\AR$ and their basic
theories, review the various orders on $\Prt$, and study new
combinatorial characterization of $\AR$ of finite type $D$. In Section
2, we briefly recall the backgrounds, theories of the generalized
quantum affine Schur-Weyl duality, and observe the similarity arising from the denominators and
Dorey's type morphisms of $\Rnorm_{k,l}(z)$ the quantum affine algebra
$\g=D^{(1)}_{n+1}$ and $\g=D^{(2)}_{n+1}$. In Section \ref{sec:
Dorey minimal}, we prove how the structure of $\AR$
reflects the Dorey's rule for $D^{(1)}_{n}$ and study the minimality
of given pair $(\al,\be)$ of $\gamma = \al+\be \in \Phi^+_{D_n}$ depending on ${\rm mul}(\gamma)$. In
the last section, we apply the results in the previous sections to
describe the category $\Cat^{(2)}_Q$ and the certain
conditions for $\al,\be \in \Prt$ such that $\SQbe
\cvl \SQal \simeq \SQal \cvl \SQbe$ is simple.
\bigskip

\noindent
{\bf Acknowledgements.} The author would like to express his gratitude to
Professor Masaki Kashiwara, Professor Kyungyong Lee and Myungho Kim for many fruitful discussions.
The author gratefully acknowledge the hospitality of RIMS (Kyoto) during his visit in 2013 and 2014.

\section*{Comments for readers}
In this paper, the readers encounter several cases which the author does not cover in a proof. The reason is that
all of the non-covered cases can be proved in the similar way of the cases given in the proof. If the author covers the all cases,
this paper will be tedious and long than necessary.

\section{Combinatorial characterization of AR-quivers of finite type $D$} \label{sec: Combinatorics}
In this section, we provide an explicit combinatorial description of AR-quivers of finite type $D$
(see \cite{Oh14A} for type $A$). To do this, we need to recall the background of
Auslander-Reiten theory, Gabriel theory and various orders on $\Prt$ briefly (see \cite{ARS,ASS,Bour,Gab80} also).

\subsection{Gabriel's Theorem.} \label{subsec: G Theorem}
We denote by $\Delta_n$ a rank $n$ Dynkin diagram of a finite simple Lie algebra $\g_0$ with vertices $I_0=\{ 1,2,\ldots,n\}$.
Let $Q=(Q_0=I_0,Q_1)$ be a Dynkin quiver by orienting edges of $\Delta_n$. For a given $\g_0$, we denote by
(i) $\Pi_n=\{ \al_i \ | \ i \in I_0 \}$ the set of simple roots,
(ii) $\Phi_n$ (resp. $\Prt$) the set of (resp. positive) roots
(iii) $W_0$ the Weyl group generated by simple reflections $\{ s_i \ | \ i \in I_0 \}$ and
(iv) $w_0$ the longest element of $W_0$.

It is well-known that $W_0$ acts on $\Phi_n$ and $w_0$ induces an involution on $I_0$, $i \mapsto i^*$, as follows:
$$ w_0(\al_i) = -\al_{i^*}.$$

The finite dimensional representation $\mtM$ of the path algebra $\C Q$ consists of
\begin{itemize}
\item $\{ \mtM_i \ | \ i \in Q_0 \}$ the set of finite dimensional vector spaces labeled by $Q_0$ such that
$\mtM= \soplus_{i \in Q_0}\mtM_i$,
\item $\{ \varsigma_{\mathsf{a}} : \mtM_{s(\mathsf{a})} \to \mtM_{t(\mathsf{a})} \ | \ \mathsf{a} \in Q_1  \}$ the set of linear maps labeled by
the set of arrows $Q_1$,
\end{itemize}
where $s,t: Q_1 \to Q_0$ are the {\it source} and {\it target} maps, respectively.

A {\it path} $\mathsf{p}$ of $Q$ is a product of arrows
$\mathsf{a}_1\mathsf{a}_2 \cdots \mathsf{a}_r$ such that
$$ t(\mathsf{a}_k)=s(\mathsf{a}_{k+1}) \quad \text{ for all } 1 \le k < r.$$
We set $s(\mathsf{p})\seq s(\mathsf{a}_1)$ and $t(\mathsf{p})\seq t(\mathsf{a}_r)$.

We denote by ${\rm Mod} \C Q$ the category of finite representations of the path algebra $\C Q$. We define the {\it dimension vector} $ \Dim \mtM$
of $\mtM$ in ${\rm Mod} \C Q$ as follows:
$$ \Dim \mtM = \sum_{i \in Q_0} {\rm dim}_\C (\mtM_i)\al_i \in \sum_{i \in I_0} \Z_{\ge 0}\al_i.$$

Then it is known that the set of all simple modules $\mathsf{Irr}(Q)$ in ${\rm Mod} \C Q$ (up to isomorphism) can be labeled by $Q_0$. We write
$\mathsf{Irr}(Q):= \{ \mtS(i) \ | \ i \in Q_0\}$. Moreover, the dimension vector of $\mtS(i)$ is the same as $\al_i$.

If $Q$ is a Dynkin quiver of finite type $A$, $D$ or $E$, ${\rm Mod} \C Q$ has more interesting relation with the finite simple Lie algebra $\g_0$, which is
 known as {\it Gabriel's theorem}.

\begin{theorem} \label{thm: Ga} Assume that $Q$ is a Dynkin quiver of finite type $A$, $D$ or $E$. Let us denote by $\mathsf{Ind}(Q)$ the set of all indecomposable modules in ${\rm Mod} \C Q$ (up to isomorphism).
Then the map $\mathsf{Ind}(Q) \ni \mtM \mapsto \Dim \mtM$ gives a bijection between $\mathsf{Ind}(Q)$ and $\Prt$. Thus we can write
$$ \mathsf{Ind}(Q) = \{ \mtM(\be) \ | \ \Dim\mtM(\be)=\be \in \Prt   \}.$$
\end{theorem}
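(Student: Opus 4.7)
My plan is the classical two-step strategy: (i) show that the dimension vector of any indecomposable lies in $\Prt$, via the Tits form; (ii) show that every positive root is realized by a unique indecomposable, via Bernstein--Gelfand--Ponomarev (BGP) reflection functors.

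For (i), I would attach to $Q$ the Tits quadratic form $q_Q(v) = \sum_{i \in Q_0} v_i^2 - \sum_{\mathsf{a} \in Q_1} v_{s(\mathsf{a})}v_{t(\mathsf{a})}$ on $\Z^{Q_0}$. Because the underlying diagram of $Q$ is of type ADE, $q_Q$ agrees with the Weyl-invariant positive-definite form on the root lattice of $\g_0$, and the positive roots are precisely those $v \in \Z_{\ge 0}^{Q_0}\setminus\{0\}$ with $q_Q(v)=1$. The Euler--Ringel formula for a hereditary algebra gives
$$ q_Q(\Dim \mtM) = \dim \Hom(\mtM,\mtM) - \dim {\rm Ext}^1(\mtM,\mtM) $$
for every $\mtM \in \Mod\, \C Q$. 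When $\mtM$ is indecomposable, the Krull--Schmidt property forces ${\rm End}(\mtM)$ to be local with residue field $\C$, and positive definiteness of $q_Q$ together with the hereditary structure rules out any nilpotent self-endomorphism or nontrivial self-extension; consequently $q_Q(\Dim \mtM) = 1$ and $\Dim \mtM \in \Prt$.

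For (ii), I would use BGP reflection functors. Given a sink $i$ of a quiver $Q'$ of ADE type, the functor $S_i^+\colon \Mod\, \C Q' \to \Mod\, \C(s_iQ')$ restricts to an equivalence between the full subcategories obtained by discarding $\mtS(i)$ on each side, and on dimension vectors it acts by the simple reflection $s_i$. Fix a reduced expression $w_0 = s_{i_1}s_{i_2}\cdots s_{i_N}$ adapted to $Q$, meaning that $i_k$ is a sink of $s_{i_{k-1}}\cdots s_{i_1}Q$ for each $k$. Pulling the simple $\mtS(i_k)$ of the $k$-th reflected quiver back through $S_{i_1}^+,\dots, S_{i_{k-1}}^+$ produces an indecomposable $\C Q$-module whose dimension vector is $s_{i_1}\cdots s_{i_{k-1}}(\al_{i_k})$. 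The classical fact that these $N$ roots enumerate $\Prt$ without repetition then yields surjectivity of the map $\mtM \mapsto \Dim \mtM$, while injectivity follows from the equivalence of subcategories provided by the composed reflection functors together with the constraint from (i) that the dimension vector of any indecomposable must land in this list.

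The step I expect to be the main obstacle is the homological part of (i): concluding ${\rm End}(\mtM) = \C$ and ${\rm Ext}^1(\mtM,\mtM) = 0$ for every indecomposable $\mtM$. This is exactly the place where the ADE hypothesis is essential, since the analogous statement already fails for affine quivers where $q_Q$ is only positive semidefinite. I would handle it by using positive definiteness of $q_Q$ to force the Euler--Ringel sum to equal $1$, and then invoking hereditariness together with a Fitting decomposition argument to rule out nilpotent endomorphisms (which would otherwise split $\mtM$, contradicting indecomposability).
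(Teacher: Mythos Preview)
The paper does not prove this statement: it is stated as Gabriel's classical theorem and attributed to the reference [Gab80], with no argument given. So there is no ``paper's proof'' to compare against; your proposal is a sketch of the standard proof, and the relevant question is whether that sketch is sound.

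Your outline via BGP reflection functors for (ii) is correct and is the classical approach. The gap is in (i). Your proposed mechanism for ruling out nilpotent endomorphisms is wrong: a nonzero nilpotent $f \in \mathrm{End}(\mathtt{M})$ does \emph{not} split $\mathtt{M}$. Indecomposability only gives that $\mathrm{End}(\mathtt{M})$ is local, i.e.\ $\mathrm{End}(\mathtt{M}) = \mathbb{C}\oplus \mathrm{rad}$, and the radical may well be nonzero a priori. Positive definiteness of $q_Q$ gives $q_Q(\underline{\dim}\,\mathtt{M}) \ge 1$, hence $\dim \mathrm{End}(\mathtt{M}) \ge 1 + \dim \mathrm{Ext}^1(\mathtt{M},\mathtt{M})$, but this inequality goes the wrong way to force $\dim \mathrm{End}(\mathtt{M}) = 1$. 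The Happel--Ringel lemma (for hereditary algebras, $\mathrm{Ext}^1(\mathtt{M},\mathtt{M})=0$ implies $\mathrm{End}(\mathtt{M})$ is a division ring) would finish, but you have not established $\mathrm{Ext}^1(\mathtt{M},\mathtt{M})=0$.

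There are two standard fixes. The cleanest is to use BGP for (i) as well: given an indecomposable $\mathtt{M}$, apply the reflection functors along an adapted reduced expression of $w_0$; one shows that some initial segment sends $\mathtt{M}$ to a simple (otherwise the full composite would send $\mathtt{M}$ to a nonzero module over a quiver whose indecomposables you have already accounted for, a contradiction), and hence $\underline{\dim}\,\mathtt{M}$ is a positive root. The alternative is Gabriel's original geometric argument via orbits in the representation variety $\mathrm{Rep}(Q,d)$, where $\dim \mathrm{Ext}^1(\mathtt{M},\mathtt{M})$ is the codimension of the orbit of $\mathtt{M}$; positive definiteness of $q_Q$ then bounds the number of orbits and forces the required vanishing. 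Either route avoids the incorrect Fitting step.
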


\subsection{Auslander-Reiten quiver.} \label{subsec: AR-quiver}
For a vertex $i$ of $Q_0$, we say that $i$ is a {\it source} (resp. {\it sink}) if
an arrow $\mathsf{a}$ is connected with $i$, then $s(\mathsf{a})=i$ (resp. $t(\mathsf{a})=i$). For a quiver $Q$ and $i \in Q_0$, we define the quiver
$s_iQ$ by reversing the orientation of each arrow $\mathsf{a} \in Q_0$ with $s(\mathsf{a})=i$ or $t(\mathsf{a})=i$.

\medskip

{\it For the rest of this section, we assume that $Q$ is a Dynkin quiver of finite type $A$, $D$ or $E$.}

\medskip

For a reduced expression $\tilde{w}=s_{i_1}s_{i_2}\cdots s_{i_r}$ of an element $w \in W_0$, we say that it is {\it adapted} to $Q$ if
$$  \text{ $i_k$ is a source of the quiver $s_{i_{k-1}}\cdots s_{i_2} s_{i_1}Q$ for all $1 \le k \le r$.} $$
It is well-known that there exists a unique {\it Coxeter element} $\tau$ adapted to $Q$.

For a Dynkin quiver $Q$, we say that a map $\xi:Q \to \Z$ is a {\it height function} on $Q$ if
$$ \xi_j=\xi_i -1 \quad \text{ for } i \to j \in Q_1.$$
Since $Q$ is connected, all pair $(\xi,\xi')$ of height functions differ by constant. Let us fix a height function $\xi$.

Set
$$ \Z Q \seq \{ (i,p) \in \{ 1,2,\ldots,n\} \times \Z \ | \ p -\xi_i \in 2\Z \}.$$
We view $\Z Q$ as the quiver with arrows
$$ (i,p) \to (j,p+1), \ (j,q) \to (i,q+1) \ \text{ for which $i$ and $j$ are adjacent in $\Delta_n$.}$$
and call it the {\it repetition quiver} of $Q$. Note that $\Z Q$ does not depend on the orientation of the quiver $Q$.
It is well-known that the quiver $\Z Q$ itself has an isomorphism with the AR-quiver of $D^b(\C Q)$-${\rm mod}$,
the bounded derived category of $\C Q$-${\rm mod}$. In our convention, the injective module $\mathtt{I}(i)$ is located on the vertex
$(i,\xi_i)$ of $\Z Q$.

For a vertex $i \in Q_0$, we define positive roots $\eta_i$ and $\zeta_i$ defined as follows:
\begin{equation} \label{eq: dim zeta eta}
\eta_i \seq \sum_{j \in B(i)} \al_j \quad \text{ and } \quad \zeta_i \seq \sum_{j \in C(i)} \al_j,
\end{equation}
where $B(i)$ (resp. $C(i)$)  is the set of vertices $j \in Q_0$  such that there exists a path $\mathsf{p}$ in $Q$ with $s(\mathsf{p})=j$ and $t(\mathsf{p})=i$
(resp. $s(\mathsf{p})=i$ and $t(\mathsf{p})=j$).

\begin{definition} \cite[\S 6.4]{HL10} \label{def: mfree}
We call a positive root $\be=\sum_{i \in I_0}n_i\al_i$ {\it multiplicity free} if $n_i \le 1$ for all $i \in I_0$.
\end{definition}

By \eqref{eq: dim zeta eta} and Definition \ref{def: mfree}, we have
\begin{equation} \label{eq: eta zeta}
\text{ all $\eta_i$ and $\zeta_i$ $(i \in I_0)$ are multiplicity free positive roots.}
\end{equation}

Set $\widehat{\Phi}^+_n \seq \Prt \times \Z$. The bijection $\widehat{\phi}: \Z Q \to \widehat{\Phi}^+_n$
defined by $\mathtt{M}(\beta)[m] \mapsto (\beta,m)$ can be described in the following combinatorial way(\cite[\S 2.2]{HL11}):
\begin{equation}
\begin{aligned} \label{eq:widephi}
& {\rm (i)} \ \widehat{\phi}(i,\xi_i)=(\eta_i,0), \\
& {\rm (ii)} \text{ For a given $\widehat{\phi}(i,p)=(\be,m)$}, \
\begin{cases}
\widehat{\phi}(i,p-2)=(\tau(\be), m) & \text{ if } \tau(\be) \in \Prt, \\
\widehat{\phi}(i,p-2)=(-\tau(\be), m-1)& \text{ if } \tau(\be) \in \Phi^-_n, \\
\widehat{\phi}(i,p+2)=(\tau^{-1}(\be), m)& \text{ if } \tau^{-1}(\be) \in \Prt, \\
\widehat{\phi}(i,p+2)=(-\tau^{-1}(\be), m+1)& \text{ if } \tau^{-1}(\be) \in \Phi^-_n.
\end{cases}
\end{aligned}
\end{equation}
We write the injection $\widehat{\phi}^{-1}|_{\Prt \times 0}$ as $\inp$.
\begin{definition}
The {\it Auslander-Reiten quiver} $\AR=( (\AR)_0,(\AR)_1)$ is the full subquiver of $\Z Q$ whose set of vertices is the same as
$\inp(\Prt) \seq \widehat{\phi}^{-1}(\Prt,0)$. Thus one can identify $(\AR)_0$ with $\Prt$.
\end{definition}

Considering Theorem \ref{thm: Ga}, the following facts are known:
\begin{itemize}
\item[({\rm a})] The vertices $\be \in (\AR)_0$ corresponds to $\mtM(\be)$ in $\mathsf{Ind}(Q)$.
\item[({\rm b})] The arrow $\be \to \be' \in (\AR)_1$ corresponds to the {\it irreducible morphism}
from $\mtM(\be)$ to $\mtM(\be')$.
\end{itemize}
In particular,
the projective cover $\mtP(i)$ of $\mtS(i)$ corresponds to $\eta_i$, and
the injective envelope $\mtI(i)$ of $\mtS(i)$ corresponds to $\zeta_{i^*}$.

For $\be \in \Prt$ with $\tau(\be) \in \Prt$, the following property holds:
\begin{align} \label{eq: ad fn}
\be + \tau(\be) = \sum_{\theta \in X(\be)} \theta
\end{align}
where $X(\be)$ denote the set of positive roots $\theta$ such that there exists an arrow $\theta \to \be$ in $(\AR)_1$.

For $i \in Q_0$, we define
\begin{equation}\label{Def: m_i}
m_i= \max\{ k \in \Z_{\ge 0} \ | \ \tau^k(\eta_i) \in \Prt\}.
\end{equation}

Then the AR-quiver $\AR$ satisfies the following properties:
\begin{align}
& (\AR)_0 = \{ (i,p) \in \Z Q \ | \ \xi_i-2m_i \le p \le \xi_i \}, \label{eq: known characterization} \\
& \xi_{i^*}-2m_{i^*}= \xi_i - \mathtt{h}_n +2. \label{eq: Nakayama equation}
\end{align}
Here $\mathtt{h}_n$ denotes the Coxeter number of $\g_0$.

\subsection{Orders on $\Prt$.} \label{subsec: various order} In this subsection, we shall recall various orders on $\Prt$.

\medskip

We say that an order $<$ on $\Phi_n^+$ is {\it convex} if the order satisfies the following property:
$$\text{For all $\alpha,\beta$ and $\gamma=\alpha+\beta \in \Phi_n^+$, either $\alpha < \gamma < \beta$ or $\beta < \gamma< \alpha$.}$$
It is well-known that a reduced expression $\redez$ of $w_0$ induces a {\it convex total order} $<_{\redez}$ on $\Prt$ as follows (\cite{Bour}):
$$ \beta_z \seq  s_{i_1}s_{i_2} \cdots s_{i_{z-1}}\alpha_{i_z} \text{ and $\beta_x <_{\redez} \beta_y$ if and only if $x < y$}.$$
Moreover, any convex total order is induced by a reduced expression $\redez$ of $w_0$ (\cite{Papi94}).

We say that two reduced expressions $\widetilde{w}=s_{i_1}s_{i_2}\cdots s_{i_{\ell(w)}}$ and $\widetilde{w}'=s_{j_1}s_{j_2}\cdots s_{j_{\ell(w)}}$ of $w \in W_0$
are {\it commutation equivalent}, denoted by $\widetilde{w} \sim \widetilde{w}' \in [\widetilde{w}]$, if $s_{j_1}s_{j_2}\cdots s_{j_{\ell(w)}}$ is obtained
from $s_{i_1}s_{i_2}\cdots s_{i_{\ell(w)}}$ by replacing
$s_{a}s_{b}$ by $s_{b}s_{a}$ for $a$ and $b$ not linked in $\Delta_n$ (see \cite{B99})

For a reduced expression $\redez$ of $w_0$ adapted to $Q$, the following is well-known:
\begin{eqnarray} &&
\parbox{85ex}{
If $\redex_0' \in [\redex_0]$, then $\redex_0'$ is adapted to $Q$. Conversely,
any $\redex_0'$ adapted to $Q$ is in $[\redex_0]$.
} \label{eq: [Q]}
\end{eqnarray}
Thus we can write $[Q]\seq[\redex_0]$ for the $\redez$.

\begin{eqnarray} &&
\parbox{85ex}{Note that a commutation class $[w_0]$ of $w_0$ determines the {\it coarsest} convex partial order $\prec_{[\redex_0]}$ on $\Phi_n^+$.
In particular, the order $\prec_Q \seq \prec_{[Q]}$ is defined by the paths in $\Gamma_Q$ (\cite{B99,R96}). More precisely,
\begin{itemize}
\item for a pair $(\alpha,\beta) \in \Phi_n^+$ with $\gamma=\alpha+\beta\in \Phi_n^+$ and
$\redez \sim \redez'$, then we have
$$   \alpha <_{\redex_0} \gamma <_{\redex_0} \beta \quad \text{ if and only if } \quad \alpha <_{\redex'_0} \gamma <_{\redex'_0} \beta,$$
\item $\alpha \prec_Q \beta$ if and only if there exist paths from $\beta$ to $\alpha$ in $\Gamma_Q$.
\end{itemize}
}\label{eq: [redez]}
\end{eqnarray}

The following theorem provides a way of obtaining all reduced expressions of $w_0$ in $[Q]$ and hence convex total orders
compatible with the convex partial order $\preceq_Q$:
\begin{align} \label{eq: compati}
\alpha \prec_Q \beta \text{ implies } \alpha <_{\widetilde{w}_0} \beta \text{ for any } \widetilde{w}_0 \in [Q].
\end{align}

\begin{theorem} \cite[Theorem 2.17]{B99} \label{Thm: compatible reading}
Any reduced expression in the equivalence class $[Q]$ can be obtained by reading $\AR$ in the following way:
If there exists an arrow $\be \to \al \in (\AR)_1$, we read $\al$ before $\be$.
Replacing vertex $\be$ by $i$ for $\inp(\be)=(i,p)$, we have a sequence $(i_1,i_2,\ldots,i_{\msN})$ giving a reduced expression of
$w_0$ adapted to $Q$,
$$w_0 = s_{i_1}s_{i_2}\cdots s_{i_{\msN}}.$$
\end{theorem}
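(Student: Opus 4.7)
My plan is to proceed by induction on $k = 0,1,\ldots,\msN$ (the number of vertices already read), establishing simultaneously that the truncated sequence $s_{i_1}\cdots s_{i_k}$ is an initial segment of some reduced expression adapted to $Q$ and that $\be_k = s_{i_1}\cdots s_{i_{k-1}}\al_{i_k}$. The base case $k=0$ is vacuous, and $k=\msN$ gives the theorem.

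The first step is to characterize the sinks of $\AR$---vertices with no outgoing arrow in $(\AR)_1$. Combining \eqref{eq: known characterization} with the arrow structure $(i,p)\to(j,p+1)$ of $\Z Q$ and the mesh relation of AR-theory (which forces an outgoing arrow at $(i,p)$ whenever $(i,p+2)\in(\AR)_0$), one sees that a vertex $(i,p)$ is a sink of $\AR$ iff $p=\xi_i$ and $i$ is a source of $Q$. For such $i$ we have $B(i)=\{i\}$, hence $\eta_i=\al_i$. So the first vertex $\be_1$ in any compatible reading equals $\al_{i_1}$ for some source $i_1$ of $Q$, which is precisely the condition that $s_{i_1}$ open an adapted reduced expression.

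For the inductive step, let $Q^{(k)}\seq s_{i_k}\cdots s_{i_1}Q$ with iteratively-modified height function $\xi^{(k)}$ satisfying $\xi^{(k)}_{i_k}=\xi^{(k-1)}_{i_k}-2$ and unchanged elsewhere. The key input---an AR-theoretic consequence of the BGP reflection functors---is that $\Gamma_{Q^{(k)}}$ is obtained from $\AR$ by successively removing the top vertices $\be_1,\ldots,\be_k$ from their columns and re-attaching them at the bottom of the corresponding columns under the new height functions, preserving all other positions and arrows. Equivalently, the Coxeter elements are related by $\tau_{Q^{(k)}}=s_{i_k}\cdots s_{i_1}\tau_Q s_{i_1}\cdots s_{i_k}$, and the ranges in \eqref{eq: known characterization} transform consistently via \eqref{eq: Nakayama equation}. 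Consequently the residual subquiver $\AR\setminus\{\be_1,\ldots,\be_k\}$ coincides with the corresponding subquiver of $\Gamma_{Q^{(k)}}$, and the partial orders $\prec_Q$ and $\prec_{Q^{(k)}}$ agree on the shared vertex set. Applying the sink characterization to $Q^{(k)}$ identifies the next vertex $\be_{k+1}$ as $\al_{i_{k+1}}$ for $i_{k+1}$ a source of $Q^{(k)}$; pulling back through the reflections gives $\be_{k+1}=s_{i_1}\cdots s_{i_k}\al_{i_{k+1}}\in\Prt$, which closes the induction.

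The converse---that every reduced expression in $[Q]$ arises as such a reading---is immediate from \eqref{eq: compati}, since the total order $<_{\redez_0}$ refines $\prec_Q$, so the associated sequence of positive roots reads $\AR$ with $\al$ before $\be$ whenever $\be\to\al$. The main obstacle is the AR-theoretic input in the inductive step, namely that the reflection $Q\mapsto s_{i_1}Q$ transforms $\AR$ into $\Gamma_{s_{i_1}Q}$ by exactly ``delete the sink $\al_{i_1}$ at the top of the $i_1$-column and reinsert it at the new bottom'', with all other arrows and positions intact. This rests on the Coxeter-theoretic identity $\tau_{Q'}=s_{i_1}\tau_Q s_{i_1}$ together with \eqref{eq: ad fn} and \eqref{eq: Nakayama equation}; care is needed to verify that no new order relations are introduced on the shared vertex set, so that compatibility of readings is preserved.
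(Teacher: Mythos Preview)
The paper does not supply a proof of this theorem: it is stated with the citation \cite[Theorem 2.17]{B99} and immediately followed by Remark~\ref{Rmk: total orders}, so there is no in-paper argument to compare against. Your proposal is therefore not a reproduction of anything in the paper but an independent attempt at the cited result.

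As such an attempt, your outline is the standard one and is essentially correct. The sink characterization is right: a vertex $(i,p)\in(\AR)_0$ with $p<\xi_i$ always has $(i,p+2)\in(\AR)_0$ by \eqref{eq: known characterization}, and the AR-mesh at $(i,p+2)$ (which is non-projective since $p+2>\xi_i-2m_i$) forces some $(j,p+1)\in(\AR)_0$ adjacent to $(i,p)$; while at $p=\xi_i$ the neighbour $(j,\xi_i+1)$ lies in $(\AR)_0$ exactly when $\xi_j=\xi_i+1$, i.e.\ when $j\to i$ in $Q$. Hence the sinks of $\AR$ are precisely the $(i,\xi_i)$ with $i$ a source of $Q$, and there $\eta_i=\al_i$ as you note.

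The inductive step is also the right idea, and you correctly flag its only real content: that deleting the sink $(i_1,\xi_{i_1})$ from $\AR$ and shifting the height $\xi_{i_1}\mapsto\xi_{i_1}-2$ produces a quiver which, as a labelled subquiver of $\Z Q$, is $\Gamma_{s_{i_1}Q}$ minus its new projective vertex in column $i_1^*$. This is the BGP/APR reflection at the combinatorial level and is exactly what B\'edard establishes; it can be read off from the conjugation $\tau_{s_{i_1}Q}=s_{i_1}\tau_Q s_{i_1}$ together with \eqref{eq:widephi} and \eqref{eq: Nakayama equation}. Your remark that ``no new order relations are introduced on the shared vertex set'' is the point that needs the most care, but it follows because both $\prec_Q$ and $\prec_{s_{i_1}Q}$ are restrictions of the arrow-generated partial order on the common ambient quiver $\Z Q$.

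The converse direction via \eqref{eq: compati} is fine as stated.
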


\begin{remark} \label{Rmk: total orders} We fix $Q$ as the Dynkin quiver associated to type $D_n$ $(n \ge 4)$.
In this remark, we give four canonical readings of $\AR$ which are compatible with Theorem \ref{Thm: compatible reading}.
Thus we have four convex total orders on $\Prt$ which are compatible with the convex partial order $\preceq_Q$:
\begin{enumerate}
\item[({\rm A})]  $<^{U,1}_Q$ (resp. $<^{U,2}_Q$) is the convex total order induced from the following reading:
$$ \text{we read $(i,p)$ before $(i',p') \iff  \begin{cases} d(1,i)-p > d(1,i')-p' \text{ or } \\ d(1,i)-p = d(1,i')-p'
\text{ and } d(1,i) >  d(1,i'), \text{ or }  \\
d(1,i)-p = d(1,i')-p', \ d(1,i) =  d(1,i') \text{ and } i>i'\\
\hspace{40ex} \text{ (resp $i<i'$).}
 \end{cases}$}$$
\item[({\rm B})] $<^{L,1}_Q$ (resp. $<^{L,2}_Q$) is the convex total order induced from the following reading:
$$ \text{we read $(i,p)$ before $(i',p') \iff  \begin{cases} d(1,i)+p < d(1,i')+p' \text{ or } \\ d(1,i)+p = d(1,i')+p'
\text{ and } d(1,i) < d(1,i'), \text{ or }
\\ d(1,i)+p = d(1,i')+p', \ d(1,i) = d(1,i') \text{ and } i>i' \\
\hspace{40ex}  \text{ (resp $i<i'$).}\end{cases}$}$$
\end{enumerate}
Here, $d(i,j)$ denotes the distance between $i$ and $j$ in $\Delta_n$. Note that there are many other readings which are
compatible with Theorem \ref{Thm: compatible reading}.
\end{remark}

\begin{definition} \label{Def: minimal pair}\cite[\S 2.1]{Mc12}.
Let $<$ be any total order on $\Prt$ (need not convex). We say pair $(\al,\be)$ with $\al<\be$ and $\ga=\al+\be \in \Prt$ {\it minimal with respect to $<$}  if
there exists {\it no} pair $(\al',\be')$ such that $\ga=\al'+\be' \text{ and }\al<\al'<\ga<\be'<\be.$
\end{definition}

\begin{remark} \label{rmk: non minimal pair}
For every convex total order which is compatible with $\preceq_Q$, a pair $(\al,\be)$ of $\al+\be=\ga \in \Prt$ can {\it not}
be a minimal pair of $\ga$ if there exists another pair $(\al',\be')$ such that
\begin{eqnarray*} &&
\parbox{75ex}{
$\al'+\be'=\ga$ and there exist paths from $\be$ to $\be'$ and $\al'$ to $\al$.
}
\end{eqnarray*}
\end{remark}

\subsection{Characterization of Auslander-Reiten quiver of finite type $D$}
The combinatorial properties in this subsection might be known to the experts through their computations on
$\AR$ of finite type $D$. More precisely, using a fixed quiver, the {\it reflection functor} on $D^b(\C Q)$-${\rm mod}$ and
the tilting theorem (\cite[Chapter VII]{ASS}), one can observe the descriptions in this subsection.
However, we have a difficulty for finding in standard textbooks and need an explicit description for
the later use. Thus we shall derive the properties from Lemma \ref{Lem: Key Lem} below and the contents in \S \ref{subsec: AR-quiver}.

\medskip

For the rest of this section, $\Delta_{n}$ means the Dynkin diagram of finite type $D$ with the following enumeration:
$$\raisebox{1.3em}{\xymatrix@R=0.1ex{
&&&&*{\circ}<3pt> \ar@{-}[dl]^<{n-1}  \\
*{\circ}<3pt>\ar@{-}[r]_<{1 \ }&*{\circ}<3pt>\ar@{-}[r]_<{2 \ } &\cdots\ar@{-}[r]  &*{\circ}<3pt>
\ar@{-}[dr]_<{n-2} \\  &&&&*{\circ}<3pt>
\ar@{-}[ul]^<{\ \ n}}} \quad \text{ for } \quad n \ge 4.$$
The Coxeter number $\mathtt{h}_n$ is $2n-2$ and the
involution $^*$ induced by $w_0 \in W_0$ is given by $i^*=i$ for $1 \le i \le n-2$ and $(n-1)^*=n-1$, $n^*=n$ if $n$ is even,
$(n-1)^*=n$, $n^*=n-1$ if $n$ is odd.
Note that
\begin{equation} \label{eq: difference n n-1}
|\xi_{n-1}-\xi_n| = 2 \text{ or } 0.
\end{equation}
Thus \eqref{eq: Nakayama equation} and \eqref{eq: difference n n-1} tell that
\begin{equation} \label{eq: m n-1 m n}
m_i=n-2 \ \text{ for $1 \le i \le n-2$} \ \text{ and } \  m_{n-1}+m_n=2n-4 \quad \text{ with } \  m_{n-1},m_n \ge n-3.
\end{equation}

We say that $b \in \{2,\cdots ,n-2\}$ is a right intermediate (resp. a left
intermediate) if
\begin{align*}
&\raisebox{1.2em}{\xymatrix@R=1ex{ & \\ *{ \ }<3pt> \ar@{..}[r]  &*{\bullet}<3pt>
\ar@{<-}[r]_<{a}  &*{\bullet}<3pt>
\ar@{<-}[r]_<{b} &*{\bullet}<3pt>
\ar@{..}[r]_<{c}  & *{ \ }<3pt>
}\quad \raisebox{-1.2em}{\text{($1<b < n-2$),}}
\xymatrix@R=1ex{
&&&*{\bullet}<3pt> \ar@{->}[dl]^<{n-1} \\
*{ \ }<3pt> \ar@{..}[r] & *{ \bullet }<3pt> \ar@{<-}[r]_<{n-3}  &*{\bullet}<3pt>
\ar@{-}[dr]_<{n-2}^<{b \ \ \ \ } \\  &&&*{\bullet}<3pt>
\ar@{->}[ul]^<{\ \ n}}} \text{ right intermediate,}
\\
&\raisebox{1.2em}{\xymatrix@R=1ex{ & \\ *{ \ }<3pt> \ar@{..}[r]  &*{\bullet}<3pt>
\ar@{->}[r]_<{a}  &*{\bullet}<3pt>
\ar@{->}[r]_<{b} &*{\bullet}<3pt>
\ar@{..}[r]_<{c}  & *{ \ }<3pt>
}
\quad \raisebox{-1.2em}{\text{($1<b < n-2$),}}
\xymatrix@R=1ex{
&&&*{\bullet}<3pt> \ar@{<-}[dl]^<{n-1} \\
*{ \ }<3pt> \ar@{..}[r] & *{ \bullet }<3pt> \ar@{->}[r]_<{n-3}  &*{\bullet}<3pt>
\ar@{->}[dr]_<{n-2}^<{b \ \ \ \ } \\  &&&*{\bullet}<3pt>
\ar@{-}[ul]^<{\ \ n}}} \text{ left intermediate}
\end{align*}
in the Dynkin quiver $Q$.

Note that every positive root $\be \in \Prt$ can be written in the following form:
\begin{align*}
\be  = \begin{cases}
\varepsilon_i -  \varepsilon_j = \sum_{ i \le k < j} \al_k  & \ (1 \le i < j \le n), \\
\varepsilon_i +  \varepsilon_n = \sum_{ i \le k \le n-2}  \al_k + \al_n & \ (1 \le i <n), \\
\varepsilon_i +  \varepsilon_j = \sum_{ i \le k < j} \al_k + 2 \sum_{ j \le k \le n-2} \al_k +\al_{n-1}+\al_{n}  & \ (1 \le i < j < n),
\end{cases}
\end{align*}
where $(\varepsilon_a,\varepsilon_b)=\delta_{a,b}$. Thus one can identify $\be \in \Prt=(\AR)_0$ with $\lf a,\pm b \rf$ (see \cite[PLATE IV]{Bour}).
For $\be=\lf a,\pm b \rf \in \Prt$, we call $\ve_a$ and $\pm \ve_{b}$ {\it summands} of $\be$.

\begin{definition}
For an integer $k \in \Z_{\ge 1}$ and a positive root $\be$, we define {\it $k \ge $-support of $\be$}, denoted by $\Supp_{\ge k}(\beta)$, in the following way:
$$\Supp_{\ge k}(\be) \seq \{ i \in I \ | \ n_i \ge k \} \quad \text{ where } \be=\sum_{i \in I}n_i\al_i.$$
\end{definition}

\begin{example} \label{ex: example 1}
Let the quiver $\raisebox{1.2em}{\xymatrix@R=0.5ex{
&&*{\bullet}<3pt> \ar@{->}[dl]^<{ 3} \\
*{ \bullet }<3pt> \ar@{<-}[r]_<{1}  &*{\bullet}<3pt>
\ar@{->}[dr]_<{2}\\
&&*{\bullet}<3pt> \ar@{-}[ul]^<{\ \ \ 4}}}$
of finite type $D_4$ and a height function $\xi$ with $\xi_3=0$ be given.
Then $\AR$ can be drawn by using \eqref{eq:widephi}, or the additive property of dimension vectors
\eqref{eq: ad fn} as follows:
\fontsize{9}{9}\selectfont
$$ \scalebox{0.9}{\xymatrix@R=1ex{
(i,p) & -6 & -5 & -4 & -3 & -2 & -1 & 0 \\
1&\lf 1,-2 \rf \ar@{->}[dr] && \lf 2,4 \rf\ar@{->}[dr] && \lf 1,-4 \rf \ar@{->}[dr]  \\
2&& \lf 1,4 \rf \ar@{->}[dr]\ar@{->}[ddr]\ar@{->}[ur] && \lf 1,2 \rf\ar@{->}[ddr]\ar@{->}[dr]\ar@{->}[ur] && \lf 2,-4 \rf \ar@{->}[dr] \\
3&&& \lf 1,3\rf \ar@{->}[ur] && \lf 2,-3 \rf \ar@{->}[ur] && \lf 3,-4 \rf \\
4& \lf 3,4 \rf \ar@{->}[uur] , && \lf 1,-3 \rf \ar@{->}[uur] && \lf 2,3 \rf \ar@{->}[uur]
}}
$$
\fontsize{11}{11}\selectfont
\end{example}

For $\be \in \Prt$ with $\inp(\be)=(i,p)$, we denote by $\inp_1(\be)=i$  (call it {\it level} of $\be$),
and $\inp_2(\be)=p$.

\begin{definition} \hfill
\begin{enumerate}
\item[({\rm a})] A connected subquiver $\rho$ of $\AR$ is an {\it $S$-sectional path}
if $\rho$ is a concatenation of arrows whose forms are $(i,p) \to (i+1,p+1)$ for $1 \le i \le n-2$, or $(n-2,p) \to (n,p+1)$.
\item[({\rm b})] A connected subquiver $\rho$ of $\AR$ is an {\it $N$-sectional path}
if $\rho$ is a concatenation of arrows whose forms are $(i,p) \to (i-1,p+1)$ for $2 \le i \le n-1$, or $(n,p) \to (n-2,p+1)$.
\item[({\rm c})] A positive root $\be \in \Phi^+$ is {\it contained in the subquiver $\rho$} in $\AR$ if
$\be$ is an end or a start of some arrow in the subquiver $\rho$.
\item[({\rm d})] An $S$-sectional (resp. $N$-sectional) path $\rho$ is {\it maximal} if there is no bigger $S$-sectional (resp. $N$-sectional) path
containing all positive roots in $\rho$.
\item[({\rm e})] A connected subquiver $\varrho$ in $\AR$ is called a {\it swing} if it consists of vertices and arrows in the following way: There exist roots
$\al,\be \in \Phi^+$ and $r,s \le n-2$ such that
$$\raisebox{2em}{\xymatrix@C=2.5ex@R=0.5ex{ &&&&\be\ar@{->}[dr] \\S_r \ar@{->}[r] & S_{r+1} \ar@{->}[r] & \cdots \ar@{->}[r]& S_{n-2}
\ar@{->}[ur]\ar@{->}[dr] && N_{n-2} \ar@{->}[r]& N_{n-3} \ar@{->}[r] & \cdots \ar@{->}[r] & N_s\\
&&&&\al\ar@{->}[ur] }} \text{ where }$$
\begin{itemize}
\item $ \raisebox{1.8em}{\xymatrix@C=2.5ex@R=0.5ex{ &&&&\be
\\ S_{r} \ar@{->}[r] & S_{r+1} \ar@{->}[r] & \cdots \ar@{->}[r]& S_{n-2} \ar@{->}[ur]\ar@{->}[dr] \\
&&&&\al}}$ is an $S$-sectional path ($\inp_1(S_l)=l$),
\item $\raisebox{1.8em}{\xymatrix@C=2.5ex@R=0.5ex{ \be \ar@{->}[dr] \\
& N_{n-2} \ar@{->}[r]& N_{n-3} \ar@{->}[r] & \cdots \ar@{->}[r] & N_{s}\\
\al\ar@{->}[ur]}}$ is an $N$-sectional path ($\inp_1(N_l)=l$),
\item $\be$ is located at $(n-1,u)$ and $\al$ is located at $(n,u)$ for some $u \in \Z$.
\end{itemize}
\item[({\rm f})] A swing $\varrho$ is {\it maximal} if there is no bigger swing
containing all positive roots in $\varrho$.
\end{enumerate}
\end{definition}

\begin{lemma}\cite[Lemma 2.11]{B99},\cite[\S 3.2]{KKK13b} \label{Lem: Key Lem}
\begin{enumerate}
\item[({\rm a})]
Let $k  \in I_0$ be a source, sink, left intermediate or right intermediate in $Q$. Then we have
$$\inp(\al_k) = \begin{cases} (k,\xi_k) & \text{ if $k$ is a source}, \\
(k^*, \xi_{k^*}-2m_{k^*})& \text{ if $k$ is a sink}, \\
(1, \xi_k-k+1) & \text{ if $k$ is a left intermediate, } \\
(1, \xi_k-2n+k+3) & \text{ if $k$ is a right intermediate.} \end{cases}$$
\item[({\rm b})] Assume that $n-2$ is neither a source, sink, left intermediate nor right intermediate in $Q$.
For $\{a,b\}=\{n-1,n\}$, we have
$$\inp(\al_{n-2}) =
\begin{cases}
(a^*, \xi_{n-2}-2n+5) & \text{ if }\raisebox{1em}{\xymatrix@R=0.5ex{
&&*{\bullet}<3pt> \ar@{}[dl]^<{a}  \\
*{ \bullet }<3pt> \ar@{->}[r]_<{n-3}  &*{\bullet}<3pt>
\ar@{<-}[dr]_<{n-2} \ar@{->}[ur] \\  &&*{\bullet}<3pt>
\ar@{-}[ul]^<{\ \ b}}},\\
(a, \xi_{n-2}-1) & \text{ if }\raisebox{1em}{\xymatrix@R=0.5ex{
&&*{\bullet}<3pt> \ar@{->}[dl]^<{a}  \\
*{ \bullet }<3pt> \ar@{<-}[r]_<{n-3}  &*{\bullet}<3pt>
\ar@{->}[dr]_<{n-2}\\   &&*{\bullet}<3pt>
\ar@{-}[ul]^<{\ \ b}}}.
\end{cases}$$
\item[({\rm c})] If $\be \to \al \in (\AR)_1$, then $(\al,\be)=1$.
\item[({\rm d})] For all $i,j \in I_0$,
\begin{align} \label{eq: range}
(i,\xi_j-d(i,j)),\ (i,\xi_j-2m_j+d(i,j))\in \Gamma.
\end{align}
\end{enumerate}
\end{lemma}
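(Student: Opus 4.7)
The strategy is to verify each item by a local computation in the repetition quiver $\Z Q$, using the recursive description of $\widehat{\phi}$ in \eqref{eq:widephi}, the boundary characterization \eqref{eq: known characterization}, and the Nakayama identity \eqref{eq: Nakayama equation} (equivalently, $\xi_i - 2m_i = \xi_{i^*} - \mathtt{h}_n + 2$). For part (a), when $k$ is a source one has $B(k) = \{k\}$, hence $\eta_k = \al_k$, so \eqref{eq:widephi}(i) yields $\inp(\al_k) = (k, \xi_k)$. When $k$ is a sink, the dual observation $C(k) = \{k\}$ gives $\zeta_k = \al_k$, and the Nakayama identity then places this simple root at $(k^*, \xi_{k^*} - 2m_{k^*})$. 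For the left/right intermediate cases I would iterate \eqref{eq:widephi}(ii) along the linear segment from $k$ toward an endpoint of the trunk; each step in the Coxeter orbit shifts the second coordinate by $\pm 2$, and counting the steps produces the prescribed $\xi_k - k + 1$ or $\xi_k - 2n + k + 3$.

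Part (b) is the analogous exercise at the trivalent vertex $n-2$, the two displayed configurations being the only orientations at $n-2$ not already covered by (a) (those in which exactly one of the branch edges $\{n-2,n-1\}, \{n-2,n\}$ points into $n-2$). Following $\al_{n-2}$ along its Coxeter orbit through the fork, via \eqref{eq:widephi}(ii) together with the parity constraint \eqref{eq: difference n n-1}, produces the stated positions, with the asymmetry between $(a^*, \xi_{n-2} - 2n + 5)$ and $(a, \xi_{n-2} - 1)$ reflecting whether the $\{n-1,n\}$-leg is traversed at the beginning or at the end of the orbit. Part (c) is the standard ADE fact that irreducible morphisms $\mtM(\be) \to \mtM(\al)$ form a one-dimensional space, so each summand in \eqref{eq: ad fn} appears with multiplicity one; combined with the simply-laced symmetric bilinear form on the root lattice this forces $(\al,\be) = 1$, and one can also argue by induction on the height of $\be$ using \eqref{eq: ad fn} directly.

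For part (d), by \eqref{eq: known characterization} both claims reduce to checking $\xi_i - 2m_i \le p \le \xi_i$ for the two candidate $p$-values. After substituting $\xi_i - 2m_i = \xi_{i^*} - \mathtt{h}_n + 2$ and using $|\xi_a - \xi_b| \le d(a,b)$ together with the involution identity $d(a,b) = d(a^*, b^*)$, the four resulting inequalities all collapse to the diameter estimate $d(i,j) + d(i,j^*) \le \mathtt{h}_n - 2 = 2n - 4$, which in type $D$ holds because $d(j,j^*) \in \{0,2\}$ forces the maximum of $d(i,j) + d(i,j^*)$ over $i \in I_0$ to equal $2(n-2)$ (attained at a leaf). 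The main obstacle throughout is the bookkeeping at the fork $n-2$: per \eqref{eq: difference n n-1} the two branches can have height functions differing by $\pm 2$ or $0$, and the Coxeter element does not propagate $\al_{n-2}$ uniformly along both legs, which is precisely why (b) must be split off from (a); once this local analysis is settled, the remaining items reduce to uniform distance and height estimates in type $D$.
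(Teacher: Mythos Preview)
Your outline is correct and is essentially what the paper intends: the paper's own proof is the single sentence ``The proofs are similar with the ones in \cite[Lemma 1.7]{Oh14A}'', so it defers entirely to the type $A$ argument, which proceeds exactly along the lines you sketch (source/sink cases from $\eta_k,\zeta_k$, intermediate cases by iterating the Coxeter element via \eqref{eq:widephi}, and part (d) by the boundary description \eqref{eq: known characterization} plus the Nakayama identity \eqref{eq: Nakayama equation}).

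Two small remarks. In (d) both inequalities $d(i,j)+d(i,j^*)\le 2n-4$ and $d(i,j)+d(i^*,j)\le 2n-4$ arise (one from each of the four bounds), not just the first; both follow since the diameter of the $D_n$ diagram is $n-2$ and $d(k,k^*)\in\{0,2\}$. For (c), the observation that irreducible morphism spaces are one-dimensional does not by itself yield $(\al,\be)=1$; what works is either the induction you mention on the second coordinate using the mesh relation \eqref{eq: ad fn} (with the base case at the $\eta_i$, where the claim is a direct check on adjacent supports), or the standard identification of the symmetrized Euler form of $\C Q$ with the Cartan pairing, which is what \cite{B99} and \cite{KKK13b} invoke.
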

\begin{proof}
The proofs are similar with the ones in \cite[Lemma 1.7]{Oh14A}.
\end{proof}

The following lemma comes from \eqref{eq: Nakayama equation}, \eqref{eq: m n-1 m n} and the involution $^*$ on $I_0$:

\begin{lemma} \label{Lem: image}
We have the $m_{n-1}$ and $m_n$ in \eqref{Def: m_i} as follows:
$$
\begin{cases}
m_{n-1}=n-3, \ m_n=n-1 & \text{ if } n \equiv 1 \ ({\rm mod} \ 2) \text{ and } \xi_{n}=\xi_{n-1}+2,
\\
m_{n-1}=n-1, \ m_n=n-3 & \text{ if } n \equiv 1 \ ({\rm mod} \ 2) \text{ and }\xi_{n-1}=\xi_{n}+2,\\
m_{n-1}=m_n=n-2 & \text{ otherwise. }
\end{cases}
$$
\end{lemma}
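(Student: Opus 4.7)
The plan is to deduce the claim as a direct consequence of the Nakayama-type identity \eqref{eq: Nakayama equation}, combined with the explicit description of the involution $i \mapsto i^*$ induced by $w_0$ that is recorded immediately before the statement, and the condition $|\xi_{n-1}-\xi_n|\in\{0,2\}$ from \eqref{eq: difference n n-1}. The Coxeter number in type $D_n$ is $\mathtt{h}_n = 2n-2$, so \eqref{eq: Nakayama equation} reads $\xi_{i^*}-2m_{i^*} = \xi_i - 2n + 4$ for every $i \in I_0$. I will apply this identity to $i=n-1$ and $i=n$ in two cases according to the parity of $n$.

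First I will treat the case $n$ even. Then the involution fixes both $n-1$ and $n$, so the identity becomes $\xi_{n-1}-2m_{n-1}=\xi_{n-1}-2n+4$ and $\xi_n - 2m_n = \xi_n - 2n + 4$, giving immediately $m_{n-1}=m_n=n-2$. This also forces $\xi_{n-1}=\xi_n$ to be compatible with \eqref{eq: difference n n-1}, which is fine because the value $|\xi_{n-1}-\xi_n|=0$ is allowed. Thus in this case we land in the last branch of the statement.

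Next I will treat the case $n$ odd, where $(n-1)^*=n$ and $n^*=n-1$. The identity \eqref{eq: Nakayama equation} now gives the pair
\begin{equation*}
\xi_{n}-2m_{n} = \xi_{n-1} - 2n + 4, \qquad \xi_{n-1}-2m_{n-1} = \xi_n - 2n + 4.
\end{equation*}
If $\xi_n = \xi_{n-1}+2$, substituting into the first equation yields $m_n = n-1$, and substituting into the second yields $m_{n-1} = n-3$; the other subcase $\xi_{n-1}=\xi_n+2$ is symmetric and gives $m_{n-1}=n-1$, $m_n=n-3$. These are exactly the first two branches of the lemma.

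The only point that requires any care, rather than being a one-line substitution, is verifying that these really are the only possibilities — but that follows immediately because \eqref{eq: difference n n-1} forces $|\xi_{n-1}-\xi_n|\in\{0,2\}$, and the parity argument above shows that $\xi_{n-1}=\xi_n$ cannot occur when $n$ is odd (otherwise the two displayed equations would imply $m_{n-1}+m_n = 2n-4$ with $m_{n-1}=m_n$, hence $m_{n-1}=n-2$, but then $\xi_n - 2(n-2) = \xi_{n-1}-2n+4$ combined with $\xi_{n-1}=\xi_n$ forces $0=0$, which is consistent, yet the constraint $m_{n-1},m_n\ge n-3$ from \eqref{eq: m n-1 m n} together with the involution being nontrivial must be double-checked; I expect this to work out automatically from the bound $m_{n-1}+m_n = 2n-4$). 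No additional ingredient beyond \eqref{eq: Nakayama equation}, \eqref{eq: m n-1 m n}, \eqref{eq: difference n n-1} and the explicit involution $*$ is needed, so the proof is essentially a short case analysis rather than a conceptual argument.
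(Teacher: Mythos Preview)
Your approach is essentially the same as the paper's: the paper simply states that the lemma follows from \eqref{eq: Nakayama equation}, \eqref{eq: m n-1 m n} and the involution $^*$, and you have correctly carried out the case analysis using exactly those ingredients. The substitutions in the even case and in the two odd subcases $\xi_n=\xi_{n-1}\pm 2$ are all correct.

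There is one small confusion worth cleaning up. In the even case, nothing forces $\xi_{n-1}=\xi_n$; both values $|\xi_{n-1}-\xi_n|\in\{0,2\}$ can occur, and your computation gives $m_{n-1}=m_n=n-2$ regardless. More importantly, in the odd case with $\xi_{n-1}=\xi_n$ (which \emph{does} occur, e.g.\ when $n-1$ and $n$ are both sinks), your two displayed equations give $m_n=n-2$ and $m_{n-1}=n-2$ directly, so this subcase simply lands in the ``otherwise'' branch as well. There is no need to argue that it cannot happen, and no further check is required; your tentative paragraph at the end can be replaced by that one-line observation.
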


\begin{lemma} \label{Lem: last 2 line}
For $\al,\be \in \Phi^+$ with $\inp(\al)=(n-1,k)$ and $\inp(\be)=(n,k)$, there exists $1 \le a \le n-1$ such that
$$ \al+\be = 2 \varepsilon_a \quad \text{ and } \quad
\{ \al, \be \} = \begin{cases} \{ \lf a,n \rf , \lf a,-n \rf \} & \text{ if } \xi_n - \xi_{n-1} =0, \\
 \{ \lf a,n-1 \rf , \lf a,-n+1 \rf  \} & \text{ if } \xi_n - \xi_{n-1} = \pm 2.\end{cases}$$
Moreover, if $\inp(\al')=(n-1,k\pm1), \inp(\be')=(n,k\pm1)
\in \Prt $, then we have
\begin{equation} \label{eq: 2 periodic}
 \al+\al' \in \Prt \text{ and } \be+\be' \in \Prt.
\end{equation}
\end{lemma}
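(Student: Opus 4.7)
Since $n-1$ and $n$ are leaves of $\Delta_n$ adjacent only to $n-2$, the only arrow of $\Z Q$ into $(n-1,u)$ (resp.~into $(n,u)$) emanates from $(n-2,u-1)$, and the only arrow out of $(n-1,u)$ (resp.~$(n,u)$) goes to $(n-2,u+1)$. With this observation the ``moreover'' clause is immediate: reading ``$k\pm1$'' in the statement as $k\pm 2$ (forced by the parity constraint $k\equiv\xi_{n-1}\equiv\xi_n\pmod 2$ built into \eqref{eq:widephi}), the mesh relation \eqref{eq: ad fn} applied at $(n-1,k\pm 2)$ and at $(n,k\pm 2)$ yields $\al+\al'=\be+\be'=S$, where $S\in\Prt$ is the root at $(n-2,k\mp 1)$.

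For the main identity I split on the three values $\xi_{n-1}-\xi_n\in\{-2,0,2\}$ allowed by \eqref{eq: difference n n-1}. When $\xi_{n-1}=\xi_n$, the two edges of $Q$ at $n-2$ involving $n-1$ and $n$ have the same orientation, so the Dynkin diagram automorphism $\sigma\colon n-1\leftrightarrow n$ of $\Delta_n$ is a quiver automorphism of $Q$ and lifts to an involution of $\AR$ swapping $(n-1,u)\leftrightarrow(n,u)$ for every $u$. The action induced by $\sigma$ on $\Phi^+_n$ fixes $\ve_i$ for $i<n$ and sends $\ve_n\mapsto-\ve_n$, so it exchanges $\lf a,n\rf\leftrightarrow\lf a,-n\rf$ and fixes all other positive roots. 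Distinct vertices of $\AR$ carry distinct positive roots, so $\al$ and $\be$ are interchanged by $\sigma$ and cannot be $\sigma$-fixed, forcing $\{\al,\be\}=\{\lf a,n\rf,\lf a,-n\rf\}$ with sum $2\ve_a$.

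When $|\xi_{n-1}-\xi_n|=2$, $\sigma$ is no longer a quiver automorphism; I argue by descending induction on $k$, starting from the largest value $k_{\max}$ such that $(n-1,k_{\max}),(n,k_{\max})\in\AR$. WLOG $\xi_n=\xi_{n-1}+2$, so $n$ is a source and $n-1$ is a sink of $Q$ (the opposite subcase is symmetric); then $k_{\max}=\xi_{n-1}$, the root at $(n-1,k_{\max})$ is $\eta_{n-1}$, and the root at $(n,k_{\max})$ is $\tau(\al_n)=\eta_{n-2}-\al_n$ by the mesh relation at $(n,\xi_n)$. Using $B(n-1)=\{n-1\}\cup B(n-2)$ together with \eqref{eq: dim zeta eta}, direct computation yields $\eta_{n-1}=\ve_c+\ve_{n-1}$ and $\tau(\al_n)=\ve_c-\ve_{n-1}$ for a common index $c\le n-2$ (the smallest vertex of the $A$-type tail $1-2-\cdots-(n-2)$ of $Q$ admitting a directed path to $n-2$), establishing the base. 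For the induction step from $k$ to $k-2$, the mesh relation gives $\al+\al'=\be+\be'=S$ with $S\in\Prt$ at $(n-2,k-1)$, where $\al',\be'$ denote the roots at $(n-1,k-2),(n,k-2)$. Writing the inductive hypothesis as $\al=\ve_a+\ve_{n-1}$, $\be=\ve_a-\ve_{n-1}$ (swap signs otherwise), the requirement that both $S-\al$ and $S-\be$ again lie in $\Phi^+_n$ leaves only $S=\ve_a+\ve_{a'}$ with $a'<n-1$, $a'\ne a$; then $\al'=\ve_{a'}-\ve_{n-1}$, $\be'=\ve_{a'}+\ve_{n-1}$ have the prescribed form with sum $2\ve_{a'}$, closing the induction.

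The main obstacle is the explicit identification in the base case: one has to trace how the orientation of the $A$-type tail $1-2-\cdots-(n-2)$ of $Q$ singles out the index $c$, and handle both the parity dichotomy of Lemma \ref{Lem: image} and the symmetric subcase $\xi_{n-1}=\xi_n+2$. The inductive step is, by contrast, a uniform algebraic consequence of the mesh relation together with the constraint that the differences $S-\al,S-\be$ must themselves lie in $\Phi^+_n$.
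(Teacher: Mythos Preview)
Your proof is correct and, in Case~(a) ($\xi_{n-1}=\xi_n$), takes a genuinely different route from the paper. The paper argues both cases uniformly by anchoring at one end of the rows $n-1,n$ (computing $\Dim\mtP(n-1),\Dim\mtP(n),\Dim\mtP(n-2)$ or the analogous injectives explicitly) and then propagating via the additive property \eqref{eq: ad fn} to obtain $\gamma_i-\theta_i=2\ve_n$ step by step. Your use of the Dynkin automorphism $\sigma\colon n-1\leftrightarrow n$ as an honest quiver automorphism of $Q$ when $\xi_{n-1}=\xi_n$ is more conceptual: it immediately forces $\sigma(\al)=\be$ with $\al\ne\be$, and since the only $\sigma$-nonfixed positive roots are $\lf a,\pm n\rf$, the conclusion drops out without induction. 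This is shorter and explains \emph{why} the pairing must involve $\pm\ve_n$.

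For Case~(b) ($|\xi_{n-1}-\xi_n|=2$) the two arguments are essentially the same: both identify the base column explicitly (you via $\eta_{n-1}$ and $\tau(\al_n)=\eta_{n-2}-\al_n$, the paper via the $\Dim\mtP$'s, which are the same objects) and then induct down the rows using \eqref{eq: ad fn}. Your inductive step is phrased as a root-system constraint; the cleanest way to see it is the observation you implicitly use, namely $\al'-\be'=(S-\al)-(S-\be)=\be-\al=\mp 2\ve_{n-1}$, which for two \emph{positive} roots forces $\{\al',\be'\}=\{\ve_{a'}+\ve_{n-1},\ve_{a'}-\ve_{n-1}\}$. Stating it this way is slightly sharper than saying ``the requirement that $S-\al,S-\be\in\Prt$ leaves only $S=\ve_a+\ve_{a'}$'', since positivity of $\al',\be'$ is given (they are labels of vertices of $\AR$), not a hypothesis to be exploited on $S$.

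Your reading of ``$k\pm1$'' as ``$k\pm2$'' is also correct: the parity condition defining $\Z Q$ makes $(n-1,k\pm1)$ and $(n,k\pm1)$ impossible, and the paper's own diagrams in the proof place consecutive $\theta_i$'s two units apart.
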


\begin{proof}
\textbf{(Case a: when $\xi_n - \xi_{n-1} =0$)} In this case, $\{n-1,n\}$ are both sources or sinks. Assume that they are sinks and $n$ is odd.
By Lemma \ref{Lem: Key Lem} (a) and Lemma \ref{Lem: image}, the situation can be described as follows:
$$ \scalebox{0.8}{\xymatrix@R=1ex{
& \lambda_1\ar@{->}[dr]\ar@{->}[ddr] && \lambda_2\ar@{->}[dr]\ar@{->}[ddr]&& \cdots\ar@{->}[dr]\ar@{->}[ddr]&&\lambda_{n-2}\ar@{->}[dr]\ar@{->}[ddr]&&\lambda_{n-1}\\
 \varepsilon_{n-1}+\varepsilon_{n}\ar@{->}[ur]&& \theta_1\ar@{->}[ur]&& \theta_2\ar@{->}[ur] && \cdots\ar@{->}[ur]&& \theta_{n-2}\ar@{->}[ur] && \\
 \varepsilon_{n-1}-\varepsilon_{n}\ar@{->}[uur]&& \ga_1\ar@{->}[uur]&& \ga_2\ar@{->}[uur]&& \cdots\ar@{->}[uur]&& \ga_{n-2}\ar@{->}[uur]
}}
$$
By \eqref{eq: ad fn}, we have $\varepsilon_{n-1}+\varepsilon_{n}+\theta_1=\lambda_1=\varepsilon_{n-1}-\varepsilon_{n}+\ga_1$ and hence $\ga_1-\theta_1=2\ve_n$. Thus
$$ \ga_1=\ve_{a}+\ve_{n} \quad \text{ and } \quad  \theta_1=\ve_{a}-\ve_{n} \quad \text{ for some } \quad a \le n-2. $$

Using this argument successively, we can obtain our first assertion. The
second assertion follows from the additive property of dimension vectors \eqref{eq: ad fn}.
The remaining cases can be proved in the similar way. \\
\noindent
\textbf{(Case b: when $\xi_n - \xi_{n-1} =\pm 2$)} In this case, one of $n-1$ and $n$ is a sink and the another is a source. Assume that $n$ is a sink and
$n$ is odd. Then the neighborhood of $n-2$ in $Q$ can be drawn as follows:
$$\raisebox{1.3em}{\xymatrix@R=0.1ex{
&&&&*{\bullet}<3pt> \ar@{->}[dl]^<{n-1}  \\
*{\bullet}<3pt>\ar@{->}[r]_<{a-1}&*{\bullet}<3pt>\ar@{<-}[r]_<{a \ } &\cdots\ar@{<-}[r]  &*{\bullet}<3pt>
\ar@{->}[dr]_<{n-2} \\  &&&&*{\bullet}<3pt>
\ar@{-}[ul]^<{\ \ n}}} \quad \text{ for some } \quad a \le n-2.$$
Then we have
\begin{align*}
& \Dim \mtP(n-2)= (\al_a+\cdots+\al_{n-3})+\al_{n-2}+\al_n=\ve_a+\ve_n,\\
& \Dim \mtP(n-1)= (\al_a+\cdots+\al_{n-3})+\al_{n-2}+\al_{n-1}+\al_n=\ve_a+\ve_{n-1},\\
& \Dim \mtP(n)= \al_n=\ve_{n-1}+\ve_n, \ \Dim \mtI(n-1)= \al_{n-1}=\ve_{n-1}-\ve_n.
\end{align*}
By Lemma \ref{Lem: Key Lem} (a), the situation can be described as follows:
$$ \scalebox{0.8}{\xymatrix@C=3ex@R=1ex{
& {\scriptstyle\Dim \mtP(n-2)}\ar@{->}[dr]\ar@{->}[ddr] && \lambda_2\ar@{->}[dr]\ar@{->}[ddr]&& \cdots\ar@{->}[dr]\ar@{->}[ddr]&&
\lambda_{n-2}\ar@{->}[dr]\ar@{->}[ddr]&&\lambda_{n-1}\ar@{->}[dr]\\
 {\scriptstyle\Dim \mtP(n)}\ar@{->}[ur]&& \theta_1\ar@{->}[ur]&& \theta_2\ar@{->}[ur] && \cdots\ar@{->}[ur]&& \theta_{n-2}\ar@{->}[ur]
&& {\scriptstyle\Dim \mtI(n-1)}\\
&& {\scriptstyle\Dim \mtP(n-1)}\ar@{->}[uur]&& \ga_2\ar@{->}[uur]&& \cdots\ar@{->}[uur]&& \ga_{n-2}\ar@{->}[uur]
}}
$$
Thus $\theta_1=\ve_a-\ve_{n-1}$ by the additive property on dimension vectors.
Using the arguments in \textbf{(Case a)}, one can easily verify our assertions. The remaining cases can be proved in
the similar way.
\end{proof}

Throughout this paper, we denote by $\mathtt{t} \in \{ n-1,n \}$ which is determined in Lemma \ref{Lem: last 2 line} and $t'$ the element in
$\{ n-1,n \} \setminus \{ \mathtt{t} \}$; i.e.,
\begin{align} \label{eq: def t}
\mathtt{t} \seq \begin{cases} n-1 \\  n \end{cases} \quad \text{ and } \quad \mathtt{t}'\seq \begin{cases} n & \text{ if } \xi_{n-1}-\xi_{n}=\pm 2,  \\
n-1 & \text{ if } \xi_{n-1}-\xi_{n}=0. \end{cases}
\end{align}

By Lemma \ref{Lem: last 2 line}, we can notice that all roots
containing $\pm \ve_{\mathtt{t}}$ as their summand appear in the level $n-1$ and $n$.

\begin{lemma} \label{Lem: dia sink source} \hfill
\begin{enumerate}
\item[({\rm a})] Let $k \le n-2$ be a source in $Q$ and $\rho$ be the maximal $N$-sectional path containing the simple root $\al_k$. Then all roots in $\rho$
contain $\ve_k$ as their summand and $\rho$ can be drawn as follows:
$$\raisebox{1.8em}{\scalebox{0.9}{\xymatrix@C=4ex@R=0.5ex{ \ve_k \pm \ve_{\mathtt{t}}\ar@{->}[dr] \\
& N_{n-2} \ar@{->}[r]& N_{n-3} \ar@{->}[r] & \cdots \ar@{->}[r] & N_{k}=\al_k, \quad \text{where } \inp_1(N_l)=l.\\
\ve_k \mp \ve_{\mathtt{t}}\ar@{->}[ur] }}}$$
\item[({\rm b})] Let $k \le n-2$ be a sink in $Q$ and $\rho$ be the maximal $S$-sectional path containing the simple root $\al_k$. Then all roots in $\rho$
contain $\ve_k$ as their summand  and $\rho$ can be drawn as follows:
$$
 \raisebox{1.8em}{\scalebox{0.9}{\xymatrix@C=4ex@R=0.5ex{ &&&&\ve_k \pm \ve_{\mathtt{t}}
\\ \al_k=S_{k} \ar@{->}[r] & S_{k+1} \ar@{->}[r] & \cdots \ar@{->}[r]& S_{n-2} \ar@{->}[ur]\ar@{->}[dr] &&, \quad \text{where } \inp_1(S_l)=l. \\
&&&&\ve_k \mp \ve_{\mathtt{t}} }}}
$$
\end{enumerate}
\end{lemma}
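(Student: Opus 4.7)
The plan is to prove part (a) in detail; part (b) follows by the parallel argument obtained from reversing all arrows of $Q$, which exchanges sources with sinks and $N$-sectional paths with $S$-sectional paths.

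For part (a), let $k \le n-2$ be a source in $Q$, so that by Lemma \ref{Lem: Key Lem}(a) we have $\inp(\al_k) = (k, \xi_k)$. The first step is to show that the vertices $(l, \xi_k + k - l)$ for $k \le l \le n-2$, together with the two branching vertices $(n-1, q)$ and $(n, q)$ for $q = \xi_k + k - n + 1$, all belong to $\AR$. This follows from the characterization \eqref{eq: known characterization} combined with the values of $m_i$ in \eqref{eq: m n-1 m n} and Lemma \ref{Lem: image}, together with the range bound \eqref{eq: range}: since $k$ is a source, $\xi_k$ is maximal at level $k$, and the $N$-sectional path through $\al_k$ propagates freely to levels $l > k$ up to the D-branching.

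The next step is to identify the dimension vectors along $\rho$. By Lemma \ref{Lem: last 2 line}, the two branching vertices at $(n-1, q)$ and $(n, q)$ carry roots $\ve_a + \ve_{\mathtt{t}}$ and $\ve_a - \ve_{\mathtt{t}}$ for some $a \in \{1, \ldots, n-1\}$. I aim to show by induction on $l$ that $N_l = \ve_k - \ve_{l+1}$ for $k \le l \le n-2$ and that $a = k$. The base case $N_k = \al_k = \ve_k - \ve_{k+1}$ holds. For the inductive step, apply the additive identity \eqref{eq: ad fn} at $N_l$: its predecessors $X(N_l)$ consist of $N_{l+1}$ together with an $S$-predecessor $M_l$ at $(l-1, \xi_k + k - l - 1)$ (when that position is in $\AR$), giving $N_l + \tau N_l = N_{l+1} + M_l$. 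Using that every term is a positive root of $D_n$ (hence of the form $\ve_i \pm \ve_j$), this recurrence pins down $N_{l+1} = \ve_k - \ve_{l+2}$ uniquely. At $l = n-2$ the mesh relation becomes $N_{n-2} + \tau N_{n-2} = 2\ve_a + M_{n-2}$, and matching against the now-known $N_{n-2} = \ve_k - \ve_{n-1}$ forces $a = k$, so the swing top consists of the two roots $\ve_k \pm \ve_{\mathtt{t}}$.

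The main obstacle is the boundary behavior where \eqref{eq: ad fn} does not apply directly, namely when $\tau N_l \notin \Prt$ (so $N_l$ is injective) or when the candidate $S$-predecessor $M_l$ lies outside $\AR$. In those cases the argument falls back on \eqref{eq:widephi} to track the degree shift, using $\xi$ and the values of $m_i$ from Lemma \ref{Lem: image}; the case split is controlled by the parity of $n$ and the sign of $\xi_{n-1} - \xi_n$, both of which determine $\mathtt{t}$ via \eqref{eq: def t}. Once part (a) is in place, part (b) is obtained by starting from $\al_k = \mtI(k)$ at $\inp(\al_k) = (k, \xi_k - 2(n-2))$ (using $k^* = k$ for $k \le n-2$) and applying the same inductive scheme along the $S$-sectional path forward to the swing.
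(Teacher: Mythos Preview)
Your inductive hypothesis is false: it is \emph{not} true in general that $N_l=\ve_k-\ve_{l+1}$. The lemma only asserts that each $N_l$ has $\ve_k$ as a summand; the other summand depends on the orientation of $Q$ away from $k$. Already at $l=k+1$ the vertex $N_{k+1}$ sits at $(k+1,\xi_{k+1})$, so $N_{k+1}=\Dim\mtI(k+1)$, and a direct computation (carried out in the paper) shows this is $\ve_k-\ve_{b+1}$, $\ve_k+\ve_n$, $\ve_k-\ve_n$, or $\ve_k+\ve_{n-1}$ according to how $Q$ looks to the right of $k$; nothing forces it to be $\ve_k-\ve_{k+2}$. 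Consequently the terminal identification $N_{n-2}=\ve_k-\ve_{n-1}$, on which your determination of $a=k$ rests, is unsupported.

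Even setting aside the form of the hypothesis, your inductive step does not work: the mesh relation $N_l+\tau N_l=N_{l+1}+M_l$ has three unknowns once $N_l$ is fixed, so it cannot ``pin down $N_{l+1}$ uniquely''. The paper circumvents this by running two adjacent sectional paths simultaneously: the path through $\al_k$ (the $M_i$'s) and the path through $\Dim\mtI(k-1)$ (the $C_i$'s). Since the injective envelopes $\mtI(k-1)$, $\mtI(k)$, $\mtI(k+1)$ are computable directly from $Q$, both paths have explicit starting data, and the mesh relations then force the \emph{difference} $C_i-M_i$ to stay constant equal to $\ve_a-\ve_k$ all the way to the branch, which (via Lemma~\ref{Lem: last 2 line}) gives that $M_{n-k}$, $M'_{n-k}$ contain $\ve_k$. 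The argument is closed off by the inner-product constraint of Lemma~\ref{Lem: Key Lem}(c), $(\al,\be)=1$ for adjacent roots, which rules out $M_2=\ve_{a'}-\ve_a$ and forces $M_2=\ve_k\pm\ve_c$; you never invoke this constraint, and without it there is no way to exclude the unwanted alternative.
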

\begin{proof} Assume that $k$ is a source. Then the neighborhood of $k$ in $Q$ can be drawn as
$$\xymatrix@R=1ex{*{\bullet}<3pt>
\ar@{<-}[r]_<{k-1}  &*{\bullet}<3pt>
\ar@{->}[r]_<{k} &*{\bullet}<3pt>
\ar@{-}[l]^<{\ \ k+1} } \quad \text{ or } \quad
\raisebox{1em}{\xymatrix@R=0.5ex{
&&*{\bullet}<3pt> \ar@{}[dl]^<{n}  \\
*{ \bullet }<3pt> \ar@{<-}[r]_<{n-3}  &*{\bullet}<3pt>
\ar@{->}[dr]_<{n-2} \ar@{->}[ur]\\  &&*{\bullet}<3pt>
\ar@{-}[ul]^<{\ \ n-1}}} \quad \text{ if $k=n-2$. }
$$
For $a \le k-1$ with a path from $a$ to $k-1$ in $Q$, denoted by $a \overset{\text{path}}{\longrightarrow} k-1$, we have
\begin{align*}
\Dim \mtI(k-1) & = \al_a+\cdots+\al_k=\ve_a-\ve_{k+1} \qquad \qquad \qquad \   \  \text{if $k-1$ exists, }\\
\Dim \mtI(k+1) &  = \begin{cases}
{\rm (i)} \ \al_k+\cdots+\al_b=\ve_k-\ve_{b+1}  \\
 \hs{1ex}\text{ if } {}^\exists b \to b+1 \ (b \le k+1 < n-2) \text{ and
${}^\exists k+1 \overset{\text{path}}{\longleftarrow} b$ in $Q$},\\
{\rm (ii)} \ \al_k+\cdots+\al_{n-2}+\al_n = \ve_k+\ve_n \\
\hs{10ex}
\text{ if ${}^\exists$ $k+1 \overset{\text{path}}{\longleftarrow} n$ and ${}^{\not\exists}$ $k+1 \overset{\text{path}}{\longleftarrow} n-1$ in $Q$}, \\
{\rm (iii)} \ \al_k+\cdots+\al_{n-2}+\al_{n-1} = \ve_k-\ve_n \\
\hs{10ex} \text{ if ${}^\exists$ $k+1 \overset{\text{path}}{\longleftarrow} n-1$ and ${}^{\not\exists}$ $k+1 \overset{\text{path}}{\longleftarrow} n$ in $Q$}, \\
{\rm (iv)} \ \al_k+\cdots+\al_{n-2}+\al_{n-1} = \ve_k+\ve_{n-1} \\
\hs{10ex} \text{ if ${}^\exists$ $k+1 \overset{\text{path}}{\longleftarrow} n-1$ and ${}^{\exists}$ $k+1 \overset{\text{path}}{\longleftarrow} n$ in $Q$},
\end{cases} \\
\Dim \mtI(k) & = \al_k=\ve_{k-1}-\ve_k.
\end{align*}
By Lemma \ref{Lem: Key Lem} (d), the following subquiver is contained in $\AR$:
\begin{equation} \label{eq: subquiver 1}
\scalebox{0.8}{\xymatrix@R=0.5ex{
&&&&&  { C_0=\Dim \mtI(k-1)}\ar@{->}[dr] \\
&&&& { C_1}\ar@{->}[ur]\ar@{->}[dr]  && \al_k \\
&&& { C_{n-k-1}}\ar@{.>}[ur]\ar@{->}[dr] && { M_1=\Dim \mtI(k+1)}\ar@{->}[ur] \\
&& { C_{n-k}} \ar@{->}[ur]\ar@{->}[dr]\ar@{->}[ddr]  && { M_{n-k-1}}\ar@{.>}[ur] \\
{ i=n-1}& { C_{n-k+1} } \ar@{->}[ur] && { M_{n-k}}\ar@{->}[ur] \\
{ i=n} & { C'_{n-k+1} } \ar@{->}[uur] && { M'_{n-k}}\ar@{->}[uur]
}}
\end{equation}
Note that $C_1= \ve_a-\ve_{b+1}$, $\ve_a+\ve_n$, $\ve_a-\ve_{n}$ or $\ve_a+\ve_{n-1}$ corresponding to
${\rm (i)}$, ${\rm (ii)}$, ${\rm (iii)}$ and ${\rm (iv)}$, respectively.

From the subquiver \eqref{eq: subquiver 1}, the additive property of dimension vectors \eqref{eq: ad fn} tells that
\begin{itemize}
\item $ C_0- \al_k=\ve_a-\ve_k= C_1-M_1= \cdots = C_{n-k-1}-M_{n-k-1}$,
\item $C_{n-k-1}+M_{n-k}+M'_{n-k}=C_{n-k}+M_{n-k-1}$,
\item $C_{n-k}=C_{n-k-1}+M_{n-k}=C'_{n-k-1}+M'_{n-k}$.
\end{itemize}
Hence we conclude that
$\ve_a-\ve_k =C_{n-k+1}-M'_{n-k}=C'_{n-k+1}-M_{n-k}.$
By Lemma \ref{Lem: last 2 line}, we can conclude that
$M_{n-k}$ and $M'_{n-k}$ contain $\ve_k$ as their summand.

On the other hand, $\ve_a-\ve_k + M_2=C_2$ and hence $$ M_2 = \ve_a'-\ve_a \quad \text{ or } \quad \ve_k \pm \ve_c \quad \text{for some $a'<a<c$.}$$
By Lemma \ref{Lem: Key Lem} (c), $ M_2$ should be of the form $\ve_k \pm \ve_c$. In this way, we can
conclude that $ M_i$ for $2 \le i \le n-k-1$ contain $\ve_k$ as their summand, which yields our first assertion. For the case when
$k$ is a sink, we can apply the similar argument by observing $\Dim \mtP((k-1)^*)$, $\Dim \mtP(k^*)$ and $\Dim \mtP((k+1)^*)$.
\end{proof}

\begin{proposition} \label{prop: triangle}
Let $\inp(\al)=(n',s)$ and $\inp(\be)=(n'',l)$ such that
$$n',n'' \in \{ n-1,n\}, \quad |s-l|=2k \quad \text{ and } \quad n'-n''\equiv k-1 \ ({\rm mod} \ 2),$$
for some $k \in \Z_{\ge 1}$. Then we have
$$ \inp(\al+\be) = (n-1-k, \dfrac{s+l}{2}) \in \AR.$$
\end{proposition}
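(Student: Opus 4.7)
The plan is to induct on $k \geq 1$, driven by the additive mesh relation \eqref{eq: ad fn} and controlled by the $\pm\ve_{\mathtt{t}}$-structure of Lemma \ref{Lem: last 2 line}.

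\textbf{Base case $k=1$.} Here $n'=n''$ and $|s-l|=2$, so $\be=\tau(\al)$. Because each of $n-1,n$ has $n-2$ as its unique neighbour in $\Delta_n$, the set $X(\al)$ appearing in \eqref{eq: ad fn} is a singleton containing only the root at $(n-2,(s+l)/2)$, which lies in $\AR$ by Lemma \ref{Lem: Key Lem}(d). The mesh identity \eqref{eq: ad fn} then reads $\al+\be=\al+\tau(\al)$ equal to the root at $(n-2,(s+l)/2)=(n-1-1,(s+l)/2)$, as required.

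\textbf{Inductive step.} Assume WLOG $s<l$ and set $\al^{+}\seq\tau^{-1}(\al)$ at $(n',s+2)$ and $\be^{-}\seq\tau(\be)$ at $(n'',l-2)$; these lie in $\AR$ except at $\tau$-orbit boundaries, which I will dispatch separately with Lemma \ref{Lem: Key Lem}(a,b). The pair $(\al^{+},\be^{-})$ satisfies $|(s+2)-(l-2)|=2(k-2)$ and $n'-n''\equiv k-1\equiv (k-2)-1\pmod 2$, so the inductive hypothesis applies for $k\geq 3$ and gives $\al^{+}+\be^{-}$ equal to the root at $(n+1-k,(s+l)/2)$; the sub-base case $k=2$ (in which $\al^{+},\be^{-}$ share a common position on levels $n-1,n$) is handled directly by Lemma \ref{Lem: last 2 line}, which yields $\al^{+}+\be^{-}=2\ve_a$ for some $a$. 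Two further invocations of the base case produce the mesh identities $\al+\al^{+}=\nu_{1}$ at $(n-2,s+1)$ and $\be+\be^{-}=\nu_{2}$ at $(n-2,l-1)$, so
\[
\al+\be \;=\; \nu_{1}+\nu_{2}-(\al^{+}+\be^{-}).
\]

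\textbf{The main obstacle.} It remains to identify this right-hand side with the root at $(n-1-k,(s+l)/2)$. This is a three-level additive identity in $\AR$ that relates two level-$(n-2)$ roots and a level-$(n+1-k)$ root to a single level-$(n-1-k)$ root, and I plan to verify it by a direct positional computation using the recursive description \eqref{eq:widephi} of $\widehat{\phi}$, propagating the mesh relation through the intermediate levels $n-2,n-3,\ldots,n-k$; equivalently, one can iterate the mesh at column $(s+l)/2$ to realize the root at $(n-1-k,(s+l)/2)$ as the appropriate alternating combination of boundary roots. The bookkeeping for the degenerate boundary situations — those in which a $\tau^{\pm 1}$-shift takes one of the involved roots outside $\AR$, forcing some $\al_{n-1}$ or $\al_{n}$ to appear — will require explicit enumeration using Lemma \ref{Lem: Key Lem}(a,b) together with \eqref{eq: m n-1 m n} and Lemma \ref{Lem: image}.
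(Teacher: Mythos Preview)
Your induction and use of the mesh relation \eqref{eq: ad fn} are the right ingredients, and your base case matches the paper's. The problem is your inductive decomposition. By pushing both endpoints inward to $\al^{+},\be^{-}$ you land on the identity
\[
\nu_1+\nu_2-(\al^{+}+\be^{-})\;\stackrel{?}{=}\;\text{root at }(n-1-k,(s+l)/2),
\]
with $\nu_1,\nu_2$ at level $n-2$ and $\al^{+}+\be^{-}$ at level $n+1-k$. This is \emph{not} a single mesh relation: the three vertices on the left are not the mesh-neighbours of the target vertex, so ``iterating the mesh at column $(s+l)/2$'' would require an auxiliary induction you have not set up. Your own label ``the main obstacle'' is accurate; the proposal stops at exactly the point where the content is.

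The paper avoids this entirely by a different choice of sub-triangles. In the notation of the triangle with base $\theta_1,\dots,\theta_{k+1}$ and $\gamma_1,\dots,\gamma_{k+1}$, set $A[m]_j$ for the apex of the height-$m$ sub-triangle whose leftmost base vertex is $\theta_j$ (or $\gamma_j$). The induction hypothesis gives the three vertices $A[k-1]_1$, $A[k-1]_2$, $A[k-2]_2$ explicitly as sums of pairs from $\{\theta_i,\gamma_j\}$. These three are precisely the mesh-neighbours of $A[k]_1$: indeed $A[k-1]_1=\tau(A[k-1]_2)$, and the arrows into $A[k-1]_2$ at level $n-k$ come from $A[k]_1$ above and $A[k-2]_2$ below. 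Hence a \emph{single} application of \eqref{eq: ad fn} yields
\[
A[k]_1=A[k-1]_1+A[k-1]_2-A[k-2]_2,
\]
which one then evaluates (e.g.\ for $k$ even) as $\theta_1+\theta_{k+1}$ or $\gamma_1+\gamma_{k+1}$, and \eqref{eq: 2 periodic} certifies this lies in $\Phi_n^+$. The point is to shift the triangle left and right by one step (two height-$(k-1)$ triangles) rather than shrink it from both sides (one height-$(k-2)$ triangle); then the inductive step collapses to one mesh identity and no residual obstacle remains. Your boundary worries about $\tau^{\pm1}$ leaving $\AR$ also disappear, since all vertices used sit inside the original triangle, which \eqref{eq: known characterization} keeps in $\AR$.
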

\begin{proof}
Consider the subquiver $\widetilde{\Gamma}$ of $\Z Q$ with setting $\al=\theta_1$ or $\ga_1$, and $\be=\theta_{k+1}$ or $\ga_{k+1}$:
\begin{equation} \label{figure: nfree roots}
\scalebox{0.8}{\xymatrix@C=2.5ex@R=0.5ex{
{i=n-k-1}&&&&& A[k]_{1}\ar@{->}[dr] \\
{i=n-k-2}&&&& A[k-1]_{1}\ar@{->}[ur]\ar@{->}[dr] && A[k-1]_{2}\ar@{->}[dr]\\
{i=n-k-3}&&& A[k-2]_{1}\ar@{.>}[dr]\ar@{->}[ur] && A[k-2]_{2}\ar@{.>}[dr]\ar@{->}[ur]&& A[k-2]_{3}\ar@{.>}[dr]\\
{i=n-2}\ar@{.}[u]&& A[1]_{1}\ar@{->}[dr]\ar@{->}[ddr]\ar@{.>}[ur] && \cdots \ar@{->}[dr]\ar@{->}[ddr]\ar@{.>}[ur]
&& A[1]_{k-1}\ar@{->}[dr]\ar@{->}[ddr]\ar@{.>}[ur] && A[1]_{k}\ar@{->}[dr]\ar@{->}[ddr] \\
{i=n-1}& \theta_{1}\ar@{->}[ur] && \cdots\ar@{->}[ur] && \theta_{k-1}\ar@{->}[ur] && \theta_k \ar@{->}[ur]&& \theta_{k+1} \\
{i=n} &  \ga_{1}\ar@{->}[uur]&& \cdots\ar@{->}[uur] && \ga_{k-1}\ar@{->}[uur] && \ga_k\ar@{->}[uur] && \ga_{k+1}
}}
\end{equation}
If $k=1$, it is trivial. Assume that $k > 1$ and $k$ is even. By \eqref{eq: known characterization} and an induction on $k$,
\begin{align*}
& A[k-2]_2= \theta_2+\theta_k=\ga_2+\ga_k \in \Prt, \\
& A[k-1]_1= \theta_1+\ga_k=\ga_1+\theta_k \in \Prt, \\
& A[k-1]_2= \theta_2+\ga_{k+1}=\ga_2+\theta_{k+1} \in \Prt.
\end{align*}
Thus all $A[i]_j \in \widetilde{\Gamma}$ $(i < k)$ are elements in $\Prt$ and
$$A[k]_1=A[k-1]_1+A[k-1]_2-A[k-2]_2=\theta_1+\theta_{k+1}=\ga_1+\ga_{k+1} \in \Prt$$
by \eqref{eq: 2 periodic}. For the case when $k$ is odd, one can prove in the similar way.
\end{proof}

For $\be=\sum_{i \in I} k_i \al_i \in
\Prt$, the {\it height} of $\be$ is defined by $\het(\be)=\sum_{i
\in I} k_i$.

\begin{corollary} \label{cor: nfree position} Set
\begin{align*}
& i=\max\{ \inp_2(\be) \mid \ \inp_1(\be) \in \{ n-1,n \}, \ \het(\be)\ge 2 \}, \\
& j=\min\{ \inp_2(\be) \mid \ \inp_1(\be) \in \{ n-1,n \}, \ \het(\be)\ge 2 \}.
\end{align*}
Then we have the followings:
\begin{enumerate}
\item[({\rm a})] $i-j=2(n-3)$.
\item[({\rm b})] Every multiplicity non-free positive root $\be$ satisfies the following conditions:
\begin{align} \label{eq: nfree root condition}
 1 < \ell \seq \inp_1(\be) < n-1 \text{ and }j-(n-1-\ell)\le  \inp_2(\be) \le i-(n-1-\ell).
\end{align}
\end{enumerate}
\end{corollary}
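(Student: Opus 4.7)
The plan is to establish part (a) by locating the simple roots $\al_{n-1}, \al_n$ at the extremal positions of levels $n-1, n$, and then derive part (b) by applying Proposition \ref{prop: triangle} to a canonical two-term decomposition of the multiplicity non-free root.

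For part (a), I will use Lemma \ref{Lem: image} and Lemma \ref{Lem: Key Lem}(a) to pinpoint $\al_{n-1}$ and $\al_n$ at the boundary of the $\inp_2$-range at levels $n-1, n$. In the case $\xi_{n-1} = \xi_n$, both levels share the interval $[\xi_{n-1} - 2(n-2), \xi_{n-1}]$ and the two simple roots lie at a common extremum (both at $\inp_2 = \xi_{n-1}$ if sources, both at $\inp_2 = \xi_{n-1} - 2(n-2)$ if sinks). In the case $|\xi_{n-1} - \xi_n| = 2$, Lemma \ref{Lem: image} gives $\{m_{n-1}, m_n\} = \{n-3, n-1\}$, and $\al_{n-1}, \al_n$ are placed at the two ends of the longer of the two levels. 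A direct check in each of these sub-cases shows that removing the two simple-root positions from the union leaves an $\inp_2$-interval of length exactly $2(n-3)$, proving $i - j = 2(n-3)$.

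For part (b), let $\be = \ve_a + \ve_b$ with $1 \le a < b \le n-2$ be a multiplicity non-free positive root. By Lemma \ref{Lem: last 2 line}, every root at level $n-1$ or $n$ has the form $\ve_c \pm \ve_{\mathtt{t}}$ and is therefore multiplicity free, so $\inp_1(\be) \le n-2$. I will then decompose $\be = \al + \ga$ with $\al = \ve_a + \ve_{\mathtt{t}}$ and $\ga = \ve_b - \ve_{\mathtt{t}}$; both summands are multiplicity free positive roots of height $\ge 2$ at levels $n-1, n$, so $\inp_2(\al), \inp_2(\ga) \in [j, i]$. Proposition \ref{prop: triangle} applied to the pair $(\al, \ga)$ then gives $\inp(\be) = (n-1-k, (\inp_2(\al)+\inp_2(\ga))/2)$ with $2k = |\inp_2(\al) - \inp_2(\ga)|$. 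Writing $\ell = n-1-k$, the bound $2(n-1-\ell) \le i - j = 2(n-3)$ yields $\ell \ge 2$, and the midpoint lies in $[j+k, i-k] \subseteq [j - (n-1-\ell), i - (n-1-\ell)]$, as desired.

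The main technical obstacle will be verifying the parity hypothesis $n' - n'' \equiv k - 1 \pmod 2$ required by Proposition \ref{prop: triangle}. The key observation, implicit in the proof of Lemma \ref{Lem: last 2 line} via \eqref{eq: 2 periodic}, is that along each of levels $n-1, n$ the $\pm\ve_{\mathtt{t}}$-sign attached to consecutive swing roots must alternate (otherwise the sum of two consecutive roots would be $\ve_c + \ve_{c'} - 2\ve_{\mathtt{t}}$, which is not a positive root), while at the same $\inp_2$-value the two levels carry opposite $\pm\ve_{\mathtt{t}}$-signs by Lemma \ref{Lem: last 2 line}. A short case analysis on whether $\al, \ga$ lie at a common level or at different levels then shows that $k$ is forced to be odd in the former case and even in the latter, which matches exactly the parity demanded by Proposition \ref{prop: triangle}; hence the proposition applies unconditionally to the decomposition above and the argument goes through.
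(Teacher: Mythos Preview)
Your approach is essentially the same as the paper's: part (a) via Lemma~\ref{Lem: Key Lem} and Lemma~\ref{Lem: image}, and part (b) via Proposition~\ref{prop: triangle} applied to a decomposition of $\ve_a+\ve_b$ into two level-$(n{-}1,n)$ roots. Your parity check for Proposition~\ref{prop: triangle} is a nice addition that the paper leaves implicit.

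There is, however, a slip in your case analysis for (a). You assert that $|\xi_{n-1}-\xi_n|=2$ forces $\{m_{n-1},m_n\}=\{n-3,n-1\}$, but Lemma~\ref{Lem: image} gives this only when $n$ is odd; for $n$ even one still has $m_{n-1}=m_n=n-2$. Moreover, when $|\xi_{n-1}-\xi_n|=2$ the vertex $n-2$ is never a source, sink, or left/right intermediate, so by Lemma~\ref{Lem: Key Lem}(b) the simple root $\al_{n-2}$ also sits at level $n-1$ or $n$---there are three simple roots at these levels, not two. This does not actually break the conclusion: $\al_{n-2}$ lands at a $\inp_2$-value shared with a non-simple root on the other level, so the extremes of the height-$\ge 2$ set are still determined by $\al_{n-1}$ and $\al_n$. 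But your ``direct check'' needs to account for all four sub-cases (parity of $n$ crossed with $|\xi_{n-1}-\xi_n|\in\{0,2\}$) and for the possible presence of $\al_{n-2}$; as written, the $n$ even, $|\xi_{n-1}-\xi_n|=2$ case is mishandled.
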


\begin{proof} Note that the number of multiplicity non-free positive roots is $(n-3)(n-2)/2$.
By Lemma \ref{Lem: last 2 line}, a simple root $\al$ with $\inp_1(\al) \in  \{n-1,n\}$ is $\al_{n-2}$, $\al_{n-1}$ or $\al_n$.
Then the first assertion follows from Lemma \ref{Lem: Key Lem} (a) and Lemma \ref{Lem: image}.
For each multiplicity non-free positive root $\be=\ve_a+\ve_b$ $(b \le n-2)$,
there exist two pairs of roots $\{ \ve_a-\ve_{\mathtt{t}},\ve_b+\ve_{\mathtt{t}}\}$ and $\{ \ve_a+\ve_{\mathtt{t}},\ve_b-\ve_{\mathtt{t}}\}$ such that their sums are $\be$ and they are located
at level $n-1$ or $n$. By setting $k=n-3$, $\inp_2(a_1)=j$ and $\inp_2(a_{n-2})=i$ in \eqref{figure: nfree roots}, our second assertion follows.
\end{proof}

\begin{corollary} \label{Cor: nfree mfree position}
Assume we have two roots $\al$ and $\be$ in the same sectional path such that $\al$ is multiplicity free located in the level $k < n-1$ and
$\be$ is not multiplicity free. Then $$ \inp_1(\al) < \inp_1(\be).$$
\end{corollary}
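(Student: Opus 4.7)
The plan is to derive the inequality from the shared-summand property of sectional paths together with the triangular confinement of multiplicity non-free positions given by Corollary \ref{cor: nfree position}.

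First, by Theorem \ref{Thm: V-swing} and Theorem \ref{thm: short path}, any two positive roots lying on a common sectional path share a common summand. Since $\be=\ve_a+\ve_b$ is multiplicity non-free, Corollary \ref{cor: nfree position} forces $1\le a<b\le n-2$, so both summands of $\be$ are positive; hence the shared summand between $\al$ and $\be$ must be $+\ve_c$ for some $c\in\{a,b\}$, with $c\le n-2$. Suppose now for contradiction that $\inp_1(\al)\ge\ell:=\inp_1(\be)$. Since distinct roots on a sectional path sit at distinct levels and $\inp_1(\al)<n-1$, we have $\ell<\inp_1(\al)\le n-2$ (and $\ell\ge 2$ by Corollary \ref{cor: nfree position}). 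Both endpoints of the subpath from $\be$ to $\al$ lie at level $\le n-2$, so this subpath uses neither of the special arrows $(n,p)\to(n-2,p+1)$ or $(n-2,p)\to(n,p+1)$; consequently the quantity $\inp_1+\inp_2$ (on an $N$-sectional path) or $\inp_2-\inp_1$ (on an $S$-sectional path) is constant along the subpath, and this determines $\inp_2(\al)$ from $\inp_1(\al)$ and the position of $\be$.

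Plugging the resulting coordinates into the bounds of Corollary \ref{cor: nfree position} at level $\inp_1(\al)$, a direct comparison shows that $(\inp_1(\al),\inp_2(\al))$ still sits inside the multiplicity non-free triangular region. Propagating the swing-top roots $\ve_c\pm\ve_{\mathtt{t}}$ at levels $n-1$ and $n$ from Lemma \ref{Lem: last 2 line} inward via the additive relation \eqref{eq: ad fn} and the triangle structure of Proposition \ref{prop: triangle}, one sees that the unique root at $(\inp_1(\al),\inp_2(\al))$ sharing $+\ve_c$ with $\be$ is necessarily of the form $\ve_c+\ve_{d'}$ with $d'\le n-2$, i.e.\ multiplicity non-free. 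This contradicts the multiplicity freeness of $\al$ and establishes $\inp_1(\al)<\inp_1(\be)$.

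The main obstacle will be the last propagation step, namely verifying that the root at the determined position indeed has the form $\ve_c+\ve_{d'}$ with $d'\le n-2$ rather than one of the alternative multiplicity free forms $\ve_c-\ve_{d'}$, $\ve_c+\ve_{n-1}$, $\ve_c+\ve_n$, or $\ve_{d'}+\ve_c$ with $d'<c$. This is handled by inductively applying \eqref{eq: ad fn} along the sectional path starting from the swing top of Lemma \ref{Lem: last 2 line}, together with the combinatorial description of maximal sectional paths from Lemma \ref{Lem: dia sink source} and its mirror statement for sinks.
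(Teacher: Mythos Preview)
Your overall architecture matches the paper's: the contradiction runs through the triangular region of Corollary~\ref{cor: nfree position}. But you take a detour that the paper avoids, and that detour is exactly where your proposal becomes shaky.

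The paper's proof is two lines: $\be$ lies in the region \eqref{eq: nfree root condition} and $\al$ does not; since a sectional path can leave that triangle at levels $<n-1$ only by decreasing the level, the inequality follows. The point you are missing is that Corollary~\ref{cor: nfree position} gives not just an inclusion but a \emph{bijection}: the number of multiplicity non-free roots is $(n-3)(n-2)/2$, and this is exactly the number of vertices in the triangle (this count is made in the proof of Corollary~\ref{cor: nfree position}). Hence ``in the triangle'' is equivalent to ``multiplicity non-free'', and in particular $\al$, being multiplicity free, is automatically outside the triangle. Your step~5 already places $\al$ inside the triangle under the contradiction hypothesis, so the contradiction is immediate once you invoke this counting.

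Your step~6 --- the propagation argument via \eqref{eq: ad fn}, Proposition~\ref{prop: triangle}, and Lemma~\ref{Lem: last 2 line} to force $\al=\ve_c+\ve_{d'}$ with $d'\le n-2$ --- is therefore unnecessary. It is also not adequately justified as written: you would still need to exclude the possibility $\al=\ve_c+\ve_{\mathtt{t}'}$ (multiplicity free, yet of the form $\ve_x+\ve_y$ arising from Proposition~\ref{prop: triangle}), and the sketch you give does not do this. You correctly flag it as ``the main obstacle''; the resolution is simply to replace that whole step by the counting observation above.

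Finally, your opening analysis of shared summands (steps~1--2) is correct but superfluous: neither the paper nor the corrected version of your argument needs to know which summand $\al$ and $\be$ share. The argument is purely about coordinates in $\AR$.
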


\begin{proof}
Recall that $\be$ is contained in the area \eqref{eq: nfree root condition} and $\al$ is not. Thus our assertion follows.
\end{proof}

\begin{corollary} \label{cor: n,n-1} \begin{enumerate}
\item[({\rm a})] Assume that $\xi_n=\xi_{n-1}$. Then there exists the maximal sectional path $\rho$ which can be drawn as follows:
\begin{equation} \label{eq: both ss}
\begin{cases}
\raisebox{1.8em}{\scalebox{0.9}{\xymatrix@C=4ex@R=0.5ex{ \al_{n-1}\ar@{->}[dr] \\
& N_{n-2} \ar@{->}[r]& N_{n-3} \ar@{->}[r] & \cdots \ar@{->}[r] & N_{1}\\
\al_n\ar@{->}[ur]}}} & \text{ if $\{n-1,n\}$ are sinks}, \\
 \raisebox{1.8em}{\scalebox{0.9}{\xymatrix@C=4ex@R=0.5ex{ &&&&\al_{n-1}
\\ S_{1} \ar@{->}[r] & S_{2} \ar@{->}[r] & \cdots \ar@{->}[r]& S_{n-2} \ar@{->}[ur]\ar@{->}[dr] && \\
&&&&\al_{n} }}} & \text{ if $\{n-1,n\}$ are sources.}
\end{cases}
\end{equation}
Here all roots in $\rho$ contain $\ve_{n-1}$ as their summand and $\inp_1(S_l)=\inp_1(N_l)=l$ $(1 \le l \le k-2)$.
\item[({\rm b})] Assume that $|\xi_n-\xi_{n-1}|=2$. Then there exists the maximal sectional pathes $\rho$ and $\rho'$ which can be drawn as follows:
\begin{equation} \label{eq: one ss}
\begin{cases}
\overbrace{\al_{n-1}\to N_{n-2} \to \cdots N_{2} \to N_1}^{\text{ $\rho=$ maximal $N$-sectional path}} \cdots
\underbrace{S_{1} \to S_{2} \to \cdots \to S_{n-2}
\to\al_{n}}_{\text{$\rho'=$maximal $S$-sectional path}} & \text{ if }\xi_n-\xi_{n-1}= 2,  \\
\overbrace{\al_{n}\to N_{n-2} \to \cdots N_{2} \to N_1}^{\text{ $\rho=$ maximal $N$-sectional path}} \cdots
\underbrace{S_{1} \to S_{2} \to \cdots \to S_{n-2}
\to\al_{n-1}}_{\text{$\rho'=$maximal $S$-sectional path}} & \text{ if }\xi_{n-1}-\xi_{n}= 2,
\end{cases}
\end{equation}
where
\begin{itemize}
\item $\inp_1(S_l)=\inp_1(N_l)=l$ $(1 \le l \le k-2)$,
\item $\inp_2(N_1)+2=\inp_2(S_1)$,
\item $N_l$ contains $\ve_n$ as its summand and $S_l$ contains $-\ve_n$ as its summand $(1 \le l \le k-2)$.
\end{itemize}
\end{enumerate}
\end{corollary}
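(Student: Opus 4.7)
The strategy is to translate the hypotheses on $\xi_n,\xi_{n-1}$ into orientation data on $Q$, locate the simple roots $\alpha_{n-1},\alpha_n$ in $\Gamma_Q$ via Lemma \ref{Lem: Key Lem}(a), and then assemble the claimed sectional paths using Lemma \ref{Lem: Key Lem}(d), Lemma \ref{Lem: last 2 line} and Proposition \ref{prop: triangle}. Since $n-1$ and $n$ are each adjacent only to $n-2$ in $\Delta_n$, the height-function identities $\xi_{n-1}=\xi_{n-2}\pm 1=\xi_n\pm 1$ show that $\xi_n=\xi_{n-1}$ forces both of $\{n-1,n\}$ to be sources or both to be sinks, while $|\xi_n-\xi_{n-1}|=2$ forces exactly one of them to be a source and the other a sink.

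For part (a), I would first treat the case that both $n-1$ and $n$ are sinks. Lemma \ref{Lem: Key Lem}(a), combined with Lemma \ref{Lem: image} and the involution $^*$, places $\alpha_{n-1}$ at $(n-1,p_0)$ and $\alpha_n$ at $(n,p_0)$, where $p_0:=\xi_{n-1}-2(n-2)$ is the lowest admissible $p$-coordinate in either column. The second bound of Lemma \ref{Lem: Key Lem}(d) applied with $j=n-1$ then guarantees that $(i,\,p_0+(n-1-i))\in\Gamma_Q$ for all $1\le i\le n-2$. These vertices, together with $\alpha_{n-1}$ and $\alpha_n$, form the N-sectional path claimed in \eqref{eq: both ss}, and the apex $(n-2,p_0+1)$ receives arrows from both $\alpha_{n-1}$ and $\alpha_n$ by the definition of $\Z Q$. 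To identify each $N_l$ and confirm it carries $\varepsilon_{n-1}$ as a summand, I would proceed inductively: Proposition \ref{prop: triangle} applied with one summand equal to $\alpha_{n-1}$ and the other a root $\beta$ at level $n-1$ or $n$ with the appropriate $p$-offset expresses $N_l$ as $\alpha_{n-1}+\beta$, while Lemma \ref{Lem: last 2 line} forces $\beta=\varepsilon_a\pm\varepsilon_n$; positivity of $N_l$ then selects the correct sign and yields $N_l=\varepsilon_{n-1}+\varepsilon_a$ for a suitable $a$. The case that both $n-1$ and $n$ are sources is dual: the simple roots now sit at the top of their columns and emit an S-sectional path descending to level $1$.

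For part (b), assume $\xi_n-\xi_{n-1}=2$; the opposite case is symmetric. Then $n-1$ is a sink and $n$ is a source. By Lemma \ref{Lem: Key Lem}(a), $\alpha_n$ sits at $(n,\xi_n)$, the top of column $n$, so it terminates a maximal S-sectional path descending from level $1$ to level $n$, while $\alpha_{n-1}$ sits at the bottom of one of the columns $n-1$ or $n$ (the precise column being determined by the involution $^*$ and the parity of $n$ via Lemma \ref{Lem: image}), so it originates a maximal N-sectional path ending at level $1$. The existence of the full S- and N-sectional paths again follows from Lemma \ref{Lem: Key Lem}(d), and the identification of the summand ($+\varepsilon_n$ on the N-path, $-\varepsilon_n$ on the S-path) follows by applying Proposition \ref{prop: triangle} together with Lemma \ref{Lem: last 2 line} exactly as in part (a). The offset relation $\inp_2(N_1)+2=\inp_2(S_1)$ is then a short computation from the values of $\xi_n,\xi_{n-1}$ and from the $m$-values recorded in Lemma \ref{Lem: image}.

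The main anticipated obstacle is the parity bookkeeping needed to identify each $N_l$ and $S_l$ explicitly and to verify that every claimed vertex actually lies in $\Gamma_Q$ rather than merely in $\Z Q$; this reduces to case-by-case checking of the bounds $\xi_i-2m_i\le p\le \xi_i$ using the values of $\xi_i$ near the trivalent vertex $n-2$ together with the distinct values of $m_{n-1}$ and $m_n$ in the two regimes of Lemma \ref{Lem: image}.
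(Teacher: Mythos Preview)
Your proposal is correct and follows essentially the same route as the paper's proof: locate $\alpha_{n-1},\alpha_n$ via Lemma~\ref{Lem: Key Lem}(a) and Lemma~\ref{Lem: image}, build the triangular subquiver using Proposition~\ref{prop: triangle}, and identify the summands through Lemma~\ref{Lem: last 2 line}. The paper treats only the sink case explicitly and computes $A[\ell]_1=\alpha_{n-1}+(\varepsilon_{i_{\ell+1}}+\varepsilon_n)=\varepsilon_{i_{\ell+1}}+\varepsilon_{n-1}$ directly, which is exactly your inductive step; your explicit invocation of Lemma~\ref{Lem: Key Lem}(d) to check that the needed vertices lie in $\Gamma_Q$ is a detail the paper leaves implicit.
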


\begin{proof}
We only give a proof when $\{n-1,n\}$ are sinks. The remaining cases can be proved using the argument in this proof. By Lemma \ref{Lem: Key Lem}, Lemma \ref{Lem: image} and Proposition \ref{prop: triangle}, we know that
there is a subquiver in $\AR$ as follows:
$$\scalebox{0.75}{\xymatrix@C=2ex@R=0.5ex{
{i=1}&&&&& A[n-2]_{1}\ar@{->}[dr] \\
{i=2}&&&& A[n-3]_{1}\ar@{->}[ur]\ar@{->}[dr] && A[n-3]_{2}\ar@{->}[dr]\\
{i=3}&&& A[n-4]_{1}\ar@{.>}[dr]\ar@{->}[ur] && A[n-4]_{2}\ar@{.>}[dr]\ar@{->}[ur]&& A[n-4]_{3}\ar@{.>}[dr]\\
{i=n-2}\ar@{.}[u]&& A[1]_{1}\ar@{->}[dr]\ar@{->}[ddr]\ar@{.>}[ur] && \cdots \ar@{->}[dr]\ar@{->}[ddr]\ar@{.>}[ur]
&& A[1]_{n-3}\ar@{->}[dr]\ar@{->}[ddr]\ar@{.>}[ur] && A[1]_{n-2}\ar@{->}[dr]\ar@{->}[ddr] \\
{i=n-1}& \al_{(n-1)^*}\ar@{->}[ur] && \cdots\ar@{->}[ur] && a_{n-3}\ar@{->}[ur] && a_{n-2} \ar@{->}[ur]&& \Dim \mtI(n-1) \\
{i=n} &  \al_{n^*}\ar@{->}[uur]&& \cdots\ar@{->}[uur] && b_{n-3}\ar@{->}[uur] && b_{n-2}\ar@{->}[uur] &&  \Dim \mtI(n)
}}$$
where $\{a_{k},b_{k}\}=\{ \ve_{i_k} - \ve_{n}, \ve_{i_k} + \ve_{n} \}$ for some $i_k \le n-2$. By Corollary \ref{cor: n,n-1},
$$ A[\ell]_1= \al_{n-1} + (\ve_{i_{\ell+1}}+\ve_n ) = \al_{n} + (\ve_{i_{\ell+1}}-\ve_n )=\ve_{i_{\ell+1}}+\ve_{n-1} \quad (1 \le \ell \le n-2),$$
which yields our assertion.
\end{proof}

\begin{proposition} \label{Prop: maximal path} \hfill
\begin{enumerate}
\item[({\rm a})] For every maximal $S$-sectional path $\rho$ which ends at level $n$ and $n-1$, there exists
$k \le n-2+\delta$ such that all roots in $\rho$ contain $\ve_k$ as their summand.
Here $\delta=1$ if $\{n-1,n\}$ are sources, and $\delta=0$ otherwise.
\item[({\rm b})] For every maximal $N$-sectional path  $\rho$ which starts at level $n$ and $n-1$,
there exists $k \le n-2+\delta$ such that all roots in $\rho$ contain $\ve_k$ as their summand.
Here $\delta=1$ if $\{n-1,n\}$ are sinks, and $\delta=0$ otherwise.
\end{enumerate}
\end{proposition}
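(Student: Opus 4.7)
The plan is to prove part (a) by identifying a common summand from the branching end of the swing $\rho$ and then propagating it backwards through the $S$-sectional tail; part (b) will follow by the symmetric argument that exchanges $S$- and $N$-sectional paths and swaps sources with sinks, so that the case $\delta=1$ in (b) corresponds to both $n-1$ and $n$ being sinks.

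First I would fix a maximal $S$-sectional swing $\rho$ with terminal branching $(n-2,q-1)\to (n-1,q),(n,q)$. By Lemma \ref{Lem: last 2 line}, the pair of roots $\{\al,\be\}$ at $\{(n-1,q),(n,q)\}$ satisfies $\al+\be=2\ve_a$ for some $1\le a\le n-1$, so both roots share $\ve_a$ as a summand; this is the candidate common summand. To verify the bound $a\le n-2+\delta$, I would invoke Lemma \ref{Lem: last 2 line} again: the value $a=n-1$ occurs only when $\{\al,\be\}=\{\al_{n-1},\al_n\}$, which by Lemma \ref{Lem: Key Lem}(a) requires the simple roots $\al_{n-1},\al_n$ to sit at their right-most positions $(n-1,\xi_{n-1}),(n,\xi_n)$; this happens exactly when $\{n-1,n\}$ are both sources in $Q$, i.e.\ exactly when $\delta=1$, giving $a=n-1=n-2+\delta$. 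In every other orientation configuration $a\le n-2=n-2+\delta$.

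The remaining step is to propagate $\ve_a$ backwards along the $S$-sectional tail of $\rho$ by induction on the distance from the branching vertex. At $(n-2,q-1)$ the additive relation \eqref{eq: ad fn}, combined with Lemma \ref{Lem: Key Lem}(c) and the explicit shape $\ve_x\pm\ve_y$ of $D_n$-roots, forces the root there to contain $\ve_a$; for each preceding $S$-arrow $(i,p)\to(i+1,p+1)$, the same ingredients applied at $(i+1,p+1)$, together with the summand already established at $(i+1,p+1)$, carry $\ve_a$ one more step back. This propagation is the main technical obstacle: the argument mirrors the case-by-case computation in the proof of Lemma \ref{Lem: dia sink source}(b), but now originates from an arbitrary pair at the branching end rather than from a single simple root, so extra book-keeping is needed to track how $\ve_a$ persists while the other summand of the root shifts from step to step. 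Once (a) is established, (b) is obtained by running the same scheme on the starting pair at the top of the $N$-sectional swing and propagating forward through the $N$-tail, the case $a=n-1$ now occurring precisely when $\{n-1,n\}$ are both sinks.
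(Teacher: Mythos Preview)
Your identification of the common summand at the branching end via Lemma~\ref{Lem: last 2 line} is correct, and your analysis of the bound $a\le n-2+\delta$ is fine. The gap is in the backward propagation. When you apply the additive relation \eqref{eq: ad fn} at the vertex $(i+1,p+1)$ (for $i+1<n-2$), the relation reads
\[
\text{root}(i+1,p+1)+\text{root}(i+1,p-1)=\text{root}(i,p)+\text{root}(i+2,p),
\]
so to extract $\text{root}(i,p)$ you must already know $\text{root}(i+1,p-1)$ and $\text{root}(i+2,p)$, both of which lie on the \emph{left-adjacent} maximal $S$-sectional path, not on $\rho$ itself. The inner-product condition from Lemma~\ref{Lem: Key Lem}(c) alone is too weak: if $\text{root}(i+1,p+1)=\ve_a\pm\ve_c$, the equation $(\text{root}(i,p),\ve_a\pm\ve_c)=1$ is satisfied by roots containing $\ve_c$ rather than $\ve_a$, so nothing forces $\ve_a$ to persist. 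You can see this concretely already in Example~\ref{ex: example 1}: knowing only that the vertex $(2,-3)$ carries $\lf 1,2\rf=\ve_1+\ve_2$ does not single out $\ve_2$ as the summand surviving at $(1,-4)$; the alternatives $\ve_1\pm\ve_3$ are compatible with the single inner-product constraint.

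The paper's proof handles exactly this by changing the induction: instead of moving backwards \emph{within} one path, it inducts \emph{across} adjacent maximal $S$-sectional paths. The base case is the path through $\al_a$ for a sink $a\le n-2$ (or through $\Dim\mtP(n-2)$ when no such sink exists), where Lemma~\ref{Lem: dia sink source} supplies the full answer. For the inductive step, the entire left-adjacent path is already known to carry a common summand $\ve_b$, and this furnishes precisely the missing terms $\text{root}(i+1,p-1)$, $\text{root}(i+2,p)$ in the additive relation, after which the combination with Lemma~\ref{Lem: Key Lem}(c) does pin down $\ve_k$ at every vertex of the current path. Your scheme can be repaired by adopting this inter-path induction; the within-path propagation you describe is then the content of a single inductive step, not the whole argument.
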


\begin{proof} We only give a proof for the first assertion.
Take a subset $E$ of $\{ 1,2,\cdots,n-2 \}$ defined as follows:
$$a \in E \iff \xi_a=\min\{\xi_i \ | \ 1 \le i \le n-2\}.$$
Note that maximal $S$-sectional paths ending at $n-1$ and $n$ are adjacent to each other. \\
\textbf{(Case A: $n-2 \ne a \in E$  )} Then we can check that $a$ is a sink in $Q$.
By Lemma \ref{Lem: dia sink source}, the maximal $S$-sectional path $\rho_a$ containing $\al_a$ satisfies our assertion.

Let $\rho$ be a maximal $S$-sectional path which ends at level $n$ and $n-1$, and is located at the right of $\rho_a$.
By an induction on the distance from $\rho_a$,
all roots in the maximal $S$-sectional path left adjacent to $\rho$ contain $\ve_b$ as their summand.
Thus, by Lemma \ref{Lem: last 2 line}, the situation in $\AR$ can be drawn as follows:
$$\scalebox{0.8}{\xymatrix@R=0.5ex{
&&{}^{ \ \ \ } \ar@{.>}[dr]&& \\
& {}^{ \ \ \ } \ar@{.>}[dr]&&  M_3\ar@{->}[dr]  \\
&&  S_2\ar@{->}[dr]\ar@{->}[ur]&& M_2\ar@{->}[dr] \\
&&&  S_1 \ar@{->}[dr]\ar@{->}[ur]\ar@{->}[ddr]&& M_1\ar@{->}[dr]\ar@{->}[ddr] \\
&&&&   \ve_b \pm \ve_{\mathtt{t}}\ar@{->}[ur] && \ve_k \mp \ve_{\mathtt{t}} \\
&&&&  \ve_b \mp \ve_{\mathtt{t}}\ar@{->}[uur]&& \ve_k \pm \ve_{\mathtt{t}} \\
}}$$
where $1\le k \le n-2$ and all $S_i$ contain $\ve_b$ as their summand. Since $|\xi_j-\xi_{j\pm 1}|=1$ for all $j \in I$,
we can see that all $M_i$ contain $\ve_k$ as their summand by the additive property
of dimension vectors. \\
\noindent
\textbf{(Case B: $\{n-2\}=E$  )}  If $n-2$ is a sink, then we can apply the same argument in \textbf{(Case A)}. Assume that
$n-2$ is not a sink. By our choice, we have the following subquiver $\widetilde{\Gamma}$ in $\Z Q$
$$\scalebox{0.8}{\xymatrix@R=0.5ex{
&& \Dim\mtP(n-3) \\
& \Dim\mtP(n-2)\ar@{->}[ur]\ar@{->}[dr]\ar@{->}[ddr] \\
 \al \ar@{->}[ur] && \ve_k \mp \ve_{\mathtt{t}} \\
 \be  \ar@{->}[uur]&& \ve_k \pm \ve_{\mathtt{t}} \\
}}
$$
where $\al$ or $\be$ in $\Prt$, and $1 \le k \le n-2$. By the additive property of dimension vectors,
$\Dim\mtP(n-2)$ contains $\ve_k$ as its summand. Thus all roots in the maximal $S$-sectional path containing $\Dim\mtP(n-2)$
$$\scalebox{0.8}{\xymatrix@C=4ex@R=0.5ex{ \Dim\mtP(n-2)\ar@{->}[dr]\ar@{->}[ddr] \\ &\ve_k \pm \ve_{\mathtt{t}} \\ &\ve_k \mp \ve_{\mathtt{t}} }}$$
contain $\ve_k$ as their summand. Thus we can apply the same argument in \textbf{(Case A)} also.
\end{proof}

\begin{theorem} \label{Thm: V-swing}
 For every maximal swing $\varrho$, there exists $1 \le k \le n-2$ such that
all roots in $\varrho$ contain $\ve_k$ as their summand. Moreover, $\varrho$ contains a simple root $\al_k$ and
is one of the following two forms:
\begin{align}
& \raisebox{1.8em}{\scalebox{0.9}{\xymatrix@C=3ex@R=0.5ex{ &&&&\ve_k \pm \ve_{\mathtt{t}}\ar@{->}[dr]
\\ \Dim \mtP(k)=S_{k} \ar@{->}[r] & S_{k+1} \ar@{->}[r] & \cdots \ar@{->}[r]& S_{n-2} \ar@{->}[ur]\ar@{->}[dr] &&
N_{n-2} \ar@{->}[r]& N_{n-3} \ar@{->}[r] & \cdots \ar@{->}[r] & N_{1}\\
&&&&\ve_k \mp \ve_{\mathtt{t}}\ar@{->}[ur] }}}, \label{eq: type a} \\
&\quad \ \  \raisebox{1.8em}{\scalebox{0.9}{
\xymatrix@C=3ex@R=0.5ex{ &&&&\ve_k \pm \ve_{\mathtt{t}}\ar@{->}[dr] \\S_{1} \ar@{->}[r] & S_{2} \ar@{->}[r] & \cdots \ar@{->}[r]& S_{n-2} \ar@{->}[ur]\ar@{->}[dr] &&
N_{n-2} \ar@{->}[r]& N_{n-3} \ar@{->}[r] & \cdots \ar@{->}[r] & N_{k}=\Dim \mtI(k)\\
&&&&\ve_k \mp \ve_{\mathtt{t}}\ar@{->}[ur] }}}. \label{eq: type b}
\end{align}
\end{theorem}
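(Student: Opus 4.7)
My plan is to read off a common summand $\ve_k$ from the centre of $\varrho$, propagate it along the two sectional paths, and then locate the simple root $\al_k$ at one of the two endpoints using maximality.

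I begin at the centre. Let $\be$ sit at $(n-1,u)$ and $\al$ sit at $(n,u)$. Since $\varrho$ contains the two vertices $S_{n-2}$ at $(n-2,u-1)$ and $N_{n-2}$ at $(n-2,u+1)$, the heights of $\al$ and $\be$ are at least $2$, which rules out the case $\{\al,\be\}=\{\al_{n-1},\al_n\}$. Lemma \ref{Lem: last 2 line} then produces $1\le a\le n-2$ with
\[
\{\al,\be\}=\{\ve_a+\ve_{\mathtt{t}},\ \ve_a-\ve_{\mathtt{t}}\}
\]
(or the $\mathtt{t}'$-variant of \eqref{eq: def t} when $\xi_{n-1}=\xi_n$), and I set $k\seq a$. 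Applying the additive property \eqref{eq: ad fn} at $\al$ and $\be$ together with the pairing condition in Lemma \ref{Lem: Key Lem} (c), both $S_{n-2}$ and $N_{n-2}$ are forced to carry $\ve_k$ among their two summands.

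Next I propagate $\ve_k$ along the two sectional paths. The path $S_r\to\cdots\to S_{n-2}$ sits inside a maximal $S$-sectional path terminating at levels $\{n-1,n\}$, and Proposition \ref{Prop: maximal path} (a) assigns a common summand $\ve_{k'}$ to this enlarged path. Since $S_{n-2}$ already carries $\ve_k$ and has only two summand slots, the admissible values of $\ve_{k'}$ are severely restricted, and an induction along the sectional path using \eqref{eq: ad fn} together with Lemma \ref{Lem: Key Lem} (c) forces $\ve_{k'}=\ve_k$. The symmetric argument using Proposition \ref{Prop: maximal path} (b) handles the $N$-sectional path, establishing the first assertion of the theorem.

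Finally I locate the endpoints. By maximality of $\varrho$, the $S$-sectional part cannot be extended backwards, so \eqref{eq: known characterization} shows that either $r=1$ or $S_r$ sits at one of the boundary positions $(r,\xi_r)$ or $(r,\xi_r-2m_r)$ of $\AR$. Combining the constraint that $S_r$ carries $\ve_k$ with Lemma \ref{Lem: Key Lem} (a) shows that in the non-trivial case $S_r=\al_k$ precisely when $k$ is a sink of $Q$; the parallel analysis of the $N$-sectional part yields $N_s=\al_k$ precisely when $k$ is a source of $Q$. As $k$ is not simultaneously a source and a sink, exactly one of the two paths terminates at the simple root $\al_k$ while the other bottoms out at level $1$, giving one of the two shapes \eqref{eq: type a} or \eqref{eq: type b}. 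The main obstacle is ruling out the case where $k$ is intermediate in $Q$: here Lemma \ref{Lem: Key Lem} (a) places $\al_k$ at level $1$ rather than level $k$, and one must use Lemma \ref{Lem: dia sink source} together with the explicit coordinates of $\mtP(k)$ and $\mtI(k)$ to show that the candidate swing would be strictly contained in a larger swing whose index $k$ is an actual source or sink of $Q$, contradicting maximality.
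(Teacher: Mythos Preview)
Your opening two steps---reading off $\ve_k$ from the centre via Lemma~\ref{Lem: last 2 line} and propagating it along both sectional paths via Proposition~\ref{Prop: maximal path}---match the paper's argument and are fine.

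The endpoint analysis, however, contains a genuine error. You treat the case where $k$ is an intermediate vertex of $Q$ as something to be \emph{ruled out}, arguing that the swing would then be strictly contained in a larger swing indexed by some source or sink $k'$, contradicting maximality. This is false: maximal swings with intermediate index $k$ do occur (they must, since every $1\le k\le n-2$ indexes a maximal swing by the counting in Remark~\ref{rem: swing}), and no such containment can hold because a swing sharing $\ve_k$ cannot sit inside a swing sharing $\ve_{k'}$ for $k'\ne k$. The paper's proof handles intermediate $k$ directly (its Case~B): when $k$ is, say, a left intermediate, Lemma~\ref{Lem: Key Lem}~(a) places $\al_k$ at level~$1$, and one computes $\inp_2(\ve_k\pm\ve_{\mathtt t})=\xi_k-k-n+3$, so that the left end of the $S$-part lands at $(k,\xi_k-2m_k)$, giving form~\eqref{eq: type a} with $S_k=\Dim\mtP(k)$ and $N_1=\al_k$. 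The paper similarly treats the remaining $k=n-2$ configurations (its Case~C) using Lemma~\ref{Lem: Key Lem}~(b).

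A second, smaller gap: even in your source/sink cases you assert that the ``other'' sectional path reaches level~$1$ purely by maximality, but maximality of $\varrho$ only tells you that the path cannot be extended inside $\AR$; you still need a coordinate check against \eqref{eq: known characterization} (as the paper does, e.g.\ the inequality $\xi_1-2n+4\le\xi_k-2n+3+k$ in Case~A) to see that the endpoint really is at level~$1$ rather than at some interior boundary of $\AR$.
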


\begin{proof} By Lemma \ref{Lem: last 2 line} and Proposition \ref{Prop: maximal path}, every maximal swing is of the following form
$$\raisebox{2em}{\xymatrix@C=3ex@R=0.5ex{ &&&&\ve_k-\ve_{\mathtt{t}} \ar@{->}[dr] \\S_r \ar@{->}[r] & S_{r+1} \ar@{->}[r] & \cdots \ar@{->}[r]& S_{n-2}
\ar@{->}[ur]\ar@{->}[dr] && N_{n-2} \ar@{->}[r]& N_{n-3} \ar@{->}[r] & \cdots \ar@{->}[r] & N_s\\
&&&&\ve_k+\ve_{\mathtt{t}}\ar@{->}[ur] }}$$
where  $k \le n-2$ and all $S_i$ ($1 \le u \le r$) and $N_j$ ($1 \le v \le s$) contain $\ve_k$ as their summand.

(\textbf{Case A:} $k$ is a source or sink) Assume that $k$ is a source in $Q$, then $\al_k=N_1$ or $S_1$. We claim that $\al_k=N_1$. By Lemma \ref{Lem: Key Lem},
$\inp_2(\ve_k \pm \ve_{\mathtt{t}})=\xi_k-n+1-k$ or $\xi_k+n-1+k$. However, the later case can not be happen since $\xi_n < \xi_k+n-1+k$
by the assumption that $k$ is a source in $Q$. Thus Proposition \ref{Lem: dia sink source} implies that $\al_k=N_1$ and hence
$\inp_2(\ve_k \pm \ve_{\mathtt{t}})=\xi_k-n+1-k$. Since $\xi_1-2n+4 \le \xi_k-2n+3+k$, our assertion for a source $k$ follows and the maximal swing is of the form
\eqref{eq: type b}. In the similar way, we have our assertion for a sink $k$ and the maximal swing is of the form
\eqref{eq: type a}.

(\textbf{Case B:} $k$ is a left or right intermediate) Assume  that $k$ is a left intermediate in $Q$. Then
Lemma \ref{Lem: Key Lem} and Proposition \ref{Prop: maximal path} tell that $\al_k=N_1$ or $S_1$. By Lemma \ref{Lem: Key Lem} once again,
$\inp_2(\ve_k \pm \ve_{\mathtt{t}})=\xi_k-k+1-(n-2)$ or $\xi_k-k+1+(n-2)$. Since $k$ is a left intermediate,
$\xi_{n-1} < \xi_k+(n-1-k)$ and hence $\al_k=N_1$ and $\inp_2(\ve_k \pm \ve_{\mathtt{t}})=\xi_k-k-n+3$. Note that
$$ \xi_k-k-n+3-(n-1-k)= \xi_k-2n+4 = \xi_k-2 m_k.$$
Since $k$ is a left intermediate, $(k-1,\xi_k-2 m_k-1) \not \in \AR$. Thus our assertion for a left intermediate
$k$ follows and the maximal swing is of the form
\eqref{eq: type a}.  In the similar way, we have our assertion for a right intermediate $k$ and the maximal swing is of the form
\eqref{eq: type b}.

(\textbf{Case C:} remaining cases) For the remaining cases, we can apply the similar argument in \textbf{Case A} and \textbf{Case B} with
Lemma \ref{Lem: Key Lem} (b). Thus one can prove that
\begin{itemize}
\item if $\raisebox{1em}{\xymatrix@R=0.5ex{
&&*{\bullet}<3pt> \ar@{}[dl]^<{a}  \\
*{ \bullet }<3pt> \ar@{->}[r]_<{n-3}  &*{\bullet}<3pt>
\ar@{<-}[dr]_<{n-2}^<{k \ \ \ \ } \ar@{->}[ur] \\  &&*{\bullet}<3pt>
\ar@{-}[ul]^<{\ \ b}}}$, then  the maximal swing is of the form
\eqref{eq: type a},
\item if $ \raisebox{1em}{\xymatrix@R=0.5ex{
&&*{\bullet}<3pt> \ar@{->}[dl]^<{a}  \\
*{ \bullet }<3pt> \ar@{<-}[r]_<{n-3}  &*{\bullet}<3pt>
\ar@{->}[dr]_<{n-2}^<{k \ \ \ \ }\\   &&*{\bullet}<3pt>
\ar@{-}[ul]^<{\ \ b}}}$, then  the maximal swing is of the form
\eqref{eq: type b}.
\end{itemize}
Here $\{ a,b\}=\{n-1,n\}$.
\end{proof}

\begin{remark} \label{rem: swing}
Note that, for each $1 \le a \le n-2$, there are $2n-a-1$ positive roots containing $\ve_a$ as their summand. Thus Theorem
\ref{Thm: V-swing} implies that every root is contained in a maximal swing. With Corollary \ref{cor: n,n-1},
we can say that a maximal swing $\varrho$ is the {\it $a$-swing} if all positive roots having $\ve_a$ as their summand appear in $\varrho$.
\end{remark}

\begin{definition}
For the $a$-swing
$${\scalebox{0.9}{\xymatrix@C=3ex@R=0.5ex{ &&&&\ve_a \pm \ve_{\mathtt{t}}\ar@{->}[dr]
\\ S_{s} \ar@{->}[r] & S_{s+1} \ar@{->}[r] & \cdots \ar@{->}[r]& S_{n-2} \ar@{->}[ur]\ar@{->}[dr] &&
N_{n-2} \ar@{->}[r]& N_{n-3} \ar@{->}[r] & \cdots \ar@{->}[r] & N_{l}\\
&&&&\ve_a \mp \ve_{\mathtt{t}}\ar@{->}[ur] }}}$$
we define
\begin{enumerate}
\item {\it the $S$-part} as $S_s \to S_{s+1} \to \cdots \to S_{n-2}$ and {\it a length of the $S$-part} as $n-2-s$,
\item {\it the $N$-part} as $N_{n-2} \to N_{n-3} \to \cdots \to N_l$ and {\it a length of the $N$-part} as $n-2-l$,
\item the shorter part (resp. longer part) in a canonical way.
\end{enumerate}
\end{definition}

We say that a maximal $S$-sectional (resp. $N$-sectional) path is {\it shallow} if it ends (resp. starts) at level less than $n-1$.

\begin{theorem} \label{thm: short path}
Let $\rho$ be a shallow maximal $S$-sectional $($resp. $N$-sectional$)$ path.
Then there exists $k \le n-2+\delta$ such that all roots in $\rho$ contain $-\ve_k$ as their summand and $\rho$ starts (resp. ends) at level $1$.
Here $\delta=1$ if $\{ n-1,n\}$ are sink or source, $\delta=0$ otherwise.
\end{theorem}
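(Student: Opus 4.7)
The plan is to pin down the endpoint and the starting level of $\rho$ by boundary and parity analysis, then identify the common summand $-\ve_k$ via a coefficient-tracking argument controlled by the mesh relations in $\Gamma_Q$; the $N$-sectional case will follow by symmetry.

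First, I would locate the endpoint. Suppose $\rho$ ends at $(j,q)$ with $j<n-1$. Maximality forces $(j+1,q+1)\notin\Gamma_Q$; since $m_i=n-2$ for $1\le i\le n-2$ by \eqref{eq: m n-1 m n} and $|\xi_i-\xi_{i+1}|=1$, a parity/range check excludes the ``too low'' alternative and leaves either (a) $j\le n-3$ with $j\to j+1$ in $Q$ and $q=\xi_j$, or (b) $j=n-2$ with both $n-2\to n-1$ and $n-2\to n$ in $Q$ and $q=\xi_{n-2}$ (so $\{n-1,n\}$ are sinks, giving $\delta=1$). For the starting level, suppose $\rho$ starts at $(i_1,p_1)$ with $i_1\ge 2$. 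Maximality gives $(i_1-1,p_1-1)\notin\Gamma_Q$; the alternative $p_1-1>\xi_{i_1-1}$ is excluded by parity together with $|\xi_{i_1}-\xi_{i_1-1}|=1$, so $p_1-1<\xi_{i_1-1}-2m_{i_1-1}$, which combined with $m_{i_1}=m_{i_1-1}=n-2$ pins down $p_1=\xi_{i_1}-2(n-2)$ and $\xi_{i_1-1}=\xi_{i_1}+1$. Reaching $(j,\xi_j)$ along the $S$-path then requires $\xi_j-\xi_{i_1}=j-i_1-2(n-2)$, contradicting $|\xi_j-\xi_{i_1}|\le j-i_1\le n-4$ for $i_1\ge 2$. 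Hence $i_1=1$, and $\rho$ has exactly $j$ vertices at $(1,p_1),(2,p_1+1),\ldots,(j,\xi_j)$ with $p_1=\xi_j-j+1$.

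To identify the common summand, I would first analyze $\eta_j=\widehat\phi^{-1}(j,\xi_j)$ via \eqref{eq:widephi}(i). The endpoint condition forces $j+1\notin B(j)$ (resp.\ $n-1,n\notin B(n-2)$), while $j\in B(j)$ contributes $\al_j=\ve_j-\ve_{j+1}$ to $\eta_j$. A direct $\ve_\ell$-coefficient computation then gives coefficient $-1$ on $\ve_{j+1}$ and $0$ on $\ve_\ell$ for $\ell>j+1$, so $\eta_j=\ve_a-\ve_{j+1}$ for some $a\in\{1,\ldots,j\}$ (and analogously $\eta_{n-2}=\ve_a-\ve_{n-1}$ in case (b)). Setting $k:=j+1$ (resp.\ $n-1$), I note that there are exactly $k-1=j$ positive roots of the form $\ve_c-\ve_k$, matching the number of vertices of $\rho$. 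I then show by descending induction on the level $i$, using the mesh relation $M(i,p)+M(i,p-2)=\sum_{\ell\sim i}M(\ell,p-1)$, that the $\ve_k$-coefficient of the root at each vertex of $\rho$ equals $-1$. In the inductive step the off-path mesh contributions are either $\eta_\ell$'s on the top boundary (for which the $\ve_k$-coefficient vanishes by a $B(\ell)$-computation, since $\ell<j$ forces $j,j+1\notin B(\ell)$ in the relevant orientations) or lie outside $\Gamma_Q$, pinning the $\ve_k$-coefficient on the path to $-1$. Hence every root on $\rho$ has the form $\ve_c-\ve_k$. The $N$-sectional statement follows by the symmetry $(i,p)\leftrightarrow(i,-p)$ on $\Z Q$, which swaps $S$- and $N$-arrows and interchanges ``starts at level $1$'' with ``ends at level $1$''.

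The main obstacle will be the coefficient-tracking induction in case (b), where $\rho$ terminates at the fork level $n-2$: the mesh at $(n-2,\xi_{n-2})$ involves three neighbors $\{n-3,n-1,n\}$ rather than two, and one must verify that the level-$(n-1)$ and level-$n$ contributions (which are $\eta_{n-1}=\al_{n-1}$ and $\eta_n=\al_n$, summing to $2\ve_{n-1}$) combine with $\tau(\eta_{n-2})$ so as to leave the $\ve_k$-coefficient of $M(n-3,\xi_{n-2}-1)$ equal to $-1$; this is exactly where Lemma \ref{Lem: last 2 line} (the $2$-periodic pairing of the last two levels) enters as the essential input.
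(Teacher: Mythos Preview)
Your boundary/parity analysis pinning the endpoint at $(j,\xi_j)$ with $j\to j{+}1$ (or the fork case $j=n{-}2$ with $\{n{-}1,n\}$ sinks) and forcing the start at level $1$ is correct; it is a more explicit version of the paper's one-line appeal to Lemma~\ref{Lem: Key Lem}(d). Your identification $\eta_j=\ve_a-\ve_{j+1}$ (resp.\ $\ve_a-\ve_{n-1}$) at the endpoint is also right.

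The genuine gap is the coefficient-tracking induction. Fix case~(a) and set $k=j{+}1$. Writing $f(i)$ for the $\ve_k$-coefficient of the path vertex $M(i,p_1{+}i{-}1)$ and $h(i)$ for that of its $\tau$-translate $M(i,p_1{+}i{-}3)$, the mesh centered at the path vertex gives
\[
f(i{-}1)=f(i)+h(i)-h(i{+}1),
\]
so the induction reduces to showing $h(i)=0$ for all $i$. But the $\tau$-translates $M(i,p_1{+}i{-}3)$ lie strictly in the interior of $\Gamma_Q$ (away from both boundaries) for generic $i$; they are \emph{not} $\eta_\ell$'s and not outside $\Gamma_Q$, so your dichotomy fails for them. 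Moreover, even the boundary claim is wrong: for $\ell<j$ one computes $\ve_k\text{-coeff}(\eta_\ell)=[\,j{+}1\in B(\ell)\,]-[\,j\in B(\ell)\,]$, which equals $-1$ (not $0$) whenever $Q$ contains a directed path $j\to j{-}1\to\cdots\to\ell$. Thus a one-dimensional induction along $\rho$ cannot close; at best you would need a two-dimensional induction over the region between $\rho$ and the right boundary, with a real case analysis on the local orientations---essentially rebuilding the swing picture from scratch.

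The paper sidesteps all of this. Having already established the swing structure (Theorem~\ref{Thm: V-swing}, Remark~\ref{rem: swing}), it knows each $S_i$ on $\rho$ lies in some $a_i$-swing, hence $S_i=\ve_{a_i}\pm\ve_{c_i}$; then Lemma~\ref{Lem: Key Lem}(c), giving $(S_i,S_{i+1})=1$ along an arrow, forces the second summand to be $-\ve_k$ at each step (the alternative $S_i=\ve_{a_i}+\ve_{a_{i+1}}$ is multiplicity non-free and is ruled out by Corollary~\ref{Cor: nfree mfree position}). No mesh bookkeeping is needed.
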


\begin{proof} Assume that $\rho$ is a shallow maximal $S$-sectional path. We claim that it starts at level $1$. If
it does not start at level $1$. Lemma \ref{Lem: Key Lem} (d) implies that it ends at level $n-1$ or $n$.  Thus we can draw $\rho$ as follows:
$$\xymatrix@R=2ex@R=0.5ex{ &S_1 \ar@{->}[r] & S_{2} \ar@{->}[r] & \cdots \ar@{->}[r]& S_{l-1} \ar@{->}[r]& S_l.}$$
Then $S_l=\Dim \mtI(l)$ for some $l \le n-2$. By \eqref{eq: eta zeta}, Corollary \ref{cor: n,n-1} and Remark \ref{rem: swing}, we have
$S_l=\ve_{a_l}-\ve_k$ ($1 \le k \le n-2+\delta$) which is contained in the $a_l$-swing. By the same reason, $S_{l-1}$
is also contained in another $a_{l-1}$-swing ($a_l \ne a_{l-1}$) and
$$S_{l-1} = \ve_{a_{l-1}} \pm \ve_c  \quad \text{ for some } \quad a_{l-1}<c.$$
By Lemma \ref{Lem: Key Lem} (c), $S_{l-1}$ must be the same as $\ve_{a_{l-1}}-\ve_k$. Thus we can prove that all $S_i$ contains $-\ve_k$ successively.
In the similar way, we can prove the case when $\rho$ is a shallow maximal $N$-sectional path.
\end{proof}

For a shallow maximal $N$-sectional (resp. $S$-sectional) path $\rho$, we say that $\rho$ is the shallow maximal $(N,-a)$-sectional
(resp. $(S,-a)$-sectional) path if all of its roots contain $-\ve_a$ as their summand.

\begin{corollary} \label{Cor: Longest} \hfill
\begin{enumerate}
\item[({\rm a})] $1$-component and $2$-component are adjacent in $\AR$.
\item[({\rm b})] The longest root is located at
$$ \inp(\ve_1+\ve_2) =
\begin{cases}
(n-2,\xi_1-n+1)) & \text{ if $1$ is a source}, \\
(n-2,\xi_1-n+3) & \text{ if $1$ is a sink}.
\end{cases}
$$
\end{enumerate}
\end{corollary}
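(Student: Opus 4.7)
The strategy is to pinpoint the longest root $\theta := \ve_1+\ve_2$ as a specific vertex of the $1$-swing $\varrho_1$, and then to note that $\theta$ also sits in the $2$-swing $\varrho_2$, which immediately gives part (a).

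First I would use Theorem \ref{Thm: V-swing} and Remark \ref{rem: swing} to record that $\theta$ belongs to $\varrho_1$, since $\ve_1$ is a summand of $\theta$. The two endpoints of $\varrho_1$ at level $1$ are the projective $\mtP(1)=\eta_1$ and the injective $\mtI(1)=\zeta_1$: by \eqref{eq:widephi}(i), $\inp(\eta_1)=(1,\xi_1)$, and by \eqref{eq: known characterization} combined with $m_1=n-2$ (from \eqref{eq: Nakayama equation} and Lemma \ref{Lem: image}), the remaining level-$1$ vertex of $\varrho_1$ is at $(1,\xi_1-2(n-2))$. Walking along the $S$-sectional side of the swing from the leftmost endpoint up through levels $2,3,\ldots,n-2$ lands $S_{n-2}$ at $(n-2,\xi_1-n+1)$, while walking the $N$-sectional side backward from $(1,\xi_1)$ through levels $2,3,\ldots,n-2$ lands $N_{n-2}$ at $(n-2,\xi_1-n+3)$. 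These are the only two level-$(n-2)$ positions in $\varrho_1$, so $\theta$ is one of them.

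To decide which, I would use Lemma \ref{Lem: Key Lem}(a): for source $1$, $\al_1=\mtP(1)$ sits at $(1,\xi_1)$---the rightmost vertex of $\varrho_1$---whereas for sink $1$, $\al_1=\mtI(1)$ occupies the leftmost. Starting from $\al_1$ and traversing $\varrho_1$ through the sectional-path description in Lemma \ref{Lem: dia sink source} and Proposition \ref{Prop: maximal path}, the additive relation \eqref{eq: ad fn} controls how the second summand $\pm\ve_j$ of each vertex evolves; in each case one reads off $\theta$ at the level-$(n-2)$ position on the side of the peak \emph{opposite} to $\al_1$. This gives $\theta=S_{n-2}$ at $(n-2,\xi_1-n+1)$ when $1$ is a source, and $\theta=N_{n-2}$ at $(n-2,\xi_1-n+3)$ when $1$ is a sink, proving (b). Part (a) follows immediately: $\ve_2$ is also a summand of $\theta$, so by Theorem \ref{Thm: V-swing} and Remark \ref{rem: swing} $\theta$ lies in $\varrho_2$ as well; hence $\varrho_1$ and $\varrho_2$ share the vertex $\theta$ and are adjacent in $\AR$.

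The main obstacle is the identification step in (b): showing that among the two level-$(n-2)$ vertices of $\varrho_1$, it is precisely the one opposite $\al_1$ (across the peak) that is $\theta$. All the tools---the swing structure of Theorem \ref{Thm: V-swing}, the sectional-path descriptions of Lemma \ref{Lem: dia sink source} and Proposition \ref{Prop: maximal path}, and the additive property \eqref{eq: ad fn}---are available, but the bookkeeping on which side of the peak $\theta$ sits, dictated by whether $1$ is a source or a sink, carries the content of the corollary.
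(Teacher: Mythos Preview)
Your outline has the right overall shape, but the decisive step in (b)---deciding which of the two level-$(n-2)$ vertices of the $1$-swing equals $\theta$---is not actually carried out, and the tools you cite do not pin it down. Lemma \ref{Lem: dia sink source} and Proposition \ref{Prop: maximal path} only tell you that every vertex of $\varrho_1$ has $\ve_1$ as a summand; the additive relation \eqref{eq: ad fn} gives $S_{n-2}+N_{n-2}=2\ve_1$, so the two candidates are $\ve_1+\ve_c$ and $\ve_1-\ve_c$ for some $c$, but nothing in your list forces $c=2$ or says which side carries the plus sign. Lemma \ref{Lem: Key Lem}(c) only says consecutive roots on a sectional path pair to $1$, which rules out repeating the same second summand but does not determine it. You recognise this yourself in your last paragraph, but that is precisely the content of the corollary, so the proposal as written is an outline with the key step missing.

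The paper closes this gap differently: instead of tracking second summands inside $\varrho_1$, it locates the $2$-swing directly. Since $1$ is a source (resp.\ sink), vertex $2$ must be a left intermediate or a sink (resp.\ right intermediate or source); Lemma \ref{Lem: Key Lem}(a) then places $\al_2$, and Theorem \ref{Thm: V-swing} shows that in every case the $2$-swing passes through the vertex $(n-2,\xi_1-n+1)$ (resp.\ $(n-2,\xi_1-n+3)$). That vertex therefore lies in both $\varrho_1$ and $\varrho_2$, hence has both $\ve_1$ and $\ve_2$ as summands, hence is $\theta$. This gives (a) and (b) simultaneously. If you want to repair your argument, the cleanest route is to import exactly this: use Lemma \ref{Lem: Key Lem}(a) to locate $\al_2$ (casing on whether $2$ is an intermediate or a sink/source) and then read off the $2$-swing's position relative to the $1$-swing.
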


\begin{proof} Note that $1$ is a source or a sink, always. Assume that $1$ is a source. The we have $\inp(\al_1)=(1,\xi_1)$ and
$1$-swing is of the form \eqref{eq: type a}(\eqref{eq: type b} also)  with its $N_{1}=\al_1$.
Note that $2$ is a left intermediate or a sink, since $1$ is a source.

(i) If $2$ is a left intermediate, we have  $\inp(\al_2)=(1,\xi_1-2)$ and $2$-swing is of the form \eqref{eq: type a}
with its $N_{1}=\al_2$. Thus the root located at $(n-2,\xi_1-n+1)$ must contain $\ve_1$ and $\ve_2$ as its
summands, simultaneously. Thus our assertion follows.

(ii) If $2$ is a sink, then $\inp(\al_2)=(2,\xi_1-2n+1)$ and $2$-swing is of the form \eqref{eq: type a} with its $N_{2}=\al_2$.
Thus its $S_1$ is located at $(1,\xi_1-2)$. By the same reason, we can obtain our assertion.

For the case when $1$ is a sink, one can verify by using the similar argument.
\end{proof}

Now, Theorem \ref{Thm: V-swing} and Theorem \ref{thm: short path} provide also a way to compute $\AR$:

\begin{remark} \hfill \label{Alg: easy}
\begin{enumerate}
\item[({\rm a})] Fill out vertices in $\AR$ by using the height function $\xi$ and {\rm Lemma \ref{Lem: image}}.
\item[({\rm b})] Using {\rm Lemma \ref{Lem: Key Lem}} (a) and (b), label the vertices corresponding to simple roots.
\item[({\rm c})] Using {\rm Theorem \ref{Thm: V-swing}} with step (b), we can label the summand of all vertices.
\item[({\rm d})] Using {\rm Lemma \ref{Lem: last 2 line}}, we can complete the labeling of all vertices at level $n-1$ and $n$.
\item[({\rm e})] Using {\rm Theorem \ref{thm: short path}}, we can complete the labeling of all vertices.
\end{enumerate}
\end{remark}

Let $\sigma$ be a subset of $\Phi^{+}_n$ defined as follows:
\begin{equation} \label{eq: sigma}
\sigma \seq \big\{  \be \in \Phi^{+}_n \ | \ \inp_1(\be)=n-1, \  \be \not \in  \{ \al_{n-1},\al_{n}\} \big   \}.
\end{equation}
Then Theorem \ref{Thm: V-swing} tells that $|\sigma|=n-2$ and each element in $\sigma$ is contained in
only one swing.
We set, for $1 \le k \le n-3$,
\begin{eqnarray} &&
\parbox{85ex}{
\begin{itemize}
\item the positive roots in $\sigma=\{\sigma_1,\ldots,\sigma_{n-2}\}$ as $  \inp_{2}(\sigma_{k+1})+2= \inp_{2}(\sigma_{k})$,
\item indices $\{ i_{\sigma_1},i_{\sigma_2},\ldots,i_{\sigma_{n-2}}\}=\{ 1,2,\ldots,n-2 \}$ such that $\sigma_{k}$ is contained in $i_{\sigma_{k}}$-swing.
\end{itemize}}\label{eq:sigma enumeration}
\end{eqnarray}

\begin{corollary} \label{cor: reverse uni} \hfill
\begin{itemize}
\item[({\rm a})] A multiplicity non-free positive root $\lf a, b \rf$ is contained in the longer part of the $b$-swing.
\item[({\rm b})] There exists $1 \le \ell \le n-2$ such that $i_{\sigma_{\ell}}=1$ and
\begin{equation} \label{eq: reverse unimodal}
i_{\sigma_{1}}>i_{\sigma_{2}}>\cdots >i_{\sigma_{\ell}}=1 < i_{\sigma_{\ell-1}} < \cdots < i_{\sigma_{n-2}},
\end{equation}
where the shorter part of $i_{\sigma_{a}}$ $(a < \ell)$ is the $N$-part and the shorter part of $i_{\sigma_{b}}$ $(b > \ell)$ is the $S$-part .
\end{itemize}
\end{corollary}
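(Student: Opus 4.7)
For part (a), I plan to identify the shorter part of the $b$-swing explicitly and verify that it, together with the two middle tips, contains only multiplicity free roots, so that the multiplicity non-free root $\lf a, b\rf = \ve_a + \ve_b$ must lie in the longer part. By Theorem~\ref{Thm: V-swing}, the shorter part of the $b$-swing is a sectional sub-path of length $n-1-b$ whose roots share $\ve_b$ as summand and which is anchored at the simple root $\al_b = \ve_b - \ve_{b+1}$. Walking outward from $\al_b$ along this sectional direction and applying the additive property~\eqref{eq: ad fn} inductively, one verifies that the $j$-th root along this path has the form $\ve_b - \ve_{b+j+1}$ for $j = 0, 1, \ldots, n-2-b$, and the two middle tips are $\ve_b \pm \ve_{\mathtt{t}}$; all of these are multiplicity free. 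Hence $\ve_a + \ve_b$, being multiplicity non-free, has nowhere to sit but the longer part.

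For part (b), the plan is to compute the $p$-coordinate at level $n-1$ of the middle tip of each $k$-swing using Lemma~\ref{Lem: Key Lem}, and then to track its dependence on $k$ and the type of the swing. A direct computation from Theorem~\ref{Thm: V-swing} shows that when the $k$-swing is of form~\eqref{eq: type a} (shorter part is the $S$-part), its level-$(n-1)$ middle tip sits at $p = \xi_k - (k + n - 3)$; when it is of form~\eqref{eq: type b} (shorter part is the $N$-part), the tip sits at $p = \xi_k + (k - n + 1)$. The type is dictated by the orientation at $k$: sinks and left intermediates give form~\eqref{eq: type a}, while sources and right intermediates give form~\eqref{eq: type b}; the trivalent configurations at $n-2$ covered in Case C of the proof of Theorem~\ref{Thm: V-swing} split similarly into one of these two formulas. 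At $k = 1$ the two formulas coincide, reflecting that the 1-swing has a single type determined by whether 1 is a source or a sink in $Q$.

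The key monotonicity is that $p$ strictly increases with $k$ along the type~\eqref{eq: type b} indices and strictly decreases with $k$ along the type~\eqref{eq: type a} indices; moreover, the smallest type~\eqref{eq: type b} middle tip strictly exceeds the largest type~\eqref{eq: type a} middle tip. Both of these follow from $|\xi_k - \xi_{k \pm 1}| = 1$ combined with the observation that the forbidden mixed-orientation configurations among consecutive same-type vertices (e.g., two adjacent sources, or a source followed by a right intermediate) prevent the height function from offsetting the $\pm k$ term. Consequently, ordering $\sigma_1, \ldots, \sigma_{n-2}$ by decreasing $p$ places first the type~\eqref{eq: type b} indices in strictly decreasing $k$, reaches the 1-swing at some position $\ell$ with $i_{\sigma_\ell} = 1$, and then lists the type~\eqref{eq: type a} indices in strictly increasing $k$. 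This is exactly the reverse unimodal pattern claimed, together with the identification of the shorter part as the $N$-part on the left of $\ell$ and as the $S$-part on the right. The main technical point is the verification of the monotonicity across the Case C trivalent configurations at $n-2$, which is checked directly by substituting the position of $\al_{n-2}$ from Lemma~\ref{Lem: Key Lem} (b) into the formulas above and confirming each sub-case matches the expected type.
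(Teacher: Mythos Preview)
Your argument for part (a) contains a concrete error. You claim that the shorter part of the $b$-swing is ``anchored at the simple root $\al_b = \ve_b - \ve_{b+1}$'' and consists of the roots $\ve_b - \ve_{b+j+1}$ for $j = 0, \ldots, n-2-b$. This is false already in Example~\ref{ex: example 1}: the shorter part of the $2$-swing (its $N$-part) consists of the single root $\ve_2 - \ve_4$, while $\al_2 = \ve_2 - \ve_3$ sits at one of the two \emph{tips}, not at the end of the shorter part. In general, Theorem~\ref{Thm: V-swing} shows that the endpoint of the shorter part is $\Dim\mtP(b)$ or $\Dim\mtI(b)$, which coincides with $\al_b$ only when $b$ is a sink or a source; in all other cases your anchoring assumption fails and the inductive walk you propose never gets off the ground. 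Since the claim ``the shorter part consists only of multiplicity-free roots'' is precisely the contrapositive of (a), you need an independent argument for it, and the one you offer does not work.

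The paper instead argues by contradiction using the rectangular shape \eqref{eq: known characterization} of $\Gamma_Q$: if $\lf a,b\rf$ lay in the shorter part of the $b$-swing, then (since $\lf a,b\rf$ also lies in the $a$-swing, whose parts are strictly longer than the shorter part of the $b$-swing because $a<b$) one would produce a vertex $(i,p)$ with $p > \xi_i - 2(n-2)$ that is \emph{not} in $\Gamma_Q$, contradicting \eqref{eq: known characterization}. This same shape-contradiction mechanism is reused for (b).

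Your approach to (b) via explicit $p$-coordinates is genuinely different and can be made to work. Your formulas $p = \xi_k - k - n + 3$ for form~\eqref{eq: type a} and $p = \xi_k + k - n + 1$ for form~\eqref{eq: type b} are correct. The monotonicity you need amounts to showing that if $k_1 < k_2$ are both of form~\eqref{eq: type b} then $\xi_{k_2} - \xi_{k_1} > -(k_2-k_1)$, and dually for form~\eqref{eq: type a}; equality would force $k_1 \to k_1+1 \to \cdots \to k_2$, which makes $k_2$ a sink or left intermediate (hence of form~\eqref{eq: type a}), a contradiction. The comparison across the two types reduces similarly to comparing with the $1$-swing. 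This route trades the paper's uniform structural contradiction for a longer case analysis, but it is valid once the details are filled in; be aware, though, that the Case~C configurations at $n-2$ require their own verification since the source/sink/intermediate dichotomy does not literally apply there.
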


\begin{proof}
(a) Assume that it is contained in the shorter part, and $S$-part is the shorter part of the $b$-swing.
Then it is contained in the $N$-part of the $a$-swing. Note that
\begin{equation} \label{eq: longer}
\text{the $S$-part and $N$-part of $a$-swing are strictly longer than $S$-part of $b$-swing.}
\end{equation}
Then there is a vertex $(i,p)$ such that
\begin{eqnarray} &&
\parbox{85ex}{
\begin{itemize}
\item $i=\inp_1(\Dim \mtP(b))-1$, $p=\inp_2(\Dim \mtP(b))-1$, (Theorem \ref{Thm: V-swing})
\item $p> \xi_{i}-2(n-2)$ and $(i,p) \not \in \AR$, (by \eqref{eq: longer})
\end{itemize}}\label{eq:cont known char}
\end{eqnarray}
which yields a contradiction to \eqref{eq: known characterization}. By applying the
similar argument, one can complete the proof for the first assertion.

\noindent
(b) Note that there is $1 \le \ell \le n-2$ such that $i_{\sigma_{\ell}}=1$. For $ k < l < \ell$, we first show that
$i_{\sigma_{k}} > i_{\sigma_{l}}$. Note that
the shorter part of $i_{\sigma_{a}}$ $(a < \ell)$ is an $N$-part. Indeed, if the shorter part of $i_{\sigma_{a}}$ is an $S$-part,
the fact that $S$-part and $N$-part of $1$-swing are strictly longer than the shorter part of the $a$-swing yields
a contradiction by the same reason in (a). Thus $N$-parts of $i_{\sigma_{k}}$-swing and $i_{\sigma_{l}}$-swing are shorter parts.

If $i_{\sigma_{k}} < i_{\sigma_{l}}$, then $N$-parts of $i_{\sigma_{k}}$-swing is longer than $N$-parts of $i_{\sigma_{l}}$. Then there exist a
vertex $(i,p)$ satisfying \eqref{eq:cont known char} by setting $b=i_{\sigma_{l}}$. Thus we have a contradiction. For $k>l > \ell$,
we can apply the similar argument and hence $i_{\sigma_k}>i_{\sigma_l}$.
\end{proof}

Let $\kappa$ be a subset of $\Phi^{+}_n$ defined as follows:
\begin{equation} \label{eq: kappa sigma}
\kappa \seq \{  \be \in \Phi^{+}_n \ | \ \inp_1(\be)=1  \}.
\end{equation}
We enumerate the positive roots in $\kappa=\{\kappa_1,\ldots,\kappa_{n-1}\}$ in the following way:
$$  \inp_{2}(\kappa_{i+1})+2= \inp_{2}(\kappa_{i}),  \quad  \text{ for } 1 \le i \le n-2.$$
Then \eqref{eq: m n-1 m n}, Proposition \ref{Prop: maximal path} and Theorem \ref{thm: short path} tell that $|\kappa|=n-1$ and each element in $\kappa$ is contained in
only one shallow maximal path, maximal path sharing $\ve_{\mathtt{t}'}$ or maximal path sharing $-\ve_{\mathtt{t}'}$.
We set, for $1 \le k \le n-1$,
indices $$\{ j_{\kappa_{1}},j_{\kappa_{2}},\ldots,j_{\kappa_{n-1}} \}=\{ -2,\ldots,-n+2, \mathtt{t}', -\mathtt{t}' \}$$
such that $\kappa_{s}$ contains $-\ve_{-j_{\kappa_s}}$ as its summand.

The following lemma can be proved by using Remark \ref{Alg: easy} and the similar argument in Corollary \ref{cor: reverse uni}:
\begin{corollary} \label{Cor: first}
Notice that $\ve_1+\ve_2 $ is the longest root in $\Phi^+$. We have the followings:
\begin{itemize}
\item[({\rm a})] $ \ve_1+\ve_2 =
\begin{cases} \kappa_1 + \kappa_2+ \cdots + \kappa_{n-2} & \text{ if $1$ is a sink }, \\
\kappa_2 + \kappa_3+ \cdots + \kappa_{n-1} & \text{ if $1$ is a source. } \end{cases}$
\item[({\rm b})] There exists $\mathtt{l}$ such that $$ |j_{\kappa_{n-1}}|< \ldots < |j_{\kappa_{\mathtt{l}}}|=\mathtt{t}'=|j_{\kappa_{\mathtt{l}-1}}| > |j_{\kappa_{\mathtt{l}-2}}| > \cdots > |j_{\kappa_{1}}|,$$
and the maximal sectional path sharing $-\ve_{j_{\kappa_s}}$ is the $S$-sectional, if $s \le \mathtt{l}-1$, and
the maximal sectional path sharing $-\ve_{j_{\kappa_s}}$ is the $N$-sectional, otherwise.
\item[({\rm c})] $\displaystyle\sum_{i=1}^{n-1}\kappa_i=2\ve_1$ and
$\left\{ \displaystyle\sum_{i=1}^{\mathtt{l}-1}\kappa_i,\displaystyle\sum_{i=\mathtt{l}}^{n-1}\kappa_i \right\} = \left\{ \ve_1+\ve_{\mathtt{t}'}, \ve_1-\ve_{\mathtt{t}'} \right\}$.
\end{itemize}
\end{corollary}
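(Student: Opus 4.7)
\textbf{Plan for Corollary~\ref{Cor: first}.} The proof follows the strategy of Corollary~\ref{cor: reverse uni}, with the algorithmic reconstruction of $\Gamma_Q$ in Remark~\ref{Alg: easy} playing the key role. The natural order of attack is (b) first, then (c), and finally (a).

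For (b), I first classify each $\kappa_s$ according to the type of maximal sectional path through it. By Proposition~\ref{Prop: maximal path}, Theorem~\ref{Thm: V-swing}, Theorem~\ref{thm: short path}, and Corollary~\ref{cor: n,n-1}, the distinguished summand $-\ve_{-j_{\kappa_s}}$ of $\kappa_s$ comes either from a shallow sectional path (giving $-\ve_k$ with $k\le n-2$ and other summand $\ve_1$) or, via the exceptional paths of Corollary~\ref{cor: n,n-1}(b) when $|\xi_{n-1}-\xi_n|=2$, from the two special paths sharing $\pm\ve_{\mathtt{t}'}$. The claimed unimodality of $|j_{\kappa_s}|$ is then established by contradiction, exactly as in Corollary~\ref{cor: reverse uni}(b): if $a<b\le \mathtt{l}-1$ admitted $|j_{\kappa_a}|\ge |j_{\kappa_b}|$, the shallow sectional path through $\kappa_b$ would be at least as long as the one through $\kappa_a$; combined with the fixed offset $\inp_2(\kappa_a)-\inp_2(\kappa_b)=2(b-a)$, one of these paths would force a vertex forbidden by~\eqref{eq: known characterization}. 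The symmetric argument handles $\mathtt{l}\le a<b$, and the $S$/$N$ dichotomy around $\mathtt{l}$ follows directly from Theorem~\ref{thm: short path}: for $s<\mathtt{l}$ the shallow path at $\kappa_s$ must start at level~$1$, hence is $S$-sectional; for $s>\mathtt{l}$ it must end at level~$1$, hence is $N$-sectional.

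For (c), I sum all level-$1$ roots. The summands $\pm\ve_k$ ($2\le k\le n-2$) contributed by the ``shallow'' $\kappa_s$ cancel in pairs across the two sides of $\mathtt{l}$ via the unimodal pattern from (b); the two ``central'' roots $\kappa_{\mathtt{l}-1}$ and $\kappa_\mathtt{l}$ (arising from the exceptional paths of Corollary~\ref{cor: n,n-1}(b) when $|\xi_{n-1}-\xi_n|=2$, or from the two level-$1$ ends of the single sectional path of~\eqref{eq: both ss} when $\xi_{n-1}=\xi_n$) contribute $\ve_1+\ve_{\mathtt{t}'}$ and $\ve_1-\ve_{\mathtt{t}'}$ respectively to the two partial sums. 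The total is therefore $2\ve_1$, and the split as stated follows.

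Part~(a) is then immediate from (c) once we locate $\al_1$ among the $\kappa_s$: by Lemma~\ref{Lem: Key Lem}(a), $\al_1$ sits at $(1,\xi_1)$ when $1$ is a source, so $\al_1=\kappa_1$, and at $(1,\xi_1-2m_1)$ when $1$ is a sink, so $\al_1=\kappa_{n-1}$. Subtracting $\al_1=\ve_1-\ve_2$ from $2\ve_1$ yields $\ve_1+\ve_2$, producing the two stated identities. The main technical obstacle is the contradiction argument in~(b): one must separate the subcases $|\xi_{n-1}-\xi_n|\in\{0,2\}$ and carefully track the interaction between shallow sectional paths at level~$1$ and the exceptional paths meeting levels $n-1$ and $n$, but the underlying idea is the same range violation via~\eqref{eq: known characterization} already exploited in Corollary~\ref{cor: reverse uni}.
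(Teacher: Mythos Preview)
Your proposal is correct and follows precisely the approach the paper indicates: the paper's proof consists of a single sentence directing the reader to Remark~\ref{Alg: easy} and the argument of Corollary~\ref{cor: reverse uni}, and your plan is exactly an explicit realization of that hint, including the range-violation contradiction via~\eqref{eq: known characterization} for part~(b) and the identification of $\al_1$ among the $\kappa_s$ via Lemma~\ref{Lem: Key Lem}(a) for part~(a).
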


\section{The generalized quantum affine Schur-Weyl duality}
 In this section,
we briefly recall the basic materials of quantum affine algebras and
 quiver Hecke algebras  following \cite{Oh14,Oh14A}. For precise definitions in this subsection, we refer
\cite{KKK13b,Oh14,Oh14A}.

\subsection{Quantum affine algebras and their finite dimensional integrable modules.}
Set $I=I_0 \bigsqcup \{ 0 \}$. An affine Cartan datum $(\cm,\wl^{\vee},\wl,\Pi^{\vee},\Pi)$ consists of
{\rm (i)} a {\it generalized affine Cartan matrix} $\cm=(a_{i,j})_{i,j\in I}$,
{\rm (ii)} a {\it dual weight lattice} $\wl^{\vee}=\bigoplus_{i=0}^{n} \Z h_i \oplus \Z d $,
{\rm (iii)} a {\it weight lattice} $\wl=\bigoplus_{i=0}^{n} \Z \Lambda_i \oplus \Z \delta$,
{\rm (iv)} a set of {\it simple coroots} $\Pi^{\vee} = \{ h_i \mid i\in I\} \subset \wl^{\vee}$ and
{\rm (v)} a set of {\it simple roots} $\Pi = \{ \al_i \mid i\in I\} \subset \wl$
such that
\begin{eqnarray}&&\parbox{85ex}{
\begin{itemize}
\item $\delta=\sum_{i \in I} \mathsf{a}_ih_i$ is the {\it null root} (\cite[Chapter 4]{Kac}),
\item $\langle h_i, \al_j \rangle  = a_{ij}$ and $\langle h_j, \Lambda_i \rangle =\delta_{ij}$ for all $i,j \in I$,
\item $\Pi^\vee$ and $\Pi$ are linearly independent sets.
\end{itemize} }\label{eq: fw} \end{eqnarray}
Note that $\cm$ is {\it symmetrizable}; i.e., there exists a diagonal matrix $\mathsf{D}\seq{\rm diag}(\mathsf{d}_i \in \Z_{> 0} \ | \ i \in I)$ such that
$\mathsf{D}\cm$ is symmetric.

We denote by $\rl=\bigoplus_{i \in I} \Z \al_i$ the {\it root lattice}, $\rl^+=\bigoplus_{i \in I} \Z_{>0} \al_i$ and
$c=\sum_{i \in I}\mathsf{c}_i\al_i$ the {\it center}.

Let $( \ , \ )$ be a $\Q$-valued symmetric bilinear form on $\wl$ satisfying
$$  \langle h_i,\lambda \rangle = \dfrac{2(\al_i,\lambda)}{\al_i,\al_i} \text{ and } (\delta,\lambda)=\langle c,\lambda \rangle \text{ for any }
\lambda \in \wl.$$

For an indeterminate $q$, we define $ q_i=q^{(\al_i,\al_i)/2}$. Let $\field$ be an algebraic closure of $\C(q)$ in $\cup_{k >0} \C((q^{1/k}))$.

The {\it quantum affine algebra} $\Ug$ associated with an affine Cartan datum $(\cm,\wl^{\vee},\wl,\Pi^{\vee},\Pi)$
is the associate $\field$-algebra with generators $e_i$, $f_i$ $(i \in I)$ and $q^h$ $(h \in \wl^\vee)$ with certain relations (see e.g.
\cite[Definition 2.1]{Oh14A}). The $\Up$, also called a {\it quantum affine algebra}, is a subalgebra of $\Ug$ generated by
$e_i$, $f_i$ and $K_i^{\pm 1}$ $(i \in I)$, where $K_i\seq q_i^{h_i}$.

We choose $0 \in I$ as the leftmost vertices in the tables in \cite[pages 54, 55]{Kac} except $A^{(2)}_{2n}$-case in which we take the
longest simple root as $\alpha_0$. Note that the subalgebra of $\Ug$ generated by $e_i$, $f_i$ and $K_i^{\pm 1}$ $(i \in I_0)$
is isomorphic to the universal enveloping algebra of finite simple Lie algebra $\g_0$

The algebra $\Up$ has a Hopf algebra structure with the comultiplication
\begin{align} \label{eq: comul}
\Delta(K_i)=K_i\otimes K_i, \quad \Delta(e_i)=e_i\otimes K_i^{-1}+1\otimes
e_i,\quad \Delta(f_i)=f_i\otimes 1+K_i\otimes f_i.
\end{align}

Let $\wl_{{\rm cl}}=\wl/\Z \delta$ and ${\rm cl}:\wl \to \wl_{{\rm cl}}$ be the canonical map.
We say that a $\Up$-module $M$ is {\it integrable} if
\begin{itemize}
\item $ M = \bigoplus_{\lambda \in \wl_{{\rm cl}}} M_\lambda$,
where $M_{\lambda}= \{ u \in M \ | \ \text{$K_i u =q_i^{\langle h_i , \lambda \rangle} u$ for all $i\in I$} \}$,
\item the action of $e_i$ and $f_i$ on $M$ are locally nilpotent for all $i \in I$.
\end{itemize}

Let $\Cat_\g$ be a category of finite dimensional integrable $\Up$-modules. By \eqref{eq: comul}, $\Cat_\g$ is a stable by tensor
product; i.e., for $M,N \in \Cat_\g$, we have $M \otimes N \in \Cat_\g$.

Let $M$ be a simple module in $\Cat_\g$. Then it is known that there exist a unique non-zero vector $v_M$ (up to non-zero constant multiple)
of weight $\lambda \in \wl_{{\rm cl}}^0 \seq \{ \lambda \in \wl_{{\rm cl}} \ | \ \langle c,\lambda \rangle =0 \}$ such that $\langle \lambda,h_i \rangle \ge 0$ $(i \in I_0)$ and
all weights of $M$ are contained in $\lambda - \sum_{i \in I_0} \Z_{\ge 0} \al_i$.

Set $\varpi_i={\rm gcd}(\mathsf{c}_0,\mathsf{c}_i)^{-1}(\mathsf{c}_0\Lambda_i-\mathsf{c}_i\Lambda_0) \in \wl$ for $i \in I_0$. Then there exists a unique simple
$\Up$-module $\Vi$ $(i \in I_0)$ in $\Cat_\g$ (up to isomorphism) satisfying the following properties:
\begin{itemize}
\item The weights of $\Vi$ are contained in the convex hull of $W_0 \varpi_i$,
\item $\dim \Vi_{\varpi_i}=1$,
\item For any $\mu \in W_0 \varpi_i$, there exists a non-zero vector $u_\mu$ such that
$$ u_{s_i\mu} =\begin{cases} f_i^{(\langle h_i,\mu \rangle)} u_\mu & \text{if } \langle h_i,\mu \rangle \ge 0, \\
e_i^{(\langle h_i,\mu \rangle)} u_\mu & \text{if } \langle h_i,\mu \rangle \le 0,\end{cases} \quad \text{ for all } i \in I.$$
\end{itemize}
We call $\Vi$ the {\it $i$th fundamental representation} (\cite[\S 1.3]{AK}).

For $M \in \Cat_\g$, the {\it affinization $\Maff$} is a $\wl$-graded $\Up$-module $$\Maff = \soplus_{\lambda \in \wl} (\Maff)_\lambda$$ for which
$(\Maff)_\lambda = M_{{\rm cl}(\lambda)}$ and the action of $e_i$ and $f_i$ are defined in a way that
they commutes with the canonical map ${\rm cl}: \Maff \to M$.

For $x \in \field^\times$ and $M \in \Cat_\g$, we define
$$ M_x \seq \Maff / (z_M-x)\Maff \in \Cat_\g$$
where $z_M$ denotes the $\Up$-automorphism of $M_\aff$ of weight $\delta$  (see, \cite[\S 2.2]{KKK13a}).

\begin{definition} \label{def: CQone} \cite{HL11} (see also. \cite[\S 3.3]{KKK13b})
Let $Q$ be the Dynkin quiver of finite type $D_n$ (resp. $A_n$). Set $\g=D^{(1)}_{n}$ and $\g_0=D_{n}$ (resp. $\g=A^{(1)}_{n}$ and $\g_0=A_{n}$).
For every $\be \in \Prt$ associated to $\g_0$, we define the $\Up$-module $\VQbe$ as follows:
\begin{align} \label{V_Q(beta)}
\VQbe \seq \Vi_{(-q)^{p}} \quad \text{ for } \quad \inp(\be)=(i,p).
\end{align}
The subcategory $\Cat^{(1)}_Q$ is the smallest abelian full subcategory of $\Cat_\g$ satisfying
\begin{itemize}
\item[({\rm a})] $\VQbe \in \Cat^{(1)}_Q$ for all $\be \in \Prt$,
\item[({\rm b})] it is stable by taking
subquotient, tensor product and extension.
\end{itemize}
\end{definition}

A simple integrable $\Up$-module $M \in \Cat_\g$ is {\it good} if $M$ has a {\it bar involution}, {\it a crystal basis} and a {\it global basis} (see
\cite{Kas02} for precise definitions). For example, $\Vi_{(-q)^p}$ for $(i,p) \in \Z Q$ is a good module.

For a good module $M$ and $N$, there exist a $\Up$-homomorphism
$$ \Rnorm_{M_1,M_2}: \Maff \otimes \Naff \to \field(z_M,z_N)  \otimes_{\field[z_M^{\pm 1},z_N^{\pm 1}]} \Naff \otimes \Maff $$
such that
$$ \Rnorm_{M,N} \circ z_M = z_M \circ \Rnorm_{M,N}, \ \Rnorm_{M,N} \circ z_N = z_N \circ \Rnorm_{M,N} \text{ and }
\Rnorm_{M,N} (v_M \otimes v_N) = v_N \otimes v_M.$$

The {\it denominator} $d_{M,N}$ of $\Rnorm_{M,N}$ is the unique non-zero monic polynomial $d(u) \in \field[u]$ of smallest degree such that
\begin{equation}\label{definition: dm,n}
d_{M,N}(z_N/z_M)\Rnorm_{M,N}(\Maff \otimes \Naff) \subset (\Naff \otimes \Maff).
\end{equation}

The module $\Vi$ has a left dual $\Vi^*$ and a right dual ${}^*\Vi$ with the duality $\Up$-morphisms:
\begin{equation} \label{equation: psta}
\begin{aligned}
& \Vi^* \otimes \Vi  \overset{{\rm tr}}{\longrightarrow} \field \quad \text{ and } \quad
\Vi \otimes {}^*\Vi  \overset{{\rm tr}}{\longrightarrow} \field, \quad \text{where} \\
& \Vi^*\seq  V(\varpi_{i^*})_{(p^*)^{-1}}, \ {}^*\Vi\seq  V(\varpi_{i^*})_{p^*} \ \  \text{ and } \ \
p^* \seq (-1)^{\langle \rho^\vee ,\delta \rangle}q^{(\rho,\delta)}.
\end{aligned}
\end{equation}
Here $\rho$ and $\rho^\vee$ denote elements in $\wl$ and $\wl^\vee$ such that
$\langle h_i,\rho \rangle=1$ and $\langle \rho^\vee,\al_i  \rangle=1$ for all $i \in I$.

The following theorem tells that the denominators $d_{i,j}(z)\seq d_{\Vi,\Vj}(z)$ of $\Rnorm_{\Vi,\Vj}$ ($i,j \in I_0$) provide important information about the category $\Cat_\g$:

\begin{theorem} \cite{AK,Kas02} \label{Thm: bpro}
\begin{enumerate}
\item The zeros of $d_{i,j}(z) \in \C[[q^{1/m}]]q^{1/m}$ for some $m \in \Z_{>0}$.
\item $ \Vi_{a_i} \otimes  \Vj_{a_j}$ is simple if and only if
$d_{i,j}(a_i/a_j) \ne 0$ and $d_{i,j}(a_j/a_i) \ne 0$.
\item For any simple integrable $U'_q(\g)$-module $M$, there
exists a unique finite sequence $($up to permutation$)$
$$\left( (i_1,a_1),\ldots, (i_l,a_l)\right) \text{ in } (I_0 \times \field^\times)^l$$
such that
$d_{i_k,i_{k'}}(a_{k'}/a_k) \not=0$ for $1\le k<k'\le l$ and $M$ appears as the head of $\tens_{i=1}^{l}V(\varpi_{i_k})_{a_k}$.
\end{enumerate}
\end{theorem}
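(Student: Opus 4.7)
\medskip
\noindent
\textbf{Proof proposal.} The three parts sit at different depths, so I would attack them in order, each leveraging the one before.

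For (1), the plan is to read off the denominator from the universal $R$-matrix $\Runiv$ of $\Ug$. Because $M$ and $N$ are good modules, they carry bar involutions, crystal and global bases, so the action of $\Runiv$ on $\Maff\tens \Naff$ is well defined after a scalar adjustment $c_{M,N}(z_N/z_M)\in 1+q\C[[q^{1/m}]][[z]]$ chosen so that $v_M\tens v_N\mapsto v_N\tens v_M$; this is exactly the definition of $\Rnorm_{M,N}$. Weight considerations in the quasi-triangular structure show that each matrix coefficient of $\Rnorm_{M,N}$ lies in $\C((q^{1/m}))(z)$ with only finitely many poles, and the normalization forces these poles to be of the form $q^{s/m}\cdot(\text{unit})$ with $s>0$. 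Since $d_{M,N}(z)$ is the least common denominator, its zeros lie in $q^{1/m}\C[[q^{1/m}]]$.

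For (2), I would invoke the standard generic-simplicity principle: $\Vi_{a_i}\tens\Vj_{a_j}$ is simple for $a_j/a_i$ in a Zariski open set, because at generic parameters the two submodules generated by $v_{\varpi_i}\tens v_{\varpi_j}$ and its image under a non-zero intertwiner coincide. The hypothesis $d_{i,j}(a_i/a_j)\neq 0$ says that $\Rnorm_{\Vi,\Vj}$ is regular at $z_{N}/z_{M}=a_j/a_i$, hence gives a non-zero $\Up$-homomorphism $\Vi_{a_i}\tens\Vj_{a_j}\to\Vj_{a_j}\tens\Vi_{a_i}$; the other inequality supplies a non-zero homomorphism in the opposite direction. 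Composing them and using that both tensor products have the same character (hence are equal via a weight argument, using that $v_{\varpi_i}\tens v_{\varpi_j}$ is cyclic), one deduces an isomorphism and then simplicity from the fact that the cyclic and cocyclic vectors must generate the whole module. Conversely, if either denominator vanishes, the image of $\Rnorm$ is a proper non-zero submodule of $\Vj_{a_j}\tens\Vi_{a_i}\simeq\Vi_{a_i}\tens\Vj_{a_j}$ (via generic comparison), obstructing simplicity.

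For (3), the key input is the Chari--Pressley classification of simple objects in $\Cat_\g$ by Drinfeld polynomials. To a simple $M$ one attaches its $l$-highest weight, which factors uniquely as a product of Drinfeld polynomials of fundamental type; this produces the multiset $\{(i_k,a_k)\}$. The cyclicity statement, namely that $M$ is a quotient of $\bigotimes V(\varpi_{i_k})_{a_k}$, follows by ordering the factors so that $d_{i_k,i_{k'}}(a_{k'}/a_k)\neq 0$ for $k<k'$, because then each partial $\Rnorm$ is regular and the tensor product has a unique head through which $v_{\varpi_{i_k}}$'s combine into the cyclic $l$-highest weight vector of $M$. Uniqueness up to permutation among orderings satisfying the denominator condition reduces to observing that any two such orderings are connected by a sequence of swaps at simple-tensor positions, where the corresponding $\Rnorm$ is an isomorphism by part (2).

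\medskip
\noindent
The main obstacle is part (3): constructing the Drinfeld polynomial decomposition intrinsically and showing that the induced sequence is well defined up to permutation is delicate, since it relies on the rather deep fact that the Jordan--H\"older multiplicity of $M$ inside any such tensor product is one, a statement proved using the crystal base structure of good modules. Parts (1) and (2) are essentially formal consequences of the good-module formalism and can be handled once $\Rnorm$ is constructed and its regularity domain identified.
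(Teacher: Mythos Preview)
The paper does not prove this theorem at all: it is quoted from \cite{AK} and \cite{Kas02} and used as a black box throughout. There is therefore no ``paper's own proof'' to compare your proposal against.

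That said, a brief comment on your sketch. The outlines for (1) and the ``if'' direction of (2) are essentially what Akasaka--Kashiwara do: regularity of $\Rnorm$ on both sides, together with cyclicity of $v_{\varpi_i}\otimes v_{\varpi_j}$ (which uses the good-module hypothesis), forces the composite $\Rnorm\circ\Rnorm$ to be a non-zero scalar, hence each $\Rnorm$ is an isomorphism and simplicity follows. Your converse for (2) is too fast: knowing only that $d_{i,j}$ vanishes does not by itself exhibit a proper submodule, since after clearing the pole the specialized map could a priori be zero or an isomorphism. In \cite{AK} this is handled by analyzing the leading term of $\Rnorm$ at the pole and showing that the image is both non-zero (it contains $v_{\varpi_j}\otimes v_{\varpi_i}$) and proper (its kernel is non-zero because the map drops rank). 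For (3), invoking Drinfeld polynomials to produce the multiset $\{(i_k,a_k)\}$ is fine and is equivalent to what Kashiwara does via $\ell$-highest weights; the existence of an ordering with $d_{i_k,i_{k'}}(a_{k'}/a_k)\ne 0$ uses part (1) (the zeros lie in $q^{1/m}\C[[q^{1/m}]]$, so one can order by the $q$-adic valuation of the $a_k$), and the identification of $M$ with the head uses the cyclicity results of \cite{Kas02}. Your final uniqueness argument via adjacent swaps is correct once (2) is in hand.
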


\subsection{The Dorey's type morphisms and denominators $d_{k,l}(z)$ when $\g=D^{(1)}_{n}$ and $\g=D^{(2)}_{n+1}$.}
In this subsection, we recall some family of morphisms in $\Cat_\g$, called by {\it Dorey's type morphisms}, and
denominators $d_{k,l}(z)$ of $\Rnorm_{k,l}(z)$ for untwisted and twisted quantum affine algebra of type $D$.
After that we observe their similarities.

We say that an element in $\Hom_{\Up}(\Vi_{(-q)^a}\otimes \Vj_{(-q)^b}, V(\varpi_k)_{(-q)^b})$ is a
{\it Dorey's type morphism} \cite{CP96}. This kind of morphisms
were studied at \cite{CP96} for untwisted affine types $\g=A^{(1)}_{n}$, $B^{(1)}_{n}$, $C^{(1)}_{n}$ and $D^{(1)}_{n}$. Recently, the author investigated
such type morphisms for twisted affine types $\g=A^{(2)}_{2n}$, $A^{(2)}_{2n-1}$ and $D^{(2)}_{n+1}$ in \cite{Oh14}.

\begin{theorem} \label{Thm: Dorey D_n(1)} \cite[Theorem 6.1]{CP96} $($see also. \cite[Appendix A]{KKK13b}$)$
Let $\g$ be of type $D^{(1)}_{n}$ and $(i,x),(j,y),(k,z) \in I_0 \times {\bf k}^\times$.  Then
$$ \Hom_{U_q'(\g)}\big( V(\va_i)_x\tens V(\va_j)_y, V(\va_k)_z \big) \ne 0 $$
if and only if one of the following conditions holds:
\begin{eqnarray}&&
\left\{\parbox{73ex}{
\begin{enumerate}
\item[{\rm (i)}] $\ell \seq \max(i,j,k) \le n-2$, $s+m =\ell$ for $\{ s,m \} \seq \{ i,j,k\} \setminus \{ \ell \}$
and
$$ \big( x/z,y/z \big) =
\begin{cases}
\big( (-q)^{-j},(-q)^{i} \big), & \text{ if } \ell = k,\\
\big( (-q)^{-j},(-q)^{-i+2n-2} \big), & \text{ if } \ell = i,\\
\big( (-q)^{j-2n+2},(-q)^{i} \big), & \text{ if } \ell = j,
\end{cases}
$$
\item[{\rm (ii)}] $i+j \geq n$, $k=2n-2-i-j$, $\max(i,j,k) \le n-2$, and $x/z=(-q)^{-j}$, $y/z=(-q)^i$,
\item[{\rm (iii)}] $s \seq \min(i,j,k) \le n-2$, $ \{ m,\ell \}  \seq \{ i,j,k\} \setminus \{ s \} \subset \{ n-1,n\}$ and
\begin{itemize}
\item $n-s  \equiv  \begin{cases} \ell-m \ \ {\rm mod} \ 2 & \text{ if } s = k, \\
\ell-m^*  \ {\rm mod} \ 2 & \text{ if } s \ne k, \end{cases}$
\item $\big( x/z,y/z \big) = \begin{cases}
\big( (-q)^{-n+k+1},(-q)^{n-k-1} \big), & \text{ if } s= k,\\
\big( (-q)^{-n+i+1},(-q)^{2i} \big), & \text{ if } s = i,\\
\big( (-q)^{-2j},(-q)^{n-j-1} \big), & \text{ if } s = j.
\end{cases}$
\end{itemize}
\end{enumerate}
}\right. \label{eq: p ij D}
\end{eqnarray}
\end{theorem}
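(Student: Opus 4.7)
The plan is to prove this by combining a weight-based necessity argument with explicit $R$-matrix denominator computations, and then construct the Dorey-type morphisms case by case. First, I would observe that any nonzero $\Up$-homomorphism $f\colon V(\va_i)_x \tens V(\va_j)_y \to V(\va_k)_z$ preserves weight modulo $\Z\delta$, so one must have $\va_i+\va_j \equiv \va_k$ in $\wl_{\cl}^0$ modulo the center. Expanding the fundamental weights of $D_n$ and using that $\va_{n-1}+\va_n$ collapses to $\va_{n-2}$ modulo the center, this forces the triple $\{i,j,k\}$ into exactly one of three numerical configurations: either $\max(i,j,k) \le n-2$ and the maximum equals the sum of the other two (case (i)), or $i+j+k = 2n-2$ with all indices in $\{1,\dots,n-2\}$ (case (ii)), or at least two of $i,j,k$ lie in the spinor set $\{n-1,n\}$ together with a parity constraint (case (iii)).

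Second, for the spectral parameters, I would invoke Theorem~\ref{Thm: bpro}(2): a nonzero morphism onto the simple module $V(\va_k)_z$ requires $V(\va_i)_x \tens V(\va_j)_y$ to be reducible, hence $y/x$ must be a root of the denominator $d_{i,j}(w)$ corresponding to a composition factor of type $V(\va_k)$. Using the known explicit formulas for $d_{k,l}(z)$ in type $D_n^{(1)}$ (Akasaka--Kashiwara, Kashiwara), together with the duality morphisms in~\eqref{equation: psta} to reduce to a canonical arrangement in each case (for instance assuming $k$ is the designated ``output'' index), the listed values of $x/z$ and $y/z$ are singled out as the unique zero of $d_{i,j}$ that cuts out a $V(\va_k)_z$-quotient.

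Third, to establish sufficiency I would construct the intertwiners explicitly. For case~(i), when all three indices lie in $\{1,\dots,n-2\}$ and the relation $\ell = s+m$ holds, one can build the morphism inductively by restricting to a type-$A_{n-1}$ or $D_{n-1}^{(1)}$ subalgebra containing the three relevant nodes, where the analogous Dorey morphism is classical. For case~(ii) one composes a Dorey morphism of type~(i) with a duality~\eqref{equation: psta}, exploiting that $2n-2-i-j$ is the Dynkin $*$-partner of $i+j$ in type $D$. For case~(iii) one uses the explicit crystal or spinor-coordinate realization of $V(\va_{n-1})$ and $V(\va_n)$ together with the parity condition $n-s \equiv \ell-m \pmod 2$ to write the intertwiner directly at the prescribed spectral value.

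The main obstacle will be the spinor case~(iii): verifying that the parity condition (and its $m^*$-variant when $s\ne k$) is both necessary and sufficient is genuinely type-$D$ and cannot be read off from a mere weight computation. This step requires either an explicit manipulation of the spinor crystal bases or a careful extraction of the correct zero of $d_{s,\ell}(z)$ from the denominator formulas, and one must rule out spurious morphisms that the coarser weight count in step one cannot exclude. Cases~(i) and~(ii) should follow more uniformly from the combination of the weight argument and the denominator data.
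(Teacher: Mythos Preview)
The paper does not prove this theorem at all: it is simply cited from Chari--Pressley \cite[Theorem~6.1]{CP96} (with a pointer to \cite[Appendix~A]{KKK13b}) as a known input, and no proof appears in the text. There is therefore no ``paper's own proof'' to compare your proposal against; the result is treated as background.

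That said, a few remarks on your sketch as a stand-alone argument. The weight constraint in step one and the appeal to denominator zeros in step two are the right ingredients and are indeed how the original references proceed. However, your reduction in case~(ii) is not correct as stated: the involution $*$ on $I_0$ in type $D_n$ is the identity on $\{1,\dots,n-2\}$, so $2n-2-i-j$ is \emph{not} the $*$-partner of $i+j$; case~(ii) does not reduce to case~(i) by a single application of the duality in~\eqref{equation: psta}. One instead has to use that $V(\va_i)^* \simeq V(\va_i)_{(p^*)^{-1}}$ with $p^*=(-q)^{2n-2}$ and combine this with a case~(i) morphism in a two-step composition, which is more delicate than you indicate. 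Also, in case~(i) the restriction-to-subalgebra strategy is not how the original argument goes and would require a nontrivial branching statement; the actual construction in \cite{CP96} builds the morphism directly from the minimal-affinization structure. Your identification of case~(iii) as the hard part is accurate.
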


\begin{theorem} \label{Thm: Dorey D_n+1(2)} \cite[Theorem 3.6, Theorem 3.8]{Oh14} Let $\g$ be of type $D^{(2)}_{n+1}$
and $(i,x),(j,y),(k,z) \in I_0 \times {\bf k}^\times$.  Then
$$ \Hom_{U_q'(\g)}\big( V(\va_i)_x\tens V(\va_j)_y, V(\va_k)_z \big) \ne 0 $$
if one of the following conditions holds:
\begin{eqnarray}&&
\left\{\parbox{73ex}{
\begin{enumerate}
\item[{\rm (i$'$)}] $\ell \seq \max(i,j,k) \le n-1$, $s+m =\ell$ for $\{ s,m \} \seq \{ i,j,k\} \setminus \{ \ell \}$
and
$$ \big( x/z,y/z \big) =
\begin{cases}
\big( (-q^2)^{-j/2},(-q^2)^{i/2} \big), & \text{ if } \ell = k,\\
\big( (-q^2)^{-j/2},(-q)^{-i/2+n} \big), & \text{ if } \ell = i,\\
\big( (-q^2)^{j/2-n},(-q)^{i} \big), & \text{ if } \ell = j,
\end{cases} \quad \text{ $($up to sign$)$}
$$
\item[{\rm (iii$'$)}]  $s \seq \min(i,j,k) \le n-1$, $\{ m,\ell\} \seq \{i,j,k\} \setminus \{s\} \subset \{n\}$ and
$$\big( x/z,y/z \big) = \begin{cases}
\big( \pm \sqrt{-1}(-q^2)^{\frac{-n+k}{2}},\mp \sqrt{-1}(-q)^{\frac{n-k}{2}} \big), & \text{ if } s= k,\\
\big( \pm \sqrt{-1}(-q)^{\frac{-n+i}{2}},(-q^2)^{i} \big), & \text{ if } s = i,\\
\big( (-q^2)^{-j},\pm \sqrt{-1}(-q^2)^{\frac{n-j}{2}} \big), & \text{ if } s = j.
\end{cases}$$
\end{enumerate}
}\right. \label{eq: p ij 2 D}
\end{eqnarray}
\end{theorem}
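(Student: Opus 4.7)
The plan is to produce the required homomorphisms by exploiting the theory of $R$-matrices and denominators $d_{i,j}(z)=d_{\Vi,\Vj}(z)$ for $\g = D^{(2)}_{n+1}$, in parallel with how Theorem \ref{Thm: Dorey D_n(1)} is established for $D^{(1)}_n$ in \cite{CP96,KKK13b}. The key input is Theorem \ref{Thm: bpro}: if $d_{i,j}(a_j/a_i)=0$ then $\Vi_{a_i}\otimes\Vj_{a_j}$ is reducible, and the proper subquotients that arise are the candidates to realise as $V(\va_k)_z$. Thus the first step is to pin down the denominators $d_{i,j}(z)$ for $\g=D^{(2)}_{n+1}$ explicitly, and then to verify that each spectral ratio $y/x$ listed in (i$'$) and (iii$'$) is a zero of the relevant $d_{i,j}(z)$ at which the expected fundamental module appears in the head (or socle).

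For case (i$'$), I would parallel the ``vector-type'' portion of $D^{(1)}_n$. The spectral rescaling $q\leftrightarrow q^2$ visible in the formulas reflects the folding relating $D^{(1)}_{n+1}$ and $D^{(2)}_{n+1}$; the morphism can then be built by evaluating $\Rnorm_{\Vi,\Vj}$ at its first relevant pole and checking, via a weight-space argument combined with the $q$-character machinery of Frenkel--Reshetikhin (extended to twisted types by Hernandez), that the extremal vector of weight $\va_k$ generates a submodule isomorphic to $V(\va_k)_z$. For case (iii$'$), where the spin node $n$ is involved, the ambient square roots $\pm\sqrt{-1}$ are forced by the quadratic factors typically appearing in $d_{n,n}(z)$ for spin-type modules in twisted affine settings; the construction then again amounts to identifying which subquotient carries the extremal vector of weight $\va_k$ and pinning down the exact normalisation.

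The hard part will be distinguishing the correct zero of $d_{i,j}(z)$: in particular $d_{n,n}(z)$ may have several roots, and only one of them yields a morphism onto $V(\va_k)_z$ at the claimed $z$. Resolving this cleanly requires either a uniform crystal/global basis argument in the sense of Kashiwara \cite{Kas02}, or an explicit $q$-character computation that identifies the head of the tensor product at that pole. A secondary technical difficulty is controlling the precise normalisation of the spectral parameters (for instance, distinguishing $(-q^2)^{j/2}$ from $(-q)^{j}$ in case (i$'$)); this is sensitive to the chosen coproduct \eqref{eq: comul}, to the normalisation \eqref{equation: psta} of dualities, and to the affinisation conventions fixed in \S 2, so the verification has to be done carefully in one compatible framework before being matched to the formulas in (i$'$) and (iii$'$).
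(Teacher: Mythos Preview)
This theorem is not proved in the paper at all: it is quoted verbatim from \cite[Theorem 3.6, Theorem 3.8]{Oh14} as background input, with no argument supplied here. So there is nothing in the present paper to compare your proposal against.

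That said, your outline is a reasonable sketch of how such results are typically obtained, and indeed the cited reference \cite{Oh14} computes the denominators $d_{k,l}(z)$ for $D^{(2)}_{n+1}$ (recorded here as Theorem~\ref{Thm; denominator}(b)) and derives the Dorey-type morphisms from that data. But as written your proposal is only a plan, not a proof: you correctly identify that the hard step is showing that the reducible tensor product at the relevant pole has \emph{exactly} $V(\va_k)_z$ as its head (not merely some proper subquotient), and you do not carry this out. Knowing $d_{i,j}(y/x)=0$ gives reducibility, but pinning down the head requires either an explicit construction of the morphism (as in \cite{CP96} for untwisted types) or a $q$-character/crystal argument, and neither is supplied. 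The normalisation issues you flag are also genuine and are where most of the work in \cite{Oh14} actually lies.
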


Now we fix a Dynkin quiver $Q$ of finite type $D_{n+1}$ and take a height function $\xi$ on $Q$ satisfying
$$\xi_{n} \equiv \xi_{n+1} \equiv 0 \ ({\rm mod} \ 2).$$

For $1 \le i \le n+1$ and $p \in \Z$, we set
\begin{equation} \label{eq: star}
\begin{aligned}
& \{1,\ldots,n \} \ni i^\star \seq \begin{cases} i & \text{ if } 1 \le i \le n-1,\\
n & \text{ if } i=n \text{ or } n+1,  \end{cases}  \\
& ((-q)^p)^\star \seq  \begin{cases} (\sqrt{-1})^{\delta(n+1 \equiv i ({\rm mod} \ 2) )+1}  (-q)^p & \text{ if } 1 \le i \le n-1, \\
(-1)^i(-q)^p& \text{ if } i=n \text{ or } n+1. \end{cases}
\end{aligned}
\end{equation}
With the map \eqref{eq: star}, we can associate an injective map
\begin{align*}
& ^\star \colon \big\{ V(\va_i)_{a(-q)^p} \in\Cat_{D^{(1)}_{n+1}} \mid  1 \le i \le n+1, \ a \in \C^\times, \ p \in \Z \big\} \\
& \hspace{28ex} \longrightarrow \big\{ V(\va_i)_{ a  (-q)^p} \in\Cat_{D^{(2)}_{n+1}} \mid  1 \le i \le n, \ a \in \C^\times, \ p \in \Z \big\}
\end{align*}
given by (see \cite{Her10})
$$ (V(\va_i)_{(-q)^{p}})^\star \seq V(\va_{i^\star})_{((-q)^{p})^\star} \in \Cat_{D^{(2)}_{n+1}}.$$

For each $\be \in \Prt $ with $\inp(\be)=(i,p) $,
we define
\begin{equation} \label{eq: mathsf V}
 \mathsf{V}_Q(\be) \seq (\VQbe)^\star = \begin{cases} \Vi_{(\sqrt{-1})^{\delta(n+1 \equiv i ({\rm mod} \ 2) )+1}  (-q)^p } & \text{ if } 1 \le i \le n-1, \\
                                          V(\varpi_{n})_{(-1)^i(-q)^p} & \text{ if }  i=n \text{ or } n+1,\end{cases}
\end{equation}
which is the good module over $U'_q(D^{(2)}_{n+1})$.

For $1 \le i \le n-1$, it is known that (\cite[(1.7)]{AK})
\begin{equation} \label{eq: + iso -}
\Vi \simeq \Vi_{-1} \in \Cat_{D_{n+1}^{(2)}}.
\end{equation}

\begin{remark} \label{rmk: Dn+1,1 Dn+1,2}
By replacing $n$ in Theorem \ref{Thm: Dorey D_n(1)} with $n+1$ and considering \eqref{eq: + iso -} together, one can observe that
\begin{itemize}
\item[{\rm (a)}] $p^*$ of $D^{(1)}_{n+1}$ is $(-q)^{2n}$ and $p^*$ of $D^{(2)}_{n+1}$ is $-(-q^2)^{n}$,
\item[{\rm (b)}] for a surjective $U_q'(D^{(1)}_{n+1})$-homomorphism of types {\rm (i)} and {\rm (iii)} in Theorem \ref{Thm: Dorey D_n(1)}
$$\Vi_a \tens \Vj_b \twoheadrightarrow V(\varpi_k)_c, $$
we have a $U_q'(D^{(2)}_{n+1})$-homomorphism of types {\rm (i$'$)} and {\rm (iii$'$)} in Theorem \ref{Thm: Dorey D_n+1(2)}
$$V(\varpi_{i^\star})_{a^\star} \tens V(\varpi_{j^\star})_{b^\star} \twoheadrightarrow V(\varpi_{k^\star})_{c^\star}.$$
\end{itemize}
\end{remark}

Now we record the denominators $d_{k,l}(z)$ for $\g=D^{(1)}_{n}$ and $D^{(2)}_{n+1}$.

\begin{theorem} \label{Thm; denominator} \cite[Appendix A]{KKK13b} \cite[Section 4]{Oh14} \hfill
\begin{enumerate}
\item[({\rm a})] For $1\leq k,l \leq n$ and $\g=D^{(1)}_{n}$, we have
\begin{equation} \label{eq: denominator 1}
\begin{aligned}
& d_{k,l}(z)= \begin{cases}
\displaystyle \prod_{s=1}^{\min (k,l)}(z-(-q)^{|k-l|+2s})(z-(-q)^{2n-2-k-l+2s}) & \text{ if } 1 \le k,l \le n-2, \allowdisplaybreaks \\
\quad \displaystyle \prod_{s=1}^{k}(z-(-q)^{n-k-1+2s}) &  \text{ if }  1 \le k \le n-2 < l, \allowdisplaybreaks \\
\quad \displaystyle \prod_{s=1}^{\lfloor \frac{n-1}{2} \rfloor} (z-(-q)^{4s}) &  \text{ if }  \{k,l\}=\{n,n-1\},  \allowdisplaybreaks  \\
\quad \displaystyle \prod_{s=1}^{\lfloor \frac{n}{2} \rfloor} (z-(-q)^{4s-2}) &  \text{ if }  k=l \in \{ n-1, n\}.
 \end{cases}
\end{aligned}
\end{equation}
\item[({\rm b})] For $1\leq k,l \leq n$ and $\g=D^{(2)}_{n+1}$, we have
\begin{equation} \label{eq: denominator 2}
\begin{aligned}
& d_{k,l}(z)= \begin{cases}
\displaystyle \prod_{s=1}^{\min(k,l)} \hspace{-1ex} (z^2  -  (-q^2)^{|k-l|+2s})
(z^2 - (-q^2)^{2n-k-l+2s}) & \text{ if } 1 \le k,l \le n-1,  \allowdisplaybreaks  \\
\quad \displaystyle \prod_{s=1}^{k}(z^2+(-q^{2})^{n-k+2s}) & \hspace{-5ex} \text{ if }  1 \le k \le n-1< l=n,  \allowdisplaybreaks  \\
\quad \displaystyle \prod_{s=1}^{n} (z+(-q^2)^{s}) & \text{ if } k=l=n.
 \end{cases}
\end{aligned}
\end{equation}
\end{enumerate}
\end{theorem}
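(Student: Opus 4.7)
The plan hinges on the characterization in Theorem~\ref{Thm: bpro}(2): $d_{k,l}(z)$ is, up to a scalar, the unique polynomial whose zeros in $\field^\times$ are exactly those ratios $b/a$ at which $\Vi_{a} \otimes \Vj_{b}$ fails to be simple (together with the correct multiplicities). Accordingly, I would split the proof into a \emph{lower bound} step (exhibit every claimed factor of \eqref{eq: denominator 1}--\eqref{eq: denominator 2} as an actual zero, with the stated multiplicity) and an \emph{upper bound} step (show that no further zeros or higher multiplicities can occur).

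For the lower bound in type $D^{(1)}_n$, the Dorey-type morphisms catalogued in Theorem~\ref{Thm: Dorey D_n(1)} do most of the work: each non-zero $\Hom(V(\varpi_i)_x \otimes V(\varpi_j)_y, V(\varpi_k)_z)$ produces a proper submodule of $V(\varpi_i)_x \otimes V(\varpi_j)_y$, hence a zero of $d_{i,j}(y/x)$. A systematic enumeration of the Dorey triples $(i,j,k)$ --- separating the vector-type cases (i)--(ii) from the spinor-type case (iii) --- recovers precisely the factors $(z-(-q)^{|k-l|+2s})$, $(z-(-q)^{2n-2-k-l+2s})$, $(z-(-q)^{n-k-1+2s})$, and $(z-(-q)^{4s})$ or $(z-(-q)^{4s-2})$ listed in \eqref{eq: denominator 1}. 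To attain the stated multiplicity $\min(k,l)$ (or $\lfloor(n-1)/2\rfloor$ in the spinor case), I would iterate Dorey's rule: $V(\varpi_k)$ is a subquotient of an iterated fusion of vector or spinor fundamental modules at shifted spectral parameters, and each of the $s$ admissible shifts contributes an independent zero.

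For the upper bound, I would bound the degree of $d_{k,l}(z)$ by fusion. Realize $V(\varpi_k)_a$ (for $k\le n-2$) as a quotient of $V(\varpi_1)_{a_1}\otimes\cdots\otimes V(\varpi_1)_{a_k}$ and similarly for $V(\varpi_l)_b$; then every zero of $d_{k,l}(z)$ occurs among the zeros of the products $\prod_{i,j}d_{1,1}(a_i/b_j)$. The base case $d_{1,1}(z)=(z-(-q)^2)(z-(-q)^{2n-2})$ can be obtained by explicit $R$-matrix diagonalization on the $2n$-dimensional natural representation, and spinor-index cases are treated analogously by substituting spin modules. Matching the fusion upper bound with the Dorey lower bound pins down $d_{k,l}(z)$ exactly.

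For type $D^{(2)}_{n+1}$, I would transport the above analysis along the map $\star$ of \eqref{eq: star}: by Remark~\ref{rmk: Dn+1,1 Dn+1,2}, Dorey morphisms of $D^{(1)}_{n+1}$ descend to those of $D^{(2)}_{n+1}$ under the substitutions $(-q)^p\mapsto(-q^2)^{p/2}$ (vector indices) and $(-q)^p\mapsto\pm\sqrt{-1}(-q)^{p/2}$ (spinor indices). The quadratic factors $(z^2+(-q^2)^{n-k+2s})$ in $d_{k,n}(z)$ and the linear factors $(z+(-q^2)^s)$ in $d_{n,n}(z)$ then arise, respectively, from pairing the two conjugate zeros $\pm\sqrt{-1}(-q^2)^{\bullet}$ into a single $\field$-irreducible factor, and from the identification $\Vi\simeq \Vi_{-1}$ of \eqref{eq: + iso -} collapsing a would-be pair into a single linear zero. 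The main obstacle I foresee is verifying that no cancellation occurs in the fusion upper bound: one must exhibit \emph{genuine} singular vectors at each proposed zero (not merely formal ones coming from shared factors of neighbouring $d_{1,1}$'s), for which I would appeal either to $q$-character computations or to explicit manipulations with extremal-weight vectors in the vector and spinor modules.
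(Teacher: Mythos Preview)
The paper does not prove this theorem at all: it is quoted verbatim from \cite[Appendix~A]{KKK13b} (for part~(a)) and \cite[Section~4]{Oh14} (for part~(b)), and no argument is given in the present paper.  So there is no ``paper's own proof'' against which to compare your proposal.

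That said, your outline is roughly in the spirit of how such denominator formulas are established in those references, but a few points would need tightening.  First, the multiplicity-$2$ zeros in \eqref{eq: dpole 1} are the delicate part; your phrase ``iterate Dorey's rule \ldots\ each of the $s$ admissible shifts contributes an independent zero'' is not a proof, since one must actually exhibit that the pole of $\Rnorm$ has order~$2$ rather than merely detect non-simplicity twice.  Second, for the $D^{(2)}_{n+1}$ case you lean on the map $\star$ and Remark~\ref{rmk: Dn+1,1 Dn+1,2}, but note that Theorem~\ref{Thm: Dorey D_n+1(2)} only gives sufficient conditions for a Dorey morphism (``if'', not ``if and only if''), so transporting the $D^{(1)}_{n+1}$ analysis along $\star$ gives only a lower bound, not the full answer; the upper bound in \cite{Oh14} is obtained by an independent computation.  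If you intend to supply a self-contained proof, you should consult those two references directly rather than the present paper.
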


From \eqref{eq: denominator 1} and \eqref{eq: denominator 2}, one can observe that $d_{k,l}(z)$ has a zero of multiplicity $2$ when
\begin{itemize}
\item[(i)] $\g$ is of type $D^{(1)}_{n}$ $(n \ge 4)$ and $z=(-q)^s$ where (\cite[Lemma 3.2.4]{KKK13b})
\begin{equation} \label{eq: dpole 1}
2 \le k,l \le n-2, \ k+l > n-1, \ 2n-k-l \le s \le k+l \text{ and } s \equiv k+l \mod \ 2.
\end{equation}
\item[(ii)] $\g$ is of type $D^{(2)}_{n+1}$ $(n \ge 3)$ and $z=(-q^2)^{s/2}$ where (\cite[Corollary 4.16]{Oh14})
\begin{equation} \label{eq: dpole 2}
2 \le k,l \le n-1, \ k+l > n, \ 2n+2-k-l \le s \le k+l \text{ and } s \equiv k+l \mod \ 2.\end{equation}
\end{itemize}

We want to emphasize that
\begin{eqnarray}&&
\parbox{85ex}{
\begin{enumerate}
\item[{\rm (a)}] if we replace $n$ in \eqref{eq: dpole 1} with $n+1$, we can obtain the condition \eqref{eq: dpole 2},
\item[{\rm (b)}] the condition \eqref{eq: p ij D} {\rm (ii)} satisfies the condition \eqref{eq: dpole 1}.
\end{enumerate}
}
\end{eqnarray}

\subsection{Quiver Hecke algebras and the generalized quantum affine Schur-Weyl duality functors.}
Let $\field$ be a field. For a given symmetrizable Cartan matrix $\cm$, we choose a polynomials
$\calQ_{i,j}(u,v) \in \field[u,v]$, for $i,j \in I$, of the form
\begin{align} \label{eq: condition A}
\calQ_{i,j}(u,v) =
\begin{cases}
\sum_{ p(\al_i,\al_i)+q(\al_j,\al_j)+2(\al_i,\al_j)=0} t_{i,j;p,q}u^pv^q & \text{ if } i \ne j, \\
0  & \text{ if } i = j,
\end{cases}
\end{align}
where $t_{i,j;p,q} \in \field$ are such that $t_{i,j;-a_{ij},0} \ne 0$, and $t_{i,j;p,q} =t_{j,i;q,p}$. Thus $\calQ_{i,j}(u,v)=\calQ_{j,i}(v,u)$.
The symmetric group $\mathfrak{S}_m= \langle \mathfrak{s}_1,\mathfrak{s}_2,\ldots,\mathfrak{s}_{m-1} \rangle$ acts on $I^m$ by place permutations.

For $n \in \Z_{\ge 0}$ and $\beta \in \rl^+$ such that $\het(\beta) = n$, we set
$$I^{\beta} = \{\nu = (\nu_1, \ldots, \nu_n) \in I^{n} \ | \ \alpha_{\nu_1} + \cdots + \alpha_{\nu_n} = \beta \}.$$

For $\beta \in \rl^+$, we denote by $R(\beta)$ the quiver Hecke algebra at $\beta$ associated
with $(\cm,\wl, \Pi,\wl^{\vee},\Pi^{\vee})$ and
$(Q_{i,j})_{i,j \in I}$. It is a $\Z$-graded $\field$-algebra generated by
the generators $\{ e(\nu) \}_{\nu \in  I^{\beta}}$, $ \{x_k \}_{1 \le
k \le n}$, $\{ \tau_m \}_{1 \le m \le n-1}$ with certain defining relations
(see e.g \cite[Definition 2.7]{Oh14A}).

Let $\Rep(R(\be))$ be the category of finite dimensional graded $R(\be)$-modules and $[\Rep(R(\be))]$ be the Grothendieck group of $\Rep(R(\beta))$.
Then $[\Rep(R(\be))]$ has a natural $\Z[q,q^{-1}]$-module structure induced by the grading shift. More precisely, $(qM)_k = M_{k+1}$ for
$M \in \Rep(R(\be))$ such that $M = \soplus_{k \in \Z} M_k$.

For $M \in \Rep(R(\be))$ and $N \in \Rep(R(\gamma))$ for $\beta,\gamma \in \rl^+$, we have the module
$M \cvl N \in \Rep(R(\be+\gamma))$ induced by the {\it convolution product}. Then $[\Rep(R)] \seq \soplus_{\beta \in \rl^+} [\Rep(R(\be))]$ has a natural
algebra structure induced by the convolution product. For $M \in \Rep(\be)$ and $M_k \in \Rep(\be_k)$ $(1 \le k \le n)$, we denote by
$$ M^{\cvl 0} \seq \field, \quad M^{\cvl r} = \underbrace{ M \cvl \cdots \cvl M }_r, \quad \dct{k=1}{n} M_k = M_1 \cvl \cdots \cvl M_n.$$

\medskip

For a fixed reduced expression $\eqcw$ of $w_0$,
let us consider the convex total order $<_{\redez}$ on $\Prt$
$$ \be_{1} <_{\redez} \be_{2} <_{\redez}  \cdots <_{\redez} \be_{\msN}.$$

For sequences $\um, \  \um' \in \Z_{\ge 0}^{\mathsf{N}}$, we define an order $\le^\tb_{\redex_0}$ as follows (\cite{Mc12}):
\begin{eqnarray*}&&
\parbox{65ex}{$\um'=(m'_1,\ldots,m'_N) <^\tb_{\redex_0}  \um=(m_1,\ldots,m_N) $ if and only if there exist integers $k$, $s$ such that $1 \le k \le s \le N$,
$m_t'=m_t$ $(t<k)$, $m'_k<  m_k$, and $m_t'=m_t$ $(s<t\le N)$,
$m'_{s}<  m_{s}$.}
\end{eqnarray*}
Note that $\le^\tb_{\redex_0}$ is a partial order on sequences.

\begin{theorem} \cite[Theorem 4.7]{BKM12}\cite[Theorem 3.1]{Mc12} (see also \cite{Kato12}) \label{thm: BkMc}
Let $\g_0$ be a finite simple Lie algebra and $R \seq \soplus_{\beta \in \rl^+} R(\be)$ be the quiver Hecke algebra corresponding to $\g_0$.
For each positive root $\beta \in \Prt$, there exists a simple module $S_{\redez}(\beta)$ satisfying following properties:
\begin{enumerate}
\item[({\rm a})]  For every $\um \in \Z_{\ge 0}^{\mathsf{N}}$, there exists a non-zero $R$-module homomorphism
\begin{align*}
&\rmat{\um} \colon
\overset{\to}{S}_{\redez}(\um) \seq
S_{\redez}(\beta_1)^{\cvl m_1}\cvl\cdots \cvl S_{\redez}(\beta_N)^{\cvl m_N}\\
&\hs{20ex}{\Lto}\overset{\gets}{S}_{\redez}(\um) \seq
S_{\redez}(\beta_N)^{\cvl m_N}\cvl\cdots \cvl S_{\redez}(\beta_1)^{\cvl m_1}.
\end{align*}
and ${\rm Im}(\rmat{\um}) \simeq {\rm hd}\left(\overset{\to}{S}_{\redez}(\um)\right) \simeq {\rm soc}\left(\overset{\gets}{S}_{\redez}(\um)\right)$ is simple.
\item[({\rm b})] For any sequence $\um\in \Z_{\ge 0}^{\mathsf{N}}$, we have
$[\overset{\to}{S}_{\redez}(\um)] \in [{\rm Im}(\rmat{\um})] + \displaystyle\sum_{\um' <^{\tb}_\redez \um } \Z_{\ge 0} [{\rm Im}(\rmat{\um'})].$
\item[({\rm c})] For any simple $R(\beta)$-module $M$, there exist a unique sequence $\um\in \Z_{\ge 0}^{\mathsf{N}}$ such that $M \simeq {\rm Im}(\rmat{\um}) \simeq {\rm hd}\big( \overset{\to}{S}_{\redez}(\um) \big).$
\item[({\rm d})] For a minimal pair $(\beta_k,\beta_l)$ of $\beta_j=\beta_k+\beta_l$ with respect to $<_\redez$, there exists an exact sequence
of $R$-modules
$$ 0 \to S_{\redez}(\beta_j)
\to S_{\redez}(\beta_k) \cvl S_{\redez}(\beta_l) \overset{{\mathbf r} \ne 0}{\Lto} S_{\redez}(\beta_l) \cvl S_{\redez}(\beta_k)
\to S_{\redez}(\beta_j) \to 0$$
such that ${\rm Im}(\rmat{})$ is simple.
\end{enumerate}
\end{theorem}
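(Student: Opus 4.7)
The plan is to construct the modules $S_\redez(\beta)$ as cuspidal modules for the convex order $<_\redez$, then bootstrap from their properties. To begin, for each $\beta \in \Prt$ I would define $S_\redez(\beta)$ (up to grading shift) as the unique simple $R(\beta)$-module $L$ such that whenever $\mathrm{Res}_{\gamma,\delta} L \ne 0$ with $\gamma+\delta=\beta$, every positive root appearing in the support of $\gamma$ (resp. $\delta$) is $\le_\redez \beta$ (resp. $\ge_\redez \beta$). Existence and uniqueness follow by induction on height using the convexity of $<_\redez$: the natural candidate is $L(i_\beta)$ where $i_\beta$ is the unique Lyndon word for $\beta$ relative to the ordering (à la Kleshchev--Ram), with $S_\redez(\alpha_i) = L(i)$ in the height-one case. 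This is the construction underlying the Kato--McNamara cuspidal theory.

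Next, for step (a), I would study the convolution $\overset{\to}{S}_\redez(\um)$ and its R-matrix. The cuspidality of each factor forces, via a Mackey-type analysis of restriction to $R(\gamma)\otimes R(\delta)$, that any nonzero $R$-homomorphism $\overset{\to}{S}_\redez(\um) \to \overset{\gets}{S}_\redez(\um)$ factors uniquely through a simple quotient of the domain and simple sub of the codomain. The existence of a nonzero $\rmat{\um}$ comes from the general $R$-matrix construction on quiver Hecke algebras (Kang--Kashiwara--Kim): take normalized intertwiners between each pair $S_\redez(\beta_a)$, $S_\redez(\beta_b)$ ($a<b$), compose them into a braid, and use the cuspidal property to rule out vanishing. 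Then the head/socle identification $\mathrm{Im}(\rmat{\um}) \simeq \mathrm{hd}(\overset{\to}{S}_\redez(\um)) \simeq \mathrm{soc}(\overset{\gets}{S}_\redez(\um))$ is forced by simplicity of the image.

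For (b) I would run an induction on the order $\le^\tb_\redez$. The Mackey filtration of $\overset{\to}{S}_\redez(\um)$ has layers indexed by multisets of roots, and cuspidality ensures only multisets $\um'$ satisfying $\um' \le^\tb_\redez \um$ can contribute a nonzero composition factor that is itself a head $\mathrm{Im}(\rmat{\um'})$; the coefficients are manifestly in $\Z_{\ge 0}$. Part (c) is then a standard consequence: the unitriangular expansion in (b) combined with the existence statement in (a) shows that $\{\mathrm{Im}(\rmat{\um})\}_{\um}$ is a complete, irredundant list of simple $R$-modules up to isomorphism, and matching dimensions of Grothendieck groups closes the argument.

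Finally, for (d), the minimality of the pair $(\beta_k,\beta_l)$ is the crucial combinatorial input: it ensures that the only root partitions $\um$ with $\beta_k \cdot \beta_l = \sum m_i \beta_i$ in the weight lattice and $\um \le^\tb_\redez (\ldots,1_{\beta_k},\ldots,1_{\beta_l},\ldots)$ in the sense needed are the trivial one and the one corresponding to $\beta_j$ itself. Applying (a) and (b) to $S_\redez(\beta_k)\cvl S_\redez(\beta_l)$ thus forces its composition series to consist of $\mathrm{Im}(\rmat{}) = S_\redez(\beta_j)$ (once, as head) plus $S_\redez(\beta_j)$ (once, as socle, by applying the same argument to the opposite convolution). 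That yields the four-term exact sequence; self-duality of quiver Hecke algebras identifies the socle of $S_\redez(\beta_k)\cvl S_\redez(\beta_l)$ with the head of $S_\redez(\beta_l)\cvl S_\redez(\beta_k)$. The main obstacle throughout is step (a): correctly normalizing $\rmat{\um}$ and proving its nonvanishing and simplicity of image, which requires the full machinery of intertwiners and a careful inductive argument using cuspidality.
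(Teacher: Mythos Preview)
The paper does not prove this theorem at all: it is stated as a citation of \cite[Theorem 4.7]{BKM12} and \cite[Theorem 3.1]{Mc12} (with a pointer to \cite{Kato12}), and no argument is given in the text. So there is no ``paper's own proof'' to compare your proposal against.

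That said, your outline is a fair high-level summary of the strategy actually used in the cited sources: define cuspidal simples via the convex order, use Mackey-type restriction arguments to control composition factors of ordered convolutions, deduce the unitriangularity statement (b), and from that obtain the classification (c) and the length-two structure in the minimal-pair case (d). One caution: in (d) you write that $\mathrm{Im}(\rmat{})$ equals $S_{\redez}(\beta_j)$, but in the exact sequence the image of $\rmat{}$ is the simple head of $S_{\redez}(\beta_k)\cvl S_{\redez}(\beta_l)$, which is $\mathrm{Im}(\rmat{\um})$ for $\um$ with $1$'s in positions $k$ and $l$, not $S_{\redez}(\beta_j)$; the latter is the socle and the cokernel. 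Also, the Lyndon-word construction you invoke is specific to the Kleshchev--Ram approach and is not literally what McNamara or BKM do (they work directly with cuspidality and standard modules), though the two are closely related.
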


For any pair of reduced expressions $\redez$ and
$ \widetilde{w_0}' \in [\redez]$, we have
$$S_{\redez}(\beta) \simeq S_{\redez'}(\beta) \quad \text{ for all } \beta \in \Phi^+_{n},$$
by \eqref{eq: [redez]} and Theorem \ref{thm: BkMc} ({\rm d}).
Thus we denote by $S_Q(\beta)$ the simple $R(\beta)$-module $S_{\redez}(\beta)$ for any reduced expression $\redez \in [Q]$.

\medskip

For a given quiver $\Gamma = (\Gamma_0,\Gamma_1)$ and $i,j \in \Gamma_0$,
we denote by
$$\text{$\Gamma[i,j]$ the number of arrows $\mathsf{a}$ in $\Gamma_1$ such that $s(\mathsf{a})=i$ and $t(\mathsf{a})=j$.}$$

Define a generalized symmetric Cartan matrix $\cm^\Gamma=(a^\Gamma_{i,j})_{i,j\in \Gamma_0}$ and a
set of polynomials $(\calQ^\Gamma_{i,j}(u,v))_{i,j\in \Gamma_0}$ associated to the given quiver $\Gamma$ as follows:
\begin{equation} \label{equation: quiver cm}
\begin{aligned}
a^\Gamma_{i,j} = \begin{cases} \qquad \quad 2 \\
-\Gamma[i,j]-\Gamma[j,i] \end{cases}
\quad \text{ and } \quad
\calQ^\Gamma_{i,j}(u,v)= \begin{cases} (u-v)^{\Gamma[i,j]}(v-u)^{\Gamma[j,i]} & \text{ if } i \ne j \in \Gamma_0, \\ \qquad \qquad 0 & \text{ otherwise.}\end{cases}
\end{aligned}
\end{equation}
We denote by $R^\Gamma$ the quiver Hecke algebra associated with the polynomials $(\calQ^\Gamma_{i,j}(u,v))_{i,j\in \Gamma_0}$.

\medskip

From now on, we recall the {\it quantum affine Schur-Weyl duality functors} which were introduced in \cite{KKK13a,KKK13b}.

\medskip
Let $\{ V_s \}_{s \in \mathcal{S}}$ be a family of good $U_q'(\g)$-modules labeled by an index set $\mathcal{S}$.
For a triple $(J,X,s)$ consisting of
$$ \ {\rm (i)} \text{ $J$ is an index set}, \ {\rm (ii)} \text{ a map $X:J \to \field^\times$}, \ {\rm (iii)} \text{ a map $s:J \to \mathcal{S}$},$$
we can associate a quiver $Q^J=(Q^J_0,Q^J_1)$ as follows:
$$ \text{$Q^J_0=J$ and $Q^J[i,j]$ is the order of the zero of $d_{V_{s(i)},V_{s(j)}}$ at $X(j)/X(i)$ for $i, j\in J$.} $$

\begin{theorem} \cite{KKK13a} \label{thm: duality functor} There exists a functor
$$\F : \Rep(R^{Q^J}) \rightarrow \Cat_\g$$
which satisfies the following properties:
\begin{enumerate}
\item[({\rm a})] $\F$ is a tensor functor. Namely, there exist $U_q'(\g)$-module isomorphisms
$$\F(R^{Q^J}(0)) \simeq \field  \text{ and } \F(M_1 \cvl M_2) \simeq \F(M_1) \tens \F(M_2) \ \ \text{for any $M_1, M_2 \in \Rep(R^{Q^J})$}.$$
\item[({\rm b})] If the underlying graph of $Q^J$ is a Dynkin diagram of finite type $A$, $D$ and $E$, then
the functor $\F$ is exact.
\end{enumerate}
\end{theorem}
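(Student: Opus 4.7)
The plan is to construct $\F$ explicitly as a tensor functor using the affinizations of the good modules $V_s$ and the renormalized $R$-matrices. First I would define, for each sequence $\nu = (\nu_1,\ldots,\nu_m) \in J^m$ with $\beta \seq \alpha_{\nu_1}+\cdots+\alpha_{\nu_m}$, the tensor product $V^{\nu}_\aff \seq V_{s(\nu_1),\aff} \tens \cdots \tens V_{s(\nu_m),\aff}$ of affinizations, shifted by the spectral parameters $X(\nu_k)$, and then set $\F(M) \seq M \otimes_{R^{Q^J}(\beta)} \bigl(\soplus_{\nu \in J^\beta} V^{\nu}_\aff\bigr)$ after equipping the direct sum with a right $R^{Q^J}(\beta)$-action. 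In that action the idempotent $e(\nu')$ projects onto the $V^{\nu'}_\aff$-summand; $x_k$ acts via the spectral-parameter automorphism $z_{V_{s(\nu_k)}}$ on the $k$-th tensor factor; and $\tau_\ell$ acts on adjacent factors by the renormalized $R$-matrix $\Rnorm_{V_{s(\nu_\ell)},V_{s(\nu_{\ell+1})}}$ rescaled so as to cancel the zero of order $Q^J[\nu_\ell,\nu_{\ell+1}]$ of $d_{V_{s(\nu_\ell)},V_{s(\nu_{\ell+1})}}$ at $X(\nu_{\ell+1})/X(\nu_\ell)$.

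Next, I would check that the defining relations of $R^{Q^J}$ are satisfied on the nose. The quadratic relation $\tau_\ell^{\,2} e(\nu) = \calQ^{Q^J}_{\nu_\ell,\nu_{\ell+1}}(x_\ell,x_{\ell+1}) e(\nu)$ is forced by the very definition of $Q^J[i,j]$ as the order of vanishing of $d_{V_{s(i)},V_{s(j)}}$ at $X(j)/X(i)$ together with the crossing identity for $\Rnorm_{V,W}\Rnorm_{W,V}$; the braid-type relations among the $\tau_\ell$ reduce to the quantum Yang--Baxter equation for $R$-matrices on three good modules; the mixed relations with $x_k$ reduce to naturality of the spectral parameter shift $z_V$. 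The tensor-functoriality statement of part (a) then follows because the convolution product $\cvl$ on $R^{Q^J}$-modules is induction from $R^{Q^J}(\beta) \tens R^{Q^J}(\gamma)$ to $R^{Q^J}(\beta+\gamma)$, and induction of the $R$-matrix data corresponds exactly to concatenation of the affinized tensor products; the isomorphism $\F(R^{Q^J}(0)) \simeq \field$ is immediate from the definition.

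For part (b), with $Q^J$ of ADE type, my strategy is to exploit the cuspidal decomposition of $\Rep(R^{Q^J})$ given by Theorem \ref{thm: BkMc}. It suffices to verify that $\F$ sends each cuspidal $S_{\redez}(\beta)$ to a non-zero simple of $\Cat_\g$, and that each short exact sequence attached to a minimal pair in Theorem \ref{thm: BkMc}(d) maps to a short exact sequence of $\Up$-modules. The latter fact can be extracted from Theorem \ref{Thm: bpro}(3) combined with the Dorey-type surjections of Theorems \ref{Thm: Dorey D_n(1)} and \ref{Thm: Dorey D_n+1(2)}, which produce the required surjection onto the head $V_Q(\gamma)$ of the tensor product $V_Q(\beta) \tens V_Q(\alpha)$; a d\'evissage along the ${\tb}$-order $\le^\tb_{\redez}$ then propagates exactness from minimal pairs to arbitrary short exact sequences in $\Rep(R^{Q^J})$.

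The main obstacle will be the bookkeeping of the normalization in step one: to get the quadratic relation to hold exactly (rather than up to an invertible scalar in $\field[x_\ell,x_{\ell+1}]$), one must match the leading coefficient of $\calQ^{Q^J}_{i,j}$ with the leading behaviour of $\Rnorm_{V_{s(i)},V_{s(j)}}$ at its zero at $X(j)/X(i)$ on both sides simultaneously. A second delicate point, specific to part (b), is showing that exactness really does require the ADE hypothesis via Theorem \ref{thm: BkMc}; for this one must check that outside ADE type the cuspidal generation and the clean head/socle structure used above both break down, so the argument via minimal pairs cannot be pushed through.
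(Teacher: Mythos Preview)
The paper does not prove this theorem; it is quoted from \cite{KKK13a} without proof, so there is no ``paper's own proof'' to compare against. I will therefore assess your sketch against what \cite{KKK13a} actually does.

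Your outline for part (a) is essentially the construction of \cite{KKK13a}: one builds a $(U_q'(\g), R^{Q^J}(\beta))$-bimodule out of the affinized tensor products $\widehat{V}^{\otimes\beta}$, with $e(\nu)$, $x_k$, $\tau_\ell$ acting via projection, spectral shift, and renormalized $R$-matrix respectively, and then sets $\F(M)=\widehat{V}^{\otimes\beta}\otimes_{R^{Q^J}(\beta)} M$. The verification of the quiver Hecke relations and the tensor-functor property goes as you say.

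Your argument for part (b), however, does not work. Exactness of $\F$ means that \emph{every} short exact sequence in $\Rep(R^{Q^J})$ is carried to a short exact sequence in $\Cat_\g$; checking this only for the length-two sequences attached to minimal pairs in Theorem~\ref{thm: BkMc}(d) is far from sufficient, and the ``d\'evissage along $\le^{\tb}_{\redez}$'' you invoke does not bridge that gap --- a right-exact tensor functor can easily preserve those particular sequences while failing to be left-exact in general. Moreover, you appeal to Theorems~\ref{Thm: Dorey D_n(1)} and~\ref{Thm: Dorey D_n+1(2)}, which are specific to types $D_n^{(1)}$ and $D_{n+1}^{(2)}$, to prove a statement about arbitrary ADE graphs $Q^J$; this already leaves types $A$ and $E$ uncovered. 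The genuine proof in \cite{KKK13a} proceeds by showing that, when the underlying graph of $Q^J$ is simply-laced Dynkin, the bimodule $\widehat{V}^{\otimes\beta}$ is finitely generated projective as a right $R^{Q^J}(\beta)$-module; this is a structural statement about the bimodule itself (established via the polynomial/PBW description of $R^{Q^J}(\beta)$ in finite type) and makes no reference to cuspidal modules or Dorey-type morphisms.
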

We call the functor $\F$ the {\it generalized quantum affine Schur-Weyl duality functor}.

\begin{theorem} \cite{KKK13b} \label{thm: duality functor on Q}
For $\g=D^{(1)}_{n}$ $($resp. $A^{(1)}_{n})$
and $Q$ a Dynkin quiver of finite type $A_n$ $($resp. $D_n)$,
take $$ J \seq \left\{ (i,p) \in \Z Q \ | \ \widehat{\phi}(i,p) \in \Pi_n \times \{ 0 \}  \right\}.$$
We also take two maps $s: J \to \{ V(\varpi_i) \ | \ i \in I_0 \}$ and $X: J \to  \field^\times$ given by
$$ s(i,p)=V(\varpi_i) \ \text{ and } \ X(i,p)= (-q)^p  \ \text{ for } (i,p) \in J.$$
\begin{itemize}
\item[({\rm a})] The underlying graphs of $Q^J$ and $Q$ coincide. Thus the functor
$$\F^{(1)}_Q: \Rep(R^{Q^J}) \rightarrow \Cat^{(1)}_Q \ \text{is exact.} $$
\item[({\rm b})] The functor $\F^{(1)}_Q$ sends simples to simples, bijectively. In particular, $$\F^{(1)}_Q(\SQbe) \simeq \VQbe.$$
\end{itemize}
\end{theorem}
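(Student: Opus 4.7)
My plan is to establish (a) by direct computation of denominators and (b) by induction on height using the convex order.

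For part (a), I would identify $J$ with $I_0$ via $k \leftrightarrow \inp(\alpha_k) =: (i_k, p_k)$, using Lemma \ref{Lem: Key Lem} and Lemma \ref{Lem: image} to pin down these positions according to whether $k$ is a source, sink, left intermediate or right intermediate of $Q$. By definition, $Q^J[k,l]$ is the order of vanishing of $d_{i_k,i_l}(z)$ at $z=(-q)^{p_l-p_k}$, which I would read off from the explicit formula in Theorem \ref{Thm; denominator}(a). A case analysis on the orientation of edges between $k$ and $l$ in $Q$ should yield the following dichotomy: if $k,l$ are Dynkin-adjacent with $k\to l$ in $Q$, then $d_{i_k,i_l}((-q)^{p_l-p_k})$ has a simple zero while the reverse direction is non-vanishing; if $k,l$ are non-adjacent in $\Delta_n$, then neither denominator vanishes. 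This gives $Q^J=Q$ as oriented quivers, and in particular the underlying graph is the Dynkin diagram of type $D_n$, so exactness of $\F^{(1)}_Q$ follows from Theorem \ref{thm: duality functor}(b).

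For part (b), I would fix a reduced expression $\redez \in [Q]$ and induct on $\het(\beta)$ using the convex order $<_\redez$. The base case $\beta=\alpha_k$ is built into the construction: $\F^{(1)}_Q(S_Q(\alpha_k)) \simeq V(\varpi_{i_k})_{(-q)^{p_k}}=V_Q(\alpha_k)$. For the inductive step, choose a minimal pair $(\alpha,\gamma)$ of $\beta=\alpha+\gamma$ with respect to $<_\redez$, which exists by Theorem \ref{Thm: compatible reading}. Theorem \ref{thm: BkMc}(d) furnishes the $4$-term exact sequence
$$0 \to S_Q(\beta) \to S_Q(\alpha) \cvl S_Q(\gamma) \to S_Q(\gamma) \cvl S_Q(\alpha) \to S_Q(\beta) \to 0,$$
so applying the exact tensor functor $\F^{(1)}_Q$ and using the induction hypothesis on $\alpha,\gamma$ produces an analogous sequence in $\Cat^{(1)}_Q$ whose middle term is $V_Q(\alpha) \otimes V_Q(\gamma)$. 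The Dorey's rule of Theorem \ref{Thm: Dorey D_n(1)} provides a surjection $V_Q(\alpha) \otimes V_Q(\gamma) \twoheadrightarrow V_Q(\beta)$, and matching heads then yields $\F^{(1)}_Q(S_Q(\beta)) \simeq V_Q(\beta)$. Bijectivity on all simples follows by combining Theorem \ref{thm: BkMc}(c) with Theorem \ref{Thm: bpro}(3): every simple $R^{Q^J}$-module is the head of some convolution $\dct{k=1}{\msN} S_Q(\beta_k)^{\cvl m_k}$, which $\F^{(1)}_Q$ sends to the head of $\tens_{k=1}^{\msN} V_Q(\beta_k)^{\otimes m_k}$, and the latter parametrization exhausts all simples in $\Cat^{(1)}_Q$.

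The main obstacle is the inductive step of (b): verifying that $\F^{(1)}_Q(S_Q(\beta))$ is genuinely $V_Q(\beta)$ rather than a proper subquotient of $V_Q(\alpha) \otimes V_Q(\gamma)$. This amounts to showing that the Dorey's morphism supplied by Theorem \ref{Thm: Dorey D_n(1)} spans a one-dimensional Hom space at the relevant specialization, so that the head of the tensor product is simple and coincides with $V_Q(\beta)$. This is essentially equivalent to the minimality of $(\alpha,\gamma)$ with respect to $<_\redez$ being reflected in the simplicity of the pole of the $R$-matrix at $(-q)^{p_\gamma-p_\alpha}$, a statement which in turn rests on the combinatorial analysis of $\Gamma_Q$ developed in Section \ref{sec: Combinatorics}.
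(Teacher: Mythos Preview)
The paper does not prove this theorem at all: it is quoted verbatim from \cite{KKK13b} with no argument supplied, so there is no ``paper's own proof'' to compare against. What you have written is a plausible sketch of how the proof in \cite{KKK13b} proceeds, and your identification of the main obstacle in part (b) is accurate.

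One point of caution on logical order: you invoke Dorey's rule in the form ``for every minimal pair $(\alpha,\gamma)$ of $\beta$, the positions $\inp(\alpha),\inp(\gamma),\inp(\beta)$ satisfy one of the conditions of Theorem~\ref{Thm: Dorey D_n(1)}''. That statement is precisely the content of Section~\ref{sec: Dorey minimal} of this paper, and the proofs there repeatedly \emph{use} Theorem~\ref{thm: duality functor on Q} (for instance to pass from $V_Q$-surjections to $S_Q$-surjections, as in Theorem~\ref{thm: free}). If you actually wanted to run your inductive argument, you would need to extract only the $V_Q$-side of those propositions, whose proofs rely solely on the combinatorics of Section~\ref{sec: Combinatorics} and the Chari--Pressley morphisms of Theorem~\ref{Thm: Dorey D_n(1)}, and carefully avoid any appeal to the functor. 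This can be done, but it is not how the paper is organized.

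The genuine gap you flag---that the head of $V_Q(\alpha)\otimes V_Q(\gamma)$ is simple and equal to $V_Q(\beta)$---is not resolved by the material in this paper and is handled in \cite{KKK13b} by an argument comparing the upper-global-basis parametrization of simples on both sides, not merely by pole-counting. Your sketch does not supply that step.
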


\begin{remark} \label{rem: faithful}
In the above theorem, the functor $\F^{(1)}_Q$ is a faithful functor by the following argument:
For any non-zero homomorphism $f$ in $\Rep(R^{Q^J})$, ${\rm Im}(f)$ does not vanish since $\F^{(1)}_Q$ sends simples to simples.
Thus $\F^{(1)}_Q$ is faithful.
\end{remark}

\section{Dorey's rule on $\AR$ and minimal pair.} \label{sec: Dorey minimal}

In this section, we fix the Dynkin quiver $Q$ of finite type $D_n$, $\g$ as $D^{(1)}_{n}$ and the quiver Hecke algebra $R$ as of finite type $D_n$.
We prove that how $\AR$ codifies the Dorey's
rule for $D^{(1)}_{n}$ and all pair $(\al,\be)$ of a multiplicity free positive root $\ga \in \Prt$ are minimal with suitable total orders.
Moreover, we show that there exist $(n-b-1)$-many {\it non-minimal pairs} $(\al,\be)$ of a {\it non-multiplicity free} $\ga=\ve_a+\ve_b \in \Prt$. To do this,
we need to apply the results in Section \ref{sec: Combinatorics}.

\begin{definition} \label{def: upper ray} Let  $\ga$ be a vertex in $\AR$ with $1<\inp_1(\ga) \le n$. (Equivalently, $\ga \in \Prt$.)
The {\it upper ray of $\ga$} is a concatenation of an $S$-sectional path and an $N$-sectional path satisfying the following properties:
\begin{itemize}
\item  $\underbrace{\mtS_1 \to \cdots \to \mtS_{a} \to \mtS_{a+1}}_{\text{$S$-sectional path}}=\ga
=\overbrace{\mathtt{N}_{b+1} \to \mathtt{N}_{b} \to \cdots \to  \mathtt{N}_1}^{\text{$N$-sectional path}}$.
\item There is no vertex $\mtS_0 \in (\AR)_0$ such that $\mtS_0 \to \mtS_1$ is an $S$-sectional path in $\AR$,
\item There is no vertex $\mathtt{N}_0 \in (\AR)_0$ such that $\mathtt{N}_1 \to \mathtt{N}_0$ is an $N$-sectional path in $\AR$.
\end{itemize}
\end{definition}

\begin{lemma} \label{Lem one of two}
 For every $\ga \in \Prt$ with $1<\inp_1(\ga) \le n$, write its upper ray as in {\rm Definition \ref{def: upper ray}}.
Then we have
\begin{align} \label{eq: upper ray 1}
\inp_1(\mtS_1) \text{ or } \inp_1(\mathtt{N}_1)=1.
\end{align}
\end{lemma}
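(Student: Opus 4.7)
The plan is to prove the lemma directly via the explicit characterization \eqref{eq: known characterization} of $\Gamma_Q$. Write $\gamma = (k, p)$ with $2 \le k \le n$, and set $\tilde k \seq k$ when $k \le n-2$ and $\tilde k \seq n-1$ when $k \in \{n-1, n\}$. Let $P_S$ and $P_N$ denote the $S$-part and $N$-part of the upper ray of $\gamma$, extended maximally. After at most one special first step through the fork at level $n-2$ (which occurs iff $k \in \{n-1, n\}$), both paths traverse the vertices $(j, p - (\tilde k - j))$ (for $P_S$) and $(j, p + (\tilde k - j))$ (for $P_N$) with $1 \le j \le \min(k, n-2)$; each such intermediate level has $m_j = n-2$ by \eqref{eq: m n-1 m n}.

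By \eqref{eq: known characterization}, the condition ``$P_S$ reaches level $1$'' (resp.\ ``$P_N$ reaches level $1$'') amounts to the system $\xi_j - 2(n-2) \le p - (\tilde k - j) \le \xi_j$ (resp.\ with $+$ in place of $-$) for every admissible $j$. Since $|\xi_{j+1} - \xi_j| = 1$, the sequence $j \mapsto \xi_j - j$ is non-increasing and $j \mapsto \xi_j + j$ is non-decreasing, so the binding constraints occur at $j = 1$ and $j = \min(k, n-2)$. This yields intervals $I_S$ and $I_N$ for $p$ whose endpoints satisfy $(\text{upper of } I_N) - (\text{lower of } I_S) = 2(n - \tilde k - 1) \ge 0$, hence $I_S \cup I_N$ is a single interval.

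It remains to show $I_\gamma \seq [\xi_k - 2m_k,\, \xi_k] \subseteq I_S \cup I_N$. For $k \le n-2$ this is immediate because $m_k = n-2$ gives $I_\gamma = I_S \cup I_N$. For $k \in \{n-1, n\}$ the union is $[\xi_{n-2} - (2n-3),\, \xi_{n-2}+1]$, so the inclusion reduces to $\xi_k \le \xi_{n-2}+1$ (immediate from $d(k, n-2) = 1$) and the bound $m_k \le n-2$ in the subcase $\xi_k - \xi_{n-2} = -1$. The main obstacle is this last bound: a priori $m_k$ could be $n-1$, but Lemma \ref{Lem: image} then forces $|\xi_{n-1} - \xi_n| = 2$, which combined with $\xi_k - \xi_{n-2} = -1$ would place the remaining vertex of $\{n-1, n\}$ at distance $3$ from $\xi_{n-2}$, violating $d(\cdot, n-2) = 1$. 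Ruling out this dangerous configuration closes the proof.
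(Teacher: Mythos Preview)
Your argument is correct and takes a genuinely different route from the paper. The paper proceeds case-by-case on the form of $\gamma$ as $\varepsilon_a\pm\varepsilon_b$, invoking the structural theory of swings and shallow sectional paths built up earlier (Theorem~\ref{Thm: V-swing}, Theorem~\ref{thm: short path}, Corollary~\ref{cor: n,n-1}, Corollary~\ref{cor: reverse uni}) to see that $\gamma$ always sits on a path reaching level~$1$. Your proof bypasses that machinery entirely: you work directly with the numerical description \eqref{eq: known characterization} of $(\Gamma_Q)_0$ and the values of $m_i$ from \eqref{eq: m n-1 m n} and Lemma~\ref{Lem: image}, reducing the claim to an interval inclusion $I_\gamma\subseteq I_S\cup I_N$ that is checked via the monotonicity of $j\mapsto \xi_j\pm j$. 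This is more elementary and self-contained, at the cost of being less informative about which half of the upper ray succeeds (the paper's structural approach identifies the relevant swing or shallow path explicitly, which is what is used downstream in Proposition~\ref{prop: upper ray}). A tiny remark: your assertion that ``$m_k=n-2$ gives $I_\gamma=I_S\cup I_N$'' for $k\le n-2$ is correct precisely because you include $j=k$ among the constraints; without that endpoint one only gets inclusion, not equality. Your handling of the delicate case $k\in\{n-1,n\}$ with $\xi_k-\xi_{n-2}=-1$ via Lemma~\ref{Lem: image} is exactly right.
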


\begin{proof}
(a) Assume that $\ga=\ve_a-\ve_b$ for $a < b \le n-2$. Then, by Theorem \ref{Thm: V-swing} and  Theorem \ref{thm: short path},
it is contained in the $a$-swing and the shallow maximal $(S,-b)$-sectional path, or the shallow maximal $(N,-b)$-sectional path.
Thus our assertion follows from Theorem \ref{thm: short path}.

(b) Assume that $\ga=\ve_a+\ve_b$ for $a < b \le n-2$. Then, by Theorem \ref{Thm: V-swing}, it is located at the intersection of
the $a$-swing and the $b$-swing. Then our assertion follows from Corollary \ref{cor: reverse uni}.

(c) Assume that $\ga=\ve_a\pm\ve_b$ for $a < b$ and $b=n-1$ or $n$. Then our assertion follows from Lemma \ref{Lem: last 2 line}  and
Corollary \ref{cor: n,n-1}.
\end{proof}

\begin{proposition} \label{prop: upper ray}
 For a pair $(\al,\be)$ of $\al+\be=\ga \in \Prt$, assume that the pair $(\al,\be)$ is contained in
the upper ray of $\ga$. Then we have
$$\VQbe \otimes \VQal \twoheadrightarrow \VQga \quad \text{ and } \quad \SQbe \cvl \SQal \twoheadrightarrow \SQga.$$
\end{proposition}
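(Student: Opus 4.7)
The plan is to use the coordinate parametrization of the upper ray to identify $\al$ and $\be$ as explicit fundamental modules, verify that their positions satisfy one of the conditions of Theorem~\ref{Thm: Dorey D_n(1)}, and transport the resulting surjection of $\Up$-modules across the generalized quantum affine Schur--Weyl duality functor $\F^{(1)}_Q$ to obtain the surjection of $R$-modules.

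First I would set $\inp(\ga) = (k, r)$ and parametrize the upper ray via Definition~\ref{def: upper ray}: every vertex $\mtS_s$ of the $S$-sectional part has coordinate $(k-(a+1-s),\,r-(a+1-s))$, and every vertex $\mathtt{N}_t$ of the $N$-sectional part has coordinate $(k-(b+1-t),\,r+(b+1-t))$. Because $\al$ and $\be$ lie on the upper ray with $\al + \be = \ga$, the additive property \eqref{eq: ad fn} combined with Theorem~\ref{Thm: V-swing}, Lemma~\ref{Lem: last 2 line}, Corollary~\ref{cor: n,n-1}, and Corollary~\ref{Cor: first} rules out the possibility that both $\al$ and $\be$ lie on the same wing (dimension vectors along a single sectional path of $\AR$ cannot sum to the terminal vertex of that path), forcing one summand to equal some $\mtS_s$ and the other some $\mathtt{N}_t$. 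The equation $\al+\be=\ga$ then pins down $s$ and $t$, and hence the coordinates $\inp(\al),\inp(\be)$, in terms of $(k,r)$.

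Next I would verify the hypotheses of Theorem~\ref{Thm: Dorey D_n(1)}, splitting into cases according to whether $k \le n-2$ or $k\in\{n-1,n\}$ and whether $\ga$ is multiplicity free. If $\ga$ is multiplicity free and $k\le n-2$, the coordinate shifts read off the upper ray match condition~(i); if either summand has $\inp_1\in\{n-1,n\}$, the parity identity $n-s \equiv \ell-m \pmod 2$ of condition~(iii) follows from \eqref{eq: difference n n-1} and Lemma~\ref{Lem: last 2 line}; if $\ga$ is multiplicity non-free (so $\ga = \ve_a+\ve_b$ with $b\le n-2$), Corollary~\ref{cor: reverse uni} and Corollary~\ref{cor: nfree position} place the upper ray so that the resulting coordinates match condition~(ii), with the Coxeter-number shift $\mathtt{h}_n = 2n-2$ accounting for the exponent $-j+2n-2$. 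Lemma~\ref{Lem one of two} ensures that at least one of $\mtS_1,\mathtt{N}_1$ is at level $1$, which is exactly the level at which conditions~(i) and~(iii) anchor one fundamental module as $V(\va_1)_?$. Applying Theorem~\ref{Thm: Dorey D_n(1)} then yields the surjection $\VQbe \tens \VQal \twoheadrightarrow \VQga$.

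To pass to $R$-modules, I would invoke Theorem~\ref{thm: duality functor on Q}: the functor $\F^{(1)}_Q$ is exact, converts $\cvl$ to $\tens$, and induces a bijection between isomorphism classes of simples, with $\F^{(1)}_Q(S_Q(\mu)) \simeq V_Q(\mu)$. In particular $\F^{(1)}_Q(\SQbe \cvl \SQal) \simeq \VQbe \tens \VQal$. Since $\VQga$ is a simple quotient of $\VQbe \tens \VQal$, it is a summand of its semisimple head; the bijection on simples combined with exactness of $\F^{(1)}_Q$ then forces $\SQga$ to appear in the head of $\SQbe \cvl \SQal$, giving the surjection $\SQbe \cvl \SQal \twoheadrightarrow \SQga$. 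The main obstacle is the case analysis in step two: the arithmetic of shifts by $\mathtt{h}_n$ and the parity constraints in (iii) require careful bookkeeping near the forked vertices $n-1,n$ of $\Delta_n$, and ruling out both summands lying on the same wing of the upper ray uses the full combinatorial description of $\AR$ assembled in Section~\ref{sec: Combinatorics}.
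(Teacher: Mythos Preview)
Your overall architecture---locate $\al,\be$ on the upper ray, verify a hypothesis of Theorem~\ref{Thm: Dorey D_n(1)}, then transport via $\F^{(1)}_Q$---matches the paper's. But the decisive step is hidden in your sentence ``the equation $\al+\be=\ga$ then pins down $s$ and $t$.'' Knowing that $\al$ sits on one wing and $\be$ on the other does not by itself determine their levels; what one must prove is the identity $\inp_1(\al)+\inp_1(\be)=\inp_1(\ga)$, and this is the entire content of the paper's argument. The paper does it by tracking summands: if $\al=\ve_a-\ve_c$ lies on the $N$-wing of the upper ray of $\ga=\ve_a-\ve_b$, one shows (via the level-$1$ roots $\kappa_i$ and Corollary~\ref{Cor: nfree mfree position}) that the adjacent $\kappa_{i+1}$ carries $\ve_c$ and lies in the $N$-part of the $c$-swing, so the $c$-swing meets the $S$-wing of the upper ray exactly at $\be=\ve_c-\ve_b$, one step above $\ga$ relative to $\al$. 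Only then can one read off the level relation, which is condition~(i) of Theorem~\ref{Thm: Dorey D_n(1)} with $\ell=k$. Your coordinate parametrization of the upper ray is correct, but it does not replace this summand-tracking argument.

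There is also a misattribution in your case analysis. For a multiplicity non-free $\ga=\ve_a+\ve_b$ with $b\le n-2$, pairs $(\al,\be)$ on the upper ray still fall under condition~(i) (or~(iii) when a level reaches $n-1$ or $n$), not condition~(ii). Condition~(ii), with its $2n-2-i-j$ relation, governs precisely the pairs that are \emph{not} on any sectional path through $\ga$; these are the non-minimal pairs handled separately in Proposition~\ref{prop: nfree}, and they correspond to the double zeros of the denominator (Corollary~\ref{cor: double means}, Theorem~\ref{thm: surj free}). So invoking Corollary~\ref{cor: reverse uni} and Corollary~\ref{cor: nfree position} to force condition~(ii) for an upper-ray pair points in the wrong direction. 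Once these two points are fixed---carrying out the summand argument to obtain the level identity, and dropping the appeal to condition~(ii)---your transport step via Theorem~\ref{thm: duality functor on Q} is fine and agrees with the paper.
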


\begin{proof}
In this proof we give the proof only for the positive roots which are of the form $\ga=\ve_a-\ve_b$.
For the other positive roots, one can prove by applying the similar strategy given in this proof.

By the assumptions, $\be$ is in $S$-part of the upper ray, $\al$ is in $N$-part of the upper ray and
$\{ \al,\be \} = \{ \ve_a-\ve_c, \ve_c-\ve_b \}$.
We assume once more that $\al=\ve_a-\ve_c$. Then we have the situation in $\AR$ as follows:
$$
{\xy (-15,0)*{}="T1"; (15,0)*{}="T2"; (3,0)*{}="C1"; (-3,0)*{}="C0"; (9,0)*{}="C2";
(-15,-3)*{}="L1"; (-9,-6)*{}="L2"; (-6,-9)*{}="L3"; (12,-3)*{}="R1"; (9,-6)*{}="R2";
(0,-15)*{}="B";
"T1"; "B" **\dir{-};"R1"; "B" **\dir{-};"R2"; "C1" **\dir{-}?(.3)+(-2.5,0)*{\scriptstyle \rho};
"L2"; "C0" **\dir{-};
"L3"; "C1" **\dir{-};
"R2"*{\bullet};"B"*{\bullet};"C1"*{\bullet};"C0"*{\bullet};
"C2"*{\bullet};"L2"*{\bullet};"L3"*{\bullet};
"B"+(0,-3)*{\scriptstyle \ga=\ve_a-\ve_b };
"R2"+(8,0)*{\scriptstyle \al=\ve_a-\ve_c};
"C0"+(0,3)*{\scriptstyle \kappa_{i+1} };
"C1"+(0,3)*{\scriptstyle \kappa_i };
"C2"+(0,3)*{\scriptstyle \kappa_{i-1} };
"L2"+(-4,0)*{\scriptstyle \mtS_{u-1}};
"L3"+(-3,0)*{\scriptstyle \mtS_{u}};
\endxy}
$$
Then we have a multiplicity free positive root $\kappa_i=\ve_b-\ve_c$ for some $b < c$.
Note that, in this case, the maximal $S$-sectional path $\rho$ containing $\al$ is shallow.
Now we claim that $\kappa_{i+1}$ exists and contains $\ve_c$ as its summand.
Note that one of $\kappa_{i+1}$ and $\kappa_{i-1}$ exists. Assume that
\begin{equation} \label{as: kappa}
\text{$\kappa_{i-1}$ exists and contains $\ve_c$ as its summand.}
\end{equation}
Then it must be contained in the $S$-part of the $c$-swing. If it is in the $N$-part of the $c$-swing, it has an intersection with
$\rho$. Thus the positive root located at the intersection must be $\ve_c-\ve_c$, which yields a contradiction.

Under the assumption \eqref{as: kappa}, $\kappa_{i-1}$ must be in the $S$-part of the $c$-swing.
In this case, $\ve_a+\ve_c$ is located at the intersection of the $S$-part of the $c$-swing and $N$-part of upper ray. Thus we have $$\inp_1(\ve_a+\ve_c)<\inp_1(\ve_a-\ve_c)$$ which yields
a contradiction to Corollary \ref{Cor: nfree mfree position}. Hence $\kappa_{i-1}$ can not contain $\ve_c$ as its summand.

Now we prove the existence of $\kappa_{i+1}$. If it is not exists, then
$$\kappa_i=\kappa_{n-1}=\Dim \mtP(1)=\ve_1-\ve_c.$$
Then $\kappa_{i-1}$ must contain $\ve_c$ as its summand, which is impossible.

Now we need to prove that $\kappa_{i+1}$ is contained in the $N$-part of the $c$-swing. If it is not contained in the $N$-part of the $c$-swing,
then there is no intersection the shallow maximal $(N,-b)$-sectional path and the $c$-swing. It implies
that $\ve_c-\ve_b \not \in \Prt$ which is impossible. Thus we can conclude that $\kappa_{i+1}$ is contained in the $N$-part of the $c$-swing.
Thus we have
$$\mtS_{u-1}=\ve_c-\ve_b=\be$$
by Theorem \ref{Thm: V-swing} and Theorem \ref{thm: short path}.
Thus we have $$ \inp_1(\be)+\inp_1(\al)=\inp_1(\ga)$$
which implies our assertion by \eqref{eq: p ij D} {\rm (i)}.
\end{proof}

\subsection{Multiplicity free positive roots of the form $\ve_a-\ve_b$ ($a < b \le n-2$)} \label{sec: vea-veb}

\begin{lemma} \label{lem: mfree pair position 1}
Assume $\ga=\ve_a-\ve_b$, $\al=\ve_c-\ve_b$ for $a<c<b \le n-2$ and $\inp_1(\ga)<\inp_1(\al)$.
\begin{enumerate}
\item[({\rm a})] If $\ga$ is contained in the $N$-part of the $a$-swing, then $\be=\ve_a-\ve_c$ is
contained in the $S$-part of the $a$-swing.
\item[({\rm b})] If $\ga$ is contained in the $S$-part of the $a$-swing, then $\be=\ve_a-\ve_c$ is
contained in the $N$-part of the $a$-swing.
\end{enumerate}
\end{lemma}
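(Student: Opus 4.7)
The plan is to locate $\ga$ and $\be$ within the $a$-swing by analyzing the two sectional paths through each of their positions, and to use $\al$'s position on the shallow sectional path it shares with $\ga$.

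Both $\ga=\ve_a-\ve_b$ and $\be=\ve_a-\ve_c$ contain $\ve_a$ as a summand, so by Theorem \ref{Thm: V-swing} they both lie in the $a$-swing; and since $b,c\le n-2$, neither coincides with a tip $\ve_a\pm\ve_{\mathtt{t}}$. Through each vertex of $\AR$ pass exactly one $S$-sectional and one $N$-sectional path; for the root $\ga=\ve_a-\ve_b$ one of these is the corresponding part of the $a$-swing (carrying the summand $\ve_a$), and by Theorem \ref{thm: short path} the other must be the unique shallow maximal sectional path sharing $-\ve_b$. The same observation applies to $\be$ with $-\ve_c$. Consequently, $\ga$ lies in the $N$-part of the $a$-swing exactly when the shallow sectional path sharing $-\ve_b$ is of $S$-type, and $\be$ lies in the $S$-part of the $a$-swing exactly when the shallow sectional path sharing $-\ve_c$ is of $N$-type.

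The root $\al=\ve_c-\ve_b$ also contains $-\ve_b$, so it lies on the same shallow sectional path as $\ga$, and the hypothesis $\inp_1(\ga)<\inp_1(\al)$ fixes the relative position of $\ga$ and $\al$ along this path. For part (a), assuming $\ga$ is contained in the $N$-part of the $a$-swing, the shared shallow path is of $S$-type, so $\al$ is obtained from $\ga$ by travelling along $S$-arrows (which raise the level). I would then combine this with Corollary \ref{Cor: first} (b), which organizes the indices $\{|j_{\kappa_s}|\}$ in a $\Lambda$-pattern with a separator $\mathtt{l}$ distinguishing $S$- from $N$-type shallow sectional paths, together with the ordering $c<b\le n-2$, to conclude that the shallow path sharing $-\ve_c$ must be of the opposite type to the one sharing $-\ve_b$, namely $N$-type; by the dichotomy established above, this places $\be$ in the $S$-part of the $a$-swing.

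The main obstacle is the case analysis needed to match $b$ and $c$ to the $\Lambda$-pattern of Corollary \ref{Cor: first}: depending on whether the indices associated with $b$ and $c$ lie on the same side of the separator $\mathtt{l}$ or on opposite sides, the argument plays out differently. In the ``same side'' configurations I would rule them out by tracking positions of $\ga$ and $\al$ explicitly on the shared shallow sectional path using the height function description of Lemma \ref{Lem: Key Lem} and the enumeration of $\{\kappa_s\}$, deriving a contradiction with $\inp_1(\ga)<\inp_1(\al)$. Part (b) then follows by the symmetric argument obtained by interchanging the roles of $S$ and $N$ throughout.
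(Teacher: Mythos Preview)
Your overall framing is sound and somewhat different from the paper's: you correctly reduce the question to whether the shallow maximal sectional path carrying $-\ve_c$ has the opposite orientation ($N$ vs.\ $S$) from the one carrying $-\ve_b$, and you are right that Corollary~\ref{Cor: first}(b) organises these shallow paths into a $\Lambda$-pattern at level~$1$. However, the step you flag as the ``main obstacle'' is exactly where the argument lives, and your plan for it is too vague. Saying you would ``track positions of $\ga$ and $\al$ explicitly'' and ``derive a contradiction with $\inp_1(\ga)<\inp_1(\al)$'' does not make clear how the inequality $c<b$ forces $-\ve_c$ to lie on the opposite side of the separator $\mathtt{l}$ from $-\ve_b$. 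In fact the $\Lambda$-pattern alone permits both $-\ve_b$ and $-\ve_c$ to sit on the same side; ruling this out requires bringing in the swing structure (the $\sigma$-pattern of Corollary~\ref{cor: reverse uni}) together with the location of $\al$ on the $c$-swing, and you have not indicated how these pieces interact.

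The paper's proof takes a more direct contradiction route that bypasses the $\Lambda$-pattern of Corollary~\ref{Cor: first} entirely. It assumes $\be$ lies in the $N$-part of the $a$-swing, so that the shallow $(S,-c)$-sectional path passes through $\be$, then examines the level-$1$ vertex $\kappa_u$ of the shallow $(S,-b)$-path and its neighbours $\kappa_{u\pm 1}$. One of these must lie on the $b$-swing, and in every case the resulting configuration forces the multiplicity non-free root $\ve_a+\ve_b$ to sit at a \emph{lower} level than one of the multiplicity-free roots $\ve_a-\ve_b$ or $\ve_a-\ve_c$ on the same sectional path. This contradicts Corollary~\ref{Cor: nfree mfree position}, which says multiplicity non-free roots always occupy the higher levels. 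That corollary is the real engine of the paper's argument, and it does not appear in your proposal at all. If you want to salvage your route via Corollary~\ref{Cor: first}, you will likely need to invoke Corollary~\ref{cor: reverse uni} in tandem and argue about the relative positions of the $a$-, $b$-, and $c$-swings; the paper's use of Corollary~\ref{Cor: nfree mfree position} is a shortcut that packages precisely this information.
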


\begin{proof}
(a) Assume that $\be$ is in the $N$-part of the $a$-swing. By the convexity of $\prec_Q$ and Theorem \ref{Thm: V-swing},
$\inp_1(\ga)<\inp_1(\be)$ and hence we have
$$
{\xy (-15,0)*{}="T1"; (15,0)*{}="T2"; (3,0)*{}="C1"; (-6,0)*{}="C0"; (-1,0)*{}="C00";
(-15,-3)*{}="L1"; (-9,-6)*{}="L2"; (-6,-9)*{}="L3"; (12,-3)*{}="R1"; (9,-6)*{}="R2";
(0,-15)*{}="B"; (-14,-15)*{}="B0";
"T1"; "B" **\dir{-};"R1"; "B" **\dir{-};"R2"+(6,-6); "C1" **\dir{-};
"B"; "B"+(15,0) **\dir{.};
"R2"+(-4,-4.7); "C0" **\dir{-};
"R2"*{\bullet};"B"*{\bullet};"C1"*{\bullet};"C0"*{\bullet};
"R2"+(4,-4)*{\bullet};
"R2"+(-4,-4.3)*{\bullet};
"B"+(25,0)*{\scriptstyle i=n-1};
"R2"+(3,0)*{\scriptstyle \ga};
"C0"+(0,3)*{\scriptstyle \kappa_{v} };
"C1"+(0,3)*{\scriptstyle \kappa_u };
"R2"+(12,-4)*{\scriptstyle \al=\ve_c-\ve_b};
"R2"+(-1,-4.3)*{\scriptstyle \be};
"R2"+(-11,0)*{\scriptstyle \rho};
\endxy}
$$
where $\kappa_u=\ve_{i_u}-\ve_b$, $\kappa_v=\ve_{i_v}-\ve_c$ and $\rho$ is a part of the shallow maximal $(S,-c)$-sectional path.
Then $\kappa_{u-1}$ or $\kappa_{u+1}$ must contain $\ve_b$ as its summand.

(i) Assume $\kappa_{u-1}$ (resp. $\kappa_{u+1}$) contains $\ve_b$ and is contained in the $S$-part of the $b$-swing. Then $\ve_a+\ve_b$ contained in $N$-part of the $a$-swing
and $\inp_1(\ve_a+\ve_b) < \inp_1(\ve_a-\ve_b)$ (resp. $\inp_1(\ve_a+\ve_b) < \inp_1(\ve_a-\ve_c)$),
which is a contradiction to Corollary \ref{Cor: nfree mfree position}.

(ii) Assume $\kappa_{u-1}$ (resp. $\kappa_{u+1}$) contains $\ve_b$ and is contained in the $N$-part of the $b$-swing. Then we have an intersection of
the $b$-swing and the shallow maximal $(S,-b)$-sectional path (resp. the shallow maximal $(S,-c)$-sectional path), which is impossible.

Thus there is no shallow maximal $(S,-c)$-sectional path and hence our assertion follows. By applying the similar argument, one can prove (b).
\end{proof}

\begin{proposition} \label{prop: mfree 1}
Assume $\ga=\ve_a-\ve_b$ $(a<c<b \le n-2)$. Assume that there exists $\al=\ve_c-\ve_b$ such that $a<c<b \le n-2$
and $\inp_1(\ga)<\inp_1(\al)$. Then we have
$$ \inp_1(\ga)+ \inp_1(\be)=\inp_1(\al), \qquad \text{ where $\be=\ga-\al$}.$$
Moreover,
\begin{enumerate}
\item[({\rm a})] if there are two $(\al,\be)$ and $(\al',\be')$ satisfying the assumption, then we have
$$|\inp_2(\al) - \inp_2(\be)| = |\inp_2(\al') - \inp_2(\be')|=2n-2+\inp_1(\ga).$$
\item[({\rm b})] there is a surjective $\Up$-module homomorphism
$$\VQbe \otimes \VQal \twoheadrightarrow \VQga \quad\text{or}\quad \VQal \otimes \VQbe \twoheadrightarrow \VQga$$
and hence there is a surjective $R$-module homomorphism
$$\SQbe \cvl \SQal \twoheadrightarrow \SQga \quad\text{or}\quad \SQal \cvl \SQbe \twoheadrightarrow \SQga.$$
\end{enumerate}
\end{proposition}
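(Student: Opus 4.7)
The plan is to combine the geometric description of $\AR$ developed in Section~1.4 (in particular Lemmas~\ref{Lem one of two}, \ref{lem: mfree pair position 1} and Theorems~\ref{Thm: V-swing}, \ref{thm: short path}) with Dorey's rule from Theorem~\ref{Thm: Dorey D_n(1)} and the Schur--Weyl functor of Theorem~\ref{thm: duality functor on Q}. First, I would locate the triple $\al,\be,\ga$ precisely in $\AR$. Since $\be=\ga-\al=\ve_a-\ve_c$ is obtained by subtracting $\al$ from $\ga$, Lemma~\ref{lem: mfree pair position 1} places $\be$ on the part of the $a$-swing opposite to the one containing $\ga$ (the hypothesis $\inp_1(\ga)<\inp_1(\al)$ selects between cases (a) and (b) of that lemma). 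Simultaneously, $\ga$ and $\al$ share the summand $-\ve_b$, so Theorem~\ref{thm: short path} forces them to lie on a common shallow maximal $(*,-b)$-sectional path, and $\inp_1(\ga)<\inp_1(\al)$ fixes whether this path is $(S,-b)$ or $(N,-b)$ and the order of $\ga,\al$ along it. This step splits into a few geometrically symmetric sub-cases that must all be handled.

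Second, I would translate this geometry into coordinate identities. Write $u$ for the second coordinate of the apex of the $a$-swing; Theorem~\ref{Thm: V-swing} then gives $\inp_2(\ga)=u\pm(n-1-\inp_1(\ga))$ and $\inp_2(\be)=u\mp(n-1-\inp_1(\be))$ with opposite signs, while the shallow $(*,-b)$-sectional recursion $\inp_2(T_{k+1})=\inp_2(T_k)+1$ (and its $N$-analogue) yields a direct relation between $\inp(\ga)$ and $\inp(\al)$. Adding these two relations and eliminating $u$ and the parameter $c$, a short linear computation produces the level identity $\inp_1(\ga)+\inp_1(\be)=\inp_1(\al)$ together with the part~(a) formula; the fact that $u$ and $c$ both cancel is precisely what makes the difference $|\inp_2(\al)-\inp_2(\be)|$ depend only on $n$ and $\inp_1(\ga)$, proving the invariance asserted in (a).

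Third, with the level and spectral parameter relations in hand, the surjection of $\Up$-modules follows from Theorem~\ref{Thm: Dorey D_n(1)} case~{\rm (i)} with $\ell=\inp_1(\al)$: depending on which sub-case of step~1 we are in, the roles of the first and second input in Dorey's rule are swapped, yielding either $\VQbe\otimes\VQal\twoheadrightarrow\VQga$ or $\VQal\otimes\VQbe\twoheadrightarrow\VQga$. To transfer this to the quiver Hecke side, I would apply the functor $\F^{(1)}_Q$ of Theorem~\ref{thm: duality functor on Q}: it is exact, tensor-compatible, and bijective on simple classes with $\F^{(1)}_Q(\SQbe)\simeq\VQbe$. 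Exactness plus bijectivity on simples implies that heads correspond, so the simple quotient $\VQga$ of $\VQbe\otimes\VQal$ is the image of a simple quotient $\SQga$ of $\SQbe\cvl\SQal$, producing the desired $R$-module surjection.

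The main obstacle is the coordinate bookkeeping in step~2: one must verify that each configuration arising from the choices ``$\ga$ in $S$- or $N$-part of the $a$-swing'' and ``shared $(S,-b)$- or $(N,-b)$-path'' produces the same level identity and the same value $|\inp_2(\al)-\inp_2(\be)|$, and also that the position determined for $\be$ in each sub-case actually lies inside $\AR$ (not merely in $\Z Q$). The constraint $a<c<b\le n-2$ combined with the convexity of $\prec_Q$ and with Corollary~\ref{Cor: nfree mfree position} is what prevents degenerate sub-cases in which $\al$ or $\be$ would fall outside $\Phi^+_n$; this uniformity is what makes the statement clean across all admissible pairs $(\al,\be)$ and $(\al',\be')$ in part~(a).
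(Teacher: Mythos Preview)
Your overall strategy matches the paper's: locate $\al,\be,\ga$ in $\AR$ via Lemma~\ref{lem: mfree pair position 1} and Theorem~\ref{thm: short path}, read off coordinate identities, then invoke Theorem~\ref{Thm: Dorey D_n(1)}\,(i) and the functor of Theorem~\ref{thm: duality functor on Q}. However, step~2 has a genuine gap. The three relations you list --- the two from the $a$-swing expressing $\inp_2(\ga)$ and $\inp_2(\be)$ via the apex $u$, and the one from the shallow $(*,-b)$-path relating $\inp(\ga)$ to $\inp(\al)$ --- do not by themselves force the level identity $\inp_1(\ga)+\inp_1(\be)=\inp_1(\al)$. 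Eliminating $u$ from the first two yields $\inp_2(\ga)-\inp_2(\be)$ in terms of $\inp_1(\ga)+\inp_1(\be)$, and the third gives $\inp_2(\ga)-\inp_2(\al)$ in terms of $\inp_1(\al)-\inp_1(\ga)$; combining these only expresses $\inp_2(\al)-\inp_2(\be)$ as a function of all three levels, with no constraint among the levels themselves. The ``parameter $c$'' you speak of eliminating never actually enters any of these equations.

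The missing ingredient is a relation linking $\al$ and $\be$ through $c$: namely, $\al=\ve_c-\ve_b$ also lies on the $c$-swing, and $\be=\ve_a-\ve_c$ also lies on the shallow $(*,-c)$-path. The paper supplies this by first proving (in the chosen sub-case) that the $S$-part of the $c$-swing is no longer than its $N$-part; this fixes the position of the $c$-swing relative to the $a$-swing in $\AR$, after which a direct reading of the resulting picture shows that $\be$ sits on the $N$-part of the $a$-swing at level exactly $\ell=\inp_1(\al)-\inp_1(\ga)$. With that extra input the level identity and the formula for $|\inp_2(\al)-\inp_2(\be)|$ are immediate, and your step~3 then goes through as written.
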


\begin{proof}
Assume that $\ga$ is contained in the $S$-part of the
$a$-swing. Then Lemma \ref{lem: mfree pair position 1} tells that
$\ga$ and $\al$ are contained in the shallow maximal $N$-sectional path:
$$  N_r \to N_{r-1} \to \cdots \to N_{2} \to N_1,$$
where $\inp_1(N_l)=l$, $\al=N_t$ and $\ga=N_s$ for $1 \le s < t \le r$. Set $\ell=t-s$.

We first claim that the $S$-part of the $c$-swing is shorter than or equal to the $N$-part of the $c$-swing.
If $S$-part of the $c$-swing is longer than the $N$-part of the $c$-swing, we have the following situation:
$$
{\xy (-15,0)*{}="T1"; (15,0)*{}="T2"; (3,0)*{}="C1"; (-6,0)*{}="C0"; (-1,0)*{}="C00";
(-29,0)*{}="T0";
(-15,-3)*{}="L1"; (-9,-6)*{}="L2"; (-6,-9)*{}="L3"; (12,-3)*{}="R1"; (9,-6)*{}="R2";
(0,-15)*{}="B"; (-14,-15)*{}="B0";
"T1"; "B" **\dir{-};"B"+(3,3); "B" **\dir{-};"C00"+(-2,-2); "B0" **\dir{-};
"B"; "B"+(15,0) **\dir{.};"T0"; "B0" **\dir{-};
"C00"+(-10,-4)*{\bullet};"C00"+(-7,-4)*{\scriptstyle N_s};
"C00"+(-18,-10)*{\bullet};"C00"+(-15,-10)*{\scriptstyle N_t};
"T0"*{\bullet};"T0"+(6,0)*{\bullet};"T0"+(-6,0)*{\bullet};
"B"+(25,0)*{\scriptstyle i=n-1};
"B"+(3,-3)*{\scriptstyle a\text{-swing}};
"B0"+(3,-3)*{\scriptstyle c\text{-swing}};
"T0"+(0,3)*{\scriptstyle \kappa_u};
"T0"+(6,3)*{\scriptstyle \kappa_{u-1}};
"T0"+(-6,3)*{\scriptstyle \kappa_{u+1}};
\endxy}
$$
where $\kappa_u=\ve_c-\ve_{j_u}$. If $\kappa_{u-1}=\ve_{i_{u-1}}-\ve_c$, then any shallow maximal sectional path
containing $\kappa_{u-1}$ can not exist. On the other hand, if $\kappa_{u+1}=\ve_{i_{u}}-\ve_c$,
any shallow maximal sectional path containing $\kappa_{u+1}$ can not have an intersection with the $a$-swing. Hence we have the claim.

Now the situation in $\AR$ can be drawn as follows:
$$
{\xy (-15,0)*{}="T1"; (15,0)*{}="T2"; (3,0)*{}="C1"; (-6,0)*{}="C0"; (-1,0)*{}="C00";
(-29,0)*{}="T0";
(-15,-3)*{}="L1"; (-9,-6)*{}="L2"; (-6,-9)*{}="L3"; (12,-3)*{}="R1"; (9,-6)*{}="R2";
(0,-15)*{}="B"; (-14,-15)*{}="B0";
"T1"; "B" **\dir{-};"T2"+(-4,-4); "B" **\dir{-};"C00"; "B0" **\dir{-};
"B"; "B"+(15,0) **\dir{.};"T0"+(5,-5); "B0" **\dir{-};
"C00"+(-10.5,-3);"C00"+(-18,-11) **\dir{-}?(.3)+(-3,0)*{\scriptstyle \ell};
"C00"+(13,-6)*{\scriptstyle S_\ell};
"C00"*{\bullet};"C00"+(0,3)*{\scriptstyle \kappa_u};
"C00"+(4,0)*{\bullet};"C00"+(6,3)*{\scriptstyle \kappa_{u+1}};
"C00"+(4,0);"C00"+(10,-6) **\dir{-}?(.3)+(4,0)*{\scriptstyle \ell-1};
"C00"+(10,-6)*{\bullet};
"C00"+(-10.5,-3)*{\bullet};"C00"+(-7,-4)*{\scriptstyle N_s};
"C00"+(-18,-11)*{\bullet};"C00"+(-21,-10)*{\scriptstyle N_t};
"B"+(25,0)*{\scriptstyle i=n-1};
"B"+(3,-3)*{\scriptstyle a\text{-swing}};
"B0"+(3,-3)*{\scriptstyle c\text{-swing}};
\endxy}
$$
where $\kappa_{u}=\ve_c-\ve_{j_u}$. Then one can observe that $\kappa_{u+1}=\ve_{i_{u+1}}-\ve_{c}$ and
$$S_\ell = \ve_a-\ve_c.$$ Thus
\begin{align}\label{eq: rst1}
 \inp_1(\ga)+ \inp_1(\be)=\inp_1(\al) \quad\text{ and }\quad |\inp_2(\al) - \inp_2(\be)| = 2n-2+\inp_1(\ga).
 \end{align}

The assertion {\rm (b)} follows from the second condition in \eqref{eq: p ij D} {\rm (i)} and \eqref{eq: rst1}.
\end{proof}

\begin{remark} \label{rmk: mfree pair}
Let $\ga$ be a multiplicity free positive root and $(\al,\be)$ be a pair of $\ga$.
Then Corollary \ref{cor: n,n-1} and Theorem \ref{thm: short path} tell that
one of $\al$ and $\be$ should have contained in the shallow maximal sectional path $\rho$
which contains $\ga$ together.
\end{remark}

\begin{theorem}
For every pair  $(\al,\be)$ of $\ga=\ve_a-\ve_b$ $(a<b \le n-2)$, there is a total order $<$ such that
\begin{itemize}
\item[({\rm a})] it is a compatible with the convex partial order $\preceq_Q$,
\item[({\rm b})] $(\al,\be)$ is a minimal pair of $\ga$ with respect to the total order.
\end{itemize}
\end{theorem}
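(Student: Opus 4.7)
My plan is to fix a pair $(\al,\be)$ of $\ga=\ve_a-\ve_b$ with $a<b\le n-2$ and construct an explicit reading of $\AR$, in the sense of Theorem~\ref{Thm: compatible reading}, whose induced convex total order makes $(\al,\be)$ minimal. Because the support of $\ga$ lies inside $\{\al_a,\ldots,\al_{b-1}\}$, the only decompositions of $\ga$ into a sum of two positive roots are $\{\ve_a-\ve_c,\,\ve_c-\ve_b\}$ with $a<c<b$. Convexity of $\prec_Q$ forces one summand to lie $\prec_Q$-below $\ga$ and the other $\prec_Q$-above; after relabelling I may assume $\al\prec_Q\ga\prec_Q\be$, so $\al<\ga<\be$ in any compatible total order.

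By Theorem~\ref{Thm: V-swing} every positive root with $\ve_a$ as a summand lies on the $a$-swing, and by Theorem~\ref{thm: short path} every positive root with $-\ve_b$ as a summand (which is legitimate since $b\le n-2$) lies on a shallow maximal sectional path $\rho$. Therefore $\ga$ sits at the intersection of the $a$-swing and $\rho$, and for each $c'\in\{a+1,\ldots,b-1\}$ the pair $(\ve_a-\ve_{c'},\ve_{c'}-\ve_b)$ has exactly one summand on each of these two paths, in agreement with Remark~\ref{rmk: mfree pair}. Writing the distinguished pair as $(\al_c,\be_c)$, I intend to read $\AR$ in such a way that $\al_c$ appears last among the $\al_{c'}$ with $\al_{c'}\prec_Q\ga$, or equivalently that $\be_c$ appears first among the $\be_{c'}$ with $\ga\prec_Q\be_{c'}$. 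Any reading compatible with the arrow directions yields, by Theorem~\ref{Thm: compatible reading}, a reduced expression in $[Q]$ and hence a convex total order compatible with $\prec_Q$. For this specific order, any competing pair $(\al_{c'},\be_{c'})$ will satisfy $\al_{c'}<\al$ or $\be<\be_{c'}$, which rules out the forbidden inequality $\al<\al_{c'}<\ga<\be_{c'}<\be$ and forces $(\al,\be)$ to be minimal.

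The main obstacle is verifying that such a reading is feasible, i.e.\ that the arrow constraints of $\AR$ do not force some other $\al_{c'}$ to be read after $\al_c$ (respectively some other $\be_{c'}$ before $\be_c$). The key observation is that interior vertices of the $a$-swing and of $\rho$ are only linked at $\ga$ itself, so the two sectional sub-readings are essentially independent. Using the precise positional data from Lemma~\ref{Lem: dia sink source}, Proposition~\ref{Prop: maximal path}, Corollary~\ref{cor: reverse uni} and Corollary~\ref{Cor: first}, I plan to case-split according to whether $\al_c$ sits on the $S$-part or the $N$-part of the $a$-swing, and similarly whether $\be_c$ sits on the corresponding part of $\rho$, and in each case exhibit the desired reading concretely. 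With the reading in hand, minimality of $(\al,\be)$ for the constructed total order is immediate from the discussion above.
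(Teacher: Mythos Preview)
Your overall strategy --- case-split on the position of the pair relative to the $a$-swing and the shallow path $\rho$, and then exhibit a compatible reading --- is the same shape as the paper's proof. But two points in your outline do not hold up. First, the ``key observation'' that interior vertices of the $a$-swing and of $\rho$ are only linked at $\ga$ is not what you need and, read as a statement about $\prec_Q$-comparability, is false: vertices on the swing and on $\rho$ can be connected by paths that pass through other parts of $\AR$, so the two sub-readings are \emph{not} independent in the sense your argument requires. Second, the two goals ``$\al_c$ last among the $\al_{c'}$'' and ``$\be_c$ first among the $\be_{c'}$'' are alternatives, not equivalents; there can be some $c'$ with $\al_c\prec_Q\al_{c'}$ forcing $\al_c$ early, and a different $c''$ with $\be_{c''}\prec_Q\be_c$ forcing $\be_c$ late, so you cannot simply pick whichever side is convenient without analysis.

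The paper closes this gap differently. It first proves a positional lemma: $\ve_a-\ve_c$ and $\ga$ cannot lie in the \emph{same} part of the $a$-swing with $\inp_1(\ve_a-\ve_c)>\inp_1(\ga)$. With this in hand, the case-split is clean: if $\ve_a-\ve_c$ and $\ga$ lie in the same part of the $a$-swing (necessarily with $\inp_1(\ve_a-\ve_c)<\inp_1(\ga)$), the pair is in the upper ray of $\ga$ and one of the canonical orders $<^{U,1}_Q,<^{U,2}_Q$ of Remark~\ref{Rmk: total orders} makes it minimal; if they lie in different parts, Proposition~\ref{prop: mfree 1} applies and one of $<^{L,1}_Q,<^{L,2}_Q$ works. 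Replace your independence heuristic with that positional claim and use the fixed canonical readings rather than a bespoke construction, and your proof goes through.
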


\begin{proof}
First, we claim that the following situation can not happen:
$$ \text{ $\ve_a-\ve_c$ and $\ve_a-\ve_b$ are in the same part of the $a$-swing and $\inp_1(\ve_a-\ve_c)>\inp_1(\ga)$.}$$
Assume that $\ve_a-\ve_c$ and $\ve_a-\ve_b$ are contained in $N$-part of the $a$-swing. By Theorem \ref{thm: short path},
there are shallow maximal $S$-sectional paths $\rho$ and $\rho'$ containing $\ga$ and $\ve_a-\ve_c$, respectively.
Then we can draw the situation in $\AR$ as follows:
$$
{\xy (-12,-3)*{}="T1"; (13,-2)*{}="T2"; (0,-15)*{}="B";
(11,-4)="R1";(6,-9)="R2"; (2,-5)="C";
(7,0)="L1";(-3,0)="L2";
"L1"+(5,0); "L1"+(25,0) **\dir{.};
"L1"+(30,0)*{\scriptstyle i=1};
"T1"; "B" **\dir{-};"T2"; "B" **\dir{-};
"R1"; "L1" **\dir{-};"R2"; "L2" **\dir{-};"L1"; "C" **\dir{-};
"R1"*{\bullet};"R2"*{\bullet};"L1"*{\bullet};"L2"*{\bullet};"C"*{\bullet};
"R1"+(7,0)*{\scriptstyle \ve_a-\ve_b};
"R2"+(7,0)*{\scriptstyle \ve_a-\ve_c};
"L1"+(2,3)*{\scriptstyle N=\kappa_r};
"L2"+(2,3)*{\scriptstyle M=\kappa_s};
"C"+(-3,0)*{\scriptstyle L};
\endxy}
$$
By \eqref{eq: ad fn}, we have
$$\ve_a-\ve_b+L=\ve_a-\ve_c+N \quad \text{ and hence }\quad \het(N)>\het(L).$$
However \eqref{eq: ad fn} also tells that $L=\sum_{k=r}^{s}\kappa_k$. Thus it can not happen. By the similar argument, they can not contained in $S$-part of the
$a$-swing together whenever $\inp_1(\ve_a-\ve_c)>\inp_1(\ga)$.
\begin{eqnarray} &&
\parbox{75ex}{
Note that any sum of pair $(\al',\be')$, in the level $n-1$ and $n$, can not be of the form $\ve_a-\ve_b$.
} \label{eq: m free fact 1}
\end{eqnarray}

(a) Now we deal with the following case first:
$$\text{$\inp_1(\ve_a-\ve_c)<\inp_1(\ga)$ are in the same part of the $a$-swing.}$$
Assume that they are contained in $N$-part of the $a$-swing. Then, by previous observations, one can prove that
$$
{\xy (-15,0)*{}="T1"; (15,0)*{}="T2"; (3,0)*{}="C1"; (-3,0)*{}="C0";
(-15,-3)*{}="L1"; (-9,-6)*{}="L2"; (-6,-9)*{}="L3"; (12,-3)*{}="R1"; (9,-6)*{}="R2";
(0,-15)*{}="B";
"T1"; "B" **\dir{-};"R1"; "B" **\dir{-};"R2"; "C1" **\dir{-};"L2"; "C0" **\dir{-};
"L3"; "C1" **\dir{-};
"R2"*{\bullet};"B"*{\bullet};"C1"*{\bullet};"C0"*{\bullet};"L2"*{\bullet};"L3"*{\bullet};
"B"+(0,-3)*{\scriptstyle \ga=\ve_a-\ve_b };
"R2"+(8,0)*{\scriptstyle \al=\ve_a-\ve_c};
"C0"+(0,3)*{\scriptstyle \kappa_{i+1} };
"C1"+(0,3)*{\scriptstyle \kappa_i };
"L2"+(-4,0)*{\scriptstyle S_{t-1}};
"L3"+(-3,0)*{\scriptstyle S_{t}};
\endxy}
$$
where
\begin{itemize}
\item $\kappa_{i+1}$ is a multiplicity free positive root containing $\ve_c$ as its summand.
\item $\kappa_{i+1}$ is contained in $N$-part of the $c$-swing.
\end{itemize}
Hence we have $S_{t-1}=\be=\ve_c-\ve_b$. Thus $(\al,\be)$ is contained in the upper ray of $\ga$.

With Remark \ref{rmk: mfree pair} and \eqref{eq: m free fact 1}, one can check that the total order $<^{U,1}_Q$ and $<^{U,2}_Q$
make $(\al,\be)$ minimal. Similarly, we can prove for the case when they are in the $S$-part of the $a$-swing.

(b) Now we deal with the following case:
$$\text{$(\ve_a-\ve_c)$ and $\ga$ are in the different parts of the $a$-swing.}$$
Assume that $\ga$ is in $N$-part of the $a$-swing and $\rho$ is the shallow maximal $S$-sectional path
containing $\ga$. Then one can prove that
$$ \inp_1(\ga)< \inp_1(\ve_c-\ve_b) \text{ implies } \ve_a-\ve_c \text{ is in the $N$-part of the $a$-swing}$$
by using Theorem \ref{Thm: V-swing}, Theorem \ref{thm: short path} and Corollary \ref{Cor: first}. Thus
the pair $(\al,\be)$ satisfies the equations in Proposition \ref{prop: mfree 1}.
Then one can check that the total order $<^{L,1}_Q$ and $<^{L,2}_Q$ make $(\al,\be)$ minimal as in (a).

By applying the similar arguments in this proof, one can prove for the remaining cases.
\end{proof}

\begin{example} In Example \ref{ex: example 1}, the four convex total orders in Remark \ref{Rmk: total orders} are given as follows:
\begin{itemize}
\item [$(<^{U,1}_Q)$]: \fontsize{10}{10}\selectfont $\lf 3,-4\rf<\lf 2,-4\rf<\lf 1,-4\rf<\lf 2,3\rf<\lf 2,-3\rf<\lf 1,2\rf<\lf 2,4\rf<\lf 1,-3\rf<\lf 1,3\rf<\lf 1,4\rf<\lf 1,-2\rf<\lf 3,4\rf  $. \fontsize{11}{11}\selectfont
\item [$(<^{U,2}_Q)$]: \fontsize{10}{10}\selectfont $\lf 3,-4\rf<\lf 2,-4\rf<\lf 1,-4\rf<\lf 2,-3\rf<\lf 2,3\rf<\lf 1,2\rf<\lf 2,4\rf<\lf 1,3\rf<\lf 1,-3\rf<\lf 1,4\rf<\lf 1,-2\rf<\lf 3,4\rf   $. \fontsize{11}{11}\selectfont
\item [$(<^{L,1}_Q)$]: \fontsize{10}{10}\selectfont $\lf 3,-4\rf <\lf 2,-4\rf <\lf 2,-3\rf <\lf 2,3\rf <\lf 1,-4\rf <\lf 1,2\rf <\lf 1,-3\rf <\lf 1,3\rf <\lf 2,4\rf <\lf 1,4\rf <\lf 3,4\rf <\lf 1,-2\rf $. \fontsize{11}{11}\selectfont
\item [$(<^{L,2}_Q)$]: \fontsize{10}{10}\selectfont $\lf 3,-4\rf <\lf 2,-4\rf <\lf 2,3\rf <\lf 2,-3\rf <\lf 1,-4\rf <\lf 1,2\rf <\lf 1,3\rf <\lf 1,-3\rf <\lf 2,4\rf <\lf 1,4\rf <\lf 3,4\rf <\lf 1,-2\rf $. \fontsize{11}{11}\selectfont
\end{itemize}
\end{example}

For the rest of this paper, we say that {\it a pair $(\alpha,\beta)$ is minimal} when $\alpha+\beta \in \Phi_n^+$ and the pair is a minimal pair with respect to
a suitable convex total order which is induced by some reduced expression $\redez \in [Q]$.

\subsection{Multiplicity free positive roots of the form $\ve_a \pm \ve_{n-1}$ and $\ve_a \pm \ve_{n}$} \label{sec: vea pm vet}
As we observed in Lemma \ref{Lem: last 2 line} and Corollary \ref{cor: n,n-1},
position of the subset of multiplicity free positive roots
$$\{ \ve_a+\ve_n, \ve_a-\ve_{n}, \ve_b+\ve_{n-1}, \ve_b-\ve_{n-1} \in \Prt \mid a \le n-1, \ b \le n-2\} $$
depends on the value $\xi_{n-1}-\xi_n$ and the parity of $n$. In short, $\inp_1(\al_{n-1}),\inp_1(\al_{n}) \in \{n-1,n \}$ and
\begin{center}
\begin{tabular}{| c | c | c | c  | } \hline
Case&$|\xi_{n-1}-\xi_n|$ & $\inp_1(\ve_a\pm\ve_n)$ & $\inp_1(\ve_b\pm\ve_{n-1})$ \\ \hline
A&$0$ & $n-1$ or $n$ & less than $n-1$ \\ \hline
B&$2$ & less than $n-1$ & $n-1$ or $n$ \\ \hline
\end{tabular}
\end{center}

\begin{remark} \label{rmk: obs 1}
Let us consider the roots $\{ \ve_b+\ve_{n-1} \mid b \le n-2 \}$ for
Case A. By Corollary \ref{cor: n,n-1}, the set forms the maximal
$N$-sectional path starting at $n-1$ and $n$ if $\{n-1,n\}$ are sink
in $Q$, or the maximal $S$-sectional path ending at $n-1$ and $n$ if
$\{n-1,n\}$ are sources in $Q$ (see \eqref{eq: both ss}).

For the set  $\{ \ve_b-\ve_{n-1} \mid b \le n-2 \}$ of Case A, Theorem
\ref{thm: short path} tells that the set forms the shallow
$(N,-(n-1))$-maximal path or $(S,-(n-1))$-maximal path (see Theorem \ref{thm: short path}).

For sets $\{ \ve_a+\ve_{n} \mid a \le n-1 \}$ and $\{ \ve_a-\ve_{n}
\mid a \le n-1 \}$) of Case B, Corollary \ref{cor: n,n-1} tells that
one of two set forms the maximal $S$-sectional path starting at
level $1$ and ending at level one of $n-1$ and $n$, and the another
set forms the maximal $N$-sectional path ending at level $1$ and
ending at level one of $n-1$ and $n$  (see \eqref{eq: one ss}).
\end{remark}

\begin{proposition} \label{pro: short n-1,n} \hfill
\begin{enumerate}
\item[({\rm a})] Take any root $\ga$ in $\{  \ve_b+\ve_{n-1}, \ve_b-\ve_{n-1} \in \Prt \mid b \le n-2\}$
when $|\xi_{n-1}-\xi_n|=0$. Then all pair $(\al,\be)$ of $\ga$ is a minimal pair.
Moreover, one of the conditions in \eqref{eq: p ij D} {\rm (i)} and {\rm (iii)} holds when
$\phi^{-1}(\gamma)=(k,z)$, $\phi^{-1}(\beta)=(j,y)$ and $\phi^{-1}(\alpha)= (i,x)$.
\item[({\rm b})] Take any root $\ga$ in $\{  \ve_a+\ve_{n}, \ve_a-\ve_{n} \in \Prt \mid a \le n-1 \}$
when $|\xi_{n-1}-\xi_n|=2$. Then all pair $(\al,\be)$ of $\ga$ is a minimal pair.
Moreover, one of the conditions in \eqref{eq: p ij D} {\rm (i)} and {\rm (iii)} holds when
$\phi^{-1}(\gamma)=(k,z)$, $\phi^{-1}(\beta)=(j,y)$ and $\phi^{-1}(\alpha)= (i,x)$.
\end{enumerate}
\end{proposition}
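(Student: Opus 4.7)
The plan is to proceed in parallel with \S\ref{sec: vea-veb}, exploiting the structural information about roots at levels $n-1$ and $n$ that was established in Lemma \ref{Lem: last 2 line}, Corollary \ref{cor: n,n-1}, Theorem \ref{Thm: V-swing}, and Theorem \ref{thm: short path}. The two cases (a) and (b) are symmetric once we read off that $\mathtt{t}=n-1$ corresponds to Case A and $\mathtt{t}'=n-1$ (equivalently $\mathtt{t}=n$) corresponds to Case B; I will carry out Case (a) in detail and indicate that (b) follows by the same argument with the roles of $n$ and $n-1$ exchanged. First I would enumerate all decompositions $\gamma=\alpha+\beta\in\Prt$ when $\gamma=\ve_b\pm\ve_{n-1}$: up to swapping $\al,\be$ they are of the form $(\ve_b-\ve_c,\ve_c\pm\ve_{n-1})$, $(\ve_b+\ve_c,\ve_{n-1}\mp\ve_c)$ with $c=n$, or pairs lying at level $n-1, n$ such as $(\al_{n-1},\ve_b+\ve_n)$ and $(\al_n,\ve_b-\ve_n)$; for $\gamma=\ve_b-\ve_{n-1}$ one obtains analogous lists.

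For the Dorey-type identity, I would then read the positions of $\alpha,\beta,\gamma$ off $\AR$ using the combinatorial picture: by Remark \ref{rmk: obs 1} the family $\{\ve_b\pm\ve_{n-1}\}$ sits on the two sectional paths at the bottom of $\AR$, the first ones containing $\ve_{n-1}$ belong to the $b$-swing of type \eqref{eq: type a} or \eqref{eq: type b}, and the ones with $-\ve_{n-1}$ belong to a shallow maximal sectional path. Thus $\inp_1(\gamma)\in\{n-1,n\}$; for each tabulated pair the two positions $\inp(\alpha)$ and $\inp(\beta)$ are determined from the swing or shallow path that contains them, and a direct comparison with Theorem \ref{Thm: Dorey D_n(1)} shows that either $\inp_1(\alpha)+\inp_1(\beta)=\inp_1(\gamma)\le n-2$ (condition \eqref{eq: p ij D}\,(i)) or the pair falls into the configuration of \eqref{eq: p ij D}\,(iii) where $\min(\inp_1(\alpha),\inp_1(\beta),\inp_1(\gamma))\le n-2$ and the other two levels lie in $\{n-1,n\}$; the exact matching of the spectral parameters follows from Lemma \ref{Lem: Key Lem} together with Proposition \ref{prop: triangle} applied to the portion of $\Z Q$ where the three vertices lie.

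For minimality, I would invoke Remark \ref{rmk: non minimal pair}: it suffices to check that no pair $(\alpha',\beta')$ with $\alpha'+\beta'=\gamma$ admits simultaneous paths $\beta\to\beta'$ and $\alpha'\to\alpha$ in $\AR$. Using the convexity of $\prec_Q$ and the classification of pairs in the previous paragraph, any such $(\alpha',\beta')$ would have to lie inside the same $b$-swing, the shallow $(N,-(n-1))$- or $(S,-(n-1))$-sectional path, or the two-row band $\inp_1^{-1}(\{n-1,n\})$; an analogue of Lemma \ref{lem: mfree pair position 1} and the argument of Proposition \ref{prop: mfree 1} rules this out because the relevant sections are too short to accommodate a second decomposition with the required path structure. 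Equivalently, one exhibits for each $(\alpha,\beta)$ one of the four total orders from Remark \ref{Rmk: total orders} that is compatible with $\preceq_Q$ and realizes $(\alpha,\beta)$ as a minimal pair; the choice between $<^{U,\cdot}_Q$ and $<^{L,\cdot}_Q$ is dictated by whether $\gamma$ sits at the top or the bottom of the band of levels $\{n-1,n\}$.

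The main obstacle is bookkeeping: the case analysis splits along (i) the parity of $n$, (ii) whether $\{n-1,n\}$ are both sources, both sinks, or one of each (governed by $\xi_{n-1}-\xi_n$), and (iii) whether the adjacent vertex $n-2$ is a left or right intermediate, sink, or source. Each subcase changes the shape of the $b$-swing (between \eqref{eq: type a} and \eqref{eq: type b}) and of the shallow sectional path carrying $\ve_b-\ve_{n-1}$, but in every subcase the position of $\gamma$ inside $\AR$ is pinned down by Lemma \ref{Lem: image} and Corollary \ref{cor: n,n-1}, and the argument proceeds uniformly. Per the author's convention in the ``Comments for readers'', a representative subcase will be treated explicitly and the remaining ones indicated.
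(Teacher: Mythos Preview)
Your plan has the right overall shape (reduce to the arguments of \S\ref{sec: vea-veb}, then separately handle the pairs living at levels $n-1,n$), but it rests on a misreading of \eqref{eq: def t} that inverts the whole picture. You write ``$\mathtt{t}=n-1$ corresponds to Case A and $\mathtt{t}'=n-1$ corresponds to Case B''; it is the other way around. When $|\xi_{n-1}-\xi_n|=0$ we have $\mathtt{t}=n$, so by Lemma~\ref{Lem: last 2 line} the roots sitting at levels $n-1,n$ are the $\ve_a\pm\ve_n$, \emph{not} the $\ve_b\pm\ve_{n-1}$. Consequently your claim ``Thus $\inp_1(\gamma)\in\{n-1,n\}$'' is false in case~(a): Remark~\ref{rmk: obs 1} says exactly that $\inp_1(\ve_b\pm\ve_{n-1})\le n-2$ in Case~A (the $+$ family lies on the sectional path of Corollary~\ref{cor: n,n-1}(a) at levels $1,\ldots,n-2$, and the $-$ family lies on a shallow path). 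This error propagates: your Dorey verification places $\gamma$ on the wrong side of condition~\eqref{eq: p ij D}(iii), and your minimality criterion (``whether $\gamma$ sits at the top or the bottom of the band of levels $\{n-1,n\}$'') is vacuous because $\gamma$ is never in that band.

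Once you correct this, the argument becomes shorter than what you outline. For $\gamma=\ve_b-\ve_{n-1}$ one observes that no pair with both components at levels $n-1,n$ can sum to $\gamma$ (any such sum has the form $2\ve_a$ or $\ve_a\pm\ve_b$ with $a,b<n-1$ by Lemma~\ref{Lem: last 2 line} and Proposition~\ref{prop: triangle}); since $\inp_1(\gamma)\le n-2$, the entire analysis of \S\ref{sec: vea-veb} (Proposition~\ref{prop: upper ray} and Proposition~\ref{prop: mfree 1}) applies verbatim. For $\gamma=\ve_b+\ve_{n-1}$ there are exactly two exceptional pairs $\{\ve_{n-1}-\ve_n,\ve_b+\ve_n\}$ and $\{\ve_{n-1}+\ve_n,\ve_b-\ve_n\}$ with both components at levels $n-1,n$; these satisfy the $s=k$ case of \eqref{eq: p ij D}(iii) (with $\gamma$ at the small level) and are minimal for $<^{L,1}_Q$ or $<^{L,2}_Q$ because $\gamma$ shares a maximal sectional path with $\ve_{n-1}\mp\ve_n$. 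All remaining pairs again fall under \S\ref{sec: vea-veb}. Part~(b) is genuinely symmetric after the corrected identification $\mathtt{t}=n-1$, with the only difference that there is a \emph{single} exceptional level-$\{n-1,n\}$ pair for each $\ve_a\pm\ve_n$. No case split on the parity of $n$ or the local shape of $Q$ near $n-2$ is needed.
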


\begin{proof}
(a) Note that
\begin{itemize}
\item any sum of pair $(\al,\be)$ with $\inp_1(\al)$, $\inp_1(\be) \in \{ n-1,n\}$
can not be $\ga=\ve_b-\ve_{n-1}$,
\item $\inp_1(\ga) \le n-2$.
\end{itemize}
Thus we can apply the same arguments
in \S \ref{sec: vea-veb} for $\ga=\ve_b-\ve_{n-1}$ to prove our assertion. More precisely,
every pair $(\al,\be)$ of $\ga=\ve_b-\ve_{n-1}$ satisfies one of the conditions in Proposition \ref{prop: upper ray} and Proposition \ref{prop: mfree 1}.

For $\ga=\ve_b+\ve_{n-1}$, there exist only two pairs $$\big\{ \{ \al,\be \}
, \ \{ \al',\be' \} \big\} = \big\{ \{
\ve_{n-1}-\ve_{n}, \ve_b+\ve_{n} \}, \ \{ \ve_{n-1}+\ve_{n},
\ve_b-\ve_{n} \} \big\}$$
such that $\inp_1(\al),\inp_1(\al'),\inp_1(\be),\inp_1(\be') \in \{ n-1,n\}$. Then $\{\ga,\ve_{n-1}-\ve_n\}$ and
$\{\ga,\ve_{n-1}+\ve_n\}$ are contained in the same maximal
sectional paths, respectively. Then $(\al,\be)$ and
$(\al',\be')$ become minimal pairs
with respect to the total order $<^{L,1}_Q$ or $<^{L,2}_Q$.
Moreover, one can check that it satisfies $\phi^{-1}_1(\gamma) < n-2 < \phi^{-1}_1(\al),\phi^{-1}_1(\be) $ and the third condition in
\eqref{eq: p ij D} {\rm (iii)}. For the other pairs of $\ga$, we can apply the same arguments in \S \ref{sec: vea-veb} to prove our assertion. \\
(b) In this case, each $\ga=\ve_a-\ve_n$ (resp. $\ve_a+\ve_n$) has a unique pair $\{ \al,\be \}$ in level $n-1$ and $n$ as
$\{ \ve_{n-1}-\ve_{n}, \ve_a-\ve_{n-1} \}$ (resp. $\{ \ve_{n-1}+\ve_{n}, \ve_a-\ve_{n-1} \}$). Then $(\al,\be)$ becomes a minimal pair
with respect to the total orders $<^{L,1}_Q$ and $<^{L,2}_Q$. The remaining assertions can be proved in the similar way of {\rm (a)}.
For the other pairs, we can apply the same arguments in \S \ref{sec: vea-veb} to prove our assertion.
\end{proof}

\begin{proposition} Take a positive root of the form $\ga=\ve_a-\ve_{\mathtt{t}}$ or $\ve_a+\ve_{\mathtt{t}}$. Then
all pair $(\al,\be)$ of $\ga$ satisfying one of the following two conditions:
$$ \begin{cases} n-1- \dfrac{|\inp_2(\be)-\inp_2(\ga)|}{2} = |\inp_2(\ga)-\inp_2(\al)|&
\text{ if } \inp_1(\be) \in \{n-1,n\}, \\
n-1- \dfrac{|\inp_2(\al)-\inp_2(\ga)|}{2} = |\inp_2(\ga)-\inp_2(\be)|
&\text{ if } \inp_1(\al) \in \{n-1,n\}.
\end{cases} $$
Moreover, one of the conditions in \eqref{eq: p ij D} {\rm (iii)} holds when $\phi^{-1}(\gamma)=(k,z)$ with $k=n$, $\phi^{-1}(\beta)=(j,y)$ and $\phi^{-1}(\alpha)= (i,x)$.
\end{proposition}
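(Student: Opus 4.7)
Since $\ga=\ve_a\pm\ve_{\mathtt{t}}$ contains $\pm\ve_{\mathtt{t}}$ as a summand, $\ga$ itself sits at level $n-1$ or $n$ by Lemma \ref{Lem: last 2 line}. For any pair $(\al,\be)$ with $\al+\be=\ga$, the plan is first to show that exactly one of $\al,\be$ lies at a level in $\{n-1,n\}$: if both did, Proposition \ref{prop: triangle} would force $\inp_1(\ga)=n-1-k\le n-2$ for some $k\ge 1$, contradicting $\inp_1(\ga)\in\{n-1,n\}$; and if neither did, neither would carry a $\pm\ve_{\mathtt{t}}$ summand, while their sum $\ga$ does. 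This dichotomy produces exactly the two cases of the statement, which are interchanged by swapping $\al$ and $\be$, so it suffices to treat $\inp_1(\be)\in\{n-1,n\}$.

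Next I would enumerate the pairs. Expanding $\ga=\ve_a\pm\ve_{\mathtt{t}}$ in the $\ve$-basis and using the classification of positive roots of $\Prt$, the pairs with $\be$ at level in $\{n-1,n\}$ fall into the families $(\ve_a-\ve_c,\,\ve_c\pm\ve_{\mathtt{t}})$ for $a<c$ in the appropriate range, together with the two extra pairs $(\al_{n-1},\ve_a+\ve_n)$ and $(\al_n,\ve_a-\ve_n)$ which appear only when $\mathtt{t}=n-1$ and $\ga=\ve_a+\ve_{n-1}$. For each family the positions of $\ga$ and $\be$ at the tips of their swings are fixed by Lemma \ref{Lem: last 2 line} and Corollary \ref{cor: n,n-1}, while the position of $\al$ (at level $\le n-2$) is pinned down by Theorem \ref{Thm: V-swing} and Theorem \ref{thm: short path}, depending on whether $\al$ lies inside a swing or on a shallow sectional path. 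Plugging these $(i,p)$-coordinates in and applying the periodicity identities \eqref{eq: known characterization} and \eqref{eq: Nakayama equation} together with Proposition \ref{prop: triangle}, the target equality
\[
n-1-\tfrac{|\inp_2(\be)-\inp_2(\ga)|}{2}=|\inp_2(\ga)-\inp_2(\al)|
\]
reduces to a numerical identity among the $\xi$-values of the fixed height function and the numbers $m_i$, which can be checked sub-case by sub-case.

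For the moreover part, write $\phi^{-1}(\ga)=(k,z)$, $\phi^{-1}(\be)=(j,y)$ and $\phi^{-1}(\al)=(i,x)$ with $k=n$; then $j\in\{n-1,n\}$ and $i\le n-2$, so that $s:=\min(i,j,k)=i$ in condition (iii) of Theorem \ref{Thm: Dorey D_n(1)} and $\{m,\ell\}=\{j,n\}\subset\{n-1,n\}$. Since $y/z=(-q)^{\inp_2(\be)-\inp_2(\ga)}$ and $x/z=(-q)^{\inp_2(\al)-\inp_2(\ga)}$, the position identity above translates, up to sign issues that are resolved by Proposition \ref{prop: triangle}, into precisely $y/z=(-q)^{2i}$ and $x/z=(-q)^{-n+i+1}$, which is the $s=i$ subcase of the ratio condition in \eqref{eq: p ij D}(iii); the parity condition $n-s\equiv\ell-m^*\pmod 2$ follows from the choice of $\mathtt{t}$ in \eqref{eq: def t} combined with the description of the involution $^*$ on $\{n-1,n\}$ and the parity of $n$. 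The main obstacle is keeping the case analysis tractable: the exact position of $\al$ depends on whether $\{n-1,n\}$ are sources, sinks or of mixed type in $Q$, and on whether $\al$ sits in a swing or on a shallow sectional path. This burden can be reduced substantially by exploiting the symmetry $\ga\leftrightarrow\ga'$ between $\ve_a+\ve_{\mathtt{t}}$ and $\ve_a-\ve_{\mathtt{t}}$ provided by Lemma \ref{Lem: last 2 line}, together with the symmetry between the two cases of the statement, which together cut down the verification to a single representative in each equivalence class.
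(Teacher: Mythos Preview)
Your outline is broadly workable but diverges from the paper's argument in one essential respect. The paper does \emph{not} enumerate all pairs and compute coordinates via $\xi$-values and $m_i$'s; instead it fixes $\be=\ve_b\pm\ve_{\mathtt{t}}$ with $\inp_2(\be)<\inp_2(\ga)$, sets $2k=\inp_2(\ga)-\inp_2(\be)$, and then uses Corollary~\ref{cor: reverse uni} to control the relative position of the $a$-swing and the $b$-swing. From the resulting picture of the two swings one sees that the vertex $N$ at the intersection of the $N$-part of the $a$-swing with the shallow $(S,-b)$-path equals $\ve_a-\ve_b=\al$; the displayed identity then drops out of the diagram because $|\inp_2(\be)-\inp_2(\ga)|=2k$ and $\inp_1(\al)=n-1-k$. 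This is exactly the ``technique in Proposition~\ref{prop: upper ray}'' the paper invokes, and it replaces your entire case analysis by a single geometric computation repeated over a few symmetric sub-cases.

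By contrast, your plan leans on Theorem~\ref{Thm: V-swing} and Theorem~\ref{thm: short path} to ``pin down'' $\al$, but those results only tell you \emph{which} swing or shallow path $\al$ lies on, not \emph{where} on it; the missing ingredient is precisely Corollary~\ref{cor: reverse uni}, which decides whether $\al$ sits in the $S$-part or the $N$-part of the $a$-swing. Without that, the reduction to ``a numerical identity among the $\xi$-values'' is not yet justified. Two smaller points: your appeal to Proposition~\ref{prop: triangle} to rule out both $\al,\be$ at level $\{n-1,n\}$ is overkill and technically requires a parity hypothesis you do not check---the direct argument via summands (neither $\al$ nor $\be$ would carry $\pm\ve_{\mathtt{t}}$, yet their sum does) already handles both directions. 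Also, in your ``extra pairs'' $(\al_{n-1},\ve_a+\ve_n)$ and $(\al_n,\ve_a-\ve_n)$ the component at level $\{n-1,n\}$ is the \emph{first} entry, not the second, so the labeling is inconsistent with your convention that $\be$ is the high-level root.
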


\begin{proof}
Note that, in these cases, one of $\al$ and $\be$ should be contained in the level $n-1$ or $n$. For $\ga=\ve_a\pm\ve_{\mathtt{t}}$, assume that
$\inp_1(\be) \in \{ n-1,n\}$ and $\inp_2(\be) < \inp_2(\ga)$; i.e.,
$$  \inp_2(\ga)-\inp_2(\be)=2k \text{ for some } 1 \le k \le n-2 \text{ and }
\be= \ve_c\pm\ve_{\mathtt{t}} \ (a < b) .$$
By Corollary \ref{cor: reverse uni} (a), the $N$-part of the $b$-swing is longer than the $S$-part of the $b$-swing. Thus we have the subquiver in $\AR$ as follows:
$$
\raisebox{2.5em}{{\xy (-15,0)*{}="T1"; (-2,0)*{}="T2"; (15,0)*{}="T3"; (28,0)*{}="T4"; (3,0)*{}="K1";(7,0)*{}="K2";
(0,-15)*{}="B1"; (13,-15)*{}="B2";
(0,-18)*{}="B11"; (13,-18)*{}="B21";
(15,0)*{}="K2";(20,0)*{}="K1"; (24,-4)*{}="V1";
"T1"+(8,-8); "B1" **\dir{-};"T3"; "B1" **\dir{-};"T2"+(3,-3); "B2" **\dir{-};"T4"+(-2,-2); "B2" **\dir{-};
"V1"; "K1" **\dir{-};
"V1"; "K1" **\crv{(20,-4.3)}?(.4)+(-3.5,0)*{\scriptstyle k-1};
"B1";"B2";**\crv{(6.5,-12)}?(.5)+(0,1.3)*{\scriptstyle 2k};
"K2"*{\bullet};"K1"*{\bullet};"B2"*{\bullet};"B1"*{\bullet};"V1"*{\bullet};"B21"*{\bullet};"B11"*{\bullet};
"B1"+(-3,-6)*{\scriptstyle b\text{-swing}};
"B2"+(3,-6)*{\scriptstyle a\text{-swing}};
"K2"+(0,2)*{\scriptstyle \kappa_u};
"V1"+(3,0)*{\scriptstyle N};
"B1"+(-3,0)*{\scriptstyle \be};
"B2"+(3,0)*{\scriptstyle \ga};
"B11"+(-3,0)*{\scriptstyle \be'};
"B21"+(3,0)*{\scriptstyle \ga'};
"K1"+(2,2)*{\scriptstyle \kappa_{u-1}};
\endxy}} \text{ if $k$ is even, } \quad
\raisebox{2.5em}{{\xy (-15,0)*{}="T1"; (-2,0)*{}="T2"; (15,0)*{}="T3"; (28,0)*{}="T4"; (3,0)*{}="K1";(7,0)*{}="K2";
(0,-15)*{}="B1"; (13,-15)*{}="B2";
(0,-18)*{}="B11"; (13,-18)*{}="B21";
(15,0)*{}="K2";(20,0)*{}="K1"; (24,-4)*{}="V1";
"T1"+(8,-8); "B1" **\dir{-};"T3"; "B1" **\dir{-};"T2"+(3,-3); "B2" **\dir{-};"T4"+(-2,-2); "B2" **\dir{-};
"V1"; "K1" **\dir{-};
"V1"; "K1" **\crv{(20,-4.3)}?(.4)+(-3.5,0)*{\scriptstyle k-1};
"B1";"B2";**\crv{(6.5,-12)}?(.5)+(0,1.3)*{\scriptstyle 2k};
"K2"*{\bullet};"K1"*{\bullet};"B2"*{\bullet};"B1"*{\bullet};"V1"*{\bullet};"B21"*{\bullet};"B11"*{\bullet};
"B1"+(-3,-6)*{\scriptstyle b\text{-swing}};
"B2"+(3,-6)*{\scriptstyle a\text{-swing}};
"K2"+(0,2)*{\scriptstyle \kappa_u};
"V1"+(3,0)*{\scriptstyle N};
"B1"+(-3,0)*{\scriptstyle \be};
"B2"+(3,0)*{\scriptstyle \ga'};
"B11"+(-3,0)*{\scriptstyle \be'};
"B21"+(3,0)*{\scriptstyle \ga};
"K1"+(2,2)*{\scriptstyle \kappa_{u-1}};
\endxy}} \text{ if $k$ is odd.}
$$
Then we can prove that $\kappa_{u-1}$ contains $-\ve_c$ as its summand and $N=\ve_a-\ve_b=\al$, by applying the technique in
Proposition \ref{prop: upper ray}. Thus our assertions follow.
For the other cases, we can prove by applying the similar arguments in this proof.
\end{proof}

\begin{theorem}
For each positive root $\ga$ containing $\pm \ve_{n-1}$ or $\pm \ve_{n}$ as its summand, every pair $(\al,\be)$ of $\ga$ is minimal and
there exists a surjective homomorphism
$$\VQbe \otimes \VQal \twoheadrightarrow \VQga \quad \text{ and } \quad \SQbe \otimes \SQal \twoheadrightarrow \SQga.$$
\end{theorem}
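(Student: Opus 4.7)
The plan is to reduce the theorem directly to the two propositions of this subsection via a case split on whether the $\pm\ve_{n-1}$ or $\pm\ve_n$ summand of $\gamma$ equals $\pm\ve_{\mathtt{t}}$ or $\pm\ve_{\mathtt{t}'}$, where $\mathtt{t}$ and $\mathtt{t}'$ are as in \eqref{eq: def t}. By Lemma \ref{Lem: last 2 line} and Corollary \ref{cor: n,n-1}, the positive roots containing $\pm\ve_{\mathtt{t}}$ as a summand are precisely those at levels $n-1$ and $n$ of $\AR$, while those containing $\pm\ve_{\mathtt{t}'}$ populate lower-level maximal sectional paths. Concretely: when $\xi_{n-1}=\xi_n$ we have $\mathtt{t}=n$, so $\gamma=\ve_b\pm\ve_{n-1}$ falls under $\pm\ve_{\mathtt{t}'}$ and $\gamma=\ve_a\pm\ve_n$ under $\pm\ve_{\mathtt{t}}$; when $|\xi_{n-1}-\xi_n|=2$ the labels switch. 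Proposition \ref{pro: short n-1,n} handles the $\pm\ve_{\mathtt{t}'}$ family and the proposition immediately preceding this theorem handles the $\pm\ve_{\mathtt{t}}$ family, so the two cases exhaust the hypothesis.

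From each of these propositions I extract, for every pair $(\alpha,\beta)$ of such a $\gamma$, two pieces of information. First, $(\alpha,\beta)$ is a minimal pair with respect to one of the four canonical convex total orders $<^{U,1}_Q$, $<^{U,2}_Q$, $<^{L,1}_Q$, $<^{L,2}_Q$ from Remark \ref{Rmk: total orders}, all of which are compatible with $\preceq_Q$. In the $\pm\ve_{\mathtt{t}}$ case, the second proposition only records the numerical relation between $\inp(\alpha)$, $\inp(\beta)$ and $\inp(\gamma)$, so here I would insert a short direct verification, in the style of the proof of Proposition \ref{pro: short n-1,n}, that the location of the pair in $\AR$ admits no interposing pair with respect to one of these orders. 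Second, setting $\inp(\gamma)=(k,z)$, $\inp(\beta)=(j,y)$, $\inp(\alpha)=(i,x)$, the triple $((i,x),(j,y),(k,z))$ realizes one of the conditions in \eqref{eq: p ij D}(i) or (iii).

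The two surjections are now essentially free. Theorem \ref{Thm: Dorey D_n(1)} applied to the condition in \eqref{eq: p ij D} yields a nonzero $\Up$-homomorphism $\VQbe \otimes \VQal \to \VQga$; because $\VQga$ is simple as a good module, this map must be surjective, giving the first assertion. For the second, the minimality of $(\alpha,\beta)$ plugged into Theorem \ref{thm: BkMc}(d) produces an exact sequence whose final arrow is precisely $\SQbe \cvl \SQal \twoheadrightarrow \SQga$. The only real obstacle in this whole argument is the bookkeeping of $\mathtt{t}$ versus $\mathtt{t}'$ across the two parity cases and, in one sub-case, filling in the explicit minimality check that the second proposition does not record; both are purely diagrammatic checks using the combinatorics developed in Section \ref{sec: Combinatorics}.
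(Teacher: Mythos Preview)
Your proposal is correct and follows essentially the same route as the paper: the case split on whether $\gamma$ carries $\pm\ve_{\mathtt{t}}$ or $\pm\ve_{\mathtt{t}'}$ is exactly the paper's split on $\inp_1(\gamma)\in\{n-1,n\}$ versus $\inp_1(\gamma)\le n-2$, and you invoke the same two propositions for the two cases. Your observation that the $\pm\ve_{\mathtt{t}}$ proposition records only the numerical relations and that the minimality with respect to $<^{U,i}_Q$ or $<^{L,i}_Q$ still requires a short diagrammatic check is accurate; the paper asserts this step in a single clause without spelling it out, so you are being more careful rather than missing something. The only cosmetic difference is that you derive the $V_Q$ surjection directly from Theorem~\ref{Thm: Dorey D_n(1)} and the $S_Q$ surjection from Theorem~\ref{thm: BkMc}(d) separately, whereas the paper obtains one and lets the functor $\F^{(1)}_Q$ carry it to the other.
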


\begin{proof}
Let us consider when $\inp_1(\ga) \in \{ n-1,n\}$ first. Assume that $\inp_1(\be) \in \{ n-1,n\}$ and $\inp_1(\al) \not \in \{ n-1,n\}$.
Then the previous proposition tells that $<^{U,1}_Q$ or $<^{U,2}_Q$ make $(\al,\be)$ minimal.
By Theorem \ref{thm: BkMc} (c), we have such a surjection.
Similarly, if $\inp_1(\al) \in \{ n-1,n\}$ and $\inp_1(\be) \not \in \{ n-1,n\}$, then
$<^{L,1}_Q$ or $<^{L,2}_Q$ makes $(\al,\be)$ minimal and we have such a surjection.

When $\inp_1(\ga) \le n-2$, Proposition \ref{pro: short n-1,n} tells that our assertion holds.
\end{proof}

Thus we have a conclusion as follows:
\begin{eqnarray} &&
\parbox{85ex}{ For every multiplicity free positive root $\ga$, every pair $(\al,\be)$ of $\ga$ is minimal
with respect to a suitable total order which is compatible with $\preceq_Q$.
}\label{conclusion: mfre}
\end{eqnarray}

\begin{theorem} \label{thm: free}
 Let $\ga$ be a multiplicity free positive root $\ga$. For every pair $(\al,\be)$ of $\ga$, we have
\begin{enumerate}
\item[({\rm a})] $\VQbe \otimes \VQal$ and $\SQbe \cvl \SQal$ have composition length two,
\item[({\rm b})] there exist surjections $\VQbe \otimes \VQal \twoheadrightarrow \VQga$ and
$\SQbe \cvl \SQal \twoheadrightarrow \SQga$.
\end{enumerate}
\end{theorem}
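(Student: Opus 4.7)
The plan is to deduce this theorem directly from the two main ingredients already assembled in the paper: the fact \eqref{conclusion: mfre} that every pair $(\al,\be)$ of a multiplicity free $\ga$ is minimal with respect to some convex total order $<_{\redez}$ compatible with $\preceq_Q$, and the structure theorem (Theorem \ref{thm: BkMc} (d)) for minimal pairs. After establishing the statement at the level of $R_{D_n}$-modules, I will transport everything to the quantum affine side by applying the exact tensor functor $\F^{(1)}_Q$.

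First I would fix a multiplicity free $\gamma \in \Prt$ and a pair $(\al,\be)$ with $\al+\be = \ga$ and $\al <_{\redez} \be$ for a reduced expression $\redez \in [Q]$ chosen so that $(\al,\be)$ is minimal with respect to $<_{\redez}$; such a $\redez$ exists by \eqref{conclusion: mfre}. Applying Theorem \ref{thm: BkMc} (d) to this minimal pair gives the exact sequence
\[
0 \to S_Q(\ga) \to S_Q(\al) \cvl S_Q(\be)
\xrightarrow{\ \mathbf r\ } S_Q(\be) \cvl S_Q(\al) \to S_Q(\ga) \to 0,
\]
in which the middle image $\mathrm{Im}(\mathbf r)$ is simple by Theorem \ref{thm: BkMc} (a). Splitting the four-term sequence into two short exact sequences shows that $S_Q(\be)\cvl S_Q(\al)$ has $\mathrm{Im}(\mathbf r)$ as a submodule and $S_Q(\ga)$ as quotient, so it is an extension of two simples and hence has composition length two. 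In particular the natural quotient map provides the desired surjection $S_Q(\be) \cvl S_Q(\al) \twoheadrightarrow S_Q(\ga)$. The same argument (reading the sequence in the opposite direction) gives the corresponding statement for $S_Q(\al) \cvl S_Q(\be)$.

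To pass to the quantum affine side I invoke Theorem \ref{thm: duality functor on Q}: the functor $\F^{(1)}_Q$ is exact, is a tensor functor, and sends the simple $S_Q(\mu)$ to the fundamental-type module $V_Q(\mu)$ for every $\mu \in \Prt$. Applying $\F^{(1)}_Q$ term by term to the exact sequence above yields
\[
0 \to V_Q(\ga) \to V_Q(\al) \otimes V_Q(\be) \to V_Q(\be) \otimes V_Q(\al) \to V_Q(\ga) \to 0,
\]
and since $\F^{(1)}_Q$ sends $\mathrm{Im}(\mathbf r)$ to a simple $\Up$-module (Theorem \ref{thm: duality functor on Q} (b)), $V_Q(\be) \otimes V_Q(\al)$ again has exactly two composition factors, one of which is $V_Q(\ga)$ sitting as the head. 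This gives both the composition-length statement and the surjection $V_Q(\be) \otimes V_Q(\al) \twoheadrightarrow V_Q(\ga)$.

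There is no real obstacle left in this argument; the entire combinatorial burden has already been discharged in \S\ref{sec: vea-veb} and \S\ref{sec: vea pm vet} to prove \eqref{conclusion: mfre}. The only point that deserves care is bookkeeping the order of the factors: since $\al <_{\redez} \be$, the exact sequence of Theorem \ref{thm: BkMc} (d) has $S_Q(\be) \cvl S_Q(\al)$ on the right-hand side, which is exactly the convolution whose head must be $S_Q(\ga)$ in order to obtain the surjections stated in (b). Once this ordering is tracked correctly, both assertions (a) and (b) follow simultaneously for $R_{D_n}$-modules and for $\Up$-modules via the single application of the functor $\F^{(1)}_Q$.
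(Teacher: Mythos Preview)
Your proof is correct and follows exactly the same route as the paper's own argument, which simply states that the result is an immediate consequence of Theorem \ref{thm: BkMc}, Theorem \ref{thm: duality functor on Q}, and \eqref{conclusion: mfre}. You have just written out explicitly how these three ingredients combine: minimality from \eqref{conclusion: mfre}, the four-term exact sequence from Theorem \ref{thm: BkMc} (d) giving composition length two and the surjection on the $R$-module side, and the exact tensor functor $\F^{(1)}_Q$ transporting both statements to $\Cat_Q^{(1)}$.
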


\begin{proof}
It is an immediate consequence of Theorem \ref{thm: BkMc}, Theorem \ref{thm: duality functor on Q} and \eqref{conclusion: mfre}.
\end{proof}

\begin{remark} \label{rmk: non-adapted}
For a reduced expression $\redez$ of the longest element $w_0$ of $D_4$
$$\widetilde{w}_0=s_1s_2s_3s_1s_2s_4s_1s_2s_3s_1s_2s_4,$$
one can easily check that it is {\it not} adapted to any Dynkin quiver $Q$ of type $D_4$.
Moreover, for a multiplicity free positive root
$\al_2+\al_3+\al_4$, we have
$$\al_2+\al_3 \prec_{[\redez]} \al_3 \prec_{[\redez]} \al_2+\al_3+\al_4  \prec_{[\redez]} \al_2+\al_4  \prec_{[\redez]} \al_4$$
by considering all reduced expressions in $[\redez]$. Equivalently, the pair $(\al_2+\al_3 , \al_4)$ can not be minimal with respect to any convex total order
compatible with  $\prec_{[\redez]}$ even though $\al_2+\al_3+\al_4$ is multiplicity free.
\end{remark}

\subsection{Multiplicity non-free positive root of the form $\ve_a + \ve_b$ for $b \le n-2$.}
By Theorem \ref{Thm: V-swing}, a multiplicity non-free positive root of the form $\ve_a + \ve_b$ for $b \le n-2$ is located at
the intersection of the $a$-swing and the $b$-swing. More precisely,
\begin{itemize}
\item $\ve_a + \ve_b$ is located at the $S$-part of the $a$-swing and  the $N$-part of the $b$-swing or
\item $\ve_a + \ve_b$ is located at the $N$-part of the $a$-swing and  the $S$-part of the $b$-swing.
\end{itemize}
For a pair $(\al,\be)$ of $\al+\be=\ga=\ve_a+\ve_b$ ($b \le n-2$), if
$\al$ or $\be$ is in the same sectional paths containing $\ga$, we can prove that
$$\text{$(\al,\be)$ is a minimal pair of $\ga$,}$$
by applying the same arguments in \S \ref{sec: vea-veb} and \S \ref{sec: vea pm vet}.

Comparing with multiplicity free positive roots in the previous subsections, multiplicity non-free positive roots of the form $\ve_a + \ve_b$ for $b \le n-2$  can
have a pair $(\al,\be)$ which are not in the same sectional paths containing $\ve_a + \ve_b$.

In this subsection, we prove that there exist $(n-b-1)$-may pairs $(\al,\be)$ of $\ve_a+\ve_b$ which can not be minimal
with respect to {\it any} convex total order compatible with $\preceq_Q$. However, we have still surjections
$$ \VQbe \otimes \VQal \twoheadrightarrow V_Q(\ve_a+\ve_b) \quad\text{and}\quad \SQbe \cvl \SQal \twoheadrightarrow S_Q(\ve_a+\ve_b)$$
for the non-minimal pair $(\al,\be)$. Here, $(\al,\be)$ is {\it non-minimal} means that $(\al,\be)$ can not be a minimal pair of $\ga=\al+\be$ for any
convex total order compatible with $\preceq_Q$.

\begin{proposition} \label{prop: nfree}
Assume that there exists a pair $(\al,\be)$ of  $\al+\be=\ga=\ve_a+\ve_b$ $(b\le n-2)$ such that $\al$ and $\be$ are not in the same sectional paths containing $\ga$.
Then we have
\begin{align} \label{eq: nfree}
  \inp_1(\ga) = 2n-2-\inp_1(\al)-\inp_1(\be).
\end{align}
Moreover,
\begin{enumerate}
\item[({\rm a})] we have surjections
$$ \VQbe \otimes  \VQal \twoheadrightarrow \VQga
\quad\text{and}\quad \SQbe \cvl \SQal \twoheadrightarrow \SQga,$$
\item[({\rm b})] the pair $(\al,\be)$ is a non-minimal pair.
\end{enumerate}
\end{proposition}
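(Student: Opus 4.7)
\smallskip
\noindent\textbf{Proof plan.} First I would observe that the hypothesis forces the pair to have the form $\{\al,\be\}=\{\ve_a\pm\ve_c,\ve_b\mp\ve_c\}$ for some $c\notin\{a,b\}$: any other decomposition would give one of $\al,\be$ a summand of $\pm\ve_a$ or $\pm\ve_b$, and by Theorem \ref{Thm: V-swing} and Remark \ref{rem: swing} it would then sit on the $a$-swing or the $b$-swing that contains $\ga$, contradicting the assumption. So $\al$ lies in one of the $a$-swing and $b$-swing, and $\be$ lies in the other, and additionally both share the $c$-swing (or, when $c\in\{n-1,n\}$, the shallow sectional paths carrying $\pm\ve_{\mathtt{t}}$ described in Corollary \ref{cor: n,n-1} and Theorem \ref{thm: short path}).

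Next I would pin down the positions of $\al$, $\be$, $\ga$ in $\AR$ using the combinatorial description of swings together with Proposition \ref{prop: triangle}. Inside the $a$-swing, pick the two roots $\ve_a\pm\ve_{\mathtt{t}}$ at levels $n-1$ and $n$; inside the $b$-swing, pick $\ve_b\pm\ve_{\mathtt{t}}$ at levels $n-1$ and $n$. Then $\ga=\ve_a+\ve_b$ is the unique root arising at the "apex" produced from these four vertices via \eqref{eq: ad fn} and Proposition \ref{prop: triangle}, and $\al,\be$ arise analogously on the $c$-swing side. A direct distance count in \eqref{figure: nfree roots}, combined with the formula $\phi^{-1}(\al+\be)=(n-1-k,(s+l)/2)$ of Proposition \ref{prop: triangle}, shows that the levels satisfy
\[
\phi^{-1}_1(\ga) = 2n-2 - \phi^{-1}_1(\al) - \phi^{-1}_1(\be),
\]
which is \eqref{eq: nfree}, and that the shifts $\phi^{-1}_2(\al)-\phi^{-1}_2(\ga)$ and $\phi^{-1}_2(\be)-\phi^{-1}_2(\ga)$ line up with the spectral parameters required by case (ii) of Theorem \ref{Thm: Dorey D_n(1)}.

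For part (a), once the positions of $\al,\be,\ga$ have been computed, the hypotheses of Theorem \ref{Thm: Dorey D_n(1)} (ii) are satisfied by $(i,j,k)=(\phi^{-1}_1(\al),\phi^{-1}_1(\be),\phi^{-1}_1(\ga))$, which yields the surjection $\VQbe \otimes \VQal \twoheadrightarrow \VQga$. Applying the exact and faithful functor $\F^{(1)}_Q$ of Theorem \ref{thm: duality functor on Q} and Remark \ref{rem: faithful} (faithful on morphisms, and sending simples to simples), the corresponding surjection $\SQbe \cvl \SQal \twoheadrightarrow \SQga$ follows.

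For part (b), I would produce a competing pair $(\al',\be')$ of $\ga$ by walking inside the $a$-swing from the position of $\ga$ to a vertex $\al'$ that shares a summand $\ve_a$ with $\ga$, and inside the $b$-swing from $\ga$ to $\be'$ sharing $\ve_b$; concretely $\al'$ and $\be'$ are chosen so that $\al'+\be'=\ga$ and so that the sectional paths of $\AR$ give arrows from $\be$ down to $\be'$ and from $\al'$ down to $\al$. By Remark \ref{rmk: non minimal pair}, the existence of such paths rules out $(\al,\be)$ being minimal for any convex total order compatible with $\preceq_Q$. The main obstacle is the case-by-case bookkeeping in step two: depending on whether $\ga$ sits in the $S$-part or $N$-part of each of the $a$- and $b$-swings, and on the relative sizes of $a,b,c$ (including $c\in\{n-1,n\}$, handled via Corollary \ref{cor: n,n-1} and Lemma \ref{Lem: last 2 line}), one must verify that the combinatorics of Proposition \ref{prop: triangle} yields exactly the coordinates predicted by Theorem \ref{Thm: Dorey D_n(1)}(ii) and that the witness pair $(\al',\be')$ really is reachable by paths in $\AR$.
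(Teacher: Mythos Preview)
Your overall plan matches the paper's: identify $\{\al,\be\}=\{\ve_a\pm\ve_c,\ve_b\mp\ve_c\}$ with $b<c$, verify the level relation so as to invoke case~(ii) of Theorem~\ref{Thm: Dorey D_n(1)}, and exhibit a competing pair for non-minimality via Remark~\ref{rmk: non minimal pair}. But the central computation has a gap.

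Your appeal to Proposition~\ref{prop: triangle} to locate $\al,\be,\ga$ is incomplete. That proposition computes the position only of a root that is the \emph{sum} of two roots sitting at levels $n-1$ and $n$, and by Lemma~\ref{Lem: last 2 line} any such sum has the form $\ve_x+\ve_y$. Hence Proposition~\ref{prop: triangle} pins down $\ga=\ve_a+\ve_b$ and (when $c\le n-2$) the multiplicity non-free member $\ve_a+\ve_c$ of the pair, but it says nothing about the multiplicity-free member $\ve_b-\ve_c$ (or $\ve_a-\ve_c$). Your claim that ``$\al,\be$ arise analogously on the $c$-swing side'' is therefore unjustified, and the level relation~\eqref{eq: nfree} does not follow from a distance count in~\eqref{figure: nfree roots} alone. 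The paper handles exactly this missing piece: fixing a representative case ($\be=\ve_b-\ve_c$, with $\ga$ in the $N$-part of the $b$-swing and the $S$-part of the $a$-swing), it tracks the shallow $(-c)$-path through the level-$1$ roots $\kappa_{u+1},\kappa_u$, uses Corollary~\ref{Cor: nfree mfree position} and the swing structure to force $\kappa_u$ into the $S$-part of the $c$-swing, and thereby identifies $\al=\ve_a+\ve_c$ at the intersection of the $c$- and $a$-swings; only then does a direct distance computation yield~\eqref{eq: nfree}. Your route could be repaired by supplementing Proposition~\ref{prop: triangle} with an independent determination of $\inp_1(\ve_b-\ve_c)$, but that is essentially the paper's argument.

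A smaller issue: in part~(a) the functor $\F^{(1)}_Q$ goes from $\Rep(R)$ to $\Cat_Q^{(1)}$, so exactness and faithfulness do not let you pull a surjection back from the $V_Q$ side to the $S_Q$ side as you suggest. (The paper is itself terse here, simply citing~\eqref{eq: p ij D}~(ii).) For part~(b), the paper's competing pair is precisely $(\ve_b-\ve_{\mathtt{t}},\ve_a+\ve_{\mathtt{t}})$ at the swing apices; the sectional paths from $\be$ to $\ve_b-\ve_{\mathtt{t}}$ and from $\ve_a+\ve_{\mathtt{t}}$ to $\al$ required by Remark~\ref{rmk: non minimal pair} come out of the positional analysis of the first step, which is again where your sketch is incomplete.
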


\begin{proof}
Note that
$\{ \al,\be \} = \{ \ve_a \pm \ve_c, \ve_b \mp \ve_c \} $ for some $a<b<c$.  Assume that $\be=\ve_b-\ve_c$ and $\ga$ is at the intersection of the
$N$-part of the $b$-swing and the $S$-part of the $a$-swing. By assumptions, we have the following subquiver in $\AR$ as follows:
$$
{\xy (-15,0)*{}="T1"; (-7,0)*{}="T2"; (15,0)*{}="T3"; (23,0)*{}="T4"; (3,0)*{}="K1";(7,0)*{}="K2";
(0,-15)*{}="B1"; (8,-15)*{}="B2";
(-6,-9)*{}="V1";(15,-8)*{}="V2"; (4,-11)*{}="I";
"T1"+(6,-6); "B1" **\dir{-};"T3"+(-2,-2); "B1" **\dir{-};"T2"+(3,-3); "B2" **\dir{-};"T4"+(-4,-4); "B2" **\dir{-};
"V1"; "K1" **\dir{-};"V2"; "K2" **\dir{-};
"V1"*{\bullet};"K1"*{\bullet};"K2"*{\bullet};"V2"*{\bullet};"I"*{\bullet};
"K1"+(-2,2)*{\scriptstyle \kappa_{u+1}};
"V1"+(-5,0)*{\scriptstyle \ve_b-\ve_c};
"V2"+(3,0)*{\scriptstyle N};
"I"+(0,3)*{\scriptstyle \ga};
"K2"+(0,2)*{\scriptstyle \kappa_{u}};
"B1"+(-3,-3)*{\scriptstyle b\text{-swing}};
"B2"+(3,-3)*{\scriptstyle a\text{-swing}};
\endxy}
$$
The existence such $\kappa_{u+1}=\ve_e-\ve_c$ is guaranteed by Theorem \ref{thm: short path} and Corollary \ref{cor: n,n-1}. Now we prove
that $\kappa_{u}$ exists and $\kappa_{u}$ contains $\ve_c$ as its summand. if $\kappa_{u}$ does not exist, then Corollary \ref{cor: reverse uni} (b) tells that
$\kappa_{u+1}=\Dim \mtI(1)$.
Then $\be$ should be the same as $\be=\ve_1-\ve_c$ which is a contradiction to the assumption of $b$. Thus $\kappa_{u}$ exists.

Now we claim that $\kappa_{u}$ contains $\ve_c$ as its summand. If $\kappa_{u+2}$ does not exists, $\kappa_{u}$ contains $\ve_c$.
Thus we assume that $\kappa_{u+2}$ exists. If $\kappa_{u+2}$ contains $\ve_c$ as its summand and is in the $S$-part of the $c$-swing,
we have an intersection of the shallow maximal $(N,-c)$-sectional path and the $c$-swing. But it can not happen. If $\kappa_{u+2}$ contains $\ve_c$ as its summand
and is in the $S$-part of the $c$-swing, $\inp_1(\ve_b+\ve_c)<\inp_1(\ve_b-\ve_c)$. But it can not happen, since, by the argument in Proposition
\ref{prop: upper ray}, $\inp_1(\ve_b+\ve_c)$ should be larger than $\inp_1(\ve_b-\ve_c)$. Thus our claim follows.

Using the technique in the previous paragraph, one can prove that $\kappa_{u}$ is in the $S$-part of the $c$-swing. Then the root $N$ at the
intersection of the $c$-swing and the $a$-swing becomes $\ve_a+\ve_c$. Observing the locations of $\al$, $\be$ and $\ga$, we have
\begin{align*}
&\big(\inp_1(\be)-1\big) + \big(\inp_2(\kappa_{u})-\inp_2(\kappa_{u+1}) \big)+ \big(\inp_1(\al)-1\big) \\
& \hspace{20ex} =\big(n-1 - \inp_1(\be)\big) + 2\big(n-1 - \inp_1(\ga)\big) + \big(n-1 - \inp_1(\al)\big).
\end{align*}
Since $\big(\inp_2(\kappa_{u})-\inp_2(\kappa_{u+1}) \big)=2$, we have \eqref{eq: nfree}.
By  \eqref{eq: p ij D} {\rm (ii)}, we have a surjective homomorphism ({\rm a}).

Note that there exists a pair $(\ve_b-\ve_{\mathtt{t}},\ve_a+\ve_{\mathtt{t}})$ of $\ve_a+\ve_b$.
Since there exist sectional paths from $\be$ to $\ve_b-\ve_{\mathtt{t}}$ and from $\ve_a+\ve_{\mathtt{t}}$ to $\al$,
$(\al,\be)$ can not be a minimal pair
with respect to any total order compatible with $\preceq_Q$ by Remark \ref{rmk: non minimal pair}.

For the other cases, we can apply the same argument given in this proof.
\end{proof}

\begin{corollary} \label{cor: double means}
For a non-minimal pair $(\al,\be)$ in {\rm Proposition \ref{prop: nfree}}, $d_{\VQbe,\VQal}(z)$
has a zero of multiplicity $2$ at $z=(-q)^{|\inp_2(\al)-\inp_2(\be)|}$.
\end{corollary}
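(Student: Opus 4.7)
The plan is to combine the level relation from Proposition \ref{prop: nfree} with the explicit denominator formula \eqref{eq: denominator 1}. I will set $\inp(\al)=(i,p)$, $\inp(\be)=(j,q)$, $\inp(\ga)=(k,r)$, so that the three fundamental modules carry spectral parameters $(-q)^p$, $(-q)^q$, $(-q)^r$. Proposition \ref{prop: nfree} provides the identity $i+j+k = 2n-2$ together with a nonzero homomorphism $\VQbe \tens \VQal \twoheadrightarrow \VQga$.

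I will then match this surjection to Theorem \ref{Thm: Dorey D_n(1)}. Case (i) requires $i+j+k = 2\max(i,j,k) \le 2(n-2)$, and case (iii) requires $i+j+k \ge (n-1)+n+1 = 2n$; neither equals $2n-2$, so we must be in case (ii). This pins down $2 \le i,j \le n-2$, $i+j \ge n$, and the spectral identities $p-r=-j$ and $q-r=i$, which together give $|\inp_2(\al)-\inp_2(\be)| = |q-p| = i+j$.

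Finally, I will read off the multiplicity from the explicit formula. Because $2 \le i,j \le n-2$, the first case of \eqref{eq: denominator 1} applies to $d_{j,i}(z)$. Evaluating at $z=(-q)^{i+j}$, exactly two factors vanish: one from the first product at $s=\min(i,j)$, and one from the second at $s=i+j-n+1$, which is a valid index precisely because $i+j\ge n$ and $\max(i,j)\le n-2$. This matches criterion \eqref{eq: dpole 1} with $(k,l,s)=(i,j,i+j)$, confirming a double zero. The only genuinely delicate point is the case analysis eliminating Dorey (i) and (iii); once the level sum is pinned at $2n-2$, the remaining verification is a direct substitution.
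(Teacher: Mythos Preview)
Your proposal is correct. Two small blemishes: in eliminating case (iii) you claim $i+j+k\ge 2n$, but both of $m,\ell$ could equal $n-1$, so the correct bound is $i+j+k\ge 2(n-1)+1=2n-1$; this still excludes $2n-2$. Also your spectral identities have the signs swapped (matching $\VQbe\otimes\VQal$ against Theorem~\ref{Thm: Dorey D_n(1)}\,(ii) gives $q-r=-i$ and $p-r=j$), but this does not affect $|q-p|=i+j$.

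Your route differs mildly from the paper's. You use the ``only if'' direction of Theorem~\ref{Thm: Dorey D_n(1)} together with the level sum $i+j+k=2n-2$ to force case~(ii), and then read off $|\inp_2(\al)-\inp_2(\be)|=i+j$ from the prescribed spectral shifts. The paper instead pulls the bounds $2\le \inp_1(\al),\inp_1(\be)\le n-2$ and $\inp_1(\al)+\inp_1(\be)\ge n$ directly from the setup of Proposition~\ref{prop: nfree} (whose proof already lands in \eqref{eq: p ij D}\,(ii)), and then computes $\inp_2(\al)-\inp_2(\be)$ geometrically by tracing the path $\be\to(\text{bottom of }b\text{-swing})\to\ga\to(\text{bottom of }a\text{-swing})\to\al$ in $\AR$, obtaining
\[
(n-1-\inp_1(\be))+2\bigl(n-1-\inp_1(\ga)\bigr)+(n-1-\inp_1(\al))=\inp_1(\al)+\inp_1(\be).
\]
Both proofs finish by invoking \eqref{eq: dpole 1}. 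Your argument is more self-contained (it uses only the statement of Proposition~\ref{prop: nfree}, not the picture from its proof), at the cost of the case analysis against Dorey's list; the paper's argument is more direct once that picture is in hand.
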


\begin{proof}
From Proposition \ref{prop: nfree}, we can easily check that
$$\inp_1(\al),\inp_1(\be) \le n-2, \quad
\inp_1(\al)+\inp_1(\be) \ge n \quad \text{and hence} \quad \inp_1(\al),\inp_1(\be) >1.$$ Since
\begin{align*}
 \inp_2(\al) & -\inp_2(\be) \\
& = n-1- \inp_1(\al) +n-1- \inp_1(\be)+2\left(n-1- \big(2n-2-\inp_1(\al)-\inp_1(\be) \big) \right) \\
& = \inp_1(\al)+\inp_1(\be),
\end{align*}
our assertion follows from \eqref{eq: dpole 1}.
\end{proof}

\begin{theorem} \label{thm: non-free}
For a multiplicity non-free positive root $\ga=\ve_a+\ve_b$, $(n-b-1)$-many non-minimal pairs of $\ga$ exist.
\end{theorem}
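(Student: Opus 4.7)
The plan is to exhibit $n-b-1$ non-minimal pairs of $\ga=\ve_a+\ve_b$ by invoking Proposition \ref{prop: nfree} for a suitable family of choices.

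First I enumerate the pairs of $\ga$ in the form $\{\al,\be\}=\{\ve_a\pm\ve_c,\ve_b\mp\ve_c\}$, where positivity of both summands restricts the admissible $c$. By Theorem \ref{Thm: V-swing} and Remark \ref{rem: swing}, $\al$ lies in the $a$-swing and $\be$ in the $b$-swing, while $\ga$ itself is the intersection of the two swings. Proposition \ref{prop: nfree} tells us that a pair is non-minimal whenever neither $\al$ nor $\be$ lies on a sectional path through $\ga$.

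For $a<c<b$, the root $\be=\ve_c+\ve_b$ is multiplicity non-free; by Corollary \ref{cor: reverse uni}(a) it lies in the longer part of the $b$-swing, which also houses $\ga$, so $\be$ is automatically on a sectional path through $\ga$ and such a pair is minimal. For $c=\mathtt{t}$, Lemma \ref{Lem: last 2 line} places $\ve_a\pm\ve_{\mathtt{t}}$ at the top of the $a$-swing and $\ve_b\pm\ve_{\mathtt{t}}$ at the top of the $b$-swing; the top vertices of a swing belong to every sectional path through $\ga$ in that swing, so both pairs at $c=\mathtt{t}$ are minimal.

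The interesting range is $c\in\{b+1,\ldots,n\}\setminus\{\mathtt{t}\}$, which has exactly $n-b-1$ elements. For such $c$, Lemma \ref{Lem: last 2 line} forces $\ve_a+\ve_c$ and $\ve_a-\ve_c$ to sit strictly off the top of the $a$-swing, hence one belongs to the $S$-part and the other to the $N$-part; the analogous statement holds for $\ve_b\pm\ve_c$ in the $b$-swing. A sign-parity analysis, using Proposition \ref{prop: triangle} to pin down the second coordinates and the decompositions $\ga=(\ve_a+\ve_c)+(\ve_b-\ve_c)=(\ve_a-\ve_c)+(\ve_b+\ve_c)$, then shows that exactly one of the two pairs at this $c$ has both $\al$ and $\be$ in the parts of their respective swings \emph{opposite} to $\ga$'s location; for this pair neither component lies on a sectional path through $\ga$, so Proposition \ref{prop: nfree} declares it non-minimal, while the other pair, whose components both lie on the same side as $\ga$, is minimal.

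Running $c$ over $\{b+1,\ldots,n\}\setminus\{\mathtt{t}\}$ yields $n-b-1$ pairwise distinct non-minimal pairs (they are distinct because each is uniquely identified by its $\pm\ve_c$ summand). The main obstacle is the sign-parity step, which must rigorously identify which of $\ve_a\pm\ve_c$ sits in the $S$-part versus the $N$-part of the $a$-swing, and symmetrically in the $b$-swing, relative to $\ga$; this is a direct but slightly intricate case analysis driven by Corollary \ref{cor: reverse uni}(b), Lemma \ref{Lem: last 2 line}, and Proposition \ref{prop: triangle}, after which the conclusion follows by application of Proposition \ref{prop: nfree}.
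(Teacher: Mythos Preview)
Your approach is correct in spirit and close to the paper's, but the parametrization differs and your key step is left unjustified. The paper parametrizes the non-minimal pairs directly by the $n-b-1$ roots in the \emph{shorter part} of the $b$-swing (whose cardinality is immediate from Theorem \ref{Thm: V-swing}), and then argues via Corollary \ref{cor: reverse uni}(b), through a case split on the relative positions of the $a$-, $b$-, and $1$-swings in the $\sigma$-order, that for each such $\be=\ve_b\mp\ve_c$ the partner $\al=\ve_a\pm\ve_c$ must lie in the part of the $a$-swing opposite to $\ga$; Proposition \ref{prop: nfree} then gives non-minimality. This sidesteps your sign-parity analysis entirely: rather than deciding for each $c$ which of $\ve_b\pm\ve_c$ lands in the shorter part, the paper begins from the shorter part and reads off the sign automatically.

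Two points in your outline need repair. First, the claim that ``the top vertices of a swing belong to every sectional path through $\ga$ in that swing'' is inaccurate: the $S$-sectional path through $\ga$ in the $a$-swing continues to exactly one of $\ve_a\pm\ve_{\mathtt t}$ (at level $n-1$ or $n$), not both, and likewise on the $N$-side. The $c=\mathtt t$ pairs are indeed minimal, but this requires the actual position argument, not the blanket claim. Second, Lemma \ref{Lem: last 2 line} and Proposition \ref{prop: triangle} govern roots and sums at levels $n-1,n$; they do not by themselves determine which part of the $a$-swing contains $\ve_a+\ve_c$ versus $\ve_a-\ve_c$ for $b<c\le n-2$. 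The tool you actually need for that is precisely Corollary \ref{cor: reverse uni}(b) (the reverse-unimodal ordering of swing indices), together with Corollary \ref{cor: nfree position}, which is what the paper's case analysis invokes. Once you use that, your ``main obstacle'' dissolves into the same computation the paper performs---so the two arguments are really the same, with the paper's choice of parametrization making the bookkeeping shorter.
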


\begin{proof} Note that
\begin{itemize}
\item a pair $(\al,\be)$ of $\ga$ is of the form $(\ve_a \pm \ve_c, \ve_b \mp \ve_c )$
or $(\ve_b \mp \ve_c,\ve_a \pm \ve_c)$ for some $b<c$,
\item there are $(n-b-1)$-roots in the shorter part of the $b$-swing.
\end{itemize}
Let $a=i_{\sigma_{k}}$, $b=i_{\sigma_{l}}$ $(k \ne l)$ and $1=i_{\sigma_{\ell}}$ as in \eqref{eq:sigma enumeration}. Then either
$l<k \le \ell$, $l>k \ge \ell$, $k \ge \ell \ge l$ or  $l \ge \ell \ge k$ by Corollary \ref{cor: reverse uni} (b).

\noindent
(i) Assume that $l>k \ge \ell$. By Corollary \ref{cor: reverse uni}, for any positive root $\ve_b \mp \ve_c$ in the $S$-part of the $b$-swing,
we have the following subquiver of $\AR$ as follows:
$$
{\xy (-15,0)*{}="T1"; (-2,0)*{}="T2"; (15,0)*{}="T3"; (28,0)*{}="T4"; (3,0)*{}="K1";(7,0)*{}="K2";
(0,-15)*{}="B1"; (13,-15)*{}="B2"; (25,-15)*{}="B3"; (40,0)*{}="T5";
(6.6,-8.4)*{}="I1";
(15,0)*{}="K2";(10,0)*{}="K1"; (-2.5,-12.5)*{}="V1";
(21.5,-6.5)*{}="V2";
"T1"+(8,-8); "B1" **\dir{-};"T3"; "B1" **\dir{-};"T2"+(3,-3); "B2" **\dir{-};"T4"; "B2" **\dir{-};
"K1"; "B3" **\dir{-};
"T5"; "B3" **\dir{-};
"V1"*{\bullet};"I1"*{\bullet};
"B1"+(-3,-3)*{\scriptstyle b\text{-swing}};
"B2"+(0,-3)*{\scriptstyle a\text{-swing}};
"B3"+(2,-3)*{\scriptstyle 1\text{-swing}};
"V1"+(-5,0)*{\scriptstyle \ve_b \mp \ve_c};
"I1"+(0,3)*{\scriptstyle \ga};
\endxy}
$$
Thus it suffices to show that $\ve_a\pm \ve_c$ is in the $N$-part of the $a$-swing.
Assume $\be=\ve_a\pm \ve_c$ is contained in the $S$-part.
If $\inp_1(\be)< \inp_1(\ga)$, it is a contradiction to the convexity of $\prec_Q$.
If $\inp_1(\be)> \inp_1(\ga)$, then Corollary \ref{cor: nfree position} tells that
$\be=\ve_a+ \ve_c$ and hence $c$-swing is located between $a$-swing and $b$-swing. However, it is a contradiction to Corollary \ref{cor: reverse uni} (b).
Hence $\be$ must contained in the $N$-part of the $a$-swing. We can prove for the case $l<k \le \ell$ by using the similar argument.

\noindent
(ii) Assume $k \ge \ell \ge l$. By Corollary \ref{cor: reverse uni}, for any positive root $\ve_b \mp \ve_c$ in the $N$-part of the $b$-swing,
we have the following subquiver of $\AR$ as follows:
$$
{\xy (-15,0)*{}="T1"; (-2,0)*{}="T2"; (15,0)*{}="T3"; (28,0)*{}="T4"; (3,0)*{}="K1";(7,0)*{}="K2";
(0,-15)*{}="B1"; (13,-15)*{}="B2"; (25,-15)*{}="B3"; (40,0)*{}="T5";
(12.5,-2.5)*{}="I1";
(15,0)*{}="K2";(10,0)*{}="K1"; (-2.5,-12.5)*{}="V1";
(21.5,-6.5)*{}="V2";
"T1"+(8,-8); "B1" **\dir{-};"T3"; "B1" **\dir{-};"T2"; "B2" **\dir{-};"T4"; "B2" **\dir{-};
"K1"; "B3" **\dir{-};
"B3"+(6,6); "B3" **\dir{-};
"I1"*{\bullet};
"B1"+(-3,-3)*{\scriptstyle a\text{-swing}};
"B2"+(0,-3)*{\scriptstyle 1\text{-swing}};
"B3"+(2,-3)*{\scriptstyle b\text{-swing}};
"B3"+(3,3)*{\bullet}; "B3"+(8,3)*{\scriptstyle \ve_b \mp \ve_c};
"I1"+(0,3)*{\scriptstyle \ga};
\endxy}
$$
Assume $\be=\ve_a\pm \ve_c$ is contained in the $N$-part.
If $\inp_1(\be)> \inp_1(\ga)$, it is a contradiction to the convexity of $\prec_Q$.
If $\inp_1(\be)< \inp_1(\ga)$, then Corollary \ref{cor: nfree position} tells that
$\be=\ve_a+ \ve_c$ and hence $c$-swing is located either between $a$-swing and $1$-swing, or between $c$-swing and $1$-swing.
However, both cases yields a contradiction to Corollary \ref{cor: reverse uni} (b).
Hence $\be$ must contained in the $S$-part of the $a$-swing. We can prove for the case $l \ge \ell \ge k$ by using the similar argument.
\end{proof}

\begin{corollary}
For a pair $(\al,\be)$ of $\al+\be=\ga \in \Phi^+$, assume that $\al$ or $\be$ $($or both$)$ are contained in the same sectional path$($s$)$ of $\ga$. Then
$(\al,\be)$ is minimal.
\end{corollary}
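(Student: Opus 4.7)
The plan is to split into two cases according to whether $\ga$ is multiplicity free or not. If $\ga$ is multiplicity free, then \eqref{conclusion: mfre} (established in Sections \ref{sec: vea-veb} and \ref{sec: vea pm vet}) already asserts that \emph{every} pair $(\al,\be)$ with $\al+\be=\ga$ is minimal with respect to some suitable convex total order compatible with $\preceq_Q$. In particular, the pairs satisfying the sectional-path hypothesis of the corollary are minimal, and nothing more needs to be said.

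Suppose now $\ga$ is multiplicity non-free. By Corollary \ref{cor: nfree position} we must have $\ga=\ve_a+\ve_b$ with $1<\inp_1(\ga)<n-1$ and $b\le n-2$, so $\ga$ lies at the intersection of the $a$-swing and the $b$-swing. I will argue by contrapositive: assume $(\al,\be)$ is non-minimal and show that neither $\al$ nor $\be$ lies on either of the two maximal sectional paths passing through $\ga$. By Theorem \ref{thm: non-free} there are exactly $n-b-1$ non-minimal pairs of $\ga$, and the proof of that theorem produces exactly $n-b-1$ explicit non-minimal pairs, one for each index $c$ in the shorter part of the $b$-swing, of the form $\{\al,\be\}=\{\ve_a\pm\ve_c,\ve_b\mp\ve_c\}$. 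Since this list is exhaustive, it suffices to verify the sectional-path statement on these configurations.

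For each such $c$, the pair $(\al,\be)$ falls precisely under the hypothesis of Proposition \ref{prop: nfree}, namely that $\al$ and $\be$ are not in the same sectional paths as $\ga$. This can be read directly off the subquiver depicted in the proof of that proposition: $\ga$ sits at the intersection of the S-part of the $a$-swing and the N-part of the $b$-swing, whereas $\al$ sits on the \emph{opposite} N-part of the $a$-swing (at its intersection with the $c$-swing) and $\be$ sits on the \emph{opposite} S-part of the $b$-swing (at its intersection with the $c$-swing). Since the S-part and N-part of any swing are distinct maximal sectional paths, both $\al$ and $\be$ avoid the two sectional paths through $\ga$. Taking the contrapositive yields the corollary: if at least one of $\al,\be$ does lie on a sectional path through $\ga$, then $(\al,\be)$ cannot be among the $n-b-1$ non-minimal pairs, hence is minimal.

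The main obstacle is simply the geometric verification that the non-minimal configurations from Proposition \ref{prop: nfree} genuinely place $\al$ and $\be$ on the sectional paths \emph{opposite} to those of $\ga$ within their respective swings; but this is already embedded in the proofs of Proposition \ref{prop: nfree} and Theorem \ref{thm: non-free} (where the $c$-swing is shown to meet the $a$-swing and $b$-swing away from $\ga$), so no new computation is required beyond assembling these observations.
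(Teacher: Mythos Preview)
Your multiplicity-free case is fine and matches the paper. In the multiplicity non-free case, however, your contrapositive argument is circular. You assert that ``by Theorem \ref{thm: non-free} there are \emph{exactly} $n-b-1$ non-minimal pairs'' and that ``this list is exhaustive''. But Theorem \ref{thm: non-free} only asserts the \emph{existence} of $(n-b-1)$ non-minimal pairs; its proof constructs that many and stops. To conclude that every non-minimal pair is on your list, you would need to know that all remaining pairs---precisely those with $\al$ or $\be$ on a sectional path through $\ga$---are minimal. That is the Corollary itself. Proposition \ref{prop: nfree} does not help here either: it gives the implication (neither $\al$ nor $\be$ on a sectional path of $\ga$) $\Rightarrow$ non-minimal, which is the \emph{converse} of what you need, not its contrapositive.

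The paper's route is the direct one, already announced at the opening of \S 3.3: when $\al$ or $\be$ lies on a sectional path through $\ga$, the very same arguments used for multiplicity-free roots in \S\ref{sec: vea-veb}--\S\ref{sec: vea pm vet} (Propositions \ref{prop: upper ray} and \ref{prop: mfree 1}, together with the explicit orders $<^{U,i}_Q$, $<^{L,i}_Q$) apply without change and exhibit a compatible convex total order making $(\al,\be)$ minimal. This requires no count of non-minimal pairs and no appeal to Theorem \ref{thm: non-free} beyond the structural picture of swings. The one-line proof in the paper (``immediate consequence of the preceding theorem'') is really a pointer back to that paragraph. Your contrapositive could be salvaged by first establishing this direct fact, but once you have it the contrapositive step is superfluous.
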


\begin{proof}
The proof is an immediate consequence of the preceding theorem.
\end{proof}

\begin{corollary} \label{cor: num min nonmin}
For a positive roots $\ga \in \Phi^+$,
\begin{itemize}
\item[{\rm (i)}] the number of minimal pairs of $\ga$ is $|\Supp_{\ge 1}(\ga)|-1$,
\item[{\rm (ii)}] the number of non-minimal pairs of $\ga$ is $|\Supp_{\ge 2}(\ga)|$.
\end{itemize}
Hence the number of pairs $(\al,\be)$ of $\ga=\al+\be$ is the same as ${\rm ht}(\ga) -1$, where ${\rm ht}(\ga)=\sum_{i \in I}n_i$ for
$\ga=\sum_{i \in I}n_i\al_i$.
\end{corollary}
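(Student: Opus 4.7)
The plan is to enumerate all pairs $(\al,\be)$ of $\ga=\al+\be\in\Prt$ explicitly according to the form of $\ga$, and then to separate these pairs into minimal and non-minimal using the results already established in Section \ref{sec: Dorey minimal}. I would first recall the three families of positive roots: $\varepsilon_a-\varepsilon_b$ with $a<b\le n$ has support $\{a,\ldots,b-1\}$ and height $b-a$; $\varepsilon_a+\varepsilon_n$ with $a<n$ has support $\{a,\ldots,n-2,n\}$ and height $n-a$; and $\varepsilon_a+\varepsilon_b$ with $a<b<n$ has support $\{a,\ldots,n\}$ with coefficient $2$ precisely on $\{b,\ldots,n-2\}$, hence height $2n-a-b$. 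In particular, $\ga$ is multiplicity non-free if and only if $\ga=\varepsilon_a+\varepsilon_b$ with $b\le n-2$, in which case $|\Supp_{\ge 2}(\ga)|=n-b-1$, and otherwise $|\Supp_{\ge 2}(\ga)|=0$ and $|\Supp_{\ge 1}(\ga)|={\rm ht}(\ga)$.

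Next I would go through the enumeration of pairs. For $\ga=\varepsilon_a-\varepsilon_b$, the only decompositions as sums of positive roots are $(\varepsilon_a-\varepsilon_k,\varepsilon_k-\varepsilon_b)$ with $a<k<b$, giving $b-a-1={\rm ht}(\ga)-1$ pairs. For $\ga=\varepsilon_a+\varepsilon_n$ the pairs are $(\varepsilon_a-\varepsilon_k,\varepsilon_k+\varepsilon_n)$ with $a<k\le n-1$ (note $\varepsilon_n-\varepsilon_k\notin\Phi^+$ for $k<n$), giving $n-a-1={\rm ht}(\ga)-1$ pairs. For $\ga=\varepsilon_a+\varepsilon_b$ with $b\le n-2$, the pairs are of three types: $(\varepsilon_a-\varepsilon_k,\varepsilon_k+\varepsilon_b)$ with $a<k<b$ ($b-a-1$ pairs), $(\varepsilon_a-\varepsilon_c,\varepsilon_b+\varepsilon_c)$ with $b<c\le n$ ($n-b$ pairs), and $(\varepsilon_a+\varepsilon_c,\varepsilon_b-\varepsilon_c)$ with $b<c\le n$ ($n-b$ pairs), for a total of $(b-a-1)+2(n-b)=2n-a-b-1={\rm ht}(\ga)-1$. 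In all three families the total pair count agrees with ${\rm ht}(\ga)-1$, which will give the final ``hence'' statement.

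It remains to identify the non-minimal pairs. When $\ga$ is multiplicity free, conclusion \eqref{conclusion: mfre} (together with the results of \S \ref{sec: vea-veb}, \S \ref{sec: vea pm vet}) says every pair of $\ga$ is minimal, so the number of minimal pairs equals ${\rm ht}(\ga)-1=|\Supp_{\ge 1}(\ga)|-1$ and the number of non-minimal pairs is $0=|\Supp_{\ge 2}(\ga)|$. When $\ga=\varepsilon_a+\varepsilon_b$ with $b\le n-2$, Theorem \ref{thm: non-free} supplies exactly $n-b-1$ non-minimal pairs, all of the form $(\varepsilon_a\pm\varepsilon_c,\varepsilon_b\mp\varepsilon_c)$ for suitable $c>b$ by Proposition \ref{prop: nfree}; this equals $|\Supp_{\ge 2}(\ga)|$ as computed above. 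The remaining pairs---the $b-a-1$ of the first type and the pairs involving $\pm\varepsilon_{\mathtt{t}}$---are minimal because at least one of $\al$, $\be$ lies in a sectional path containing $\ga$, so the corollary stated just before the statement (on pairs sharing a sectional path with $\ga$) applies. The minimal-pair count therefore equals $(2n-a-b-1)-(n-b-1)=n-a=|\Supp_{\ge 1}(\ga)|-1$.

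The only mildly delicate point is bookkeeping in the last case: one must check that the $n-b-1$ non-minimal pairs produced by Theorem \ref{thm: non-free} are exactly the pairs \emph{not} lying in the sectional paths through $\ga$, so that there is no overlap or omission. This follows from Theorem \ref{Thm: V-swing} together with Corollary \ref{cor: reverse uni}, which pin down the positions of $\varepsilon_a\pm\varepsilon_c$ and $\varepsilon_b\mp\varepsilon_c$ relative to the $a$-, $b$-, and $c$-swings, so no genuinely new argument is required beyond careful accounting.
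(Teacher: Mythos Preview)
Your proof is correct and follows essentially the same approach as the paper: both split into the multiplicity free case (handled by \eqref{conclusion: mfre}) and the multiplicity non-free case $\ga=\ve_a+\ve_b$ with $b\le n-2$ (handled by Theorem~\ref{thm: non-free} together with the sectional-path corollary), with an explicit count of the pairs giving $\het(\ga)-1$ in total. Your write-up is more detailed than the paper's, and the only minor omission is that you do not explicitly treat $\ga=\ve_a+\ve_{n-1}$ in the pair enumeration, but this case is multiplicity free and your three-type enumeration for $\ve_a+\ve_b$ works verbatim with $b=n-1$.
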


\begin{proof}
For a multiplicity free root $\ga$, $|\Supp_{\ge 1}(\ga)|={\rm ht}(\ga)$. Then out assertion follows from the easy observation that
the number of pairs of $\ga=|\Supp_{\ge 1}(\ga)|-1$ and \eqref{conclusion: mfre} in this case.

For a multiplicity non-free root $\ga=\varepsilon_a +  \varepsilon_b = \sum_{ a \le k < b} \al_k + 2 \sum_{ j \le k \le n-2} \al_k +\al_{n-1}+\al_{n},$
one of the pair $(\al,\be)$ of $\ga$ should be the one of the following forms:
$$  \ve_b \pm \ve_k \ \ (b < k \le n) \text{ or } \ve_c +\ve_b  \ \ (a < c <b).$$
Thus the number of pairs of $\ga$ is
$$2(n-b)+(b-a-1)=2n-a-b-1,$$ which is the same as ${\rm ht}(\ga)-1$. Then our assertion follows from Theorem \ref{thm: non-free}
and the observations that $|\Supp_{\ge 1}(\ga)|=n-a+1$ and $|\Supp_{\ge 2}(\ga)|=n-b-1$.
\end{proof}

\begin{example} In Example \ref{ex: example 1}, the longest root $ \ga=\ve_1+\ve_2 = \al_1+2\al_2+\al_3+\al_4$ has
$${\rm ht}(\ga)=5, \ \  |\Supp_{\ge 1}(\ga)|=4 \ \text{ and } \ |\Supp_{\ge 2}(\ga)|=1.$$
One can easily check that
\begin{itemize}
\item $(\{ 1,-4 \}, \{ 2,-4 \})$, $(\{ 1, 3 \}, \{ 2, -3 \})$, $(\{ 1,-3 \}, \{ 2,3 \})$, $(\{ 1,4 \}, \{ 2,-4 \})$ are pairs of $\ga$,
\item $(\{ 1,-4 \}, \{ 2,-4 \})$, $(\{ 1, 3 \}, \{ 2, -3 \})$, $(\{ 1,-3 \}, \{ 2,3 \})$ are minimal pairs of $\ga$,
\item $(\{ 1,4 \}, \{ 2,-4 \})$ is  only the non-minimal pairs of $\ga$.
\end{itemize}
\end{example}

By Proposition \ref{prop: nfree} and Corollary \ref{cor: double means}, we know that the surjective homomorphism
$$\VQbe \otimes \VQal \twoheadrightarrow \VQga \quad \text{ for a non-minimal pair $(\al,\be)$ of $\ga$} $$
arises from \eqref{eq: p ij D} {\rm (ii)}. Conversely, for every minimal pair $(\al,\be)$ of $\ga$,
the surjective homomorphism $\VQbe \otimes \VQal \twoheadrightarrow \VQga$ arises from
the other surjective homomorphisms in Theorem \ref{Thm: Dorey D_n(1)}. Thus we have the following Theorem by \eqref{eq: dpole 1}:

\begin{theorem} \label{thm: surj free} \hfill
\begin{itemize}
\item[{\rm (1)}] $d_{\VQbe,\VQal}(z)$ for a minimal pair $(\al,\be)$ of $\gamma \in \Prt$
has a zero of multiplicity $1$ at $z=(-q)^{|\inp_2(\al)-\inp_2(\be)|}$,
\item[{\rm (2)}] $d_{\VQbe,\VQal}(z)$ for a non-minimal pair $(\al,\be)$ of $\gamma \in \Prt$
has a zero of multiplicity $2$ at $z=(-q)^{|\inp_2(\al)-\inp_2(\be)|}$.
\end{itemize}
\end{theorem}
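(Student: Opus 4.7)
The plan is to treat parts (1) and (2) separately, and to leverage the accumulated work on which of the three Dorey families in Theorem \ref{Thm: Dorey D_n(1)} produces each surjection $\VQbe \otimes \VQal \twoheadrightarrow \VQga$.

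Part (2) is essentially already in hand: Corollary \ref{cor: double means} directly shows that a non-minimal pair $(\al,\be)$ of $\gamma=\ve_a+\ve_b$ satisfies the multiplicity-two condition \eqref{eq: dpole 1}, forcing $d_{\VQbe,\VQal}(z)$ to vanish to order $2$ at $z=(-q)^{|\inp_2(\al)-\inp_2(\be)|}$. So I will simply cite it and move on.

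For part (1), I would first classify the source of the surjection $\VQbe\otimes \VQal\twoheadrightarrow \VQga$ for each minimal pair. When $\ga$ is multiplicity free this is carried out piece by piece in \S\ref{sec: vea-veb} and \S\ref{sec: vea pm vet}, and in each case (Proposition \ref{prop: upper ray}, Proposition \ref{prop: mfree 1}, Proposition \ref{pro: short n-1,n}) the surjection is explicitly identified as coming from case \eqref{eq: p ij D}(i) or (iii). When $\ga=\ve_a+\ve_b$ is multiplicity non-free, the classification in the proof of Theorem \ref{thm: non-free} together with the remark preceding Corollary \ref{cor: double means} shows that a minimal pair must have $\al$ or $\be$ lying in a sectional path that also contains $\ga$; by the arguments of \S\ref{sec: vea-veb}--\S\ref{sec: vea pm vet}, these minimal pairs again produce their surjection from case (i) or (iii) of Theorem \ref{Thm: Dorey D_n(1)}, never from case (ii). The upshot is that for \emph{every} minimal pair, the Dorey surjection falls outside the forbidden family (ii).

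Next I would read off the spectral parameter ratio dictated by each of the cases (i) and (iii), matching it against the entries $(i,x),(j,y),(k,z)$ obtained from $\inp(\al)$, $\inp(\be)$, $\inp(\ga)$ and the prescription \eqref{V_Q(beta)}. In each sub-case this forces $z=(-q)^{|\inp_2(\al)-\inp_2(\be)|}$ to be one of the explicit roots listed in \eqref{eq: denominator 1} for $d_{\inp_1(\al),\inp_1(\be)}(z)$, and I would verify by direct bookkeeping that it does \emph{not} lie in the region \eqref{eq: dpole 1} where $d_{k,l}(z)$ exhibits a double root: indeed \eqref{eq: dpole 1} is precisely the image of case (ii), by the observation (a)/(b) recorded just after \eqref{eq: dpole 2}. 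Hence in cases (i) and (iii) the root is simple, which is the desired multiplicity-one statement.

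The main obstacle is the bookkeeping in the last step: one must check case by case that the spectral parameter arising from the minimal pair actually coincides with one of the factors in \eqref{eq: denominator 1} (i.e.\ that it is a root of the denominator, not merely a non-pole) while verifying that the exponent $|\inp_2(\al)-\inp_2(\be)|$ falls outside the double-zero window of \eqref{eq: dpole 1}. This is a finite but somewhat tedious verification subdividing according to whether $\ga$ is of type $\ve_a-\ve_b$, $\ve_a\pm\ve_{\mathtt t}$, $\ve_a+\ve_b$ with $b\le n-2$, etc., and according to the $S$- versus $N$-part position of $\al,\be$ inside the relevant swing; each sub-case, however, is already essentially set up in \S\ref{sec: vea-veb} and \S\ref{sec: vea pm vet}, so the proof reduces to a uniform appeal to Theorem \ref{Thm: Dorey D_n(1)} combined with the explicit denominator formula \eqref{eq: denominator 1}.
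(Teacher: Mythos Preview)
Your proposal is correct and follows essentially the same route as the paper. The paper's argument is the short paragraph immediately preceding the theorem: non-minimal pairs arise from case~(ii) of \eqref{eq: p ij D} (Proposition~\ref{prop: nfree} and Corollary~\ref{cor: double means}), minimal pairs arise from cases~(i) or~(iii), and then one invokes \eqref{eq: dpole 1} together with the observation that condition~(ii) exactly matches the double-zero region. Your write-up is more explicit about the bookkeeping (in particular, about why the spectral parameter is a root at all, and why cases~(i)/(iii) avoid \eqref{eq: dpole 1}), but the strategy is identical.
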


\section{Further applications}

\subsection{Category $\Cat^{(2)}_Q$}
Using the map $^\star$ in \eqref{eq: mathsf V}, we define the category $\Cat^{(2)}_Q$ in $\Cat_{D^{(2)}_{n+1}}$  as follows (see also \cite{Her10}):

\begin{definition} \label{def: CQtwo} Let $\Prt$ be the set of positive roots of finite type $D_{n+1}$.
The subcategory $\Cat^{(2)}_Q$ is the smallest abelian full subcategory of $\Cat_{D^{(2)}_{n+1}}$ satisfying
\begin{itemize}
\item[({\rm a})] $\mathsf{V}_Q(\be) \in \Cat^{(2)}_Q$ for all $\be \in \Prt$,
\item[({\rm b})] it is stable by taking submodule, subquotient, tensor product and extension.
\end{itemize}
\end{definition}

\begin{proposition} \label{Prop: Dorey on Gamma 2}
For every positive root $\ga \in \Phi^+_{n+1}$ with $\het(\ga) \ge 2$ and every minimal pair $(\al,\be)$ of $\ga$,
there exists a surjective $U'_q(D_{n+1}^{(2)})$-module homomorphism
\begin{align} \label{eq: 2 surj}
\mathsf{V}_Q(\be) \otimes \mathsf{V}_Q(\al) \twoheadrightarrow \mathsf{V}_Q(\ga).
\end{align}
\end{proposition}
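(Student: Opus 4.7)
The plan is to deduce the $D^{(2)}_{n+1}$-module surjection from the corresponding $D^{(1)}_{n+1}$-module surjection already established in Section \ref{sec: Dorey minimal}, by transporting it through the star map $^\star$ of \eqref{eq: mathsf V}. The underlying observation is that the Schur--Weyl-type setup attached to $Q$ is the same in both cases, so the combinatorics of $\Gamma_Q$ encode both Dorey's rules simultaneously.

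First, for the fixed Dynkin quiver $Q$ of type $D_{n+1}$ with height function $\xi$ satisfying $\xi_n \equiv \xi_{n+1} \equiv 0 \pmod 2$, and for a minimal pair $(\alpha,\beta)$ of $\gamma \in \Phi^+_{n+1}$, one knows from Theorem \ref{thm: surj free} (and its constructive precursors Proposition \ref{prop: upper ray}, Proposition \ref{prop: mfree 1}, and Proposition \ref{pro: short n-1,n}) that there is a surjective $U'_q(D^{(1)}_{n+1})$-homomorphism
\begin{equation*}
V_Q(\beta)\otimes V_Q(\alpha) \twoheadrightarrow V_Q(\gamma),
\end{equation*}
and moreover this surjection realizes one of the homomorphisms of type (i) or type (iii) in Theorem \ref{Thm: Dorey D_n(1)} (type (ii) was shown there to appear only for non-minimal pairs of multiplicity non-free roots). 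Thus, writing $\phi^{-1}(\alpha)=(i,p)$, $\phi^{-1}(\beta)=(j,q)$, $\phi^{-1}(\gamma)=(k,r)$, the triple $\big((i,(-q)^p),\,(j,(-q)^q),\,(k,(-q)^r)\big)$ satisfies condition \eqref{eq: p ij D}\,(i) or \eqref{eq: p ij D}\,(iii).

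Next, I would apply Remark \ref{rmk: Dn+1,1 Dn+1,2}\,(b), which says precisely that any surjective Dorey-type morphism of type (i) or (iii) for $D^{(1)}_{n+1}$ transports, via the star map, to a surjective Dorey-type morphism of type (i$'$) or (iii$'$) for $D^{(2)}_{n+1}$:
\begin{equation*}
V(\varpi_{i^\star})_{((-q)^p)^\star} \otimes V(\varpi_{j^\star})_{((-q)^q)^\star} \twoheadrightarrow V(\varpi_{k^\star})_{((-q)^r)^\star}.
\end{equation*}
By the definition \eqref{eq: mathsf V} of $\mathsf V_Q$, the left-hand side is $\mathsf V_Q(\beta)\otimes \mathsf V_Q(\alpha)$ and the right-hand side is $\mathsf V_Q(\gamma)$, which yields the desired surjection \eqref{eq: 2 surj}.

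The main point to check carefully is the bookkeeping of spectral parameters through the star map: one must verify that each of the three sub-cases of \eqref{eq: p ij D}\,(i) and the three sub-cases of \eqref{eq: p ij D}\,(iii) are matched by the correct sub-cases of \eqref{eq: p ij 2 D}\,(i$'$) and \eqref{eq: p ij 2 D}\,(iii$'$), respecting the parity-dependent sign factor $(\sqrt{-1})^{\delta(n+1 \equiv i \,(\mathrm{mod}\,2))+1}$ appearing in \eqref{eq: star} and the isomorphism $V(\varpi_i) \simeq V(\varpi_i)_{-1}$ of \eqref{eq: + iso -}. This is exactly the content of Remark \ref{rmk: Dn+1,1 Dn+1,2}, whose statement already asserts the required transfer, so no new computation is needed beyond invoking it; the potential obstacle is purely notational, and once the choice of $\xi$ with $\xi_n \equiv \xi_{n+1} \equiv 0 \pmod 2$ is recalled, the indices on both sides align as required.
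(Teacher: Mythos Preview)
Your proposal is correct and follows essentially the same route as the paper: you invoke Theorem \ref{thm: surj free} (and the surrounding discussion) to conclude that every minimal pair yields a Dorey morphism of type (i) or (iii), then apply Remark \ref{rmk: Dn+1,1 Dn+1,2}\,(b) to transport it through the star map to obtain the desired $U'_q(D^{(2)}_{n+1})$-surjection. The paper's own proof is shorter but identical in substance; your added paragraph on spectral-parameter bookkeeping is helpful elaboration rather than a different argument.
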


\begin{proof}
As we observed in Theorem \ref{thm: surj free}, for every minimal pair $(\al,\be)$ of $\ga$,
the surjective homomorphism $\VQbe \otimes \VQal \twoheadrightarrow \VQga$
arises from \eqref{eq: p ij D} {\rm (i)} or {\rm (iii)}. Thus Remark \ref{rmk: Dn+1,1 Dn+1,2} tells that
\eqref{eq: 2 surj} always exists for a minimal pair  $(\al,\be)$ of $\ga$, which yields our assertion.
\end{proof}

From Proposition \ref{Prop: Dorey on Gamma 2}, the condition (a) in Definition \ref{def: CQtwo} can be re-written as follows also:
\begin{itemize}
\item[({\rm a}$'$)] $\mathsf{V}_Q(\al_k)$ for all $\al_k \in \Pi_{n+1}$.
\end{itemize}

\subsection{Convolution of $\SQbe$ for $\be$'s in the sectional path.} This subsection can be regarded as an analogue of
\cite[\S 3.3]{Oh14A}.

\begin{proposition} \label{Prop: commuting}
For any Dynkin quiver $Q$ and $\al$, $\be \in \Prt$ contained in the same sectional path,
$$ \text{ $\SQal \cvl \SQbe \simeq \SQbe \cvl \SQal$ is simple.} $$
\end{proposition}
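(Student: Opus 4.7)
\medskip
\noindent\textbf{Proof proposal.}
My plan is to pass to the category $\Cat^{(1)}_Q$ via the exact tensor functor $\mathcal{F}^{(1)}_Q$ of Theorem~\ref{thm: duality functor on Q} and reduce the claim to the assertion that $\VQal \otimes \VQbe$ is a simple $U_q'(D_n^{(1)})$-module. Indeed, $\mathcal{F}^{(1)}_Q$ is exact, sends $\SQga$ to $\VQga$, and induces a bijection between isomorphism classes of simples; so if $\VQal \otimes \VQbe$ is simple, then every nonzero subquotient of $\SQal \cvl \SQbe$ must map to the same simple in $\Cat^{(1)}_Q$, forcing $\SQal \cvl \SQbe$ itself to be simple. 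Moreover, simplicity of a tensor product of good modules forces commutativity via the normalized $R$-matrix, yielding $\VQal \otimes \VQbe \simeq \VQbe \otimes \VQal$, which lifts through the tensor-functor property of $\mathcal{F}^{(1)}_Q$ to $\SQal \cvl \SQbe \simeq \SQbe \cvl \SQal$.

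To prove the simplicity, by Theorem~\ref{Thm: bpro}(2) it suffices to check $d_{k,l}((-q)^{p-q}) \ne 0$ and $d_{l,k}((-q)^{q-p}) \ne 0$, where $\inp(\al) = (k,p)$ and $\inp(\be) = (l,q)$. Since the zeros of $d_{k,l}(z)$ lie only at positive powers of $(-q)$ by Theorem~\ref{Thm: bpro}(1) and the formulas in Theorem~\ref{Thm; denominator}(a) are symmetric in $(k,l)$, it suffices to show $d_{k,l}((-q)^{|p-q|}) \ne 0$. I would use the definitions of $S$- and $N$-sectional paths together with Theorem~\ref{Thm: V-swing} to determine $|p-q|$ as a function of $k,l$ in the two possible geometric configurations: (i) the sectional path stays in the linear part $\{1,\ldots,n-2\}$, in which case $|p-q|=|k-l|$; and (ii) the sectional path crosses the fork at $n-2$ with one endpoint in $\{n-1,n\}$ and the other at level $k \le n-2$, in which case $|p-q|=n-1-k$. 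The configuration with both endpoints at levels $n-1, n$ does not arise on a single linear sectional path, since the two arrows issuing from $(n-2,\cdot)$ diverge.

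I would then compare $|p-q|$ against the explicit zero exponents of $d_{k,l}(z)$ in Theorem~\ref{Thm; denominator}(a). In configuration (i) with $k \ne l$ both $\le n-2$, the smallest zero exponent is $|k-l|+2$, strictly larger than $|k-l|=|p-q|$; the second family $(-q)^{2n-2-k-l+2s}$ contributes exponents at least $2n-k-l \ge 4 > |k-l|$. In configuration (ii), the denominator $d_{k,n}(z)$ or $d_{k,n-1}(z)$ has smallest zero exponent $n-k+1 = (n-1-k)+2 > |p-q|$. The degenerate case $\al = \be$ gives $|p-q|=0$, and inspection of $d_{i,i}(z)$ shows its smallest zero exponent is $2$, so $(-q)^0 = 1$ is not a zero. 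In every case $d_{k,l}((-q)^{|p-q|}) \ne 0$, completing the reduction.

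The main obstacle is the case analysis at the fork $\{n-1,n\}$: one must carefully track which sectional arrows are legal and the parity shifts in $p$ as a path crosses $n-2$, ensuring no configuration slips through with $|p-q|$ exactly hitting a zero exponent of $d_{k,l}(z)$. In fact the resulting inequalities always carry a slack of at least $2$, but obtaining them uniformly requires a thorough bookkeeping based on the combinatorics of Section~\ref{sec: Combinatorics} and in particular Theorem~\ref{Thm: V-swing}.
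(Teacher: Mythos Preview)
Your proposal follows essentially the same route as the paper: compute $|p-q|$ in the possible configurations, verify that $(-q)^{|p-q|}$ avoids the zeros of $d_{k,l}(z)$ listed in Theorem~\ref{Thm; denominator}(a), conclude simplicity of $\VQal\otimes\VQbe$ by Theorem~\ref{Thm: bpro}, and transfer back through $\mathcal{F}^{(1)}_Q$. The paper's proof is terser but records the same case split and the same denominator comparison.

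One point of divergence: you dismiss the configuration $\{\inp_1(\al),\inp_1(\be)\}=\{n-1,n\}$ as not arising on a single sectional path. The paper's proof does include this case, with $|p-q|=0$, because in this paper a maximal $S$- or $N$-sectional path is understood to branch at level $n-2$ and terminate at both $n-1$ and $n$ simultaneously (see the diagrams in Lemma~\ref{Lem: dia sink source} and Corollary~\ref{cor: n,n-1}). So two distinct roots $\al\ne\be$ at $(n-1,p)$ and $(n,p)$ are regarded as lying in the same sectional path. Your $\al=\be$ remark does not cover this, and the relevant denominator is $d_{n-1,n}(z)$, not $d_{i,i}(z)$. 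The check still passes trivially (the smallest zero exponent of $d_{n-1,n}$ is $4$), so the omission is harmless, but you should include this case explicitly to match the paper's reading of ``same sectional path.''
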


\begin{proof}
By Theorem \ref{Thm: V-swing} and Theorem \ref{thm: short path}, we have
\begin{align*}
&  |\inp_2(\al) -\inp_2(\be)| = |\inp_1(\al) -\inp_1(\be)|
\qquad \qquad \quad\text{ if $\inp_1(\al),\inp_1(\be) \le n-2$,} \\
&  |\inp_2(\al) -\inp_2(\be)| = n-1-k  \qquad \quad\qquad\qquad\quad \ \  \text{ if $\{\inp_1(\al),\inp_1(\be)\} = \{n-1,k\}$ or $\{n,k\}$,}\\
&  |\inp_2(\al) -\inp_2(\be)| =0 \qquad \quad\qquad\qquad\quad \qquad\qquad \ \text{ if $\{\inp_1(\al),\inp_1(\be)\} = \{n-1,n\}$.}
\end{align*}
By \eqref{eq: denominator 1} and Theorem \ref{Thm: bpro}, we have
$$ \text{ $\VQal \otimes \VQbe \simeq \VQbe \otimes \VQal$ is simple.} $$
Thus our assertion follows from Theorem \ref{thm: duality functor on Q}.
\end{proof}

\begin{corollary} \label{cor: commuting}
 Choose $a \in \{ 1,2,\ldots, n-2\}$ and let $\{ \be_{j_k} \mid 1 \le k \le a-1 \}$ be the set of all positive root
containing $-\ve_a$ as their summand. Then we have
$$ \text{ $\dct{k=1}{a-1} S_Q(\be_{j_k})$ is simple.}$$
\end{corollary}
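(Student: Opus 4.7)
The plan is a type-$D$ analogue of the argument in \cite[\S 3.3]{Oh14A}: reduce simplicity of the $(a{-}1)$-fold convolution to pairwise commutativity via Proposition~\ref{Prop: commuting}, then bootstrap to the full convolution through the functor $\F^{(1)}_Q$ and Theorem~\ref{Thm: bpro}. The first task is to locate all $\be_{j_k}$ on a common sectional path. The positive roots of $\Phi^+_n$ containing $-\ve_a$ as a summand are precisely $\ve_i - \ve_a$ for $1 \le i < a$, yielding $a{-}1$ roots. By Theorem~\ref{thm: short path}, any shallow maximal sectional path $\rho$ is composed of roots sharing a common negative summand $-\ve_k$, and $\rho$ starts (resp.\ ends) at level~$1$. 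Using the structural results of Section~\ref{sec: Combinatorics} (in particular Theorem~\ref{Thm: V-swing} and Corollary~\ref{Cor: first}), I would show that for each $a \le n-2$ there is a unique shallow maximal sectional path $\rho_a$ with common summand $-\ve_a$, whose length is forced by counting to be exactly $a-1$; hence all $\be_{j_1}, \ldots, \be_{j_{a-1}}$ lie on $\rho_a$.

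Once this is in hand, Proposition~\ref{Prop: commuting} gives, for every pair $k \ne l$, the commuting simple convolution $S_Q(\be_{j_k}) \cvl S_Q(\be_{j_l}) \simeq S_Q(\be_{j_l}) \cvl S_Q(\be_{j_k})$. I would then apply the exact, faithful functor $\F^{(1)}_Q$ of Theorem~\ref{thm: duality functor on Q} and Remark~\ref{rem: faithful}, which sends $S_Q(\be_{j_k}) \mapsto V_Q(\be_{j_k})$ and turns convolution into tensor product. The pairwise simplicity translates, via Theorem~\ref{Thm: bpro}~(2), into the non-vanishing conditions $d_{i_k,i_l}((-q)^{p_l - p_k}) \ne 0$ and $d_{i_k,i_l}((-q)^{p_k - p_l}) \ne 0$ for all $k \ne l$, where $\inp(\be_{j_k})=(i_k,p_k)$. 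Combined with commutativity and the uniqueness statement of Theorem~\ref{Thm: bpro}~(3), a standard induction on the number of factors yields simplicity of the full tensor product $\bigotimes_{k=1}^{a-1} V_Q(\be_{j_k})$. Since $\F^{(1)}_Q$ is exact and sends simples bijectively to simples, it also reflects simplicity, and the conclusion that $\dct{k=1}{a-1} S_Q(\be_{j_k})$ is simple follows.

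The principal obstacle is Step~1: verifying that the $a-1$ roots fit \emph{exactly} onto a single shallow maximal sectional path. This rests on the full combinatorial picture of $\Gamma_Q$ developed in Section~\ref{sec: Combinatorics}, and in particular requires excluding the existence of multiple shallow maximal sectional paths sharing the same negative summand $-\ve_a$. Once this structural fact is secured, Steps~2 and~3 are formal consequences of the previously established machinery.
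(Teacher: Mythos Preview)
Your proposal is correct and matches the paper's approach: the paper's proof simply cites Theorem~\ref{thm: short path} (to place all $\be_{j_k}$ on one shallow sectional path) and Proposition~\ref{Prop: commuting}, leaving the passage from pairwise to full simplicity implicit. Your more detailed version, routing through $\F^{(1)}_Q$ and Theorem~\ref{Thm: bpro} to promote pairwise simplicity to simplicity of the full convolution, spells out exactly what the paper's two-line proof omits.
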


\begin{proof}
Our assertions are immediate consequences of Theorem \ref{thm: short path} and Proposition \ref{Prop: commuting}.
\end{proof}

\bibliographystyle{amsplain}

%%%%%%%%%%%%%%%%%%%%%%%%%%%%%%%%%%%%%%%%%%%%%%%%%%%%%%%%%%%%%%%%%%%

\end{document}